\documentclass[journal]{IEEEtran}
\usepackage{bbm}
\usepackage{mathtools}
\usepackage{amsmath, amsfonts, amssymb}
\usepackage{amsthm}
\usepackage{booktabs}
\usepackage{amsmath}
\usepackage{diagbox}
\usepackage{fontawesome5}
\usepackage{algorithm,algorithmic}
\usepackage{hyperref}
\usepackage{bbm}
\usepackage{dsfont} 
\usepackage{bbm}
\usepackage{dsfont}
\usepackage{subfigure}
\usepackage{xcolor}         % colors
\usepackage[table]{xcolor}
\definecolor{sotagreen}{RGB}{220,245,225}
\newtheorem{theorem}{Theorem}

\newtheorem{Lemma}{Lemma}

\newtheorem{definition}{Definition}

\newtheorem{remark}[theorem]{Remark}

\newcommand{\bd}{\boldsymbol}

\allowdisplaybreaks

\usepackage{aliascnt}
\usepackage{bm}
\usepackage{amsfonts}
\usepackage{mathrsfs}
\usepackage{amsmath}

\def\bx{\boldsymbol{x}}
\def\by{\boldsymbol{y}}
\def\bz{\boldsymbol{z}}
\def\bu{\boldsymbol{u}}

\def\bg{\boldsymbol{g}}
\def\bq{\boldsymbol{q}}
\def\bs{\boldsymbol{s}}

\def\bd{\boldsymbol{d}}
\def\bm{\boldsymbol{m}}

\def\bpi{\boldsymbol{\pi}}

\DeclareMathOperator{\mX}{{\scriptstyle\boldsymbol{\mathcal{X}}}}

\DeclareMathOperator{\mD}{{\scriptstyle\boldsymbol{\mathcal{D}}}}

\DeclareMathOperator{\mY}{{\scriptstyle\boldsymbol{\mathcal{Y}}}}
\DeclareMathOperator{\mG}{{\scriptstyle\boldsymbol{\mathcal{G}}}}
\DeclareMathOperator{\mZ}
{{\scriptstyle\boldsymbol{\mathcal{Z}}}}

\DeclareMathOperator{\mhE}{{\scriptstyle\boldsymbol{\mathcal{E}}}}

\DeclareMathOperator{\mS}
{{\scriptstyle\boldsymbol{\mathcal{S}}}}
\DeclareMathOperator{\mR}
{{\scriptstyle\boldsymbol{\mathcal{R}}}}

\def\mQ{\mathcal{Q}}
\def\mT{\mathcal{T}}
\def\mU{\mathcal{U}}

\def\mB{{\mathcal{B}}}

\def\hnabla{\widehat{\nabla}}
\def\bxi{\boldsymbol{\xi}}
\def\bzeta{\boldsymbol{\zeta}}
\DeclareMathOperator{\mA}{{\mathcal{A}}}

\def\mC{\mathcal{C}}

\def\mP{\mathcal{P}}
\def\mR{\mathbb{R}}

\def\bxi{\boldsymbol{\xi}}

\def\mE{\mathbb{E}}

\newtheorem{Theorem}{Theorem}
\newtheorem{Assumption}{Assumption}
\newtheorem{Corollary}{Corollary}

\usepackage{booktabs}
\usepackage{tabularx}
\usepackage{makecell}
\usepackage[table]{xcolor}
\usepackage{threeparttable}
\usepackage{pifont} % for checkmarks/crosses
\definecolor{tableShade}{RGB}{245,248,252}
\ifCLASSINFOpdf
\else
\fi

\begin{document}
\title{A Unified Zeroth-Order Approach for Decentralized Minimax Optimization}

\author{Haoyuan Cai, Yike Zhao,
        Aleksandar Armacki,
        Jie Chen
        and~Ali H.~Sayed,
        \thanks{Haoyuan Cai, Yike, Zhao, Aleksandar Armacki, and Ali H.~Sayed are with the School of Engineering, École Polytechnique Fédérale de Lausanne, Switzerland. 
Emails: \{haoyuan.cai, yike.zhao, aleksandar.armacki, ali.sayed\}@epfl.ch}
\thanks{Jie Chen is with the School of Artificial Intelligence, Northwestern Polytechnical University, Xi'an, China. 
Email: jie.chen@nwpu.edu.cn}
}

% The paper headers
\markboth{}%
{Shell \MakeLowercase{\textit{et al.}}: Bare Demo of IEEEtran.cls for IEEE Journals}

% make the title area
\maketitle

\begin{abstract}
We propose ZOMA, a unified Zeroth-Order decentralized accelerated MinimAx framework for multi-agent nonconvex Polyak--\L{}ojasiewicz minimax optimization.
The proposed framework only requires evaluating the function value and, as such, is tailored to gradient-free environments,  where exact gradient information is either unavailable or computationally prohibitive to obtain. 
A central contribution of our \textbf{ZOMA} framework is a multi-level unification, along the following directions: 
(i) \emph{estimator} - our framework adopts a hybrid zeroth-order estimator, which accommodates, among others, both coordinate-wise and randomized uniform smoothing estimators;
 (ii) \emph{bias correction} - our framework subsumes a wide range of bias-correction strategies, including gradient tracking (GT), exact diffusion (ED), and EXTRA and (iii) \emph{acceleration} - our framework facilitates a broad class of acceleration techniques, including zeroth-order versions of STORM, PAGE, and L2S.
The general nature of \textbf{ZOMA} leads to many novel decentralized zeroth-order minimax methods and allows us to establish unified convergence guarantees, matching the performance of state-of-the-art centralized zeroth-order minimax methods, while providing benefits, such as linear speed-up in the number of users. The unified framework also provides a systematic way to assess algorithmic suitability by specializing the convergence rates to specific problem structures and method designs.
We validate the performance of the proposed algorithms via numerical simulations.
\end{abstract}

\begin{IEEEkeywords}
Decentralized minimax optimization, unified algorithm, zeroth-order optimization, acceleration approach
\end{IEEEkeywords}

\section{Introduction}
\IEEEPARstart{S}{tochastic} zeroth-order (ZO) optimization has received growing attention due to its effectiveness in gradient-free applications, including black-box adversarial attacks \cite{chen2017zoo,tu2019autozoom} and memory-efficient fine-tuning of large language models (LLMs) \cite{malladi2023fine,zhang2024revisiting},
where the sheer size of the model makes it prohibitively expensive to compute the gradient. Another issue with such models is the huge volume of data required for training, further leading to high storage demands. As such, decentralized training represents a natural alternative, with data stored in many smaller chunks across a network of users.
Despite growing interest, existing ZO methods are largely limited to centralized minimax problems, leaving {\em decentralized minimax} optimization unexplored. Such
decentralized minimax formulations arise naturally in a wide range of applications, including decentralized generative adversarial networks \cite{liu2020decentralized}, AUC maximization \cite{guo2020communication}, and reinforcement learning \cite{li2019robust},
where learning is performed over networked multi-agent systems.
Existing approaches to these problems predominantly rely on first-order (FO) gradient information, which might be inaccessible due to the black-box nature, memory constraints, or privacy restrictions of the problem.
These challenges motivate the use of decentralized ZO minimax methods. 
Therefore,
we study the following minimax problems in a decentralized gradient-free setting
\begin{align}
&\min_{x \in \mathbb{R}^{d_1}}
\max_{y \in \mathbb{R}^{d_2}}
J(x, y)
= \frac{1}{K}\sum_{k=1}^K J_k(x,y), \quad
\label{main:sec1:problem}
\end{align}
where $K\ge2$ is the number of agents, $J_k(x,y)$ is the local cost of agent $k$, and the global cost function $J(x,y)$ is assumed to be smooth nonconvex in $x$ and nonconcave in $y$, satisfying the $\nu$-Polyak–Łojasiewicz (PL) condition; see Assumption \ref{main:assumption:costfunction} ahead.
We address two important learning scenarios, namely the stochastic and finite-sum settings, i.e.,
\begin{align}
 J_k(x, y) = 
\begin{cases}
\mathbb{E}_{\bzeta_k \sim \mathbb{P}_k}[Q_k(x, y; \bzeta_k)] & \textbf{(Stochastic)}\\
\frac{1}{N} \sum_{s=1}^{N} Q_{k}(x, y; \bzeta_{k}(s)) & \textbf{(Finite-sum)}, \notag
\end{cases}
\end{align}
where the local loss $Q_k(x,y;\bzeta_k)$ is computed with  $\bzeta_k$  drawn from an unknown distribution $\mathbb{P}_k$ 
in the stochastic setting, and $Q_k(x,y;\bzeta_k(s))$ is a component function $\forall s= [N]$ in the finite-sum setting.
We assume agents access a stochastic ZO oracle to perform local updates.
This gradient-free constraint significantly complicates algorithmic design and analysis, as stochastic ZO oracles typically exhibit higher noise variance than their FO counterparts \cite{ji2019improved,liu2020min,liu2018zeroth}.
Such difficulties can be further exacerbated in decentralized settings due to network-induced errors.
Recently, considerable progress has been made on designing FO minimax algorithms in both centralized  \cite{lin2020gradient,diakonikolas2021efficient,xu2023unified,yang2022faster,zheng2023universal,cai2024accelerated,huang2025enhanced,lin2025two,li2022tiada} and decentralized settings \cite{xian2021faster,gao2022decentralized,mancino2023variance,zhang2024jointly,xu2024decentralized,cai2025communication,cai2025dama1,cai2025dama2}.
In contrast, decentralized ZO minimax optimization remains much less understood, with several key questions still open. We next review related works to highlight these gaps.

\subsection{Related works}

\textbf{ZO methods}.
References  \cite{ghadimi2013stochastic,nesterov2017random,gao2018information, liu2018zerothaistat,ji2019improved,liu2020primer,liu2018zeroth,zhang2024revisiting,chen2017zoo,malladi2023fine,chen2025zeroth, yi2022zeroth,tang2020distributed,sahu2018distributed,sahu2020decentralized} have studied gradient-free minimization problems.  
For the  centralized case, references
\cite{ghadimi2013stochastic,nesterov2017random} studied the Gaussian smoothing-based ZO gradient technique. Subsequent works \cite{liu2018zeroth,ji2019improved} developed variance-reduced ZO methods. More recently, the works 
\cite{malladi2023fine,zhang2024revisiting} applied the ZO method in deep learning tasks for memory-efficient fine-tuning of LLMs using only forward function evaluations. 
On the {\em decentralized} side, the work
\cite{sahu2018distributed} addressed the decentralized ZO convex optimization, while references \cite{tang2020distributed, yi2022zeroth, chen2025zeroth} focused on the 
decentralized nonconvex optimization problems.
Due to the rapid growth of this area, we refer readers to the aforementioned works and the references therein for details.
In contrast to the single-variable minimization setting, we study the two-variable minimax optimization problems, which are more challenging due to the coupling between the two optimization variables.

\textbf{Centralized minimax methods.}
References \cite{lin2020gradient,lin2025two, yang2022faster,zheng2023universal,zhang2020single,huang2025enhanced,cai2024accelerated,luo2020stochastic}
focused on FO methods
for solving nonconvex strongly concave/PL minimax optimization problems.  Among these studies, the works \cite{lin2020gradient,lin2025two} analyzed the convergence of two-time-scale gradient descent–ascent (GDA) methods.  The works \cite{yang2022faster,zheng2023universal,zhang2020single} proposed smoothing variants of GDA to improve the geometry of the optimization problem. Variance-reduced techniques have been investigated in \cite{huang2025enhanced,cai2024accelerated,luo2020stochastic}.
In parallel, several studies have investigated single-machine ZO methods for minimax optimization \cite{liu2020min,wang2023zeroth,huang2022accelerated,xu2020gradient,xu2023zeroth,an2024robust}. For instance, reference \cite{wang2023zeroth} studied stochastic ZO-GDA  using the Gaussian smoothing technique under a nonconvex strongly-concave setting, while references \cite{huang2022accelerated, xu2020gradient, xu2023zeroth} proposed variance reduction techniques to accelerate the randomized ZO estimator in an online scenario, which are crucial for addressing ZO gradient noise.
However, existing accelerated ZO methods are designed specifically for the online stochastic setting and lack a unified acceleration framework that handles both stochastic and finite-sum cases, resulting in a notable gap.

\textbf{Decentralized minimax methods}.
Decentralized nonconvex strongly concave/PL minimax optimization has attracted increasing attention in recent years \cite{xian2021faster,huang2023near,cai2025communication, gao2022decentralized,zhang2024jointly,mancino2023variance, zhang2025federated,cai2025dama1,cai2025dama2}. Several works studied momentum–based gradient tracking (GT) methods \cite{xian2021faster,huang2023near,cai2025communication}, while variance reduction-based GT approaches have also been explored by the works \cite{gao2022decentralized,zhang2024jointly,mancino2023variance}. 
Although the  GT method can mitigate data heterogeneity, it is often outperformed by other bias-correction approaches in sparse networks (e.g., \cite{alghunaim2022unified,sayed2022inference}).
In addition, reference \cite{chen2024efficient} studied an efficient loopless variance-reduction method and a fast mixing scheme for stochastic and finite-sum settings. 
However, fast-mixing typically requires multiple-step communication for each update. 
More recently, references \cite{cai2025dama1,cai2025dama2} introduced a unified framework that subsumes multiple decentralized learning as special cases, with improved performance over sparse networks.

Despite the substantial progress in decentralized FO minimax optimization, 
{\em decentralized ZO minimax algorithms remain unexplored}, leaving the development of strong baselines and a unified theoretical understanding in this regime an open problem.
This motivates the following research questions.

\setlength{\tabcolsep}{2pt}  
\begin{table*}[t]
\centering
\caption{\footnotesize \normalfont Comparison of ZO minimax algorithms for reaching an $\varepsilon$-stationary point under  nonconvex strongly-concave/PL setups.
Communication and ZO oracle complexity refer to per-agent results. 
\textbf{Notation:}
ZO-CW: ZO coordinate-wise.
ZO-RU: ZO randomized uniform smoothing.
ZO-RG: ZO randomized Gaussian smoothing.
Accelerated schemes: STORM \cite{cutkosky2019momentum}, PAGE \cite{li2021page}, and L2S \cite{li2020convergence}. 
$d$: problem dimension, $d= d_1+d_2$.
$K$: the number of agents. $N$: sample size in finite-sum setting.
$\kappa$: condition number, $\kappa \triangleq L_f/\nu$, where $L_f$ is the smooth constant and $\nu$ is the strong concavity/PL parameter. $1-\lambda$: network spectral gap (approaching 
$0$
for sparse and 
$1$ for dense networks).
We observe that several of our methods achieve order-wise better ZO oracle complexity than centralized/single-machine methods, while maintaining linear speedup.
We note that EXTRA-based strategies achieve guarantees comparable to ED-based ones; details are omitted for brevity.}
\label{tab:zo-minimax-unified-comparison}

\begin{tabular}{@{} l l c c  @{}}
\toprule
\textbf{Regime} & \textbf{Algorithm} &
\textbf{Communication complexity} &
\textbf{ZO oracle complexity}  \\
\midrule
% ---------------- Centralized baseline ----------------
\multicolumn{4}{@{}l}{{\em Centralized (single-machine) }} \\
\midrule
Stochastic & ZO-RU, ZO-Min-Max \cite{liu2020min}&
------ &$\mathcal{O}(d\kappa^4\varepsilon^{-6})$  \\
Stochastic & ZO-RG+ZO-CW, ZO-VRGDA \cite{xu2020gradient}
&------& $\mathcal{O}(d\kappa^3\varepsilon^{-3})$
\\
Stochastic &ZO-RU, Acc-ZOMDA \cite{huang2022accelerated}& ------& 
$\widetilde{\mathcal{O}}(d^{\frac{3}{4}}\kappa^{4.5}\varepsilon^{-3})$
\\
stochastic & ZO-RG, ZO-SGDA \cite{wang2023zeroth} &------& $\mathcal{O}(\kappa^5\varepsilon^{-4})$  
\\
stochastic& ZO-RG, ZO-VRAGDA \cite{xu2023zeroth}&------& $\mathcal{O}(d_1d_2\kappa^3\varepsilon^{-3})$
\\
stochastic & ZO-RU, ZO-GDEGA \cite{an2024robust} &  ------ & $\mathcal{O}(\kappa^2(d_1+d_2\kappa)\varepsilon^{-4})$ 
\\
\midrule
% ---------------- tau = 0 ----------------
\multicolumn{4}{@{}l}{{\em Decentralized} --- \textbf{ZOMA} \textbf{(\color{red} Ours; includes the following as special cases)}.}  \\
\midrule
\multicolumn{4}{c}{ZO-CW$^{\scalebox{1.2}{$\star$}}$}
\\
\midrule
\rowcolor{sotagreen}
Stochastic
& \textsc{ZO-STORM-ED} (Corollary \ref{corollary:online:storm+ed}) & $ \mathcal{O}\Big(\frac{\kappa^3\varepsilon^{-3}}{K} + \frac{\kappa^2\varepsilon^{-2}}{(1-\lambda)^2} \Big)$ & $\mathcal{O}\Big(\frac{d\kappa^3\varepsilon^{-3}}{K} + \frac{d\kappa^2\varepsilon^{-2}}{(1-\lambda)^2} +  \frac{d\kappa \varepsilon^{-1}}{K} \Big)$ \\Stochastic
& \textsc{ZO-STORM-GT} (Corollary \ref{corollary:online:storm+gt}) & $\mathcal{O}\Big(\frac{\kappa^3\varepsilon^{-3}}{K} + \frac{\kappa^2\varepsilon^{-2}}{(1-\lambda)^3} \Big)$ & $\mathcal{O}\Big(\frac{d\kappa^3\varepsilon^{-3}}{K} + \frac{d\kappa^2\varepsilon^{-2}}{(1-\lambda)^3} +  \frac{d\kappa \varepsilon^{-1}}{K} \Big)$\\
\rowcolor{sotagreen}
Finite-sum
& \textsc{ZO-PAGE-ED} (Corollary \ref{corollary:offline:page+ed})& $ \mathcal{O}\Big(\frac{\kappa^2\varepsilon^{-2}}{(1-\lambda)^{1.5}}
+ \kappa^2\varepsilon^{-2} \Big)
$ & $ \mathcal{O}\Big(
\frac{d\kappa^2\sqrt{N}\varepsilon^{-2}}{\sqrt{K}(1-\lambda)^{1.5}} + d\sqrt{\frac{N}{K}}
\Big)$ \\ Finite-sum
& \textsc{ZO-PAGE-GT} (Corollary \ref{corollary:offline:page+atc-gt})& $\mathcal{O}\Big(\frac{\kappa^2\varepsilon^{-2}}{(1-\lambda)^{2}}
+ \kappa^2\varepsilon^{-2} \Big)$ & $\mathcal{O}\Big(
\frac{d\kappa^2\sqrt{N}\varepsilon^{-2}}{\sqrt{K}(1-\lambda)^{2}} + d\sqrt{\frac{N}{K}}
\Big)$ \\ 
\rowcolor{sotagreen}
Finite-sum
& \textsc{ZO-L2S-ED} (Corollary \ref{corollary:offline:L2S+ed}) & $ 
\mathcal{O}\Big(
\frac{\kappa^2\sqrt{N}\varepsilon^{-2}}{\sqrt{K}}
+\frac{\kappa^2\epsilon^{-2}}{(1-\lambda)^2}+ \frac{\kappa^2\sqrt{K}\varepsilon^{-2}}{\sqrt{N}(1-\lambda)^3}
\Big)
$ & $\mathcal{O}\Big(
\frac{d\kappa^2\sqrt{N}\varepsilon^{-2}}{\sqrt{K}} +\frac{d\kappa^2\varepsilon^{-2}}{(1-\lambda)^2} + d\sqrt{\frac{N}{K}}
\Big)$  \\Finite-sum
& \textsc{ZO-L2S-GT} (Corollary \ref{corollary:offline:L2S+atc-gt}) & $ 
\mathcal{O}\Big(
\frac{\kappa^2\sqrt{N}\varepsilon^{-2}}{\sqrt{K}}
+\frac{\kappa^2\epsilon^{-2}}{(1-\lambda)^3} + \frac{\kappa^2\sqrt{K}\varepsilon^{-2}}{\sqrt{N}(1-\lambda)^4}
\Big)
$ & $\mathcal{O}\Big(
\frac{d\kappa^2\sqrt{N}\varepsilon^{-2}}{\sqrt{K}} +\frac{d\kappa^2\varepsilon^{-2}}{(1-\lambda)^3} + d\sqrt{\frac{N}{K}}
\Big)$  \\
\midrule
\multicolumn{4}{c}{ZO-CW + ZO-RU$^{\scalebox{1.2}{$\star$}}$}
\\
\midrule
\rowcolor{sotagreen}
Stochastic
& \textsc{ZO-STORM-ED} (Corollary \ref{corollary:online:storm+ed:rd}) & $\mathcal{O}\Big(\frac{ d^{\frac{3}{4}}\kappa^3\varepsilon^{-3}}{K} + \frac{\kappa^2\varepsilon^{-2}}{(1-\lambda)^2} \Big)$ & $\mathcal{O}\Big(\frac{d^{\frac{3}{4}}\kappa^3\varepsilon^{-3}}{K} + \frac{\kappa^2\varepsilon^{-2}}{(1-\lambda)^2} \Big)$  \\ Stochastic
& \textsc{ZO-STORM-GT}  (Corollary \ref{corollary:online:atc+gt:rd})  & $\mathcal{O}\Big(\frac{ d^{\frac{3}{4}}\kappa^3\varepsilon^{-3}}{K} + \frac{\kappa^2\varepsilon^{-2}}{(1-\lambda)^3} \Big)$ & $\mathcal{O}\Big(\frac{d^{\frac{3}{4}}\kappa^3\varepsilon^{-3}}{K} + \frac{\kappa^2\varepsilon^{-2}}{(1-\lambda)^3}\Big)$  \\ 
\rowcolor{sotagreen}
Finite-sum$^{\scalebox{1.1}{$\dagger$}}$
& \textsc{ZO-PAGE-ED} (Corollary \ref{corollary:offline:page+ed:rd})  & $\mathcal{O}\Big(\frac{\sqrt{d}\kappa^2\varepsilon^{-2}}{(1-\lambda)^{1.5}}
+ \kappa^2\varepsilon^{-2} \Big)$ & $\mathcal{O}\Big(
\frac{d^{1-\frac{c}{2}}\kappa^2\sqrt{N}\varepsilon^{-2}}{\sqrt{K}(1-\lambda)^{1.5}} + d^{\frac{3-c}{2}}\sqrt{\frac{N}{K}}
\Big)$ \\ 
Finite-sum$^{\scalebox{1.1}{$\dagger$}}$
& \textsc{ZO-PAGE-GT} (Corollary \ref{corollary:offline:page+atc-gt:rd}) & $\mathcal{O}\Big(\frac{\sqrt{d}\kappa^2\varepsilon^{-2}}{(1-\lambda)^{2}}
+ \kappa^2\varepsilon^{-2} \Big)$ & $\mathcal{O}\Big(
\frac{d^{1-\frac{c}{2}}\kappa^2\sqrt{N}\varepsilon^{-2}}{\sqrt{K}(1-\lambda)^{2}} + d^{\frac{3-c}{2}}\sqrt{\frac{N}{K}}
\Big)$ \\ 
\rowcolor{sotagreen}
Finite-sum$^{\scalebox{1.1}{$\dagger$}}$ 
& \textsc{ZO-L2S-ED}  (Corollary \ref{corollary:offline:L2S+ed:rd}) & $ 
\mathcal{O}\Big(
\frac{d^{1-\frac{c}{2}}\kappa^2\sqrt{N}\varepsilon^{-2}}{K^{\frac{c+1}{2}}} 
+\frac{\kappa^2\epsilon^{-2}}{(1-\lambda)^2}+ \frac{\kappa^2K^{\frac{c+1}{2}} \varepsilon^{-2}}{\sqrt{N}(1-\lambda)^3}
\Big)$ & $\mathcal{O}\Big(
\frac{d^{1-\frac{c}{2}}\kappa^2\sqrt{N}\varepsilon^{-2}}{K^{\frac{c+1}{2}}} +\frac{\kappa^2\varepsilon^{-2}}{(1-\lambda)^2} + \frac{\kappa^2K^{\frac{c+1}{2}} \varepsilon^{-2}}{\sqrt{N}(1-\lambda)^3}
\Big)$  \\ Finite-sum$^{\scalebox{1.1}{$\dagger$}}$
& \textsc{ZO-L2S-GT} (Corollary \ref{corollary:offline:L2S+atc-gt:rd})  & $ 
\mathcal{O}\Big(
\frac{d^{1-\frac{c}{2}}\kappa^2\sqrt{N}\varepsilon^{-2}}{K^{\frac{c+1}{2}}} 
+\frac{\kappa^2\epsilon^{-2}}{(1-\lambda)^3}+ \frac{\kappa^2K^{\frac{c+1}{2}} \varepsilon^{-2}}{\sqrt{N}(1-\lambda)^4}
\Big)$ & $\mathcal{O}\Big(
\frac{d^{1-\frac{c}{2}}\kappa^2\sqrt{N}\varepsilon^{-2}}{K^{\frac{c+1}{2}}} +\frac{\kappa^2\varepsilon^{-2}}{(1-\lambda)^3}+ \frac{\kappa^2K^{\frac{c+1}{2}} \varepsilon^{-2}}{\sqrt{N}(1-\lambda)^4}
\Big)$  \\
\bottomrule
\end{tabular}%

\begin{tablenotes}[flushleft]
\footnotesize
\item \textbf{Notes:}
$^{\scalebox{1.0}{$\dagger$}}$ 
We demonstrate guarantees of hybrid ZO methods under two regimes: i)  $N\le \tilde{\mathcal{O}}(\varepsilon^{-2})$ in which case $c=0$,
and ii) $N\ge \tilde{\mathcal{O}}(\varepsilon^{-2})$ in which case $c=1$. The latter regime implies that an improved complexity is attainable when $N$ is large.
$^{\scalebox{1.2}{$\star$}}$ ZO-CW: pure coordinate ZO estimator is adopted; ZO-CW+ZO-RU: hybrid ZO estimators.
\end{tablenotes}
\end{table*}

\textbf{Q1:}{ \em 
Can we design a general decentralized ZO minimax framework that accommodates both stochastic and finite-sum settings and is resilient to the loss of information stemming from the use of a ZO estimator?}

\textbf{Q2:}{ \em 
  Can we establish unified performance guarantees for such a framework, thereby enabling quantitative comparisons across different special cases?}

\textbf{Q3:}{ \em 
 Can we identify algorithmic instances that simultaneously achieve optimal communication and function query efficiency, among others?
If so, can their function query efficiency match that of centralized methods?}

Answering these questions is highly nontrivial due to both algorithmic and theoretical challenges, as we highlight next.

\subsection{Novelty and challenges}
Compared with existing centralized ZO and decentralized FO minimax algorithms, we encounter challenges on both the algorithmic and theoretical sides:

$\bullet$ 
To the best of our knowledge, existing ZO minimax methods primarily focus on the centralized setting~\cite{liu2020min,wang2023zeroth,huang2022accelerated,xu2020gradient,xu2023zeroth,an2024robust}. In such settings, the main goal is to improve function-query complexity, while communication efficiency is not part of the algorithmic design. In contrast, decentralized ZO minimax optimization requires information exchange over a network, making the balance between communication efficiency and function-query efficiency a key issue.
Moreover, function heterogeneity among agents becomes more difficult to handle in the decentralized ZO setting due to information loss from gradient approximation. To address these challenges, we propose a unified framework equipped with a bias-correction technique and a flexible variance-reduction scheme tailored to the ZO setting; see subsections \ref{main:subsection: accelerated_ZO} and \ref{main:subsection:unified}. The analysis further requires a transformed recursion that is absent from the centralized ZO literature; see Appendix~\ref{appendix:sec:transformation}.
Moving from FO minimax methods to ZO minimax methods introduces additional difficulties ~\cite{gao2022decentralized,chen2024efficient,zhang2024jointly,mancino2023variance,cai2025dama1}: ZO gradient estimators often suffer from high variance, which can degrade performance even in regimes where simple decentralized gradient methods are effective, necessitating efficient variance-reduction design.  The choice of the ZO estimator is also crucial, as different estimators can lead to distinct trade-offs between function-query complexity and approximation accuracy. We exploit the complementary strengths of coordinate-wise and randomized smoothing estimators to balance efficiency and convergence.

$\bullet$ The theoretical analysis is nontrivial due to the combined effects of weaker assumptions, decentralization, variance reduction, and the use of multiple ZO estimators. Compared with existing non-variance-reduced ZO methods~\cite{xu2023unified,an2024robust}, we adopt several acceleration schemes and need to control gradient approximations at both the averaged and network levels. To address this, we exploit the unified structure of these acceleration schemes and establish Lemmas~\ref{appendix:lemma:coorinate_variance} and~\ref{appendix:lemma:coorinate_Nxc_Nyc}. Moreover, unlike several existing ZO works that assume bounded FO gradients~\cite{liu2020min}, bounded ZO variance~\cite{an2024robust,huang2022accelerated, xu2023zeroth}, Lipschitz component gradients~\cite{huang2022accelerated}, or convex component functions~\cite{xu2020gradient}, our analysis is carried out under substantially weaker conditions. As a result, we need to control expected differences between ZO gradient estimators. We address this difficulty by exploiting a mean-value representation of the gradient difference; see Lemma~\ref{appendix:lemma:random:exey}. We also need to handle gradient-dependent ZO variance, which is controlled by relating it to the target gradient; see  Lemmas~\ref{appendix:lemma:coorinate_Nxc_Nyc}--\ref{appendix:lemma:random:exey}. These difficulties are distinct from those in decentralized FO minimax methods~\cite{gao2022decentralized,chen2024efficient,zhang2024jointly,mancino2023variance,cai2025dama1}, where gradient differences can often be controlled directly. Furthermore, our hybrid ZO framework employs multiple estimators that approximate different FO surrogate gradients, requiring a careful error decomposition to relate the resulting estimation errors; we tackle these challenges in Lemmas~\ref{appendix:lemma:cw_coupled} and~\ref{appendix:lemma:random:exey}.

\subsection{Contributions}

To address the aforementioned gaps,
we develop a unified \textbf{ZO} decentralized accelerated \textbf{M}inim\textbf{A}x framework, termed \textbf{ZOMA}.
Our main contributions are listed below.

$\bullet$ Our general framework \textbf{ZOMA} provides a broad unification, along several levels: 
(i) \emph{estimator} - it adopts a hybrid ZO estimator, which accommodates, among others, both coordinate-wise (CW) and randomized uniform (RU) smoothing estimators;
 (ii) \emph{bias correction} - it subsumes a wide range of bias-correction strategies, including GT \cite{nedic2017achieving}, 
 exact diffusion (ED) \cite{yuan2018exact}, and EXTRA \cite{shi2015extra} and (iii) \emph{acceleration} - it facilitates a broad class of acceleration techniques, including zeroth-order versions of STORM \cite{cutkosky2019momentum}, PAGE \cite{li2021page} and L2S \cite{li2020convergence}.
% Jointly unifying these components is highly nontrivial, as it requires meticulous co-design of both algorithmic and analytical frameworks.
To the best of our knowledge, there are no prior works on decentralized ZO  minimax optimization, making our work the first ZO framework for decentralized minimax optimization. 

$\bullet$ We establish unified performance guarantees for \textbf{ZOMA} under two ZO design choices: (i) a pure ZO coordinate-wise (ZO-CW) strategy, and (ii) a hybrid strategy that combines ZO-CW and ZO randomized uniform (ZO-RU) smoothing technique. These designs
enable a trade-off between estimation accuracy of the ZO  gradient estimator and function query efficiency.
In Table~\ref{tab:zo-minimax-unified-comparison}, we summarize the complexity results of special cases of \textbf{ZOMA} and compare them with existing centralized benchmarks, notably showing that our decentralized methods perform on par with (or better than) their centralized counterparts, while bringing benefits like linear speed-up.
In particular,
the coordinate-wise variant of ZO-STORM-ED/GT matches the centralized complexity $\mathcal{O}(d\kappa^3\epsilon^{-3})$ achieved by \cite{xu2020gradient} (see Table~\ref{tab:zo-minimax-unified-comparison} for definition of $\kappa$ and $d$), while additionally attaining linear speedup with respect to the number of agents $K$. 
The hybrid ZO variant 
 of ZO-STORM-ED/GT
 further improves this result, reducing the complexity by a factor of $\mathcal{O}(d^{\frac{1}{4}})$.
Compared with Acc-ZOMDA proposed by \cite{huang2022accelerated}, our hybrid method removes the logarithmic dependency and improves the condition number dependence by a factor of $\mathcal{O}(\kappa^{1.5})$.
 
$\bullet$ Our theoretical analysis provides several insights into the relative strengths of different algorithmic instances under various regimes.
First, ZO-ED-based algorithms consistently outperform their ZO-GT-based counterparts over sparse networks, indicating that the ED strategy is advantageous in gradient-free settings.
Second, our results reveal an inherent trade-off between communication and ZO oracle complexities, {\em as no single algorithm simultaneously achieves the best performance in both metrics}. Third, by leveraging the finite-sum structure,  we establish guarantees that depend on the sample size $N$, which can outperform stochastic guarantees when $N$ falls into certain regimes (see subsection \ref
{main:subsection:discussion} for details). 

\textbf{Paper organization.}
The remainder of the paper is organized as follows.
Section \ref{main:section:preliminaries} reviews ZO estimators and states the assumptions.
Section \ref{main:section:development} presents the \textbf{ZOMA} framework.
Section \ref{main:section:development} establishes convergence guarantees.
Section \ref{main:simulation} provides simulations.
All proofs are deferred to the appendix.

\textbf{Notation.}
We use normal fonts (e.g., $x$) to denote deterministic quantities, and bold fonts (e.g., $\bx$) to denote stochastic quantities. The symbol 
$\mathcal{O}(\cdot)$ denotes the standard ``big O", while $\tilde{\mathcal{O}}(\cdot)$ hides all hyperparameters except $\varepsilon$. Calligraphic symbol denotes network concatenated quantities, e.g.,
$\mX_i \triangleq \mathrm{col}\{\bx_{1,i}, \ldots, \bx_{K,i}\}
\triangleq {\rm col}\{\bx_{k,i}\}_{k=1}^K \in \mR^{Kd_1}$,
where $\bx_{k,i} \in \mR^{d_1}$ represents the variable of agent $k$ at communication round $i$. We use
$\mathbb{B}^{d}$ to denote the $d$-dimensional {\em unit} ball and 
$\mathbb{S}^{d-1}$ to denote the sphere of the unit ball in $\mathbb{R}^{d}$.
$\mathcal{N}_k$ denotes the neighboring set of agent $k$ (itself included). $\mathbb{I}(\cdot) \in \{0, 1\}$ is an indicator function. $\otimes$ stands for the Kronecker product.
$\mathbf{I}_{K}$ and $\mathds{1}_{K}$ denote the $K$-dimensional identity matrix and vector of all ones, respectively.

\section{Preliminaries}
\label{main:section:preliminaries}
\subsection{Stochastic ZO Gradient Estimator}
\label{main:subsection:ZOestimator}
We review the standard ZO gradient estimator by
considering a loss function $Q(w;\bxi):\mR^d \rightarrow \mR$.

$\bullet$ \textbf{ZO coordinate-wise (ZO-CW) estimator}

The ZO-CW strategy constructs a gradient estimator by approximating each partial derivative via finite differences, using function evaluations at points perturbed along the canonical basis directions. Specifically, it queries $2d$ perturbed 
function values, i.e.,
$
Q(w + \delta e_j;\bxi)$ and $ Q(w - \delta e_j;\bxi), j \in \{1,\ldots,d\}$,
where $\delta > 0$ is a smoothing parameter and
$e_j \in \mathbb{R}^{d}$ denotes the $j$-th canonical basis vector. The  estimator is given by 
\begin{align}
&\sum_{j=1}^{d}
\Big[\Big(Q(w + \delta e_j;\bxi) - Q(w-\delta e_j;\bxi)\Big)/(2\delta)\Big]e_j.
\end{align}
\vspace{-0.5em}

$\bullet$ \textbf{ZO randomized uniform (ZO-RU) estimator}

A more query-efficient alternative is the ZO-RU estimator, which constructs a gradient estimator by perturbing the argument along a single randomly sampled direction. Let
$\bu \sim \mathrm{Unif}(\mathbb{S}^{d-1})$
be uniformly drawn from the unit sphere. Using only two function evaluations, namely
$Q(w+\mu \bu; \bxi)$ and $Q(w;\bxi)$, the ZO-RU estimator is given by
\begin{align}
d\Big[\Big(Q(w +\mu \bu;\bxi) -Q(w;\bxi)\Big)/\mu\Big]\bu,
\end{align}
where $\mu>0$
is a smoothing parameter.
This estimator approximates the gradient of the smoothed objective
$ Q_{\mu}(w,\bxi) \triangleq \mathbb{E}_{\bu \sim \mathrm{Unif}(\mathbb{B}^d)}\left[Q(w + \mu \bu;\bxi)\right]$ \cite{liu2020primer}.
Compared with the ZO-CW estimator, ZO-RU requires only two function queries per construction and is therefore more flexible and query-efficient,  at the expense of reduced gradient estimation accuracy.
\vspace{-0.8em}

\subsection{ZO Gradient Estimator for Minimax Variables}

We now extend the above estimators to the minimax setting. Consider a local loss function $Q_k(x,y;\bxi): \mR^{d_1+d_2} \rightarrow \mR$, where $x\in \mR^{d_1},  y \in \mR^{d_2}$. For compactness, we define the concatenated vector $z \triangleq {\rm col}\{x, y\}$.
By fixing one variable and applying either the ZO-CW or ZO-RU strategy to the other, we construct ZO estimators of the partial gradients with respect to
$x$ and $y$ as follows\footnote{
We use $\bu \not =0$ to denote the ZO-RU estimator, and $\bu =0$ to denote the ZO-CW estimator, since $\bu =0$ cannot be sampled from the unit sphere.}
\begin{align}
&\bq^x_{k}(z;\bxi, \bu) \triangleq 
\label{main:preliminary:ZO_coordinateX}
\\
&\begin{cases}
\displaystyle
\sum_{j=1}^{d_1}
\frac{Q_k(x + \delta_x e_j, y;\bxi) - Q_k(x-\delta_x e_j, y;\bxi)}{2\delta_x}e_j,   \text{ if } \bu =0  \\
\displaystyle
\frac{d_1(Q_k(x +\mu_x \bu,y;\bxi) -Q_k(z;\bxi))}{\mu_x}\bu, \qquad  \qquad  \
\text{ if } \bu \not=0.
\end{cases} \notag
\end{align}
\begin{align}
& \bq^y_{k}(z;\bxi, \bu)  \triangleq \label{main:preliminary:ZO_randomY} \\
&  \begin{cases}
\displaystyle
    \sum_{j=1}^{d_2}
\frac{Q_k(x , y+ \delta_y e_j;\bxi) - Q_k(x, y-\delta_y e_j;\bxi)}{2\delta_y}e_j ,   \text{ if } \bu =0
\label{main:preliminary:ZO_coordinateY} \\ 
\displaystyle\frac{d_2(Q_k(x, y +\mu_y \bu; \bxi) -Q_k(z;\bxi))}{\mu_y}\bu,
\qquad  \quad \quad
\text{ if } \bu \not=0.
\end{cases}
\notag
\end{align}
% \begin{align}
% &(\textbf{ZO-CW}) \notag \\
% &\bq^x_{i,0}(z; \bxi) 
%  \notag \\
% &\triangleq\sum_{j=1}^{d_1}
% \frac{Q_k(x + \delta_x e_j, y;\bxi) - Q_k(x-\delta_x e_j, y;\bxi)}{2\delta_x}e_j,  \label{main:preliminary:ZO_coordinateX} \\
% & \bq^y_{k}(z; \bxi)
%   \notag \\
%  &\triangleq\sum_{j=1}^{d_2}
% \frac{Q_k(x , y+ \delta_y e_j;\bxi) - Q_k(x, y-\delta_y e_j;\bxi)}{2\delta_y}e_j,  
% \label{main:preliminary:ZO_coordinateY}
% \\
% &(\textbf{ZO-RU}) \notag \\
% & \bq^x_{i,0}(z;\bu, \bxi) \triangleq \frac{d_1(Q_k(x +\mu_x \bu,y;\bxi) -Q_k(z;\bxi))}{\mu_x}\bu,  
% \label{main:preliminary:ZO_randomX}
% \\
% & \bq^y_{k}(z;\bu, \bxi)  \triangleq  \frac{d_2(Q_k(x, y +\mu_y \bu; \bxi) -Q_k(z;\bxi))}{\mu_y}\bu,
% \label{main:preliminary:ZO_randomY}
% \end{align}  
where $\delta_x, \delta_y, \mu_x, \mu_y >0$ are smoothing
parameters,
and if $\bu \not =0$, then either $\bu \sim \mathrm{Unif}(\mathbb{S}^{d_1-1})$ or $\bu \sim \mathrm{Unif}(\mathbb{S}^{d_2-1})$,
depending on the variable being perturbed.

% \textbf{Bottleneck of the stochastic ZO Gradient}.
% \label{main:subsection:bottleneck}
% Although the ZO estimators in \eqref{main:preliminary:ZO_coordinateX}–\eqref{main:preliminary:ZO_randomY} 
% are straightforward to use, they
% exhibit higher estimation variance than their FO counterparts. 
% This is further exacerbated by high dimensionality and decentralized network settings, posing a major challenge for efficient minimax optimization. To address this issue, in the next section we introduce an efficient probabilistic loopless variance reduction technique. Before that, we present some standard assumptions used in our work.

\subsection{Assumptions}
We next 
introduce some preliminary assumptions and our optimization criterion. 
Compared with existing ZO minimax works \cite{liu2020min,wang2023zeroth,huang2022accelerated,xu2020gradient,an2024robust}, our assumptions are notably weaker. 
We
begin by defining an $\varepsilon$-game stationary point.
\begin{definition}[\textbf{$\varepsilon$-game stationarity}]
The point $(\bx^\star,\by^\star)$ is called an $\varepsilon$-game stationary point of problem \eqref{main:sec1:problem} if 
$
\mE\|\nabla_x J(\bx^\star, \by^\star)\|^2 \le \varepsilon^2, \mE\|\nabla_y J(\bx^\star, \by^\star)\|^2 \le \varepsilon^2.$
\end{definition}

To establish guarantees for an $\varepsilon$-game stationary point,
we adopt the following assumptions. 

% \cite{nouiehed2019solving,yang2022faster, yang2020global, gao2022decentralized,luo2020stochastic,xian2021faster,huang2022accelerated,huang2025enhanced,cai2025dama1,cai2025dama2}

\begin{Assumption}
\label{main:assumption:costfunction}
The global cost function $J(x, y)$
is nonconvex in $x$ and nonconcave in $y$, satisfying a $\nu$-PL condition, i.e.,
\begin{align}
\|\nabla_y J(x,y)\|^2 \ge 2\nu\Big(\max_{z} J(x,z) - J(x,y)\Big), \ \forall y \in \mR^{d_2}.
\notag
\end{align}
Here, $\nu >0$ is a positive constant. In addition, we assume the value function $P(x) = \max_y J(x,y)$ is lower bounded, i.e., 
$ P^\star= 
\inf_x P(x) > -\infty \notag
$. 
\end{Assumption}
This assumption is consistent with several FO minimax works~\cite{nouiehed2019solving,yang2022faster,cai2025dama1,cai2025dama2}, while being weaker than the assumptions adopted in several existing ZO works~\cite{liu2020min,wang2023zeroth,huang2022accelerated} as we do not need a strong concavity condition on $y$. Furthermore, the PL condition has been shown to hold for over-parameterized neural networks \cite{liu2022loss}, and has recently emerged as a key structural assumption enabling convergence guarantees for nonconvex–nonconcave minimax problems without requiring strong concavity \cite{nouiehed2019solving,yang2020global,yang2022faster}.

\begin{Assumption}
\label{main:assumption:Lipschitz}
The local loss function 
$Q_k(x,y;\bxi_k)$ is continuously differentiable and expected $L_f$-smooth. Specifically, for all $w\in\{x, y\}$, we have 
\begin{align}
&\mE\|\nabla_w Q_k(x,y;\bxi_k) - \nabla_w Q_k(x^\prime,y^\prime;\bxi_k)\|^2
\notag \\
&\le L^2_f(\|x-x^\prime\|^2+\|y-y^\prime\|^2).
\end{align}
The condition number is defined as $\kappa \triangleq L_f/\nu$.
\end{Assumption}
The above assumption aligns with those commonly used to obtain improved rates in FO optimization \cite{gao2022decentralized,luo2020stochastic,xian2021faster,huang2022accelerated,huang2025enhanced,cai2025dama2}, while being weaker than the assumptions imposed in existing ZO methods \cite{huang2022accelerated,xu2023zeroth}, which require smoothness of each component gradient rather than only smoothness in expectation. As a consequence,  bounding the gradient difference is substantially more difficult without assuming convexity on each component
function \cite{xu2020gradient};  see proof of Lemma \ref{appendix:lemma:coorinate_Nxc_Nyc}.

\begin{Assumption}
\label{main:assumption:boundedvariance}
The local loss gradients $\nabla_w Q_k(x,y;\bxi_k)$
are unbiased and have bounded variance, i.e., we have for all $w \in \{x, y\}$
\begin{align}
 &\mE_{\bxi_k}[\nabla_w Q_k(x, y;\bxi_k)] = \nabla_w J_k(x,y),  \\ &\mE_{\bxi_k} \|\nabla_w Q_k(x, y;\bxi_k)- \nabla_w J_k(x,y)\|^2 \le \sigma^2. 
\end{align}
The random samples $\{\bxi_k\}$ 
are assumed to be spatially and temporally independent and identically distributed (i.i.d.).
\end{Assumption}
Assumption \ref{main:assumption:boundedvariance} is standard
in stochastic FO minimax optimization \cite{lin2020gradient,yang2022faster,xian2021faster}.

\section{Development of ZOMA}
\label{main:section:development}
\subsection{Accelerating ZO Gradient Estimator}
\label{main:subsection: accelerated_ZO}
To address the high variance of ZO gradient estimators, we develop a loopless probabilistic variance-reduced ZO  estimator. When the ZO gradient estimator is replaced by its exact FO counterpart, the proposed framework recovers several accelerated methods as special cases, including STORM~\cite{cutkosky2019momentum}, PAGE~\cite{li2021page}, and L2S~\cite{li2020convergence}.

Let $\bg^w_{k,i}$, with $w \in\{ {x,y}\}$, denote the ZO gradient estimator of the variable $w$ at agent $k$ and communication round $i$.
At each round,
each agent starts by generating a Bernoulli random variable $\bpi_i \sim \mathrm{Bernoulli}(p)$ using a shared random seed.
Here, $p \in [0,1]$ is the probability of the event $\bpi_i=1$.
Based on the realization of $\bpi_i$,  each agent computes $\bg^w_{k,i}$ as
\begin{align}
\label{main:zero_GRACE} &\bg^w_{k,i}   = 
\begin{cases}
\frac{1}{B}\sum_{j=1}^{B} \bq^w_{k}(\bz_{k,i};\bxi^j_{k,i},0)  \hspace{5.5em} (\text{if } \bpi_i =1)\\
\\
(1-\beta)
\Big[\bg^w_{k,i-1} 
- 
\frac{1}{b}\sum_{j=1}^b
\bq^w_{k}(\bz_{k,i-1};\bxi^j_{k,i}, \bu^j_{k,i})
\Big] \\
+\frac{1}{b}\sum_{j=1}^b
\bq^w_{k}(\bz_{k,i};\bxi^j_{k,i}, \bu^j_{k,i}) \hspace{3.5em} (\text{if } \bpi_i =0),
\end{cases}
\end{align}
where $B$ and $b$ denote the large- and mini-batch sizes, respectively; $\beta \in [0,1]$ is a momentum smoothing parameter; and $\bu^j_{k,i} = 0$ or $\bu^j_{k,i} \sim \mathrm{Unif}(\mathbb{S}^{d_1-1})$ (resp., $ \mathrm{Unif}(\mathbb{S}^{d_2-1})$) when $\bpi_i=0$.
When the event $\bpi_i = 1$ occurs,
we focus on the ZO-CW strategy, as this event is intended to produce an occasional high-quality gradient estimator.
Otherwise, if $\bpi_i$ = 0, a momentum-style update is performed, using either the ZO-CW or ZO-RU estimator.
The resulting probabilistic ZO gradient estimator is simple and efficient as it avoids nested loops and supports seamless switching between large-batch variance reduction and mini-batch momentum-based updates, enabling us to obtain strong performance in both stochastic and finite-sum settings.
By tuning $(p,\beta,B,b)$, we obtain several special cases of \eqref{main:zero_GRACE}, including the following: 1) ZO STORM:  $p= 0, b = \mathcal{O}(1), \beta \in (0,1)$, 2) ZO PAGE: 
$p\in(0,1), B=\mathcal{O}(N), b \approx \mathcal{O}(\sqrt{B}), \beta = 0$,
3) ZO L2S: 
$p\in(0,1), B = \mathcal{O}(N), b  = \mathcal{O}(1), \beta = 0$, 4) stochastic ZO-CW GDA: $p = 0, b =\mathcal{O}(1), \beta = 1, \bu^j_{k,i} =0$, 5) stochastic ZO-RU GDA: $p = 0, b =\mathcal{O}(1), \beta = 1, \bu^j_{k,i} \not=0$.
% \begin{itemize}
% \item ZO STORM:  $p= 0, b = \mathcal{O}(1), \beta \in (0,1)$,
% \item ZO PAGE: 
% $p\in(0,1), B=\mathcal{O}(N), b \approx \mathcal{O}(\sqrt{B}), \beta = 0$, 
% \item ZO L2S: 
% $p\in(0,1), B = \mathcal{O}(N), b  = \mathcal{O}(1), \beta = 0$,
% \item 
% stochastic ZO-CW GDA: $p = 0, b =\mathcal{O}(1), \beta = 1, \bu^j_{k,i} =0$,
% \item 
% stochastic ZO-RU GDA: $p = 0, b =\mathcal{O}(1), \beta = 1, \bu^j_{k,i} \not=0$.
% \end{itemize}

We note that several of the above variants have not been previously studied in the ZO minimax literature, making our unified estimator novel even in the centralized minimax setting.
For ease of reference, we refer to the unified formulation \eqref{main:zero_GRACE} as \textbf{ZO-GRACE} (\textbf{ZO} \textbf{GR}adient \textbf{AC}celeration \textbf{E}stimator).

\subsection{Description of ZOMA}
\label{main:subsection:unified}
\begin{algorithm}[!t]
\footnotesize
%\tiny
\caption{\hspace{-3.5pt}: \textbf{ZOMA} (Zero-Order decentralized accelerated Minimax Algorithm)}
\label{alg:sample}
\begin{algorithmic}[1]
   \STATE 
   \textbf{Initialize}: $\bx_{k,0}= \bx_{k,-1} \in \mR^{d_1} , \by_{k,0} = \by_{k,-1} \in \mR^{d_2}, \bm^x_{k,-1}= \bg^x_{k,-1}=0 ,\bm^y_{k,-1}=\bg^y_{k,-1}=0 \ \forall k \in [K]$, learning rates $\eta_x, \eta_y$, hyperparameters $p, \mu_{x}, \mu_{y}, \delta_x, \delta_y,\beta,b,B,$ $b_0 (\text{initial batch size}).$
    \FOR{$i = 0, \dots, T-1$}
    \FOR{agent $k$ in parallel} 
    \STATE 
\underline{\texttt{Stage 1: Construct ZO gradients}} \vspace{0.5em}
    \IF{$i=0$}
    \STATE
    Draw a $b_0$-minibatch of i.i.d. samples  and compute
$
\bg^w_{k, 0}
= \frac{1}{b_0}\sum_{j=1}^{b_0} \bq^w_{k}(\bz_{k,0};\bxi^j_{k,0},0), \ \forall w \in \{x,y\}$.
\ELSE 
\STATE
    Draw  $\bpi_{i} \sim \operatorname{Bernoulli}(p)$ using a shared random seed.
    If $\bpi_i=1$ in the \textbf{finite-sum} setting, uniformly sample
 $\{\bxi^j_{k,i}\}$ without replacement; otherwise, draw i.i.d. samples.
   The gradient estimators $\bg^x_{k,i}, \bg^y_{k,i}$ are then updated via \eqref{main:zero_GRACE}.
\ENDIF 
\STATE 
\underline{\texttt{Stage 2: Decentralized learning}}
\vspace{1em}
\STATE 
Select one of the following options\\

(\textbf{Option 1: ED}) 
\begin{align}
     &\begin{cases}
    \bx_{k,i+1} &= \sum_{\ell \in \mathcal{N}_{k}} w_{k\ell}[2\bx_{\ell,i} - \bx_{\ell,i-1} - \eta_x(\bg^x_{\ell,i} - \bg^x_{\ell,i-1})], \notag\\
    \by_{k,i+1} &= \sum_{\ell \in \mathcal{N}_{k}} w_{k\ell}[2\by_{\ell,i} - \by_{\ell,i-1} + \eta_y(\bg^y_{\ell,i} - \bg^y_{\ell,i-1})].  \notag
    \end{cases}
\end{align}
\textbf{(Option 2: GT)} 
\begin{align}
\begin{cases}
    \bm^x_{k,i} &=  \sum_{\ell \in \mathcal{N}_{k}} w_{k\ell} [\bm^x_{\ell,i-1}-\bg^x_{\ell,i-1} +\bg^x_{\ell,i}] , \\
     \bm^y_{k,i} &=  \sum_{\ell \in \mathcal{N}_{k}} w_{k\ell} [\bm^y_{\ell,i-1}-\bg^y_{\ell,i-1} +\bg^y_{\ell,i}],
     \notag\\
     \bx_{k,i+1} &= \sum_{\ell \in \mathcal{N}_{k}} w_{k\ell}[\bx_{\ell,i} -\eta_x \bm^x_{\ell,i} ], \\
     \by_{k,i+1} &= \sum_{\ell \in \mathcal{N}_{k}} w_{k\ell}[\by_{\ell,i} +\eta_y \bm^y_{\ell,i} ].\notag
    \end{cases}
\end{align}
\textbf{(Option 3: EXTRA)} 
\begin{align}
\begin{cases}
    \bx_{k,i+1} &= \sum_{\ell \in \mathcal{N}_{k}} w_{k\ell}[2\bx_{\ell,i} - \bx_{\ell,i-1}] - \eta_x[\bg^x_{\ell,i} - \bg^x_{\ell,i-1}], \\
    \by_{k,i+1} &= \sum_{\ell \in \mathcal{N}_{k}} w_{k\ell}[2\by_{\ell,i} - \by_{\ell,i-1}] + \eta_y[\bg^y_{\ell,i} - \bg^y_{\ell,i-1}]. \notag
\end{cases}
\end{align}
\ENDFOR
\ENDFOR
\end{algorithmic}
\end{algorithm}

We now integrate \textbf{ZO-GRACE} with several decentralized optimization strategies, including GT, ED, and EXTRA, leading to our \textbf{ZOMA} framework.
For compactness, we define network block variables and network block gradient estimators as
$
\mX_i
\triangleq {\rm col}\{\bx_{k,i}\}_{k=1}^K,  \mG_{x,i} \triangleq {\rm col}\{\bg^x_{k,i}\}_{k=1}^K \in \mR^{Kd_1},$$
\mY_i
\triangleq {\rm col}\{\by_{k,i}\}_{k=1}^K, 
\mG_{y,i}
\triangleq {\rm col}\{\bg^y_{k,i}\}_{k=1}^K \in \mR^{Kd_2}.$ Following \cite{alghunaim2022unified}, 
we can write decentralized strategies in a unified form:

\begin{align}
\mX_{i+1} &= \mA_{x}[\mC_x\mX_{i} - \eta_x \mG_{x,i}]- \mathcal{B}_x\mD_{x,i},  \label{main:zodama_recursion1}\\
\mY_{i+1} &= \mA_{y}[\mC_y \mY_{i} + \eta_y \mG_{y,i}]- \mathcal{B}_y\mD_{y,i}, \label{main:zodama_recursion2}\\
\mD_{x,i+1} &= \mD_{x,i} + \mB_x \mX_{i+1}, \label{main:zodama_recursion3}\\
\mD_{y,i+1} &= \mD_{y,i} + \mB_y \mY_{i+1} \label{main:zodama_recursion4}.
\end{align}
Here, $\eta_x,\eta_y>0$ are learning rates;
$\mD_{x,i} \in \mR^{Kd_1}, \mD_{y,i} \in \mR^{Kd_2}$
are auxiliary block vectors; and
$\{\mA_x, \mB_x, \mC_x\} \in \mR^{Kd_1\times Kd_1}, \{\mA_y, \mB_y, \mC_y\} \in \mR^{Kd_2\times Kd_2}$
are design matrices satisfying certain structural properties; see Assumption \ref{main:assumptions:combinationmatrix} ahead.
By appropriately selecting $\{\mA_x, \mB_x, \mC_x, \mA_y, \mB_y, \mC_y\}$, 
we obtain several special cases of \textbf{ZOMA}, which are presented in \textbf{Algorithm}~\ref{alg:sample}.
% The specific choice of these decentralized strategies can be found in Appendix \ref{appendix:subsection:matrixchoice}. 
The algorithm begins with proper initialization and then iteratively executes lines~4–8 to construct the ZO gradients, followed by lines~9–10 to perform updates.
\begin{remark}
We note that all algorithms adopt the ZO-CW strategy to obtain a high-quality gradient at the initial iteration. For this reason, we will refer to cases with $p =0$ and $\bu^j_{k,i} \not =0$ when $\bpi_i =0$ as hybrid variants rather than pure ZO-RU methods.
We further note that $\{\mA_x, \mB_x, \mC_x, \mA_y, \mB_y, \mC_y\}$ can be selected according to Table~\ref{tab:matrix_choices} in Appendix~\ref{appendix:subsection:matrixchoice} to recover the special cases presented in \textbf{Algorithm}~\ref{alg:sample}.
\end{remark}
\section{Main results}
\label{main:section:results}
We present the main results under two ZO design choices: the pure ZO-CW estimator with $\bu^j_{k,i} \equiv 0$ and the hybrid estimator with $\bu^j_{k,i}  \not = 0$ when $\bpi_i=0$.
For convenience, let the network average 
be 
$
\bx_{c,i} \triangleq \frac{1}{K} \sum^{K}_{k=1}\bx_{k,i},   \by_{c,i} \triangleq \frac{1}{K} \sum^K_{k=1}\by_{k,i}$.
We introduce an assumption on gossiping matrices to handle the decentralized setting (see  \cite{alghunaim2022unified, sayed2022inference, cai2025dama2}).
\begin{Assumption}
\label{main:assumptions:combinationmatrix}
Consider a symmetric, doubly stochastic, and primitive combination matrix $W \in \mathbb{R}^{K \times K}$. 
The matrices $\mA_x, \mB_x, \mC_x$ and $\mA_y, \mB_y, \mC_y$ are assumed to be polynomial functions of $W \otimes \mathrm{I}_{d_1}$ and $W \otimes \mathrm{I}_{d_2}$, respectively, such that
i) $\mA_x, \mA_y, \mC_x, \mC_y$ are symmetric and doubly stochastic, and ii) $\mathrm{null}(\mB_x) = \mathrm{span}\{\mathds{1}_{K} \otimes \bar{x}\}, 
\mathrm{null}(\mB_y) = \mathrm{span}\{\mathds{1}_{K}\otimes \bar{y}\}$ for some nonzero vectors $\bar{x}\in \mR^{d_1}, \bar{y} \in \mR^{d_2}$.
\end{Assumption}

We can verify that the above assumption holds for the special cases considered in this work. For instance, a GT-based minimax algorithm admits the following selection:
$
\mA_x= W^2 \otimes \mathrm{I}_{d_1}, \ \mA_y = W^2 \otimes \mathrm{I}_{d_2}, \ \mC_x= \mathrm{I}_{Kd_1}, 
\mC_y=\mathrm{I}_{Kd_2},  \mB_x =(\mathrm{I}_{Kd_1} - W \otimes \mathrm{I}_{d_1}),  \mB_y =(\mathrm{I}_{Kd_2} - W \otimes \mathrm{I}_{d_2}). $
It is not difficult to verify that these matrices satisfy Assumption \ref{main:assumptions:combinationmatrix}.
For convenience,  let $\lambda$ denote the second-largest eigenvalue of $W$ in magnitude. The quantity $1-\lambda$ is widely known as the network spectral gap and characterizes the decaying rate of consensus error.

\subsection{Case: Pure ZO-CW Estimator}
In this subsection, we provide guarantees for the pure ZO-CW estimator.
\label{subsection:case1}
\begin{Theorem}
\label{main:theorem1}
Under Assumptions \ref{main:assumption:costfunction}-\ref{main:assumptions:combinationmatrix}, choosing appropriate hyperparameters,  \textbf{ZOMA} yields the following rate:
\begin{align}
&\frac{1}{T}
\sum_{i=0}^{T-1}
(\mE\|\nabla_x J(\bx_{c,i},\by_{c,i})\|^2 + \mE\|\nabla_y J(\bx_{c,i},\by_{c,i})\|^2)\notag \\
&\le \mathcal{O}
\Big(
\underbrace{\Pi'_0}_{\text{initial gap}} +\underbrace{\frac{\kappa^2\lambda^2_a\eta^2_y(p+\beta^2)\sigma^2}{bb_0K\bar{\beta}^2T(1-\rho)^2\underline{\lambda^2_b}}}_{\text{network noise}} + \underbrace{\Pi'_1 \sigma^2 \mathbb{I}(B<N) }_{\text{large-batch effect}}\notag\\
&\quad 
 \underbrace{+ \frac{\kappa^2\sigma^2}{b_0\bar{\beta}KT}}_{\text{initial noise}}+  \underbrace{\Pi'_2\sigma^2}_{\text{ZO momentum error}}
+\underbrace{\Pi'_3}_{\text{ZO bias}} \Big), 
\end{align}
\end{Theorem}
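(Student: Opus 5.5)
The proof of Theorem~\ref{main:theorem1} will rest on a potential (Lyapunov) function that combines four ingredients. The first is the value-function gap $P(\bx_{c,i})-P^\star$. The second is the dual optimality gap $P(\bx_{c,i})-J(\bx_{c,i},\by_{c,i})$, which is controlled through the $\nu$-PL condition of Assumption~\ref{main:assumption:costfunction}. The third is the network consensus error, expressed in transformed coordinates. The fourth is the gradient-estimation error $\mE\|\mG_{w,i}-\nabla_w\mathcal{J}(\mX_i,\mY_i)\|^2$ of \textbf{ZO-GRACE}. The last term is split into an averaged part and a network-level part. I will carry out the argument in the following order.

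\textbf{Step 1 (descent on the averaged iterates).} Because $\mA_w,\mC_w$ are doubly stochastic and $\mathds{1}^\top\mB_w=0$ (Assumption~\ref{main:assumptions:combinationmatrix}), the averages evolve as $\bx_{c,i+1}=\bx_{c,i}-\eta_x\bar{\bg}^x_i$ and $\by_{c,i+1}=\by_{c,i}+\eta_y\bar{\bg}^y_i$. Using $L_f$-smoothness of $P$ (smoothness constant $\mathcal{O}(\kappa L_f)$ under PL), I obtain a descent inequality for $P(\bx_{c,i})$. I then obtain an ascent inequality for $J(\bx_{c,i},\cdot)$ and combine the two through PL into a recursion for the dual gap. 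Both recursions contain error terms of the form $\|\bar{\bg}^w_i-\nabla_w J(\bx_{c,i},\by_{c,i})\|^2$. I bound these by the averaged estimator error plus $L_f^2$ times the consensus error. This step requires the time-scale ratio $\eta_x\lesssim\eta_y/\kappa^2$.

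\textbf{Step 2 (consensus error via transformation).} Following \cite{alghunaim2022unified}, I eigendecompose $W$ to rewrite the recursions \eqref{main:zodama_recursion1}--\eqref{main:zodama_recursion4}, restricted to the disagreement subspace, as a linear system whose matrix has spectral radius $\rho<1$. This transformed recursion is the one described in Appendix~\ref{appendix:sec:transformation}. Its driving input is the difference $\mG_{w,i}-\mG_{w,i-1}$, scaled by $\lambda_a$ and $1/\underline{\lambda_b}$. This scaling produces the factors $\lambda_a^2/((1-\rho)^2\underline{\lambda_b^2})$ that appear in the network-noise term.

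\textbf{Step 3 (ZO estimator error).} This is the main obstacle. The CW estimator satisfies $\bq^w_k(z;\bxi,0)=\nabla_w Q_k(z;\bxi)+\mathcal{O}(L_f\sqrt{d}\,\delta_w)$, so the bias contributes $\Pi'_3$. The harder part is the momentum update with $\bpi_i=0$. There I must bound $\mE\|\bq(\bz_i)-\bq(\bz_{i-1})\|^2$ using only expected smoothness, without per-sample Lipschitz gradients. I would write each finite difference as $\int_0^1\nabla Q\,dt$ (the mean-value representation), apply Jensen, and then apply Assumption~\ref{main:assumption:Lipschitz}. This gives the bound $L_f^2\|\bz_i-\bz_{i-1}\|^2$ plus smoothing terms. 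With this in hand, the \textbf{ZO-GRACE} error satisfies
\[
e_i\le(1-p)(1-\beta)^2e_{i-1}+\tfrac{p\,\sigma^2\mathbb{I}(B<N)}{B}+\tfrac{(1-p)\beta^2\sigma^2}{b}+\tfrac{(1-p)L_f^2}{b}\mE\|\mZ_i-\mZ_{i-1}\|^2+\text{bias}.
\]
The second term yields $\Pi'_1$, and the third, after division by $\bar\beta=p+\beta(1-p)$-type contraction, yields $\Pi'_2$. The same bound must be established at the network level, since Step 2 needs $\|\mG_i-\mG_{i-1}\|^2$. This is where Lemmas~\ref{appendix:lemma:coorinate_variance} and~\ref{appendix:lemma:coorinate_Nxc_Nyc} enter.

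\textbf{Step 4 (combine and telescope).} I form the potential $\mathcal{L}_i=P(\bx_{c,i})-P^\star+\tfrac{c_1\eta_x}{\eta_y}(P-J)+c_2\,\text{consensus}+c_3\,e_i$. The weights and step sizes are chosen so that all positive $\|\mZ_i-\mZ_{i-1}\|^2$ and consensus terms are absorbed by the negative terms of the previous steps. Telescoping over $T$ iterations and dividing by $\eta_xT$ gives $\Pi'_0$ from $\mathcal{L}_0$. The initial estimator error $\sigma^2/(b_0K)$, divided by $\bar\beta T$, produces the initial-noise term. Its propagation through the consensus recursion produces the network-noise term. Finally, I convert the resulting bound on $\|\nabla P\|^2+\text{dual gap}$ into a bound on $\|\nabla_xJ\|^2+\|\nabla_yJ\|^2$ using PL and the estimate $\|\nabla_yJ\|^2\le 2L_f(P-J)$.
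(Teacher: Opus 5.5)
Your skeleton matches the paper's: descent for $P(\bx_{c,i})$, a PL-based duality-gap recursion, the transformed consensus recursion driven by $\mG_{i+1}-\mG_i$, a contracting recursion for the ZO-GRACE error, and a final combination. The paper averages each recursion over $T$ and substitutes rather than building a single potential, which does not matter.

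The gap is in the hand-off from Step 3 to Step 2. You say the estimator-error bound ``must be established at the network level, since Step 2 needs $\|\mG_i-\mG_{i-1}\|^2$''. A bound on $\mE\|\mS_{z,i}\|^2$ controls the increment only through the triangle inequality, $\|\mG_{z,i+1}-\mG_{z,i}\|^2\lesssim\|\mS_{z,i+1}\|^2+\|\mS_{z,i}\|^2+L_f^2\|\mZ_{i+1}-\mZ_i\|^2+\text{bias}$, so the estimator error carries an $\mathcal{O}(1)$ weight. Summing the estimator-error recursion costs a factor $1/\bar\beta$: $\frac1T\sum_i\mE\|\mS_{z,i}\|^2\lesssim\frac{KL_f^2}{b\bar\beta T}\sum_i\mE\|\mhE_{z,i}\|^2+\frac{\mS_0}{T\bar\beta}+\dots$. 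The loop consensus $\to$ estimator error $\to$ increment $\to$ consensus then has gain of order $\eta_y^2\lambda_a^2L_f^2/((1-\rho)^2\underline{\lambda^2_b}\,b\bar\beta)$.

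Closing that loop forces $\eta_y\lesssim(1-\rho)\underline{\lambda_b}\sqrt{b\bar\beta}/(\lambda_aL_f)$. The STORM choices of Corollary~\ref{corollary:online:storm+ed} ($\eta_y\asymp K^{2/3}T^{-1/3}$, $\beta\asymp K^{1/3}T^{-2/3}$, $b=\mathcal{O}(1)$) violate this by a factor of order $\sqrt{K}$. In addition, $\mS_0$ and the momentum noise $\beta^2\sigma^2/b$ reach the consensus bound without the factor $p+\beta^2$. You would therefore obtain $\kappa^2\lambda_a^2\eta_y^2\sigma^2/(bb_0K\bar\beta^2T(1-\rho)^2\underline{\lambda^2_b})$ and an extra $1/\bar\beta$ on the network part of the momentum noise, not the stated bound.

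The missing idea is to bound the increment from the estimator's own structure, not from two error terms.
\begin{itemize}
\item On $\bpi_{i+1}=1$ the increment is $\bq^x_{i+1,0}(\bz_{k,i+1};B)-\bg^x_{k,i}$, so the stale error $\bg^x_{k,i}-\hnabla_xJ_k(\bz_{k,i})$ enters with weight $\mathcal{O}(1)$, but only on an event of probability $p$.
\item On $\bpi_{i+1}=0$ the increment equals $-\beta(\bg^x_{k,i}-\hnabla_xJ_k(\bz_{k,i}))+(\bq^x_{i+1,0}(\bz_{k,i+1};b)-\bq^x_{i+1,0}(\bz_{k,i};b))+\beta(\bq^x_{i+1,0}(\bz_{k,i};b)-\hnabla_xJ_k(\bz_{k,i}))$, so the stale error enters only with weight $\beta$.
\end{itemize}
This gives $\mE\|\mG_{x,i+1}-\mG_{x,i}\|^2\le5(p+\beta^2)\mE\|\mS_{x,i}\|^2+\mathcal{O}(KL_f^2)(\text{consensus}+\text{drift})+\text{noise/bias}$. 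Combined with $p+\beta^2\le b\bar\beta$, which is exactly what $b\ge1$ and $\beta+bp\le b$ guarantee, the loop gain becomes $\mathcal{O}(\eta_y^2\lambda_a^2L_f^2/((1-\rho)^2\underline{\lambda^2_b}))$, independent of $\bar\beta$. That is what allows $\eta_y\le(1-\rho)\underline{\lambda_b}/(54\lambda_aL_fv_1v_2)$ and produces the $(p+\beta^2)$ factor in the network-noise term.

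A smaller point for Step 3: route $\bq^x_k(\bz_{k,i};\bxi,0)-\bq^x_k(\bz_{k,i-1};\bxi,0)$ through $\nabla_xQ_k(\bz_{k,i};\bxi)$ and $\nabla_xQ_k(\bz_{k,i-1};\bxi)$, as the paper does. Applying the integral representation coordinate by coordinate at the shifted points and then expected smoothness would put an extra factor $d_1$ on $L_f^2\|\bz_{k,i}-\bz_{k,i-1}\|^2$.
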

Here,  $\Pi'_1, \Pi'_2, \Pi'_3$
are terms characterizing different types of error and each depends on some of the hyperparameters $p, \mu_{x}, \mu_{y}, \delta_x, \delta_y,\beta,b,B, T$; we refer the reader to the full version of Theorem~\ref{main:theorem1} in Appendix \ref{appendix:subsection:theorem1} for further details.
The symbol $\mathbb{I}(B<N)$ is an indicator of stochastic or finite-sum settings; we use the convention $N = \infty$ in the stochastic case. Besides, $\bar{\beta}\triangleq p+\beta-p\beta \le 1$ captures the impact of the probability parameter $p$ and the smoothing factor $\beta$;  the quantities $(1-\rho)$ and $\underline{\lambda_b^2}$ are factors depending on $\lambda$, and both approach zero when the network becomes sparse; see Lemma \ref{appendix:lemma:transformed_recursion} for explicit forms.
The large-batch error term
$\Pi'_1$
and momentum error term
$\Pi'_2$
vanish by simultaneously setting full-batch $B = N$ and $\beta = 0$ in the finite-sum setting.
In the stochastic setting, $\Pi'_1$ and $\Pi'_2$ can be controlled by setting $p =0$ and small $\beta$, respectively.
The ZO bias term $\Pi'_3$
can be controlled by choosing sufficiently small smoothing factors $\mu_x, \mu_y,\delta_x, \delta_y$.
We can specialize the unified bound to several representative cases.
For instance, choosing hyperparameters according to
Corollaries~\ref{corollary:online:storm+ed},
\ref{corollary:offline:page+ed},
\ref{corollary:offline:L2S+ed} in Appendix \ref{appendix:corollary:case1}, yields
$\mathcal{O}\!\Big(\frac{\kappa^2}{(T K)^{2/3}} + \frac{\kappa^2}{T(1-\lambda)^2}\Big)$
for ZO-STORM-ED,
$\mathcal{O}\!\Big(\frac{\kappa^2}{(1-\lambda)^{1.5} T}\Big)$
for ZO-PAGE-ED, and
$\mathcal{O}\!\Big(\frac{\kappa^2 \sqrt{N}}{\sqrt{K} T} + \frac{\kappa^2}{T(1-\lambda)^2} +\frac{\kappa^2\lambda^2\sqrt{K}}{\sqrt{N}T(1-\lambda)^3}\Big)$ for ZO-L2S-ED; implying the communication complexity demonstrated in Table \ref{tab:zo-minimax-unified-comparison}.

\subsection{Case: ZO-CW+ZO-RU Estimator}

\label{subsection:case2}
To bound the variance of the stochastic ZO-RU gradient,
we introduce an additional assumption 
that can be seen as an extension of the weak bounded similarity condition \cite{yi2022zeroth, chen2025zeroth}. Specifically, this assumption is weaker than the bounded ZO variance condition used in existing works \cite{huang2022accelerated,an2024robust}.

\begin{Assumption}
\label{main:asssumption:RD-ZO}
Consider the local smoothed risk functions $J^{\mu_x}_{k}(x,y) \triangleq \mE_{\bu \sim {\rm Unif}(\mathbb{B}^{d_1})} J_k(x+\mu_x \bu, y) $ and $J^{\mu_y}_{k}(x,y) \triangleq \mE_{\bu \sim {\rm Unif}(\mathbb{B}^{d_2})} J_k(x, y+\mu_y \bu)$.
We assume $\forall w \in \{x,y\}$ that
\begin{align}
&\mE_{\bxi,\bu}
\|\bq^w_{k}(z;\bu,\bxi) -
\nabla_w J^{\mu_w}_{k}(x,y)\|^2 \le 
\sigma^2_1 \|\nabla_w J(z)\|^2 + \sigma^2_0 .  \notag
\end{align}
\end{Assumption}
Unlike ZO-CW, ZO-RU requires this additional assumption because it implicitly approximates the true gradient of the associated smoothed function, as discussed in subsection \ref{main:subsection:ZOestimator}.
\begin{Theorem}
\label{main:theorem2}
Under Assumptions \ref{main:assumption:costfunction}-\ref{main:asssumption:RD-ZO}, choosing appropriate hyperparameters,  \textbf{ZOMA} yields the following rate
\begin{align}
& \frac{1}{T}
\sum_{i=0}^{T-1}
(\mE\|\nabla_x J(\bx_{c,i},\by_{c,i})\|^2 + \mE\|\nabla_y J(\bx_{c,i},\by_{c,i})\|^2)  \notag \\
&\le\mathcal{O}
\Bigg(
\underbrace{\Pi_0}_{\text{initial gap}} + \underbrace{\frac{d\kappa^2\lambda^2_a\eta^2_y(p+\beta^2)\sigma^2}{bb_0K\bar{\beta}^2T(1-\rho)^2\underline{\lambda^2_b}}}_{\text{network noise}} + \underbrace{\Pi_1 \sigma^2 \mathbb{I}(B<N)}_{\text{large-batch effect}}
\notag\\
&\quad+ \underbrace{\frac{\kappa^2\sigma^2}{b_0K\bar{\beta}T}}_{\text{initial noise}}
+ \underbrace{\Pi_2 \sigma^2_0}_{\text{ZO momentum error}} 
+ \underbrace{\Pi_3}_{\text{ZO bias}}
\Bigg).
\end{align}
\end{Theorem}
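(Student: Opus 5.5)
We prove Theorem~\ref{main:theorem2} with the same Lyapunov-type argument that underlies Theorem~\ref{main:theorem1}. The changes are confined to the terms that involve the ZO-RU branch ($\bpi_i=0$, $\bu^j_{k,i}\neq 0$). The analysis has four layers.

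\textbf{Step 1: descent of the averaged iterates.} We project the unified recursion \eqref{main:zodama_recursion1}--\eqref{main:zodama_recursion4} onto $\mathds{1}_K$. Because $\mA,\mC$ are doubly stochastic and $\mathds{1}^\top\mB=0$, the average $\bx_{c,i}$ performs an inexact GDA step driven by $\bar{\bg}^x_i=\frac1K\sum_k\bg^x_{k,i}$. Using $L_f$-smoothness of $P(x)=\max_y J(x,y)$ (valid under the PL condition, with constant $\mathcal{O}(\kappa L_f)$), we derive a descent inequality for $P(\bx_{c,i})$. We also derive a contraction inequality for the dual gap $P(\bx_{c,i})-J(\bx_{c,i},\by_{c,i})$ via Assumption~\ref{main:assumption:costfunction}. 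Both inequalities are perturbed by three quantities: the consensus errors $\|\mX_i-\mathds{1}\otimes\bx_{c,i}\|^2$, the averaged estimation error $\|\bar{\bg}^w_i-\frac1K\sum_k\nabla_w J_k(\bx_{k,i},\by_{k,i})\|^2$, and the ZO bias.

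\textbf{Step 2: consensus via the transformed recursion.} We invoke the transformed recursion of Lemma~\ref{appendix:lemma:transformed_recursion}. It diagonalizes the network part and yields a contraction factor $\rho<1$ for the consensus error. The driving term is the network-level difference $\|\mG_{w,i}-\mG_{w,i-1}\|^2$, weighted by $\lambda_a^2/\underline{\lambda_b^2}$.

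\textbf{Step 3: estimator errors.} This is where the hybrid estimator enters. We split each local ZO-RU query into three parts: the smoothed gradient $\nabla_w J^{\mu_w}_k$, the gap $\nabla_w J_k^{\mu_w}-\nabla_w J_k$ (bias of order $\mu_w L_f d_w$), and the random fluctuation, whose second moment is bounded by Assumption~\ref{main:asssumption:RD-ZO} as $\sigma_1^2\|\nabla_w J\|^2+\sigma_0^2$. The CW branch, used when $\bpi_i=1$ and at $i=0$, is handled exactly as in Theorem~\ref{main:theorem1} (Lemmas~\ref{appendix:lemma:coorinate_variance}, \ref{appendix:lemma:coorinate_Nxc_Nyc}). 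Because the CW estimator approximates $\nabla J$ while RU approximates $\nabla J^{\mu}$, the momentum recursion mixes two surrogates. We control the cross term using Lemma~\ref{appendix:lemma:cw_coupled}.

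The key difficulty is the STORM-type difference $\bq^w_k(\bz_{k,i};\bxi,\bu)-\bq^w_k(\bz_{k,i-1};\bxi,\bu)$ evaluated with shared randomness, since only expected smoothness is assumed. We plan to use the mean-value representation of Lemma~\ref{appendix:lemma:random:exey}. It bounds this difference by $\mathcal{O}(d_w L_f^2)\|\bz_{k,i}-\bz_{k,i-1}\|^2$ plus smoothing residuals, and this is the source of the extra factor $d$ in the network-noise term.

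With these bounds, the averaged error obeys a recursion of the form $\mE\|e_{i+1}\|^2\le(1-\bar\beta)\mE\|e_i\|^2+\mathcal{O}((1-\bar\beta)d L_f^2\mE\|\bz_i-\bz_{i-1}\|^2/(bK))+\bar\beta^2\sigma_0^2/(bK)+p\sigma^2\mathbb{I}(B<N)/B+\text{bias}$. The $1/K$ factor comes from spatial independence of the samples. The same argument gives a network-level version without the $1/K$.

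\textbf{Step 4: combine and choose parameters.} We form a potential that sums the value gap, the dual gap, the weighted consensus error, and the averaged and network estimator errors, with weights of order $1/\bar\beta$ and $1/(1-\rho)$. The step sizes must satisfy $\eta_y=\mathcal{O}(1/L_f)$, $\eta_x=\mathcal{O}(\eta_y/\kappa^2)$, and be small relative to $\bar\beta/\sqrt{d}$ and $(1-\rho)\underline{\lambda_b}$, so that all $\|\bz_i-\bz_{i-1}\|^2$ and $\sigma_1^2\|\nabla J\|^2$ terms are absorbed by the descent terms. For the latter we require $\sigma_1^2$-dependent batch conditions such as $b\gtrsim\sigma_1^2$. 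Telescoping over $T$ and dividing by $\eta_x T$ then yields the stated terms. The initial gap becomes $\Pi_0$. The $b_0$-batch CW initialization gives $\kappa^2\sigma^2/(b_0K\bar\beta T)$. The network term is the accumulated gradient differences. Finally, $\sigma_0^2$ gives $\Pi_2\sigma_0^2$ and the smoothing residuals give $\Pi_3$.

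The main obstacle is Step~3's coupling: closing the recursion requires the gradient-dependent variance $\sigma_1^2\|\nabla_wJ\|^2$, evaluated at local rather than averaged points, to be re-expressed through $\|\nabla_w J(\bx_{c,i},\by_{c,i})\|^2$ plus consensus error. If this step fails, we fall back on absorbing it via a stricter step-size condition.
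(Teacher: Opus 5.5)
Your plan is essentially the paper's proof. All local errors are measured against $\nabla_w J^{\mu_w}_k$, so the CW reset branch only contributes a $p$-weighted bias $\mathcal{O}(L_f^2(d_1\delta_x^2+d_1^2\mu_x^2))$ and no cross term needing Lemma~\ref{appendix:lemma:cw_coupled}; the shared-randomness RU difference is bounded by $\tfrac{3}{2}L_f^2d_1^2\mu_x^2+3d_1L_f^2\mE\|\bz_{k,i}-\bz_{k,i-1}\|^2$ via the integral remainder (proved inside Lemma~\ref{appendix:lemma:coorinate_Nxc_Nyc}); $\sigma_1^2\|\nabla_w J(\bz_{k,i})\|^2$ is split into $\|\nabla^z_{c,i}\|^2$ plus consensus error and absorbed; and everything is closed with the transformed recursion, with your Lyapunov potential and descent on $P$ (instead of $P_{\mu_x}$) only repackaging the paper's time-averaged substitutions.

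Two corrections. First, $\sigma_0^2$ and $\sigma_1^2$ enter only through the $\bpi_i=0$ branch with coefficient $(1-p)\beta^2/b$, not $\bar\beta^2$; this makes $\Pi_2\propto\beta^2/\bar\beta$ vanish when $\beta=0$ and lets the gradient-dependent variance be absorbed by a smallness condition such as $\beta\lesssim\sqrt{bK\bar\beta}/(\kappa\sigma_1)$, rather than only a batch condition. Second, the step-size cap you need is $\eta_y\lesssim\sqrt{bK\bar\beta/d}/L_f$, not $\bar\beta/\sqrt{d}$.
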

Here, $\Pi_1, \Pi_2, \Pi_3$
are terms that capture different sources of error and can be controlled through appropriate hyperparameter choices, in the same spirit as those discussed in  Theorem~\ref{main:theorem1}; we refer the reader to the full version of Theorem~\ref{main:theorem2} in Appendix \ref{appendix:subsection:theorem2} for further details.
Building on this unified  bound,
we can show that the convergence rates for ZO-STORM-ED, ZO-PAGE-ED, and ZO-L2S-ED are given by $\mathcal{O}\Big(\frac{\sqrt{d}\kappa^2}{(TK)^{2/3}} + \frac{\kappa^2}{T(1-\lambda)^2}\Big)$,
$\mathcal{O}\Big(
\frac{\sqrt{d}\kappa^2}{T(1-\lambda)^{1.5}}
\Big)$, $\mathcal{O}\Big(
\frac{d^{1-\frac{c}{2}}\kappa^2\sqrt{N}}{K^{\frac{c+1}{2}}T} + \frac{\kappa^2}{T(1-\lambda)^2}  +\frac{\kappa^2\lambda^2K^{\frac{c+1}{2}}}{\sqrt{N}T(1-\lambda)^3}
\Big)$, respectively,
where $c \in\{0, 1\}$, depending on the size of $N$; see Corollaries \ref{corollary:online:storm+ed:rd}, \ref{corollary:offline:page+ed:rd}, \ref{corollary:offline:L2S+ed:rd} in Appendix \ref{appendix:corollary:case2}.
Several of these rates exhibit explicit $\mathcal{O}(\sqrt{d})$ dependence, leading to a slower convergence rate than the former case.
Nonetheless, these methods achieve improved ZO oracle efficiency 
compared to the former case
as they typically require $\mathcal{O}(1)$ function queries when $\bpi_i =0$.

\subsection{Discussion and Technical Insights}
\label{main:subsection:discussion}

We derive communication and function query complexities for specific algorithmic instances in Corollaries \ref{corollary:online:storm+ed}-\ref{corollary:offline:L2S+atc-gt:rd}; see Appendix \ref{appendix:corollary:case1} and \ref{appendix:corollary:case2} for details and  Table \ref{tab:zo-minimax-unified-comparison} for a summary. From these results, we obtain the following insight.

\textbf{Improvements over the existing centralized works.}
As can be observed from Table \ref{tab:zo-minimax-unified-comparison},
our results improve upon non–variance-reduced ZO methods  \cite{wang2023zeroth, an2024robust} by an order of $\mathcal{O}(\varepsilon^{-1})$. The coordinate-wise ZO-STORM-ED/GT matches the centralized complexity $\mathcal{O}(d\kappa^3\varepsilon^{-3})$ of ZO-VRGDA \cite{xu2020gradient} while achieving linear speedup in the number of agents $K$;  see Corollaries  \ref{corollary:online:storm+ed} and \ref{corollary:online:storm+gt}. Moreover, the hybrid ZO variants further reduce this complexity by a factor of $\mathcal{O}(d^{1/4})$; see Corollaries \ref{corollary:online:storm+ed:rd} and \ref{corollary:online:atc+gt:rd}. Compared with Acc-ZOMDA \cite{huang2022accelerated}, our hybrid variant eliminates the logarithmic factor and improves the condition-number dependence by $\mathcal{O}(\kappa^{1.5})$, owing to an improved two-time-scale condition on the learning rates.

\textbf{No ZO method simultaneously achieves the best function query and communication complexities.}
As shown in Table~\ref{tab:zo-minimax-unified-comparison}, ZO-PAGE-ED achieves the best communication complexity of
$\mathcal{O}(\frac{\kappa^2\varepsilon^{-2}}{(1-\lambda)^{1.5}})$, improving upon ZO-STORM-ED and ZO-L2S-ED by a factor of $\tilde{\mathcal{O}}(\varepsilon^{-1})$. 
This improvement stems from the use of a larger mini-batch. However, it also introduces a stronger dependence on the network spectral gap in the dominant term, since the learning rates need to be chosen explicitly in terms of $1-\lambda$. As a consequence, the resulting function query complexity has a worse dependency on the network spectral gap. 
This result reveals an inherent trade-off between communication and function query efficiency, suggesting that different methods may be preferable depending on whether faster communication or lower query complexity is prioritized. 

\textbf{ED is preferred to the GT strategy for ZO methods.} 
From Table \ref{tab:zo-minimax-unified-comparison},
we can see that ED-based ZO algorithms consistently outperform their GT-based counterparts in terms of the network spectral gap $\mathcal{O}(1-\lambda)$ in both stochastic and finite-sum settings. This matches previous observations, e.g. \cite{alghunaim2022unified, cai2025dama1}, and suggests that ED should be preferred over sparse networks, such as ring and line networks.

% \textbf{The trade}
% Referring to Table \ref{tab:zo-minimax-unified-comparison},
% our results indicate that no algorithm, including ZO-STORM-ED/GT, ZO-L2S-ED/GT, or ZO-PAGE-ED/GT, can simultaneously achieve the best communication and function query complexity. This trade-off arises because, although faster algorithms require fewer communication rounds to reach a prescribed accuracy level of solution, they rely on querying more ZO oracles per iteration to do so. As a result, the benefit of reduced communication rounds is offset by the substantially increased number of function queries per communication round, leading to higher ZO oracle complexity.

\textbf{Leveraging the finite-sum structure can lead to improved convergence guarantees.}
Table~\ref{tab:zo-minimax-unified-comparison} presents two types of results: finite-sum guarantees obtained using hyperparameters that exploit the finite-sum structure, and stochastic guarantees without such tuning. The finite-sum results exhibit an explicit dependence on $\mathcal{O}(\sqrt{N})$ together with a weaker dependence on the solution accuracy parameter $\varepsilon$; see, e.g., Corollaries~\ref{corollary:offline:page+ed}--\ref{corollary:offline:page+atc-gt} in Appendix~\ref{appendix:corollary:case1}. These results suggest that improved guarantees can be achieved compared to the stochastic results when the sample size lies in a suitable regime, e.g.,
$N \le \mathcal{O}(\varepsilon^{-2})$.

\textbf{Different ZO estimators provide trade-offs with respect to communication and oracle complexity.}
Table~\ref{tab:zo-minimax-unified-comparison} shows that ZO-CW methods consistently outperform their hybrid ZO counterparts in terms of communication complexity when the problem dimension $d$ is large. This is expected, as coordinate-wise estimators generally provide more accurate gradient approximations than randomized smoothing estimators, at the cost of increased function queries per communication round.
On the other hand, in the stochastic setting, hybrid ZO methods improve upon ZO-CW methods by a factor of $\mathcal{O}(d^{\frac{1}{4}})$ in function query complexity. 
In the finite-sum setting, the advantage of hybrid ZO methods becomes more apparent when $N \ge \tilde{\mathcal{O}}(\varepsilon^{-2})$, showing an $\mathcal{O}(\sqrt{d})$ improvement.
This improvement arises because the finite-sum error term $\Pi_1$ can even be controlled without requiring a full-batch choice of $B$, which relaxes the choice of $B$ in terms of sample size $N$ and eventually leads to improved complexity bounds on the dimension parameter; see Corollaries
\ref{corollary:offline:page+ed:rd}---\ref{corollary:offline:L2S+atc-gt:rd}.

These theoretical insights further highlight the value of the general \textbf{ZOMA} framework. Since \textbf{ZOMA} encompasses multiple algorithmic variants and ZO estimators, it enables unified convergence results that can be specialized to different problem structures, estimator choices, and resource constraints, thereby allowing systematic comparisons among algorithmic variants. These comparisons reveal trade-offs that are difficult to identify from any single method in isolation and show that the choice of algorithm and estimator should depend on the available computational and communication budgets.
\begin{figure*}[t]
\centering
\begin{minipage}[t]{0.42\textwidth}
    \centering
\includegraphics[width=\textwidth]{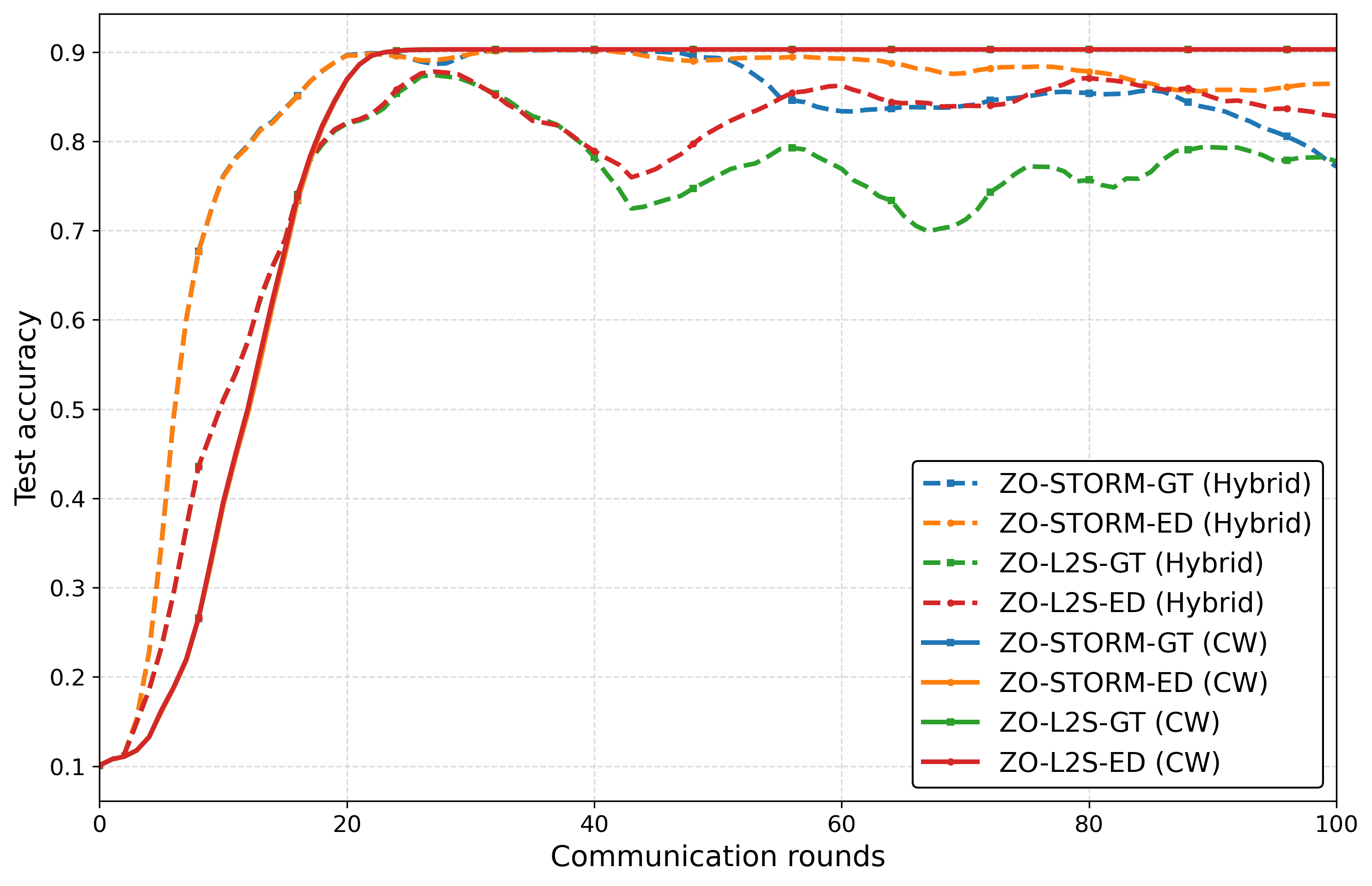}
\end{minipage}
\hfill
\begin{minipage}[t]{0.42\textwidth}
    \centering
    \includegraphics[width=\textwidth]{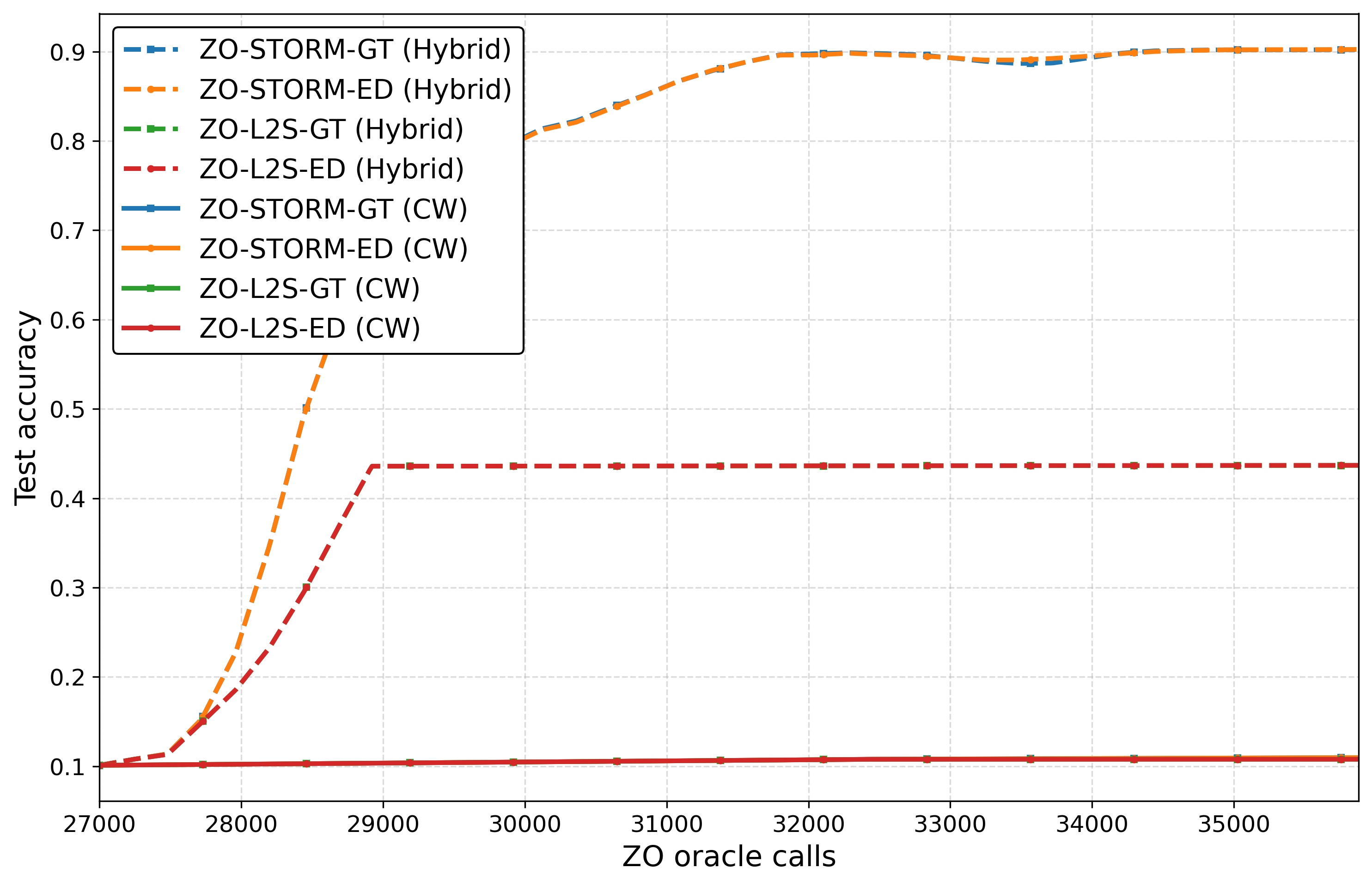}
\end{minipage}
\caption{Simulation result of \textbf{ZOMA} in the nonsmooth case ({\em Setting 1}) with the dataset \texttt{ijcnn1}.
The figures show test accuracy versus communication rounds and ZO oracle calls. In the right figure, the oracle-call axis is truncated once one method reaches its best observed performance.}
\label{fig:compare_nonsmooth}
\end{figure*}

\begin{figure*}[t]
\centering
\begin{minipage}[t]{0.42\textwidth}
    \centering
    \includegraphics[width=\textwidth]{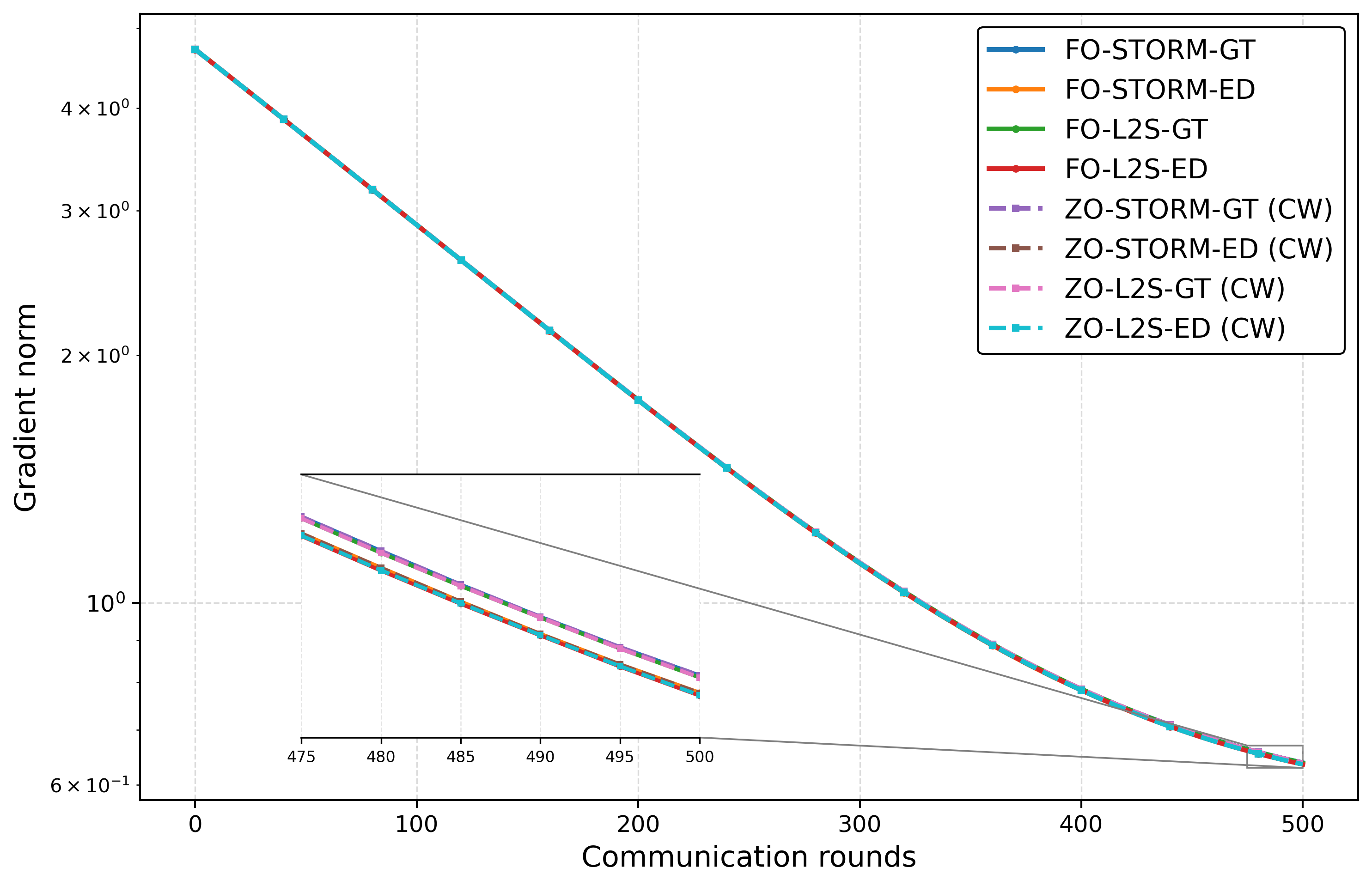}
\end{minipage}
\hfill
\begin{minipage}[t]{0.42\textwidth}
    \centering
    \includegraphics[width=\textwidth]{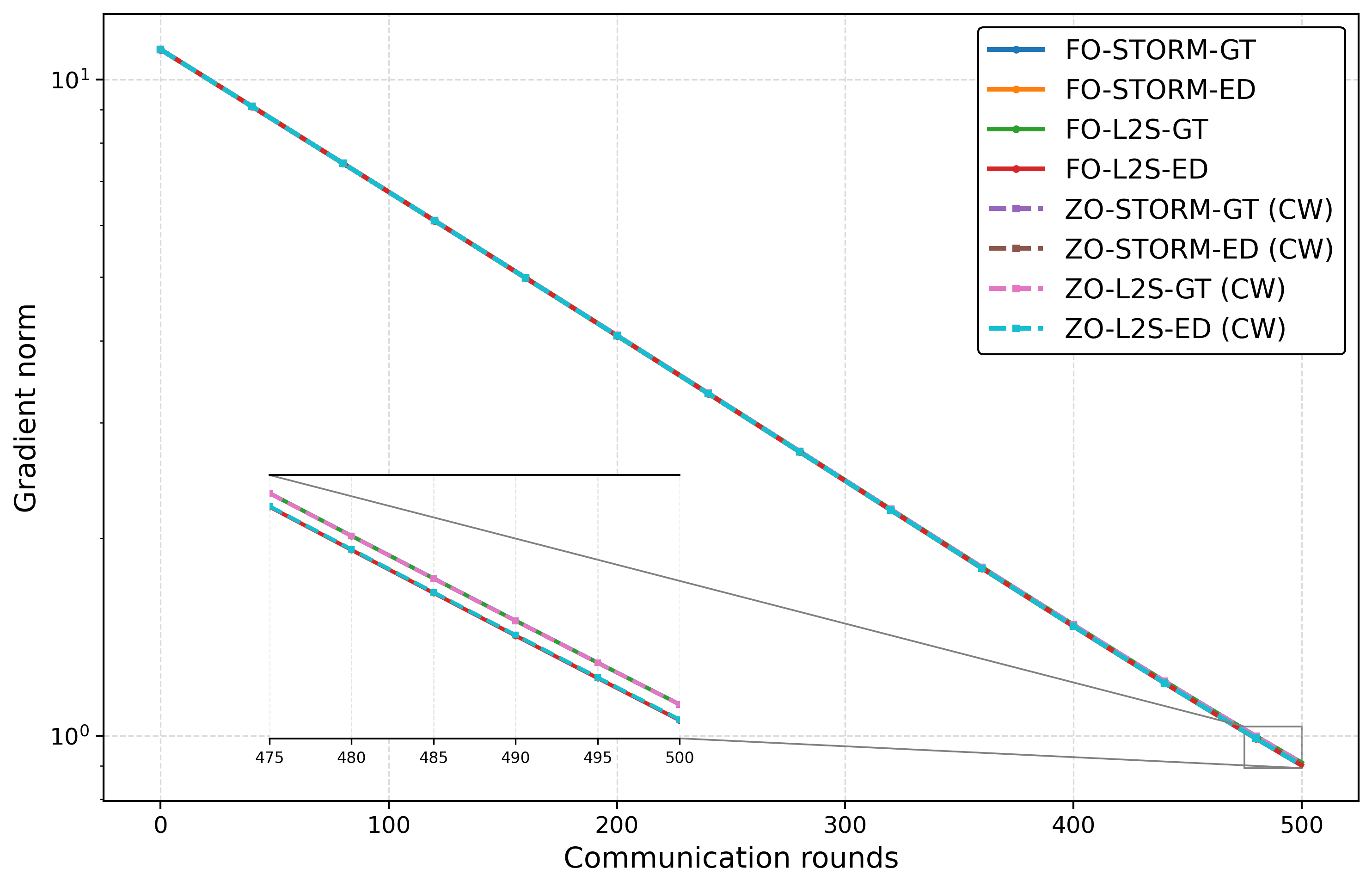}
\end{minipage}
\caption{ Comparison between coordinate-wise (CW) variants of \textbf{ZOMA} and its FO counterparts in the smooth case ({\em Setting 2}). We plot the gradient norm at the network average on two datasets: \texttt{ijcnn1} (left) and \texttt{a9a} (right).}
\label{fig:compare_fo}
\end{figure*}

\begin{figure*}[t]
\centering
\begin{minipage}[t]{0.42\textwidth}
    \centering
    \includegraphics[width=\textwidth]{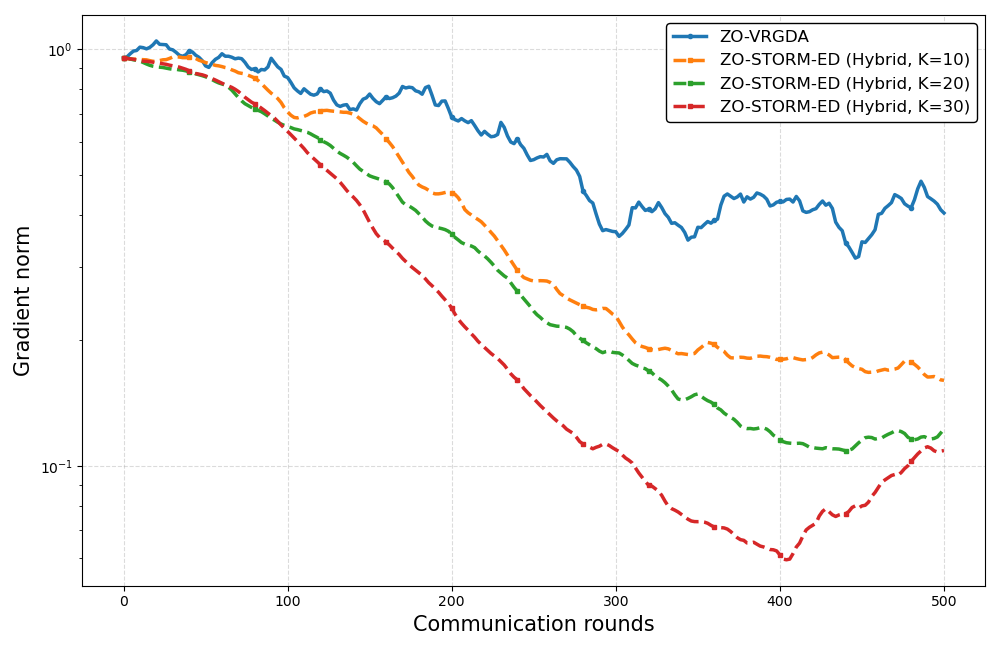}
\end{minipage}
\hfill
\begin{minipage}[t]{0.42\textwidth}
    \centering
    \includegraphics[width=\textwidth]{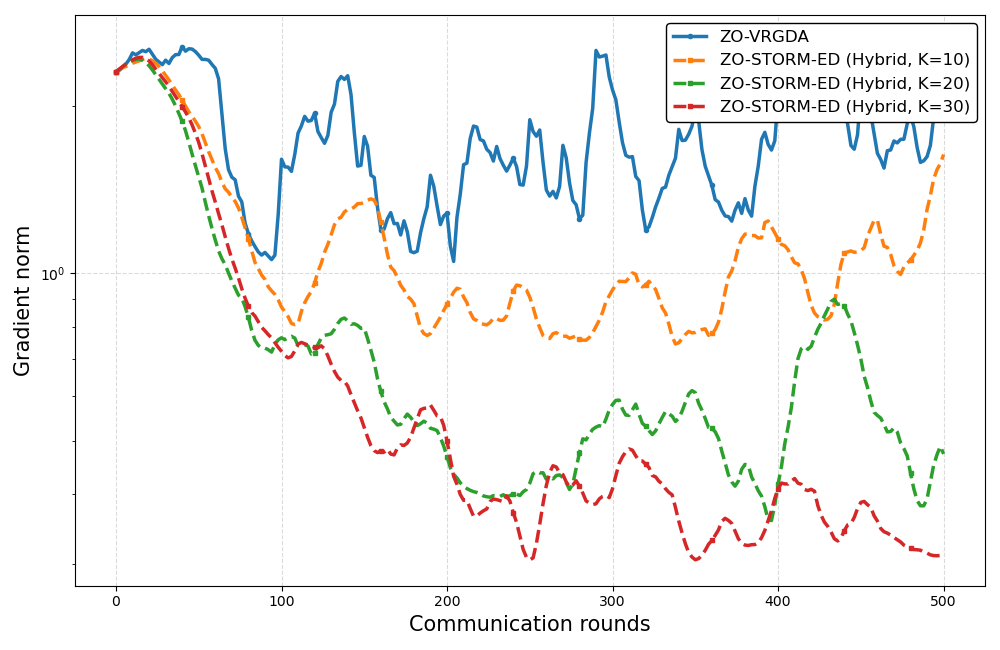}
\end{minipage}
\caption{ Comparison between hybrid variants of \textbf{ZOMA} and centralized method ZO-VRGDA \cite{xu2020gradient} in the smooth case ({\em Setting 3}) with the dataset \texttt{ijcnn1} (left) and \texttt{a9a} (right).}
\label{fig:compare_centralizzed}
\end{figure*}
\section{Numerical simulations}
\label{main:simulation}

We now illustrate the performance of \textbf{ZOMA} on a robust logistic regression task and an image classification task with fairness constraints.

\subsection{Robust logistic regression}
Given the local label–feature pairs $(b_{kj}, a_{kj})$ and a perturbation vector $y$ applied to the feature data,
the robust logistic regression problem is formulated as 
\begin{align}
\min_{x\in \mR^d} \max_{y \in \mR^d} \frac{1}{NK}\sum_{k=1}^K \sum_{j=1}^N
Q_k(x, y; b_{kj}, a_{kj})
- \frac{1}{2}\,\|y\|^2, \notag 
\end{align}
where the local loss function $Q_k(x, y; b_{kj}, a_{kj})$ can be either smooth or nonsmooth, and is defined as
\begin{align}
& Q_k(x, y; b_{kj}, a_{kj}) 
\\
&=  \begin{cases}\ln
\Big(
\frac{\bigl(b_{kj} - x^\top (a_{kj} + y)\bigr)^2}{2} + 1\Big)  &(\textbf{Smooth}) 
\\ 
\max\left\{
0,
1-b_{kj}x^\top(a_{kj}+y)
\right\} &(\textbf{Nonsmooth})
\end{cases}\notag,
\end{align}
A similar function was considered in ~\cite{zhang2024jointly}, however, they only consider the smooth version, while we test the performance on potentially non-smooth functions.

$\bullet$ {\em Setting 1}:
We first evaluate \textbf{ZOMA} in nonsmooth settings where FO methods are not applicable, demonstrating the importance of our ZO methods.
In this setting, we aim to validate the trade-off between communication and function query complexity of the proposed methods.

$\bullet$ {\em Setting 2}:
We further compare our \textbf{ZOMA} against their FO counterparts in smooth settings where gradient information is available, to examine whether ZO methods can match the performance of FO methods. 

$\bullet$ {\em Setting 3}:
Our theory implies a linear speedup benefit of \textbf{ZOMA}  compared with the centralized algorithm; we therefore compare it with the state-of-the-art centralized method ZO-VRGDA \cite{xu2020gradient} to examine whether this speedup is achievable. 

For the first two settings, we generate a lazy ring gossiping matrix $W \in \mR^{K\times K}$ with $K =30$ agents, where $[W]_{ii} = 0.9, [W]_{ij}=0.05 (i \not= j)$. 
For the last setting,  we use Erd\H{o}s--R\'enyi graphs with $K \in \{10,20,30\}$ under a fixed connectivity satisfying
$\|W - \tfrac{1}{K}\mathds{1}_K \mathds{1}_K^\top\| \approx 0.6$.
We consider \texttt{ijcnn1} and \texttt{a9a} datasets from LIBSVM repository\footnote{\url{ https://www.csie.ntu.edu.tw/~cjlin/libsvmtools/datasets/}}. In the smooth case, we can directly evaluate the gradient norm to validate our theoretical results.  In the nonsmooth case, we evaluate the test accuracy of the trained models, where $20\%$ of the data is reserved for testing.

For all algorithms, we tune learning rates $\eta_x, \eta_y $ over the grid $\{0.01, 0.005, 0.001\}$. For ZO-STORM/L2S-ED/GT, we set hyperparameters as $\beta =0.1, p=0.1, B=100$ when applicable. When employing the ZO-CW estimator, we use a minibatch size of $b= b_0 =1$. For the ZO-RU estimators, we might increase the minibatch size over $\{10,20,50\}$ until convergence is observed.
The smoothing factors are set as $\mu_x=\mu_y=\delta_x=\delta_y = 0.001$. 
We adopt the same hyperparameters for the FO counterparts of \textbf{ZOMA}.
For the centralized algorithm ZO-VRGDA, we set the large-batch refreshing period to $q=5$ and tune the number of inner-loop iterations $m$ over $\{1,5,10\}$.
All local iterates are initialized as $0.1 * \mathds{1}_{d'}$, where $ d' =d_1 \text{ or } d_2$.

From Figure \ref{fig:compare_nonsmooth}, we can see that the CW variants of \textbf{ZOMA} require fewer communication rounds to attain the best observed performance compared with their hybrid counterparts. In contrast, when performance is measured against the number of ZO oracle calls, the hybrid variants are more efficient, as they require fewer ZO oracle calls to reach higher performance. We can also observe that among the hybrid variants, the ED-based methods consistently outperform the GT-based variants. Moreover, we can see that our method maintains strong performance even in the non-smooth setting, further highlighting the importance of our ZO framework. All these empirical observations corroborate our theoretical findings.

From Figure~\ref{fig:compare_fo}, we observe that the CW variants of \textbf{ZOMA} can achieve performance comparable to that of its FO counterparts, demonstrating the effectiveness of the proposed ZO methods even in the setting where FO information is available.
From Figure~\ref{fig:compare_centralizzed}, we observe that the proposed decentralized methods exhibit a clear linear speedup with respect to the number of agents $K$ compared to the centralized algorithm ZO-VRGDA. 
In both figures we can see that the CW variants exhibit more stable convergence than the RU variants due to their more accurate gradient estimators.

\subsection{Fair classifier}

We now consider training a neural network classifier over multiple image classes under fairness constraints, considered in ~\cite{mohri2019agnostic}.
The optimization objective is given as follows
\begin{align}
\min_{x} \max_{y} \;
\frac{1}{KC} \sum_{k=1}^{K} \sum_{c=1}^{C}
y_c \, \mathbb{E}_{\boldsymbol{\xi}_{k,c}} \, Q\!\left(x; \boldsymbol{\xi}_{k,c}\right)
- \frac{\rho'}{2} \|y\|^2,
\end{align} 
where $x$ is the model parameter, $\boldsymbol{\xi}_{k,c}$ represents a sample from class $c$ at agent $k$, and $y$ is a weight vector that adjusts the model's attention across different classes and satisfies $\sum^C_{c=1} y_c =1$. The parameter $\rho'= 10^{-3}$ is used for regularization.
We adopt a neural network architecture similar to that in~\cite{wu2024solving}. We use all classes from the FashionMNIST dataset~\cite {xiao2017fashionmnist}, i.e., $C =10$, and each class of data is randomly sharded into $K=20$ agents under a ring network topology.

We use the proposed hybrid variants of \textbf{ZOMA}, including ZO-STORM-ED/EXTRA/GT, to train the model from scratch, as they are more memory-efficient than the CW variants.
The hyperparameters are tuned as follows: $\eta_x = 0.05, \eta_y = 0.1, \beta = 0.95, \mu_x=\mu_y = 0.001$. We report the averaged test accuracy over communication rounds across $10$ independent trials. The results in Figure~\ref{fig:fair_classifier} demonstrate that \textbf{ZOMA} can successfully train a fair neural network from scratch without relying on standard backpropagation. Moreover, ZO-STORM-ED/EXTRA outperforms ZO-STORM-GT by approximately $4\%$ in test accuracy.

% \begin{align}
% &\min_{x} \max_{y} \frac{1}{K}
% \sum_{k=1}^{K}
% \sum_{c=1}^{C} y_c J_{k, c}(x) - \frac{\rho}{2} \|y\|^2, \notag\\
% &\text{where } J_{k,c}(x) = \mE_{\bxi_{k,c} \sim \mathcal{D}_{k,c}}[Q_k(x;\bxi_{k,c})] \notag 
% \\
% &\text{s.t. } y_{i} \ge 0, ~\forall~ c \in \{1,\dots,C\} \text{ and } \sum_{c=1}^C y_{c} = 1,
% \label{simulation:fairclassifier}
% \end{align}
% Here, $\bxi_{k,c}$ is the random sample of category $c$ at agent $k$ and  $Q_k(x;\bxi_{k,c})$
% represents the local loss incurred by category $c$ at the agent $k$. Moreover,  $x$ is the neural network parameter and $\rho$ is the regularization parameter.

% We compare algorithms formed by integrating gradient estimators from the set $\{$"\textbf{GRACE}", "STORM", "Loopless SARAH"$\}$ and decentralized strategies from the set $\{$"ED", "EXTRA", "ATC-GT"$\}$ as well.
% The hyperparameters are tuned via grid search and, unless otherwise specified, are set to the same values across methods, or to smaller values to ensure stability.
% For ED- and ATC-GT-based method, the step sizes are set as $\mu_x = 0.05, \mu_y =0.1$. The $\mu_x$ of the EXTRA-based method is set as $\mu_x = 0.02$ to ensure stability.
% The smoothing factors are set as $\beta_x = \beta_y = 0.95$ for the \textbf{GRACE}- and STORM-based method.
% The warm-up batch size $b_0$ and megabatch size $B$
% is set as a full batch
% while the minibatch size $b$
% is set as $b=50$.
% The Bornoulli parameter $p$
% of \textbf{GRACE}- and Loopless SARAH-based method is set as $p=0.02$.

\begin{figure}
\centering
\includegraphics[width=1.0\linewidth]{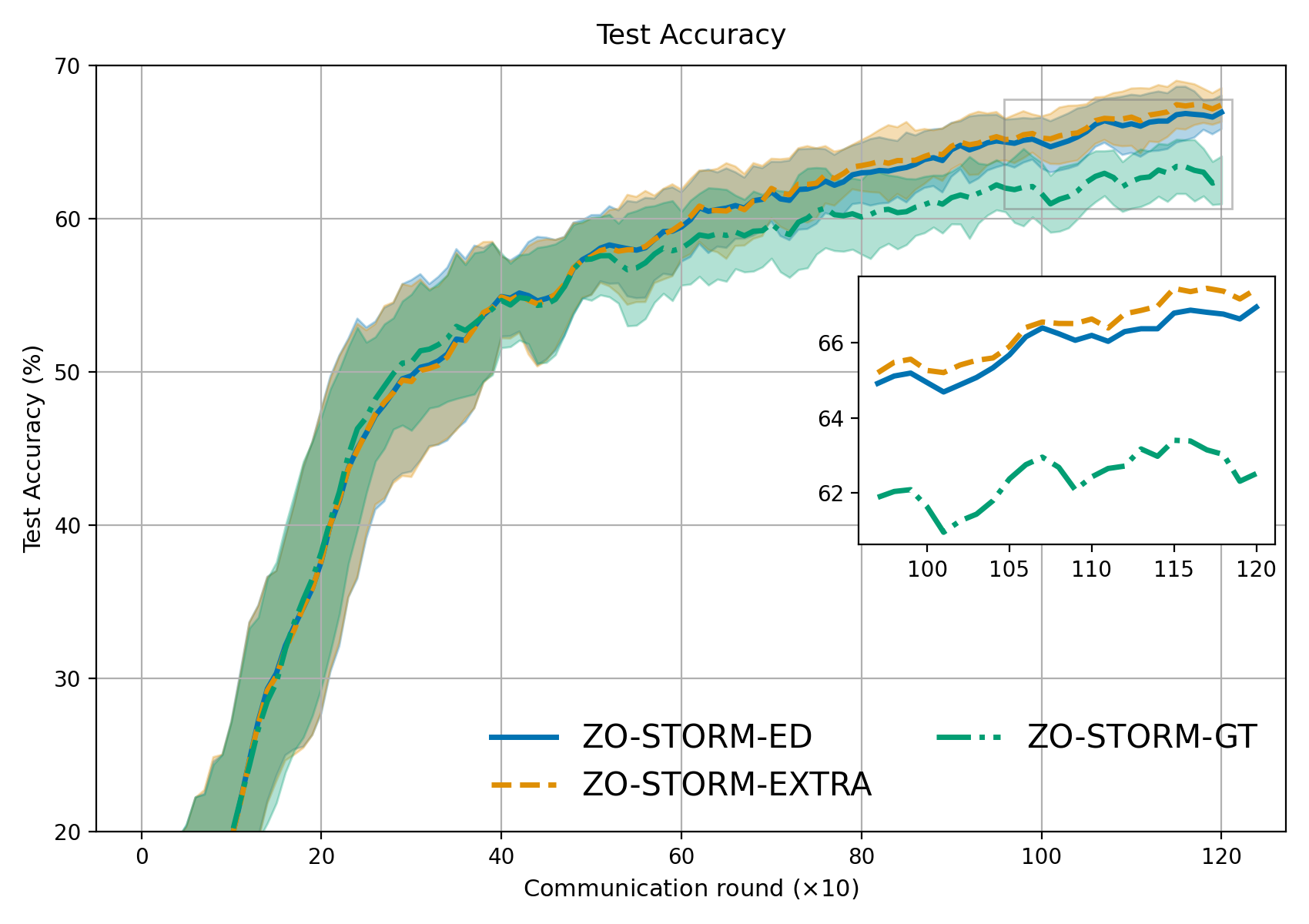}
\caption{
Simulation results on training a fair classifier with the FashionMNIST dataset. We report the
test accuracy of the {\em hybrid} ZO variants of \textbf{ZOMA}, including ZO-STORM-ED/EXTRA/GT. 
}
\label{fig:fair_classifier}
\end{figure}

\section{Conclusion}
In this work, we proposed the first unified ZO framework for decentralized minimax optimization. The proposed framework integrates various decentralized strategies and ZO acceleration techniques, achieving improved guarantees over existing centralized methods. Numerical experiments further validate the theoretical findings and demonstrate the strong empirical performance of our methods. An important future direction is to extend the existing algorithms and analysis to accommodate nondifferentiable and constraint minimax settings.

\bibliographystyle{IEEEtran} 
\bibliography{refs}  

\onecolumn
\appendices

\section{Notation}
\label{sec:notations}
To facilitate analysis, we first introduce the concatenated variables, i.e.,
\begin{align}
z \triangleq  [x; y] , \quad  \bz \triangleq  [\bx;\by] \in \mR^{d_1+d_2}. 
\end{align}
The averaged variable and gradient estimator are defined as
\begin{subequations}
\begin{align}
    \bz_{c,i}  \triangleq[\bx_{c,i}; \by_{c,i}] &\triangleq  \frac{1}{K}
    \sum_{k=1}^{K} [\bx_{k,i};\by_{k,i}] \in \mathbb{R}^{d_1+d_2},   \\ 
   [\bg^x_{c,i};\bg^y_{c,i}] &\triangleq \frac{1}{K}
   \sum_{k=1}^K 
   [\bg^x_{k,i};\bg^y_{k,i}] \in \mathbb{R}^{d_1+d_2}. 
\end{align}
\end{subequations}
The network-extended variables are denoted by
\begin{subequations}
    \begin{align}
&\mZ_i \triangleq [\mX_{i}; \mY_{i}] \triangleq [ {\rm col}\{\bx_{k,i}\}_{k=1}^{K} ;  {\rm col}\{\by_{k,i}\}_{k=1}^{K}] \in \mathbb{R}^{K(d_1+d_2)},  \\
&\mZ_{c,i}\triangleq [\mX_{c,i};\mY_{c,i}] \triangleq  [\mathds{1}_K \otimes  \bx_{c,i}  ; \mathds{1}_K \otimes \by_{c,i}]  \in \mathbb{R}^{K(d_1+d_2)},   \\
&\mG_{z,i} \triangleq[\mG_{x,i}; \mG_{y,i}] \triangleq [
{\rm col}\{\bg^x_{k,i}\}_{k=1}^{K}; {\rm col}\{\bg^y_{k,i}\}_{k=1}^{K}] \in \mathbb{R}^{K(d_1+d_2)}, 
  \\
& [\mD_{x,i}; \mD_{y,i}] \triangleq
[{\rm col}\{\bd^x_{k,i} 
\}_{k=1}^{K} ; {\rm col}\{\bd^y_{k,i} 
\}_{k=1}^{K} ]\in \mathbb{R}^{K(d_1+d_2)}.  
\end{align}
\end{subequations}
We define the risk-value-based ZO-CW gradients $\forall k \in [K]$ as 
\begin{subequations}
    \begin{align}
\hnabla_x J_k(z) &\triangleq \sum_{j=1}^{d_1} \frac{J_k(x+\delta_xe_j,y) - J_k(x-\delta_xe_j,y)}{2\delta_x}e_j \in \mR^{d_1}, \\
\hnabla_y J_k(z) &\triangleq \sum_{j=1}^{d_2} \frac{J_k(x, y+\delta_ye_j) - J_k(x,y-\delta_ye_j)}{2\delta_y}e_j \in \mR^{d_2}. 
\end{align}
\end{subequations}We define {\em smoothed} risk functions  $\forall k \in [K]$ as 
\begin{subequations}
  \begin{align}
J^{\mu_x}_{k}(z) &\triangleq \mE_{\bu \sim {\rm Unif}(\mathbb{B}^{d_1})} J_k(x+\mu_x \bu, y)\in \mR, \\
J^{\mu_y}_{k}(z) &\triangleq \mE_{\bu \sim {\rm Unif}(\mathbb{B}^{d_2})} J_k(x, y+\mu_y \bu) \in \mR. 
\end{align}  
\end{subequations}
When using the hybrid ZO estimator,  we denote the local gradient error 
by
\begin{align}
[\bs^x_{k,i}; \bs^y_{k,i}] &\triangleq [\bg^x_{k,i} - \nabla_x J^{\mu_x}_{k}(\bz_{k,i}) ; \bg^y_{k,i} - \nabla_y J^{\mu_y}_{k}(\bz_{k,i}) ] \in \mR^{d_1+d_2}. 
\end{align}
When using the pure ZO-CW estimator, the local gradient error is instead defined as
\begin{align}
[\bs^x_{k,i}; \bs^y_{k,i}] &\triangleq [\bg^x_{k,i} -\hnabla_x J_k(\bz_{k,i}) ; \bg^y_{k,i} - \hnabla_y J_k(\bz_{k,i}) ] \in \mR^{d_1+d_2}. 
\end{align}
Accordingly, the averaged and networked gradient error can be defined as 
\begin{align}
\mS_{z,i}&\triangleq [\mS_{x,i};\mS_{y,i}] \triangleq  \Big[{\rm col}\{\bs^x_{k,i}\}^{K}_{k=1}; {\rm col}\{\bs^y_{k,i}\}^{K}_{k=1}\Big] \in \mathbb{R}^{K(d_1+d_2)}, 
\\ 
\bs^z_{c,i} &\triangleq  [\bs^x_{c,i};\bs^y_{c,i}] \triangleq  \frac{1}{K}
 \sum_{k=1}^{K} [\bs^x_{k,i}; \bs^y_{k,i}] \in \mR^{d_1+d_2}. 
\end{align}
At communication round $i$, let $\bz = \bz_{k,i}$ or $\bz_{k,i-1}$,
the local $b$-batch ZO gradient estimator is defined as 
\begin{subequations}
    \begin{align}
[\bq^x_{i}(\bz;b);\bq^y_{i}(\bz;b)] &\triangleq  \frac{1}{b}\sum_{j=1}^{b}\Big[\bq^x_k(\bz;\bxi^j_{k,i},\bu^j_{k,i});\bq^y_{k}(\bz;\bxi^j_{k,i},\bu^j_{k,i})\Big]\in \mR^{d_1+d_2},\notag
\\
[\bq^x_{i,0}(\bz;b);\bq^y_{i,0}(\bz;b)] &\triangleq \frac{1}{b}\sum_{j=1}^b\Big[\bq^x_k(\bz;\bxi^j_{k,i},0); \bq^y_{k}(\bz;\bxi^j_{k,i},0)\Big]\in \mR^{d_1+d_2},\notag
\end{align}
\end{subequations}
where the local index \(k\) of the \(b\)-batch ZO gradient estimator is omitted for compactness, since it is clear once we insert the local iterate.

We further define the true gradients as 
\begin{subequations}
\begin{align}
\nabla^z_{k,i} &\triangleq [\nabla^x_{k,i}; \nabla^y_{k,i}] \triangleq [\nabla_x J_k(\bz_{k,i}); \nabla_y J_k(\bz_{k,i})] \in \mR^{d_1+d_2}, \\
\nabla^z_{c,i} &\triangleq [\nabla^x_{c,i}; \nabla^y_{c,i}] \triangleq [\nabla_x J(\bz_{c,i}); \nabla_y J(\bz_{c,i})] \in \mR^{d_1+d_2}. 
\end{align}
\end{subequations}

The function gap $P(x) - J(x,y)$ 
at  $(\bx_{c,i}, \by_{c,i})$
is defined as 
\begin{align}
\Delta^y_{c,i} \triangleq P(\bx_{c,i}) - J(\bx_{c,i}, \by_{c,i}) \in \mR. 
\end{align}

\section{Basic Lemmas}
\label{appendix:sec:basiclemma}
\begin{Lemma}[\textbf{ZO-RU}]
\label{appendix:lemma:randomsmoothing}
Let Assumptions \ref{main:assumption:Lipschitz} and \ref{main:asssumption:RD-ZO} hold,
for the ZO-RU estimator, we have 
$\forall w \in \{x, y\}$ that
\begin{align}
&(1) \quad J^{\mu_x}_{k}(\cdot, y), J^{\mu_y}_{k}(x, \cdot) \text{ are } L^\prime_f (\le L_f)\text{--smooth},\notag\\
&(2) \quad \|\nabla_x J^{\mu_x}_{k}(z) - \nabla_x J_k(z)\| \le \frac{\mu_x L_f d_1}{2}, \quad \|\nabla_y J^{\mu_y}_{k}(z) - \nabla_y J_k(z)\| \le \frac{\mu_y L_f d_2}{2},
\notag
\\
&(3) \quad
\mE_{\bxi, \bu}
[\bq^w_{k}(z;\bxi, \bu)]  = 
\nabla_w J^{\mu_w}_{k}(z),\notag 
\\
&(4) \quad
\mE\Big\|\frac{1}{b}\sum_{j=1}^b
\bq^w_{k}(z;\bxi^j, \bu^j) - \nabla_w J^{\mu_w}_{k}(z) \Big\|^2  \le \frac{\sigma^2_1}{b}\mE\|\nabla_w J(z)\|^2 + \frac{\sigma^2_0}{b}. \notag
\end{align}
\begin{proof}
By fixing either $x$ or $y$, results (1), (2), and (3)
follow directly from  \cite[Lemma 4.1]{gao2018information}.
For (4), we focus on the $x$-variable and obtain
\begin{align}
&\mE\Big\|\frac{1}{b}\sum_{j=1}^b
\bq^x_{k}(z;\bxi^j, \bu^j) - \nabla_x J^{\mu_x}_{k}(z) \Big\|^2 \overset{(a)}{\le} \frac{1}{b^2}
\sum_{j=1}^{b}\mE\Big\|
\bq^x_{k}(z;\bxi^j, \bu^j) - \nabla_x J^{\mu_x}_{k}(z) \Big\|^2\overset{(b)}{\le} \frac{\sigma^2_1}{b}\mE\|\nabla_x J(z)\|^2 + \frac{\sigma^2_0}{b}, \notag
\end{align}
where $(a)$ follows from (3)
and the fact that $\{\bxi^j\}$ and $\{\bu^j\}$
are i.i.d. sequences, while 
$(b)$ follows from Assumption \ref{main:asssumption:RD-ZO}.
\end{proof}
\end{Lemma}
\begin{Lemma}[\textbf{ZO-CW} \cite{ji2019improved}]
\label{appendix:lemma:coordinate_zero_first}

Let Assumption \ref{main:assumption:Lipschitz} hold, 
for any smoothing parameter $\delta_x, \delta_y > 0$,  it holds that
\begin{align}
\|\hnabla_{x} J_k(z) -\nabla_xJ_k(z)\|^2 &\le d_1 L^2_f\delta_x^2, &\quad \|\hnabla_{y} J_k(z) -\nabla_y J_k(z)\|^2 &\le d_2 L^2_f\delta_y^2,
\\
\mE\|\bq^x_{k}(z;\bxi,0) -  \nabla_x Q_k(z;\bxi)\|^2
&\le d_1 L^2_f \delta^2_x, &\quad \mE\|\bq^y_{k}(z;\bxi,0) -  \nabla_y Q_k(z;\bxi)\|^2
&\le d_2 L^2_f \delta^2_y. 
\end{align}
\begin{proof}
The above results can be extended from those in \cite{ji2019improved}. For illustration, we establish the first inequality only.
Using the mean value theorem for  $J_k(\cdot, y)$,
we have 
\begin{align}
&J_k(x +\delta_x e_j ,y) = J_k(x -\delta_x e_j ,y) 
+ (2\delta_xe_j)^\top\nabla_x J_k(x -\delta_x e_j + 2t_j\delta_xe_j,y), \quad \text{for } t_j\in (0,1) . 
\label{appendix:lemma:coordinate_gradient_deviation}
\end{align}
It follows that 
\begin{align}
&\|\hnabla_{x} J_k(z) -\nabla_x J_k(z)\|^2
\notag \\
&= \Big\|\sum_{j=1}^{d_1} \frac{J_k(x+\delta_x e_j, y) -J_k(x-\delta_xe_j, y)}{2\delta_x}e_j -\nabla_x J_k(z)\Big\|^2
\notag \\
&\overset{(a)}{=}
\Big\|
\sum_{j=1}^{d_1}e_je^\top_j \Big(\nabla_x J_k(x + (2t_j-1)\delta_x e_j,y) -\nabla_x J_k(z)\Big)\Big\|^2 \notag \\
&\overset{(b)}{=} \sum^{d_1}_{j=1}
\Big\|e_je^\top_j \Big(\nabla_x J_k(x + (2t_j-1)\delta_x e_j,y) -\nabla_x J_k(z)\Big)\Big\|^2 \notag 
\\
&\overset{(c)}{\le}
\sum^{d_1}_{j=1}
\Big\| \nabla_x J_k(x + (2t_j-1)\delta_x e_j,y) -\nabla_x J_k(z)\Big\|^2 \notag 
\\
&\overset{(d)}{\le}
\sum_{j=1}^{d_1}L^2_f(2t_j-1)^2\|\delta_x e_j\|^2 \le d_1 L^2_f\delta^2_x,  \quad t_j\in(0,1) ,
\end{align}
where $(a)$ follows from \eqref{appendix:lemma:coordinate_gradient_deviation} and the fact that $\sum_{j=1}^{d_1} e_je^\top_j = \mathrm{I}_{d_1}$, $(b)$ follows from the fact that $e_je^\top_j v$
produces a vector with its $j$-th entry being $v[j]$
and others being zero, $(c)$ follows
from the sub-multiplicative property of the matrix norm and $\|e_je^\top_j\|  =1$, and $(d)$ follows from Assumption \ref{main:assumption:Lipschitz} and the relation $0<2t_j-1 < 1$.
\end{proof}
\end{Lemma}
\begin{Lemma}[\textbf{ZO-CW}]
\label{appendix:lemma:coordinate_large_bound}
Let Assumptions \ref{main:assumption:Lipschitz} and  \ref{main:assumption:boundedvariance}
hold,  we have
$\forall w \in \{x, y\}$ and $\forall k \in [K]$ that
\begin{align}
&\mE\Big\|
\frac{1}{B}
\sum_{j=1}^{B}
\bq^w_{k}(z;\bxi^{j},0)
- 
\hnabla_w J_k(z)
\Big\|^2 \le 
\frac{C_0}{B}
\mathbb{I}(B<N),  \notag \\
&\mE\Big\|{\rm col}
\Big\{
\frac{1}{B}
\sum_{j=1}^{B}
\bq^w_{k}(z;\bxi^{j},0)
\Big\}_{k=1}^{K}
- {\rm col}\Big\{
\hnabla_w J_k(z)
\Big\}_{k=1}^{K}
\Big\|^2 \le 
\frac{KC_0}{B}\mathbb{I}(B<N). \notag 
\end{align}
where $\mathbbm{I}(\cdot)$
is an indicator function
and $
C_0 \triangleq \max\Big\{3(2d_1L^2_f\delta^2_x +\sigma^2), 3(2d_2L^2_f\delta^2_y +\sigma^2) \Big\}$.
\end{Lemma}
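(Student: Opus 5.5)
We split the single-sample error around its mean and use independence within the batch. Fix $w=x$; the $y$ case is identical with $d_2,\delta_y$. Write $\bq^x_{k}(z;\bxi^j,0)-\hnabla_x J_k(z)$ for the per-sample error. By linearity of the finite-difference operator and unbiasedness of the function values, $\mE_{\bxi}[Q_k(z';\bxi)]=J_k(z')$ for every perturbed point $z'$. Hence $\mE_{\bxi}[\bq^x_k(z;\bxi,0)]=\hnabla_x J_k(z)$, so each summand has zero mean.

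\textbf{Stochastic case ($B<N$, samples i.i.d.).} The cross terms vanish, and we get
\begin{align}
\mE\Big\|\frac{1}{B}\sum_{j=1}^{B}\bq^x_k(z;\bxi^j,0)-\hnabla_x J_k(z)\Big\|^2=\frac{1}{B}\mE\|\bq^x_k(z;\bxi,0)-\hnabla_x J_k(z)\|^2. \notag
\end{align}
To bound the single-sample variance, decompose it into three pieces:
\begin{align}
\bq^x_k(z;\bxi,0)-\hnabla_x J_k(z)=[\bq^x_k(z;\bxi,0)-\nabla_x Q_k(z;\bxi)]+[\nabla_x Q_k(z;\bxi)-\nabla_x J_k(z)]+[\nabla_x J_k(z)-\hnabla_x J_k(z)]. \notag
\end{align}
Apply $\|a+b+c\|^2\le 3(\|a\|^2+\|b\|^2+\|c\|^2)$. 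Lemma~\ref{appendix:lemma:coordinate_zero_first} bounds the first and third pieces by $d_1L_f^2\delta_x^2$ each, and Assumption~\ref{main:assumption:boundedvariance} bounds the middle piece by $\sigma^2$. This gives $3(2d_1L_f^2\delta_x^2+\sigma^2)\le C_0$, which is the claimed $C_0/B$ bound.

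\textbf{Finite-sum case.} When $B<N$ and sampling is without replacement, the variance of the sample mean is at most the with-replacement variance. This uses the standard factor $\frac{N-B}{(N-1)}\le 1$ applied to the population variance $\frac1N\sum_s\|\cdot\|^2$, which the same three-term split bounds by $C_0$ (here $J_k$ is the empirical mean). When $B=N$, every component is used exactly once, so the average equals $\hnabla_x J_k(z)$ exactly and the error is $0$. This accounts for the indicator $\mathbb{I}(B<N)$.

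\textbf{Network-level bound.} The squared norm of the column vector is the sum over $k$ of the local squared norms. Summing the per-agent bound $K$ times gives $KC_0\mathbb{I}(B<N)/B$.

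\textbf{Main obstacle.} The main care point is the finite-sum without-replacement step: one must verify that the sampling-without-replacement variance is dominated by the i.i.d. variance and vanishes at $B=N$.
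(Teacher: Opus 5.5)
Your proposal is correct and follows essentially the same route as the paper: zero-mean summands plus independence give the $1/B$ factor, and the three-term split with Lemma~\ref{appendix:lemma:coordinate_zero_first} and Assumption~\ref{main:assumption:boundedvariance} gives $3(2d_1L_f^2\delta_x^2+\sigma^2)$, while a without-replacement argument handles the finite-sum case. The differences are minor: you derive the finite-population factor $\frac{N-B}{N-1}$ explicitly where the paper cites \cite[Lemma 4]{ji2019improved}, and you spell out the network-level bound by summing over agents, which the paper leaves implicit.
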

\begin{proof}
The proof will focus on the $x$-variable.
In an online {\em stochastic} scenario, it holds that
\begin{align}
&\mE\Big\|\frac{1}{B}\Big(
\sum_{j=1}^{B} \bq^x_{k}(z;\bxi^{j},0) - \hnabla_x J_k(z)\Big)\Big\|^2 \notag \\
&\overset{(a)}{=}
\frac{1}{B^2}
\sum_{j=1}^{B}
\mE\| \bq^x_{k}(z;\bxi^{j},0) - \hnabla_x J_k(z)\|^2 \notag \\
&\le 
\frac{1}{B^2}
\sum_{j=1}^{B}
\Big(3\mE\|
\bq^x_{k}(z;\bxi^j,0)
-\nabla_x Q_k(z;\bxi^j)
\|^2+3\mE\|\nabla_x
 Q_k(z;\bxi^j)-
\nabla_x J_k(z)\|^2 +3\mE\|\hnabla_x J_k(z) -
\nabla_x J_k(z)\|^2\Big) \notag \\
&\overset{(b)}{\le} 
\frac{3}{B}(2d_1L^2_f\delta_x^2 +\sigma^2), \notag 
\end{align}
where $(a)$
and $(b)$ follows from Lemma \ref{appendix:lemma:coordinate_zero_first} and Assumption \ref{main:assumption:boundedvariance}.
In {\em finite-sum} scenarios,
$\{\bxi^{j}\}_{j=1}^{B}$
are uniformly sampled without replacement,
we can borrow the result in \cite[Lemma 4]{ji2019improved}
\begin{align}
&\mE\Big\|\frac{1}{B}
\sum_{j=1}^{B} \bq^x_{i,0}(z;\bxi^{j},0) - \hnabla_x J_k(z)\Big\|^2\le 
\frac{3\mathbb{I}(B<N)}{B}(2d_1L^2_f\delta_x^2 +\sigma^2) .
\label{appendix:proof:ZO-CW:keysptep}
\end{align}
We note that relation \eqref{appendix:proof:ZO-CW:keysptep} provides a unified bound for both stochastic and finite-sum scenarios.
Denoting
$C_0 \triangleq \max\Big\{3(2d_1L^2_f\delta^2_x +\sigma^2), 3(2d_2L^2_f\delta^2_y +\sigma^2) \Big\}$, we can complete the proof.
\end{proof}

\begin{Lemma}[\textbf{Dansky-type lemma} \cite{nouiehed2019solving}]
\label{Danskin}
    Under Assumptions \ref{main:assumption:costfunction}--\ref{main:assumption:boundedvariance}, the value function $P(x) = \max_y J(x,y)$ is $L \triangleq (L_f + \frac{\kappa L_f}{2})$-smooth 
    and satisfies $
           \nabla P(x) = \nabla_x J(x , y^o(x))$, 
     where $\kappa \triangleq \frac{L_f}{\nu}$ is the condition number
     and 
$y^o(x) =\operatorname{argmax}_y \  J(x, y)$. Furthermore, the quadratic growth property holds
\begin{align}
 P(x) - J(x,y) \ge  \frac{\nu}{2}\|y -y^o(x)\|^2.
 \notag
\end{align}
\end{Lemma}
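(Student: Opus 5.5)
The plan is to reduce everything to the smoothness of $J$ and the PL inequality in $y$, following the argument of \cite{nouiehed2019solving}. Throughout, $y^o(x)$ denotes a (suitably chosen) element of the possibly non-singleton set $\mathcal{Y}^\star(x)\triangleq\operatorname{argmax}_y J(x,y)$.

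\emph{Step 1 (smoothness of $J$).} First I will transfer Assumption~\ref{main:assumption:Lipschitz} to $J$. Unbiasedness (Assumption~\ref{main:assumption:boundedvariance}) gives $\nabla_w J_k=\mE\nabla_w Q_k$. Jensen's inequality applied to the expected-smoothness bound then shows that each $J_k$, and hence the average $J$, has $L_f$-Lipschitz partial gradients. Consequently, for fixed $x$ the function $g_x(y)\triangleq P(x)-J(x,y)\ge 0$ is $L_f$-smooth and satisfies $\|\nabla g_x(y)\|^2\ge 2\nu g_x(y)$.

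\emph{Step 2 (quadratic growth and error bound).} I will use the Karimi-type gradient-flow argument. Run $\dot y(t)=-\nabla g_x(y(t))$ from $y$. Along the flow,
\[
\tfrac{d}{dt}\sqrt{g_x(y(t))}=-\frac{\|\nabla g_x\|^2}{2\sqrt{g_x}}\le -\sqrt{\nu/2}\,\|\nabla g_x\|.
\]
Hence the path length is at most $\sqrt{2g_x(y)/\nu}$, and the flow converges to a point of $\mathcal{Y}^\star(x)$. Taking $y^o(x)$ to be the projection of $y$ onto $\mathcal{Y}^\star(x)$ yields $\|y-y^o(x)\|^2\le \tfrac{2}{\nu}g_x(y)$, which is the quadratic growth claim. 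Combining this with PL gives the error bound $\mathrm{dist}(y,\mathcal{Y}^\star(x))\le \|\nabla_y J(x,y)\|/\nu$.

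\emph{Step 3 (Lipschitz solution set and Danskin formula).} Take $y^\star\in\mathcal{Y}^\star(x)$, so that $\nabla_yJ(x,y^\star)=0$. The error bound at $x'$ gives
\[
\mathrm{dist}(y^\star,\mathcal{Y}^\star(x'))\le \|\nabla_yJ(x',y^\star)-\nabla_yJ(x,y^\star)\|/\nu\le\kappa\|x-x'\|.
\]
For differentiability, set $x'=x+h$. The lower bound is
\[
P(x+h)-P(x)\ge J(x+h,y^\star)-J(x,y^\star)\ge\langle\nabla_xJ(x,y^\star),h\rangle-\tfrac{L_f}{2}\|h\|^2.
\]
For the upper bound, let $y'$ be the projection of $y^\star$ onto $\mathcal{Y}^\star(x+h)$. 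Then
\[
P(x+h)-P(x)\le J(x+h,y')-J(x,y')\le\langle\nabla_xJ(x,y'),h\rangle+\tfrac{L_f}{2}\|h\|^2.
\]
Moreover $\|\nabla_xJ(x,y')-\nabla_xJ(x,y^\star)\|\le L_f\kappa\|h\|$, so the upper bound becomes $\langle\nabla_xJ(x,y^\star),h\rangle+\mathcal{O}(\|h\|^2)$. Thus $P$ is differentiable with $\nabla P(x)=\nabla_xJ(x,y^\star)$ for every maximizer $y^\star$.

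\emph{Step 4 (Lipschitz constant of $\nabla P$).} Since $\nabla P(x)$ does not depend on which maximizer is used, I may evaluate it at the paired maximizers $y^\star$ and $y'$ from Step 3. This gives
\[
\|\nabla P(x)-\nabla P(x')\|\le L_f(\|x-x'\|+\|y^\star-y'\|)\le L_f(1+\kappa)\|x-x'\|.
\]
The main obstacle is sharpening this to the stated constant $L_f+\kappa L_f/2$. I would try to do so through the cocoercive-type refinement used in \cite[Lemma A.5]{nouiehed2019solving}, which bounds the maximizer displacement more tightly via the quadratic-growth constant. If that refinement does not go through cleanly, I would defer to that reference for the exact constant; the structural conclusions (differentiability, the gradient formula, and quadratic growth) follow from Steps 1--3 regardless.
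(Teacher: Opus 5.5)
Your Steps 1--3 are correct. The transfer of expected smoothness to $J$ via Jensen, the gradient-flow proof of quadratic growth, the error bound $\mathrm{dist}(y,\mathcal{Y}^\star(x))\le\|\nabla_yJ(x,y)\|/\nu$, the $\kappa$-Lipschitz property of the solution set, and the two-sided Danskin estimate all go through. You also take $y^o(x)$ as a projection onto the possibly non-singleton argmax set, a point the statement glosses over. Step 4 correctly gives $\|\nabla P(x)-\nabla P(x')\|\le L_f(1+\kappa)\|x-x'\|$, and in fact $L_f\sqrt{1+\kappa^2}$ using the joint form of Assumption~\ref{main:assumption:Lipschitz}. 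The paper gives no argument of its own beyond citing \cite{nouiehed2019solving}. The real gap is your last move: sharpening to $L_f+\kappa L_f/2$, or deferring to the reference for it. Neither can work, because the stated constant is false once $\kappa>2$.

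To see this, take $d_1=d_2=1$, $Q_k\equiv J$ deterministic, $0<\nu<L_f$, $b\triangleq\sqrt{L_f^2-\nu^2}$, and
\[
J(x,y)=\tfrac{\nu}{2}x^2+bxy-\tfrac{\nu}{2}y^2 .
\]
Both $\nabla_xJ=\nu x+by$ and $\nabla_yJ=bx-\nu y$ are $L_f$-Lipschitz in $(x,y)$, since $\nu^2+b^2=L_f^2$. Moreover $J(x,y)=P(x)-\tfrac{\nu}{2}(y-bx/\nu)^2$ with $P(x)=\tfrac{L_f^2}{2\nu}x^2$. So $J(x,\cdot)$ is a strongly concave special case of the PL setting: the PL inequality holds with equality with constant $\nu$, and $P^\star=0$. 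Hence $P$ is exactly $\kappa L_f$-smooth, which exceeds $L_f+\kappa L_f/2$ whenever $\kappa>2$. The maximizer map $y^o(x)=bx/\nu$ is $\sqrt{\kappa^2-1}$-Lipschitz, so no refinement can bound the maximizer displacement by $\tfrac{\kappa}{2}\|x-x'\|$; the error-bound constant $\nu$ you derived is tight. The correct conclusion is the one your argument already proves, $L=L_f(1+\kappa)$, and it is tight up to a factor $1+1/\kappa$ by this example. Since $L_f(1+\kappa)\le 2(L_f+\kappa L_f/2)$, redefining $L$ this way only changes absolute constants in downstream step-size conditions such as $\eta_x\le 1/(32L)$.
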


\section{Possible Choices for Combination Matrices}
\label{appendix:subsection:matrixchoice}

\begin{table*}[h]
\centering
\caption{\normalfont Possible choices for matrices discussed in subsection \ref{main:subsection:unified}. Below, $W \in \mR^{K\times K}$ is a symmetric, doubly stochastic matrix.}
\label{tab:matrix_choices}
\renewcommand{\arraystretch}{1.2}
\setlength{\tabcolsep}{3pt}
\begin{tabular}{l|cccccc}
\toprule
\diagbox[width=2.5cm]{\textbf{Strategy}}{\textbf{Choices}}
& $\mA_x$ & $\mA_y$ & $\mB_x$ & $\mB_y$ & $\mC_x$ & $\mC_y$ \\ 
\toprule 
ED     & $W \otimes \mathrm{I}_{d_1}$ & $W \otimes \mathrm{I}_{d_2}$ & $(\mathrm{I}_{Kd_1} - W \otimes \mathrm{I}_{d_1})^{1/2}$ & $(\mathrm{I}_{Kd_2} -W \otimes \mathrm{I}_{d_2})^{1/2}$ & $\mathrm{I}_{Kd_1}$ & $\mathrm{I}_{Kd_2}$ \\ 
EXTRA  & $\mathrm{I}_{Kd_1}$ & $\mathrm{I}_{Kd_2}$ & $(\mathrm{I}_{Kd_1} - W \otimes \mathrm{I}_{d_1})^{1/2}$ & $(\mathrm{I}_{Kd_2} - W \otimes \mathrm{I}_{d_2})^{1/2}$ & $W \otimes \mathrm{I}_{d_1}$ & $W \otimes \mathrm{I}_{d_2}$ \\
ATC-GT & $(W \otimes \mathrm{I}_{d_1})^2$ & $(W \otimes \mathrm{I}_{d_2})^2$ & $\mathrm{I}_{Kd_1} - W \otimes \mathrm{I}_{d_1}$ & $\mathrm{I}_{Kd_2} - W \otimes \mathrm{I}_{d_2}$  & $\mathrm{I}_{Kd_1}$ & $\mathrm{I}_{Kd_2}$  
% \\ 
% semi-ATC-GT & $W \otimes \mathrm{I}_{d_1}$ & $W \otimes \mathrm{I}_{d_2}$ & $\mathrm{I}_{Kd_1} - W \otimes \mathrm{I}_{d_1}$ & $\mathrm{I}_{Kd_2} - W \otimes \mathrm{I}_{d_2}$  & $W \otimes \mathrm{I}_{d_1}$ & $W \otimes \mathrm{I}_{d_2}$  \\ 
% non-ATC-GT & $ \mathrm{I}_{Kd_1}$ & $\mathrm{I}_{Kd_2}$ & $\mathrm{I}_{Kd_1} - W \otimes \mathrm{I}_{d_1}$ & $\mathrm{I}_{Kd_2} - W \otimes \mathrm{I}_{d_2}$  & $(W \otimes \mathrm{I}_{d_1})^2$ & $(W \otimes \mathrm{I}_{d_2})^2$  
\\
\bottomrule
\end{tabular}
\end{table*}

The possible choices for design matrices $\{\mA_x, \mB_x, \mC_x, \mA_y, \mB_y, \mC_y\}$ are shown in Table \ref{tab:matrix_choices}. We further refer to \cite{cai2025dama1} for specializing the unified form \eqref{main:zodama_recursion1}–\eqref{main:zodama_recursion4} after determining the design matrices.

\section{Fundamental Transformation}
\label{appendix:sec:transformation}

To facilitate the analysis, 
we apply a transformation to \eqref{main:zodama_recursion1}–\eqref{main:zodama_recursion4} following \cite{cai2025dama1}.  In particular, these results enable us to transform the unified recursions \eqref{main:zodama_recursion1}–\eqref{main:zodama_recursion4} into equivalent forms via the fundamental factorization of the matrices $\{\mA_x, \mB_x, \mC_x, \mA_y, \mB_y, \mC_y\}$.
The combination matrix $W$ is symmetric, doubly-stochastic, and primitive.
Suppose that the eigenvalues of $W$ are given by $\{1, \lambda_2, \ldots, \lambda_K\}$, ordered in descending order.  The matrix $W$ admits the following eigendecomposition
\begin{align}
W = U\Lambda U^\top \triangleq 
\begin{bmatrix}
\frac{1}{\sqrt{K}}\mathds{1}_{K},  \widehat{U}
\end{bmatrix}
\begin{bmatrix}
1 &0\\
0& \widehat{\Lambda}
\end{bmatrix}
\begin{bmatrix}
    \frac{1}{\sqrt{K}} \mathds{1}^\top_K\\
    \widehat{U}^\top
\end{bmatrix}
\in \mathbb{R}^{K\times K},
\end{align}
where $\widehat{\Lambda} ={\rm diag}\{\lambda_i\}^{K}_{i=2}$ is a diagonal matrix and $\widehat{U}\in \mathbb{R}^{K \times (K-1)}$ satisfies $\widehat{U}^\top\widehat{U} = \mathrm{I}_{K-1}$.
From Assumption \ref{main:assumptions:combinationmatrix}, $\mA_x, \mB_x, \mC_x$,
are polynomial functions of $W \otimes \mathrm{I}_{d_1}$, 
it follows that 
\begin{subequations}
\begin{align}
\mA_x &\triangleq \mU_x \Lambda_{a_x} \mU^\top_x
\triangleq \begin{bmatrix}
\frac{1}{\sqrt{K}}
\mathds{1}_K \otimes \mathrm{I}_{d_1}, \widehat{\mU}_x
\end{bmatrix}
\begin{bmatrix}
    \mathrm{I}_{d_1} &0\\
    0& \widehat{\Lambda}_{a_x}
\end{bmatrix}
\begin{bmatrix}
\frac{1}{\sqrt{K}}
\mathds{1}^\top_K \otimes \mathrm{I}_{d_1} \\
\widehat{\mU}^\top_x 
\end{bmatrix} , \label{proof:Ax_eigen}
\\
\mC_x &\triangleq \mU_x \Lambda_{c_x} \mU^\top_x
\triangleq \begin{bmatrix}
\frac{1}{\sqrt{K}}
\mathds{1}_K \otimes \mathrm{I}_{d_1}, \widehat{\mU}_x
\end{bmatrix}
\begin{bmatrix}
    \mathrm{I}_{d_1} &0\\
    0& \widehat{\Lambda}_{c_x}
\end{bmatrix}
\begin{bmatrix}
\frac{1}{\sqrt{K}}
\mathds{1}^\top_K \otimes \mathrm{I}_{d_1}\\
\widehat{\mU}^\top_x
\end{bmatrix},  \label{proof:Cx_eigen}
\\
\mB^2_x &\triangleq \mU_x \Lambda^2_{b_x}\mU^\top_x
\triangleq 
\begin{bmatrix}
\frac{1}{\sqrt{K}}
\mathds{1}_K \otimes \mathrm{I}_{d_1}, \widehat{\mU}_x
\end{bmatrix}
\begin{bmatrix}
    0 &0\\
    0& \widehat{\Lambda}^2_{b_x}
\end{bmatrix}
\begin{bmatrix}
\frac{1}{\sqrt{K}}
\mathds{1}^\top_K \otimes \mathrm{I}_{d_1}\\
\widehat{\mU}^\top_x
\end{bmatrix},  \label{proof:Bx_eigen}
\end{align}
\end{subequations}
where $\widehat{\mU}_x \triangleq \widehat{U}\otimes \mathrm{I}_{d_1}$
and
$\widehat{\Lambda}_{a_x}, \widehat{\Lambda}_{c_x}, \widehat{\Lambda}^2_{b_x}$
denote eigenvalue matrices of $\mA_x, \mC_x, \mB^2_x$, respectively, each defined as follows:
\begin{align}
\widehat{\Lambda}_{a_x} &= {\rm diag}\{ \lambda_{a_x, i}\}_{i=2}^{K} \otimes \mathrm{I}_{d_1}, \ \widehat{\Lambda}_{c_x} = {\rm diag}\{ \lambda_{c_x, i}\}_{i=2}^{K} \otimes \mathrm{I}_{d_1}, 
\widehat{\Lambda}^2_{b_x} = {\rm diag}\{ \lambda^2_{b_x, i}\}_{i=2}^{K} \otimes \mathrm{I}_{d_1}.
\end{align}
We assume a similar eigendecomposition form for $\mA_y, \mC_y, \mB^2_y$, omitted here for brevity.
Introducing the auxiliary variables
\begin{subequations}
\begin{align}
    \mZ_{x,i} &\triangleq \eta_x \mA_x\mG_{x,i} + \mB_x\mD_{x,i}-\mB^2_x\mX_{i} , \label{main:ZxExpression}\\
    \mZ_{y,i} &\triangleq  -\eta_y \mA_y \mG_{y,i} + \mB_y \mD_{y,i}-\mB^2_{y} \mY_{i}.
    \label{main:ZyExpression}
\end{align}
\end{subequations}
 Following the work  \cite{cai2025dama1}, we can derive the following lemma.
\begin{Lemma}[\textbf{Transformed error recursion} \cite{cai2025dama1}]
\label{appendix:lemma:transformed_recursion}
Let Assumption \ref{main:assumptions:combinationmatrix} hold,
recursions \eqref{main:zodama_recursion1}---\eqref{main:zodama_recursion4} can be equivalently transformed into
\begin{subequations}
\begin{align}
\bx_{c,i+1} &= \bx_{c,i}-\frac{\eta_x}{K} \sum_{k=1}^{K} \bg^x_{k,i}\label{main:eq:x_finalrecursion},\\
\by_{c,i+1} &= \by_{c,i}  + \frac{\eta_y}{K}\sum_{k=1}^{K}
\bg^y_{k,i}\label{main:eq:y_finalrecursion} ,\\
\mhE_{x,i+1} &= \mT_x \mhE_{x,i} 
-\frac{\eta_x}{\tau_x}\widehat{\mQ}^{-1}_x 
\begin{bmatrix}
0\\
\widehat{\Lambda}^{-1}_{b_x} \widehat{\Lambda}_{a_x} \widehat{\mU}^\top_x(\mG_{x,i} -\mG_{x,i+1})
\end{bmatrix} \label{main:eq:ex_finalrecursion} ,\\
\mhE_{y,i+1} &= \mT_y \mhE_{y,i}  + \frac{\eta_y}{\tau_y}  \widehat{\mQ}^{-1}_y \begin{bmatrix}
0\\ 
\widehat{\Lambda}^{-1}_{b_y}\widehat{\Lambda}_{a_y} \widehat{\mU}^\top_y(\mG_{y,i} - \mG_{y,i+1})
\end{bmatrix}\label{main:eq:ey_finalrecursion},
\end{align}
\end{subequations}
where  $\tau_x$ and $\tau_y$ are arbitrary positive constants, and
$\mhE_{x,i}, \mhE_{y,i}$ are 
coupled error terms
defined as 
\begin{align}
\mhE_{x,i} &\triangleq
   \frac{1}{\tau_x} \widehat{\mQ}^{-1}_x
\begin{bmatrix}
 \widehat{\mU}^\top_x \mX_i \\
 \widehat{\Lambda}_{b_x}^{-1} \widehat{\mU}^\top_x \mZ_{x,i}
\end{bmatrix} \in  \mathbb{R}^{2(K-1)d_1},  \\
\mhE_{y,i}  &\triangleq
   \frac{1}{\tau_y} \widehat{\mQ}^{-1}_y
\begin{bmatrix}
 \widehat{\mU}^\top_y \mY_i \\
 \widehat{\Lambda}_{b_y}^{-1} \widehat{\mU}^\top_y \mZ_{y,i}
\end{bmatrix}  \in  \mathbb{R}^{2(K-1)d_2},
\label{main:tranform:def_couplederror}
\end{align}
and $\mT_x, \mT_y, \widehat{\mQ}_x, \widehat{\mQ}_y$ result from  a certain transformations of the following transition matrices
\begin{align}\mP_x 
&\triangleq \begin{bmatrix}
\widehat{\Lambda}_{a_{x}}\widehat{\Lambda}_{c_{x}} -\widehat{\Lambda}^2_{b_x} & - \widehat{\Lambda}_{b_x} \\
\widehat{\Lambda}_{b_x}  & \mathrm{I}_{(K-1)d_1} 
\end{bmatrix}\in \mathbb{R}^{2(K-1)d_1\times 2(K-1)d_1}, 
\\
\mP_y &\triangleq \begin{bmatrix}
\widehat{\Lambda}_{a_{y}}\widehat{\Lambda}_{c_{y}} -\widehat{\Lambda}^2_{b_y}& - \widehat{\Lambda}_{b_y} \\
\widehat{\Lambda}_{b_y}  & \mathrm{I}_{(K-1)d_2}
\end{bmatrix} \in \mathbb{R}^{2(K-1)d_2\times 2(K-1)d_2}, \notag
\end{align}
with
$
\mP_x = \widehat{\mQ}_x \mT_x \widehat{\mQ}^{-1}_x$ and $
\mP_y = \widehat{\mQ}_y \mT_y \widehat{\mQ}^{-1}_y$ and $\|\mT_x\| <1, \|\mT_y\|<1$.
Furthermore, 
choosing design matrices $\{\mA_x, \mB_x, \mC_x, \mA_y,\mB_y, \mC_y\}$
according to Table \ref{tab:matrix_choices} and further define the following constants
\begin{align}
    \rho_x&\triangleq \|\mT_x\|, &
    \quad 
    \rho_y & \triangleq
    \|\mT_y\|,  &\quad \rho \triangleq& \max \{\rho_x, \rho_y\}, \notag \\ 
    \lambda_{a_x} &\triangleq \|\widehat{\Lambda}_{a_x}\|, &\quad \lambda_{a_y} &\triangleq \|\widehat{\Lambda}_{a_y}\|, &\quad \lambda_a \triangleq&  \max\{\lambda_{a_x} , \lambda_{a_y} \},\notag\\ 
\underline{\lambda_{b_x}} &\triangleq  \frac{1}{\|\widehat{\Lambda}_{b_x}^{-1}\|}, &\quad 
     \underline{\lambda_{b_y}} &\triangleq  \frac{1}{\|\widehat{\Lambda}_{b_y}^{-1}\|}, &\quad 
     \frac{1}{\underline{\lambda^2_b} } \triangleq& \max \Big\{ \frac{1}{\underline{\lambda^2_{b_x}}},  \frac{1}{\underline{\lambda^2_{b_y}}}  \Big\}\label{appendix:definitionforlambda},
\end{align}
we have \\
\begin{itemize}
    \item \textbf{ED.} If $W \ge 0$, then  
$
\rho_x = \rho_y = \sqrt{\lambda}, \quad \lambda_{a_x} = \lambda_{a_y} = \lambda, \quad \underline{\lambda_{b_x}} = \underline{\lambda_{b_y}} = \sqrt{1-\lambda}.$
\item\textbf{EXTRA.} If $W \ge 0$, then
$
 \rho_x= \rho_y = \sqrt{\lambda}, \quad \lambda_{a_x} = \lambda_{a_y} = 1, \quad \underline{\lambda_{b_x}} = \underline{\lambda_{b_y}} = \sqrt{1-\lambda}
.$
\item \textbf{GT.} If $W \ge 0$, then
$
 \rho_x= \rho_y \le \frac{1+\lambda}{2}, \quad  \lambda_{a_x} = \lambda_{a_y} = \lambda^2, \quad \underline{\lambda_{b_x}} = \underline{\lambda_{b_y}} = 1-\lambda
.$
\end{itemize}
Here, $\lambda = \max_{i=2,\dots, K} |\lambda_i|$ is the second largest absolute  eigenvalue of $W$. 
\end{Lemma}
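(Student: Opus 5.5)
The lemma has two parts: the equivalence of the unified recursion with the transformed recursions, and the explicit values of $\rho,\lambda_a,\underline{\lambda_b}$ for ED, EXTRA and GT. I will prove the first part for the $x$-block only; the $y$-block is identical after replacing $\eta_x\mG_{x,i}$ by $-\eta_y\mG_{y,i}$.

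\textbf{Average recursion.} I first establish \eqref{main:eq:x_finalrecursion}. Multiply \eqref{main:zodama_recursion1} by $\frac{1}{K}(\mathds{1}_K^\top\otimes \mathrm{I}_{d_1})$. By Assumption~\ref{main:assumptions:combinationmatrix}, $\mA_x$ and $\mC_x$ are symmetric and doubly stochastic, so they leave the averaging operator invariant. Since $\mB_x$ is a symmetric polynomial in $W\otimes\mathrm{I}_{d_1}$ whose null space is the consensus subspace, we get $(\mathds{1}^\top_K\otimes \mathrm{I})\mB_x=0$. Hence the $\mD_{x,i}$ term disappears, which yields \eqref{main:eq:x_finalrecursion}. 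This needs no initialization condition on $\mD_{x,0}$, because the term is annihilated exactly.

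\textbf{Consensus part.} Next I project onto the disagreement subspace by multiplying by $\widehat{\mU}_x^\top$ and using the eigendecompositions \eqref{proof:Ax_eigen}--\eqref{proof:Bx_eigen}. Write $\bar{\mX}_i=\widehat{\mU}_x^\top\mX_i$ and $\bar{\mD}_i=\widehat{\mU}_x^\top\mD_{x,i}$. Then
\[
\bar{\mX}_{i+1}=\widehat{\Lambda}_{a_x}\widehat{\Lambda}_{c_x}\bar{\mX}_i-\eta_x\widehat{\Lambda}_{a_x}\widehat{\mU}_x^\top\mG_{x,i}-\widehat{\Lambda}_{b_x}\bar{\mD}_i,
\]
\[
\bar{\mD}_{i+1}=\bar{\mD}_i+\widehat{\Lambda}_{b_x}\bar{\mX}_{i+1}.
\]
To remove the gradient from the first equation, I introduce $\mZ_{x,i}$ from \eqref{main:ZxExpression}, so that $\widehat{\Lambda}_{b_x}^{-1}\widehat{\mU}^\top_x\mZ_{x,i}=\widehat{\Lambda}_{b_x}^{-1}\widehat{\Lambda}_{a_x}\eta_x\widehat{\mU}^\top_x\mG_{x,i}+\bar{\mD}_i-\widehat{\Lambda}_{b_x}\bar{\mX}_i$.

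Substituting, the pair $(\bar{\mX}_i,\widehat{\Lambda}_{b_x}^{-1}\widehat{\mU}^\top_x\mZ_{x,i})$ evolves under the matrix $\mP_x$. The only leftover forcing is $\eta_x\widehat{\Lambda}_{b_x}^{-1}\widehat{\Lambda}_{a_x}\widehat{\mU}_x^\top(\mG_{x,i}-\mG_{x,i+1})$, and it enters only the second block. The sign and placement of this term must be checked by direct substitution; this bookkeeping is the step most prone to error.

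\textbf{Diagonalization and the case constants.} Because every block of $\mP_x$ is diagonal, $\mP_x$ decouples into $K-1$ independent $2\times 2$ matrices,
\[
\begin{bmatrix}\lambda_{a}\lambda_{c}-\lambda_b^2 & -\lambda_b\\ \lambda_b & 1\end{bmatrix},
\]
one for each eigenvalue $\lambda_i$ of $W$. For each block I would find a similarity $Q_i$ that puts it into a form $T_i$ with $\|T_i\|<1$: either Jordan/complex-eigenvalue normal form, or a $2\times2$ scaling that turns the spectral radius into a norm bound. Stacking these gives $\widehat{\mQ}_x$ and $\mT_x$. Multiplying the stacked recursion by $\frac{1}{\tau_x}\widehat{\mQ}_x^{-1}$ then yields \eqref{main:eq:ex_finalrecursion}; $\tau_x$ is a free scaling that normalizes $\|\widehat{\mQ}_x\|$.

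The case constants are then computed from Table~\ref{tab:matrix_choices}:
\begin{itemize}
\item \textbf{ED:} $\lambda_a=\lambda_i$, $\lambda_c=1$, $\lambda_b^2=1-\lambda_i$. The block has trace $\lambda_i+\lambda_i=2\lambda_i$ and determinant $\lambda_i$, so its eigenvalues have modulus $\sqrt{\lambda_i}$ and $\rho=\sqrt{\lambda}$.
\item \textbf{EXTRA:} the $\lambda_a$ and $\lambda_c$ roles are swapped, giving the same $\rho$ but $\lambda_a=1$.
\item \textbf{GT:} $\lambda_a=\lambda_i^2$ and $\lambda_b=1-\lambda_i$, giving eigenvalues $\lambda_i$ and $\lambda_i^2$ when those are distinct. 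After a rescaling this gives a bound of $(1+\lambda)/2$.
\end{itemize}
In every case $\underline{\lambda_b}=\min_i\lambda_{b,i}$, which equals $\sqrt{1-\lambda}$ for ED and EXTRA and $1-\lambda$ for GT. The assumption $W\ge0$ keeps all $\lambda_i$ in $(-1,1)$ with the needed signs.

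\textbf{Main obstacle.} I expect the hardest step to be constructing a similarity that gives an honest operator-norm bound $\|\mT_x\|<1$, rather than just a spectral radius bound. For ED and EXTRA each $2\times2$ block has complex-conjugate or repeated eigenvalues, so a defective block needs a careful real normal form to avoid a norm blow-up. Here I would rely on the construction in \cite{cai2025dama1} and cite it for these per-block factorizations.
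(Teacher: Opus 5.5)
Your average recursion, the projection onto the disagreement subspace, and the ED/EXTRA constants are correct. This is the standard transformation the paper imports from \cite{cai2025dama1}; the paper gives no proof of its own. The substitution you flagged does leave exactly the forcing $-\eta_x\widehat{\Lambda}^{-1}_{b_x}\widehat{\Lambda}_{a_x}\widehat{\mU}^\top_x(\mG_{x,i}-\mG_{x,i+1})$ in the second block, as in \eqref{main:eq:ex_finalrecursion}.

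\textbf{The gap is the GT case.} With the ATC-GT choice of Table~\ref{tab:matrix_choices}, the $i$-th $2\times 2$ block of $\mP_x$ is
\[
G_i=\begin{bmatrix}\lambda_i^2-(1-\lambda_i)^2 & -(1-\lambda_i)\\ 1-\lambda_i & 1\end{bmatrix}=\begin{bmatrix}2\lambda_i-1 & -(1-\lambda_i)\\ 1-\lambda_i & 1\end{bmatrix}.
\]
Its trace is $2\lambda_i$ and its determinant is $\lambda_i^2$, so its eigenvalues are not $\lambda_i$ and $\lambda_i^2$: $\lambda_i$ is a double eigenvalue. Since $G_i-\lambda_i\mathrm{I}_2=(1-\lambda_i)\begin{bmatrix}-1 & -1\\ 1 & 1\end{bmatrix}\neq 0$, every block is a nontrivial Jordan block.

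This is not cosmetic. With your eigenvalues you would diagonalize and conclude $\|\mT_x\|=\lambda$, which is false. For a defective block with $\lambda_i\neq 0$, $\|G_i^k\|$ grows like $k|\lambda_i|^{k-1}$, so no similarity achieves $\|T_i\|\le|\lambda_i|$. Your sketch also gives no mechanism that produces $(1+\lambda)/2$.

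The missing step is a Jordan rescaling. For instance, with $M=\begin{bmatrix}1 & -1/2\\ -1 & 0\end{bmatrix}$ one checks
\[
M^{-1}G_iM=\begin{bmatrix}\lambda_i & (1-\lambda_i)/2\\ 0 & \lambda_i\end{bmatrix}.
\]
Hence $\widehat{\mQ}_x=M\otimes\mathrm{I}_{(K-1)d_1}$ gives $\|\mT_x\|\le\max_{i}(1+\lambda_i)/2\le(1+\lambda)/2$ when $W\ge 0$. Moreover, $\|\widehat{\mQ}_x\|$ and $\|\widehat{\mQ}^{-1}_x\|$ are absolute constants.

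\textbf{Your ``main obstacle'' is attached to the wrong case.} For $\lambda_i\in(0,1)$ the ED/EXTRA block has the distinct complex pair $\lambda_i\pm\sqrt{\lambda_i^2-\lambda_i}$. So it is diagonalizable, and its real rotation--scaling form has operator norm exactly $\sqrt{\lambda_i}$; this is what gives the equality $\rho_x=\sqrt{\lambda}$. Only the nilpotent block at $\lambda_i=0$ needs rescaling, to norm at most $\sqrt{\lambda}$. Defectiveness is the generic situation for GT, not for ED/EXTRA.

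\textbf{The generic step needs case-by-case checking.} Choosing $Q_i$ with $\|T_i\|<1$ cannot be done under Assumption~\ref{main:assumptions:combinationmatrix} alone. For example, $\mA_x=\mC_x=\mathrm{I}_{Kd_1}$ with the ED choice of $\mB_x$ satisfies the assumption but gives blocks of determinant $1$. So $\|\mT_x\|<1$ has to be verified case by case, as you do, or under an additional spectral condition on the design matrices.
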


\bigskip

\section{Useful Inequalities}
\label{appendix:sec:useful}
We list several useful inequalities below.

% $\bullet$ 
% Given a nonnegative sequence  $\{w_i\}$
% and a constant $\alpha \in [0,1)$,
% we have 
% \begin{subequations}
% \begin{align}
% &\sum_{i=1}^{T}\alpha^i \text{ or } 
% \sum_{i=0}^{T-1}\alpha^i \text{ or }
% \sum_{j=0}^{i-1}
% \alpha^{i-j-1}  \le \frac{1}{1-\alpha} ,\label{appendix:sequence:inequality1} 
% \\
% \sum_{i=1}^T\sum_{j=0}^{i-1} \alpha^{i-j-1} w_j
% &=\sum_{i=0}^{T-1}
% \Big(\sum^{T-1-i}_{j=0} \alpha^j\Big) w_i \leq  \frac{1}{1-\alpha}\sum_{i=0}^{T-1} w_i,
% \label{appendix:sequence:inequality3}
% \\
% \sum_{i=1}^T\sum_{j=0}^{i-1} \alpha^{i-j-1} w_j
% &=\sum_{i=0}^{T-1}
% \Big(\sum^{T-1-i}_{j=0} \alpha^j\Big) w_i \leq 
% \sum_{i=0}^{T-1}
% \frac{1- \alpha^{T-i}}{1-\alpha} w_i,
% \label{appendix:sequence:inequality33}
% \\
% \sum_{i=1}^T\sum_{j=1}^{i} \alpha^{i-j} w_j
% &= 
% \sum_{i=1}^{T}
% \Big(\sum^{T-i}_{j=0} \alpha^j\Big) w_i \le \frac{1}{1-\alpha}\sum_{i=1}^{T} w_i,\label{appendix:sequence:inequality4} \\
% \sum_{i=0}^{T-1}
%  \sum_{j=0}^{i} \alpha^{i-j} w_{j} &=
%  \sum_{i=0}^{T-1}
%  \Big(
%  \sum_{j=0}^{T-1-i} \alpha^j
%  \Big)w_i  \le \frac{1}{1-\alpha} \sum_{i=0}^{T-1} w_i
%  ,  \label{appendix:sequence:inequality5}\\
%  \sum_{i=0}^{T-1}
%  \sum_{j=0}^{i} \alpha^{i-j} w_{j+1}
%  &=
%  \sum_{i=0}^{T-1}
%  \Big( \sum_{j=0}^{T-1-i}\alpha^{j}\Big) w_{i+1} \le \frac{1}{1-\alpha}
%  \sum_{i=0}^{T-1} w_{i+1} =\frac{1}{1-\alpha}
%  \sum_{i=1}^{T} w_{i} 
%  \label{appendix:sequence:inequality6}.
% \end{align}
% \end{subequations}

$\bullet$
Given nonnegative sequences $\{a_i\}_{i=0}^{T}, \{b_{i}\}_{i=0}^{T}$ and the recursion $a_i \le \alpha a_{i-1} +b_i+b_{i-1}, \alpha \in [0, 1), \forall i \ge 1$,
we have $a_{i} \le \alpha^{i} a_0+\sum_{j=0}^{i-1}\alpha^{i-j-1} b_{j+1}+\sum_{j=0}^{i-1}\alpha^{i-j-1} b_j $ and
\begin{align}
\sum_{i=1}^{T}a_i &\le
\sum_{i=1}^{T}(\alpha^{i} a_0+\sum_{j=0}^{i-1}\alpha^{i-j-1} b_{j+1}+\sum_{j=0}^{i-1}\alpha^{i-j-1} b_j) \notag\\
&\le \frac{a_0}{1-\alpha}+
\sum_{i=1}^{T}b_i(\sum_{j=0}^{T-i} \alpha^j)
+\sum_{i=0}^{T-1}b_i(\sum_{j=0}^{T-i-1} \alpha^j)
\notag\\
&\le \frac{a_0}{1-\alpha}+\frac{2}{1-\alpha}\sum_{i=1}^{T}b_i
+\frac{b_0}{1-\alpha}.
\label{proof:useful:recursion_first}
\end{align}

$\bullet$
For coupled consensus error $\|\mZ_{i}- \mZ_{c,i}\|^2 $, we can derive that 
\begin{align}
&\|\mZ_{i}- \mZ_{c,i}\|^2  \notag \\
&= 
\Big\|\Big(\Big(\mathrm{I}_{K} - \frac{1}{K} \mathds{1}_{K}\mathds{1}^\top_{K}\Big)\otimes \mathrm{I}_{d_1} \Big) \mX_{i} \Big\|^2
\notag \\
&\quad +\Big\|\Big(\Big(\mathrm{I}_{K} - \frac{1}{K} \mathds{1}_{K}\mathds{1}^\top_{K}\Big)\otimes \mathrm{I}_{d_2}\Big) \mY_{i} \Big\|^2
\notag \\
&=
\Big\|\Big(\widehat{U} \widehat{U}^\top\otimes \mathrm{I}_{d_1}\Big) \mX_i \Big\|^2 + 
\Big\|\Big(\widehat{U} \widehat{U}^\top\otimes I_{d_2}\Big)\mY_i\Big\|^2 \notag \\
&= 
\Big\|\widehat{\mU}_{x} \widehat{\mU}^\top_x \mX_i\Big\|^2
+\Big\|\widehat{\mU}_{y} \widehat{\mU}^\top_y \mY_i\Big\|^2 = 
\Big\|\widehat{\mU}^\top_x \mX_i\|^2 
+\|\widehat{\mU}^\top_y \mY_i\|^2 \notag 
\\
& \overset{(a)}{\le} 
\tau^2_x \|\widehat{\mQ}_x \mhE_{x,i}\|^2
+ 
\tau^2_y \|\widehat{\mQ}_y \mhE_{y,i}\|^2 
\overset{(b)}{\le} 
Kv^2_1v^2_2 \|\mhE_{z,i}\|^2,
\label{proof:useful:consensus}
\end{align}
where $(a)$ follows from 
Lemma \ref{appendix:lemma:transformed_recursion}, while $(b)$ is obtained by  defining
\begin{align}
v^2_1 \triangleq \max\Big\{ \|\widehat{\mQ}_x\|^2, \|\widehat{\mQ}_y\|^2\Big\}, \quad v^2_2 \triangleq \max\Big\{ \|\widehat{\mQ}^{-1}_x\|^2, \|\widehat{\mQ}^{-1}_y\|^2\Big\},   \quad \mhE_{z,i} \triangleq [\mhE_{x,i}; \mhE_{y,i}]
\label{appendix:proof:consensus_error}
\end{align}
and setting 
$\tau^2_x = \tau^2_y = K v^2_2$.

$\bullet$
For the weight drift
$\|\mZ_{i}-\mZ_{i-1}\|^2 $, we can derive that 
\begin{align}
&\|\mZ_{i}-\mZ_{i-1}\|^2\notag \\
&\le 
\|\mZ_{i}-\mZ_{c,i}+\mZ_{c,i} - \mZ_{c,i-1} +\mZ_{c,i-1} - \mZ_{i-1}\|^2
\notag\\
&\le 
3\|\mZ_{i}-\mZ_{c,i}\|^2
+3 \|\mZ_{i-1}-\mZ_{c,i-1}\|^2 +3\|\mZ_{c,i}-\mZ_{c,i-1}\|^2\notag \\
&\le 
3Kv^2_1v^2_2\|\mhE_{z,i}\|^2
+
3Kv^2_1v^2_2\|\mhE_{z,i-1}\|^2 
+3K(\eta^2_x\|\bg^x_{c,i-1}\|^2
+\eta^2_y\|\bg^y_{c,i-1}\|^2),
\label{appendix:proof:incremental}
\end{align}
where the last inequality follows from 
\eqref{proof:useful:consensus}
and Lemma \ref{appendix:lemma:transformed_recursion}.

$\bullet$
For in-expectation gradient norm, we can derive that 
\begin{align}
&\mE\|\nabla^x_{c,i}\|^2
+\mE\|\nabla^y_{c,i}\|^2 \notag \\
&= 
\mE\|\nabla_x J(\bz_{c,i}) - 
\nabla_x J(\bx_{c,i}, \by^o(\bx_{c,i})) + \nabla_x J(\bx_{c,i}, 
\by^o(\bx_{c,i}))\|^2
+\mE\|\nabla_y J(\bz_{c,i}) - \nabla_yJ(\bx_{c,i}, \by^o(\bx_{c,i})) \|^2\notag \\
&\le 
3L^2_f\mE\|\by_{c,i} - \by^o(\bx_{c,i})\|^2
+2\mE\|\nabla P(\bx_{c,i})\|^2 \notag \\
&\le 6\kappa L_f \mE \Delta^y_{c,i} 
+6\mE\|\nabla P(\bx_{c,i})\|^2 ,
\label{proof:useful:gradientnorm}
\end{align}
where the last inequality follows from the 
quadratic growth property of a $\nu$-PL function.
This relation implies  
\begin{align}
\mE\|\nabla^x_{c,i}\|^2
+\mE\|\nabla^y_{c,i}\|^2 = \mathcal{O}(\kappa L_f \mE \Delta^y_{c,i} 
+\mE\|\nabla P(\bx_{c,i})\|^2).
\label{appendix:useful:bound_gradient}
\end{align}

$\bullet$
For $\mE\|\mS_{z,0}\|^2$, using Lemma \ref{appendix:lemma:coordinate_large_bound}, we have 
\begin{align}
\mE\|\mS_{z,0}\|^2 \le  \frac{2KC_0}{b_0}, \quad \mE\|\bs^z_{c,0}\|^2 \le  \frac{2C_0}{Kb_0}.
\end{align}
For smoothing factors satisfying $\delta_x \le \frac{\sigma}{L_f\sqrt{d_1}}, \delta_y\le \frac{\sigma}{L_f\sqrt{d_2}}$,
we can conclude  
\begin{align}
C_0 = \mathcal{O}(\sigma^2), \ \mE\|\mS_{z,0}\|^2 = \mathcal{O} \Big(
\frac{K\sigma^2}{b_0}
\Big), \ 
\mE\|\bs^z_{c,0}\|^2 = \mathcal{O} \Big(
\frac{\sigma^2}{Kb_0}
\Big).
\label{appendix:useful_Sxy}
\end{align}

$\bullet$ From Lemma \ref{appendix:lemma:transformed_recursion},  setting $\tau^2_x = \tau^2_y = Kv^2_2$ and $\bx_{1,0}= \ldots= \bx_{K,0}, \by_{1,0}= \ldots= \by_{K,0}$, we can obtain
\begin{align}
&\mE\|\mhE_{z,0}\|^2
\notag \\
&\overset{(a)}{\le} 
\frac{
\mE\|\widehat{\mU
}^\top_x\mZ_{x,0}\|^2
+\mE\|\widehat{\mU
}^\top_y\mZ_{y,0}\|^2
}{K\underline{\lambda^2_b}}  
\notag 
\\
&= \frac{
\mE\|\widehat{\mU
}_x
\widehat{\mU
}^\top_x\mZ_{x, 0}\|^2
+
\mE\|\widehat{\mU
}_y
\widehat{\mU
}^\top_y\mZ_{y, 0}\|^2}{{K\underline{\lambda^2_b}} }
\notag \\
& =
\frac{1}{{K\underline{\lambda^2_b}} }\mE\Big\|\Big(\mathrm{I}_{K}
-\frac{1}{K} \mathds{1}_K \mathds{1}^\top_K  \Big)\otimes \mathrm{I}_{d_1}\mZ_{x,0}\Big\|^2 +\frac{1}{{K\underline{\lambda^2_b}} }
\mE\Big\|\Big(\mathrm{I}_{K}
-\frac{1}{K} \mathds{1}_{K} \mathds{1}^\top_{K}  \Big)\otimes \mathrm{I}_{d_2}\mZ_{y, 0}\Big\|^2
\notag
\\
&\overset{(b)}{=}
\frac{\eta^2_x}{K\underline{\lambda^2_b}} 
\mE\Big\|
\Big(\mA_{x}  - 
\frac{1}{K}
\mathds{1}_{K}
\mathds{1}^\top_{K}
\otimes \mathrm{I}_{d_1}\Big)\mG_{x,0}
\Big\|^2 +
\frac{\eta^2_y}{K\underline{\lambda^2_b}}  
\mE\Big\|
\Big(\mA_{y}  - 
\frac{1}{K}
\mathds{1}_{K}
\mathds{1}^\top_{K}
\otimes \mathrm{I}_{d_2}\Big)\mG_{y,0}
\Big\|^2
\overset{(c)}{\le} 
\frac{\eta^2_y \zeta^2_{0}}{\underline{\lambda^2_b}},
\label{appendix:useful_Exy}
\end{align}
where $(a)$ follows by defining 
$\frac{1}{\underline{\lambda^2_b} } \triangleq \max \Big\{ \frac{1}{\underline{\lambda^2_{b_x}}},  \frac{1}{\underline{\lambda^2_{b_y}}}  \Big\}$ (see \eqref{appendix:definitionforlambda}) and $\widehat{\mU}^\top_x\mX_0 =0, \widehat{\mU}^\top_y\mY_0 =0$,
$(b)$ follows from Assumption \ref{main:assumptions:combinationmatrix} together with the identical initialization of $\bd^x_{k,0}, \bd^y_{k,0}$ such that 
$\mB_x \mD_{x,0} =0, \mB_y \mD_{y,0} =0$, and $(c)$ follows by choosing  $\eta_x \le \eta_y$ and defining
\begin{align}
\zeta^2_{0} 
&\triangleq \frac{1}{K}
\mE\Big\|
\Big(\mA_{x}  - 
\frac{1}{K}
\mathds{1}_{K}
\mathds{1}^\top_{K}
\otimes \mathrm{I}_{d_1}\Big)\mG_{x,0}
\Big\|^2 +\frac{1}{K}
\mE\Big\|
\Big(\mA_{y}  - 
\frac{1}{K}
\mathds{1}_{K}
\mathds{1}^\top_{K}
\otimes \mathrm{I}_{d_2}\Big)\mG_{y,0}
\Big\|^2.
\end{align}

\section{Proof of pure coordinate ZO estimator}
We prove the convergence guarantee for the hybrid ZO estimator presented in Theorem \ref{main:theorem1}.
We start by establishing some key lemmas.
\subsection{Key Lemmas}
\begin{Lemma}[\textbf{Network gradient error}]
\label{appendix:lemma:coorinate_variance}
Under Assumptions \ref{main:assumption:Lipschitz}–\ref{main:assumption:boundedvariance}, we consider $\bu^j_{k,i} \equiv 0$ and
$\bar{\beta} \triangleq p +\beta - p \beta \le 1$, it holds that
\begin{align}
&\frac{1}{T}
\sum_{i=0}^{T-1}
\mE\|\mS_{z,i}\|^2\le 
\frac{2\mS_0}{T\bar{\beta}}
+ 
\frac{72KL^2_fv^2_1v^2_2}{bT\bar{\beta}}
\sum_{i=1}^{T}
\mE\|\mhE_{z,i}\|^2 +\frac{36KL^2_fv^2_1v^2_2 \mhE_0}{bT\bar{\beta}}+\frac{36KL^2_f}{bT\bar{\beta}}
\sum_{i=0}^{T-1}
(\eta^2_x\mE\|\bg^x_{c,i}\|^2 + \eta^2_y\mE\|\bg^y_{c,i}\|^2)
\notag \\
& 
\quad + \frac{K(C'_1+C'_2)}{\bar{\beta}},
\end{align}
where  $\mS_0\triangleq \mE\|\mS_{z,0}\|^2$ and $\mhE_0 \triangleq \mE\|\mhE_{z,0}\|^2$ and 
\begin{align}
C'_1 \triangleq \frac{12(d_1L^2_f\delta^2_x+d_2L^2_f\delta^2_y)}{b} , \quad
C'_2 \triangleq \frac{2pC_0}{B}\mathbb{I}(B<N)
+ \frac{4\beta^2C_0}{b} 
\end{align}
\end{Lemma}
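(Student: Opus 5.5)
The plan is to derive a one-step contraction for $\mE\|\mS_{z,i}\|^2$ with factor $(1-\bar\beta)$, then sum it using the recursion inequality \eqref{proof:useful:recursion_first}. Throughout, $\bs^w_{k,i}=\bg^w_{k,i}-\hnabla_w J_k(\bz_{k,i})$.

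\textbf{Step 1: the event $\bpi_i=1$ (probability $p$).} By Lemma~\ref{appendix:lemma:coordinate_large_bound}, the local large-batch error satisfies
\[
\mE\|\mS_{z,i}\|^2\le 2KC_0\,\mathbb{I}(B<N)/B .
\]

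\textbf{Step 2: the event $\bpi_i=0$ (probability $1-p$).} With $\bu\equiv0$, write
\[
\bs^w_{k,i}=(1-\beta)\bs^w_{k,i-1}+\bm{e}^w_{k,i},
\]
where
\[
\bm{e}^w_{k,i}=(1-\beta)\big[\bq^w_{i,0}(\bz_{k,i};b)-\bq^w_{i,0}(\bz_{k,i-1};b)-\hnabla_wJ_k(\bz_{k,i})+\hnabla_wJ_k(\bz_{k,i-1})\big]+\beta\big[\bq^w_{i,0}(\bz_{k,i};b)-\hnabla_wJ_k(\bz_{k,i})\big].
\]
The fresh samples $\bxi^j_{k,i}$ are independent of $\mathcal{F}_{i-1}$, which contains $\bs_{k,i-1}$, $\bz_{k,i}$ and $\bz_{k,i-1}$.

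The catch is that the ZO-CW estimator is a biased estimate of $\nabla Q$ but an unbiased estimate of $\hnabla_w J_k$, since $\mE_{\bxi}\bq^w_k(z;\bxi,0)=\hnabla_wJ_k(z)$ by linearity of the finite difference. So the cross term vanishes and
\[
\mE\|\bs_{k,i}\|^2=(1-\beta)^2\mE\|\bs_{k,i-1}\|^2+\mE\|\bm{e}_{k,i}\|^2 .
\]

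To bound $\mE\|\bm{e}_{k,i}\|^2$:
\begin{itemize}
\item Split it as $2(1-\beta)^2\|\text{diff}\|^2+2\beta^2\|\text{fresh}\|^2$.
\item The fresh part is at most $C_0/b$ by Lemma~\ref{appendix:lemma:coordinate_large_bound} with batch $b$. This gives the $4\beta^2C_0/b$ term of $C'_2$.
\item For the difference part, first use independence across the $b$ samples to get a $1/b$ factor. Then bound the variance by the second moment $\mE\|\bq^w_k(\bz_{k,i};\bxi,0)-\bq^w_k(\bz_{k,i-1};\bxi,0)\|^2$.
\item Insert $\pm\nabla_wQ_k(\cdot;\bxi)$ at both points and use a three-term Young inequality. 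Lemma~\ref{appendix:lemma:coordinate_zero_first} controls the two ZO-vs-FO gaps by $d_1L_f^2\delta_x^2$ and $d_2L_f^2\delta_y^2$. Assumption~\ref{main:assumption:Lipschitz} (smoothness in expectation only, with no componentwise Lipschitz) gives $L_f^2\|\bz_{k,i}-\bz_{k,i-1}\|^2$.
\item This yields the per-round $C'_1$ term and $(c/b)L_f^2\mE\|\mZ_i-\mZ_{i-1}\|^2$.
\end{itemize}
Taking expectation over $\bpi_i$, and using $(1-p)(1-\beta)^2\le 1-\bar\beta$, we get
\[
\mE\|\mS_{z,i}\|^2\le(1-\bar\beta)\mE\|\mS_{z,i-1}\|^2+K(C'_1+C'_2)+\tfrac{12L_f^2}{b}\mE\|\mZ_i-\mZ_{i-1}\|^2 .
\]

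\textbf{Step 3: substitute the weight drift.} Apply \eqref{appendix:proof:incremental}, which bounds $\mE\|\mZ_i-\mZ_{i-1}\|^2$ by $3Kv_1^2v_2^2(\mE\|\mhE_{z,i}\|^2+\mE\|\mhE_{z,i-1}\|^2)$ plus $3K(\eta_x^2\mE\|\bg^x_{c,i-1}\|^2+\eta_y^2\mE\|\bg^y_{c,i-1}\|^2)$. The recursion now has the form $a_i\le\alpha a_{i-1}+b_i+b_{i-1}$ with $\alpha=1-\bar\beta$.

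\textbf{Step 4: sum.} Apply \eqref{proof:useful:recursion_first} with $1/(1-\alpha)=1/\bar\beta$. The $\mhE_{z,i}$ terms appear twice, producing the $72$ coefficient on $\sum_{i=1}^T\mE\|\mhE_{z,i}\|^2$ and the $36$ coefficient on $\mhE_0$. Adding the $i=0$ term $\mS_0$ (absorbed using $\bar\beta\le1$) and dividing by $T$ gives the stated bound.

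\textbf{Main obstacle.} The hard part is Step 2's difference term. Because smoothness holds only in expectation, ZO estimator differences cannot be bounded directly. The remedy is to route through the FO sample gradients via Lemma~\ref{appendix:lemma:coordinate_zero_first}. This is also exactly where the bias constant $C'_1$ originates.
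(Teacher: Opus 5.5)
Your proposal is correct and follows the paper's proof essentially step for step. It uses the same split over $\bpi_i$, the same decomposition in which the cross term vanishes because the CW estimator is unbiased for $\hnabla_w J_k$, and the same handling of the difference term through $\nabla_w Q_k$ via Lemma~\ref{appendix:lemma:coordinate_zero_first}; it then substitutes \eqref{appendix:proof:incremental} and sums with \eqref{proof:useful:recursion_first}, and the only cosmetic difference is that you keep the factor $(1-\beta)^2$ on the difference term where the paper bounds it by $1$, which gives the same $C'_1$ and $C'_2$.
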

\begin{proof}
Note that $\forall k \in [K]$,
we have 
\begin{align}
&\mE\|\bg^x_{k,i} - \hnabla_x J_{k}(\bz_{k,i})\|^2\notag\\
&\overset{(a)}{=}
p\mE\Big\|
 \bq^x_{i,0}(\bz_{k,i};B)
- \hnabla_x J_k (\bz_{k,i})\Big\|^2 
+(1-p)\mE\Big\|(1-\beta)\Big(\bg^x_{k,i-1} -  \bq^x_{i,0}(\bz_{k,i-1};b)\Big)
+\bq^x_{i,0}(\bz_{k,i};b)
   - \hnabla_x J_k(\bz_{k,i})\Big\|^2,
\label{appendix:proof:CW_ZO:grad_error1}
\end{align}
where $(a)$ follows from the probabilistic gradient estimator.
 The first term of 
\eqref{appendix:proof:CW_ZO:grad_error1}
corresponds to the event $\bpi_i=1$. 
Invoking 
Lemma \ref{appendix:lemma:coordinate_large_bound}, we can bound it as
\begin{align}
\mE\Big\|
 \bq^x_{i,0}(\bz_{k,i};B)
- \hnabla_x J_k (\bz_{k,i})\Big\|^2 \le \frac{C_0}{B} \mathbb{I}(B < N).
\end{align}
The second term corresponds to the event 
$\bpi_i=0$, we have 
\begin{align}
&\mE\Big\|(1-\beta)(
\bg^x_{k,i-1} -  \bq^x_{i,0}(\bz_{k,i-1};b)
)
+  \bq^x_{i,0}(\bz_{k,i};b) -\hnabla_x J_k(\bz_{k,i})\Big\|^2
\notag \\
&=
\mE\Big\|(1-\beta)(
\bg^x_{k,i-1} - \hnabla_x J_k(\bz_{k,i-1})
)+(1-\beta)
\Big( \bq^x_{i,0}(\bz_{k,i};b)
 - \bq^x_{i,0}(\bz_{k,i-1};b)+ \hnabla_x J_k(\bz_{k,i-1})
-\hnabla_x J_k(\bz_{k,i})
\Big) 
\notag\\
&\quad+ \beta \Big(\bq^x_{i,0}(\bz_{k,i};b) -\hnabla_x J_k(\bz_{k,i})\Big)\Big\|^2
\notag 
\\
&\overset{(a)}{\le}   
(1-\beta)^2
\mE\|\bg^x_{k,i-1} - \hnabla_x J_k(\bz_{k,i-1})\|^2
+\mE\Big\|
(1-\beta)
\Big( \bq^x_{i,0}(\bz_{k,i};b)
-  \bq^x_{i,0}(\bz_{k,i-1};b) + \hnabla_x J_k(\bz_{k,i-1})-\hnabla_x J_k(\bz_{k,i})
\Big) 
\notag \\
&\quad 
+ \beta \Big(\bq^x_{i,0}(\bz_{k,i};b) -\hnabla_x J_k(\bz_{k,i})\Big)\Big\|^2 \notag 
\\
&\overset{(b)}{\le} 
(1-\beta)
\mE\|\bg^x_{k,i-1} - \hnabla_x J_k(\bz_{k,i-1})\|^2
+
2\mE\Big\|\Big( \bq^x_{i,0}(\bz_{k,i};b)
 - \bq^x_{i,0}(\bz_{k,i-1};b)+ \hnabla_x J_k(\bz_{k,i-1})
-\hnabla_x J_k(\bz_{k,i})
\Big) \Big\|^2
\notag\\
&\quad +2\beta^2
\mE\Big\|\bq^x_{i,0}(\bz_{k,i};b) -\hnabla_x J_k(\bz_{k,i})\Big\|^2,
\end{align}
where $(a)$ follows from \begin{align}
   \mE_{\bxi^{j}_{k,i}}[\bq^x_{i,0}(\bz_{k,i};b) ] = \hnabla_x J_k(\bz_{k,i}), \notag\\ 
   \mE_{\bxi^{j}_{k,i}}[ \bq^x_{i,0}(\bz_{k,i-1};b)] = 
\hnabla_x J_k(\bz_{k,i-1}),
\end{align}
and the fact that  $\{\bxi^{j}_{k,i}\}$ is i.i.d. sequence;
$(b)$ follows from Jensen's inequality and $(1-\beta)^2 \le (1-\beta) \le 1$.
Note that 
\begin{align}
&\mE\Big\|\bq^x_{i,0}(\bz_{k,i};b) -\hnabla_x J_k(\bz_{k,i})\Big\|^2 \notag \\
&\le 
\frac{3}{b^2}\sum_{j=1}^b
\Big(\mE\|\bq^x_{k}(\bz_{k,i};\bxi^j_{k,i},0) - \nabla_x Q_k(\bz_{k,i};\bxi^j_{k,i})\|^2
+\mE\|\nabla_x Q_k(\bz_{k,i};\bxi^j_{k,i}) - \nabla_x J_k(\bz_{k,i})\|^2\notag 
+ \mE\| \hnabla_x J_k(\bz_{k,i}) - \nabla_x J_k(\bz_{k,i})\|^2\Big)\notag\\
&
\overset{(a)}{\le} 
\frac{C_0}{b}, 
\label{appendix:proof:minibbtach_variance}
\end{align}
where  $(a)$ follows from an argument similar to that used in Lemma \ref{appendix:lemma:coordinate_large_bound}.
On the other hand, we have 
\begin{align}
&\mE\Big\|\bq^x_{i,0}(\bz_{k,i};b)
-  \bq^x_{i,0}(\bz_{k,i-1};b) + \hnabla_x J_k(\bz_{k,i-1})
-\hnabla_x J_k(\bz_{k,i})
 \Big\|^2 \notag \\
&\overset{(a)}{=} \frac{1}{b^2}
\sum_{j=1}^b
\mE\|\bq^x_{k}(\bz_{k,i};\bxi^{j}_{k,i},0) - \bq^x_{k}(\bz_{k,i-1};\bxi^{j}_{k,i},0)  + \hnabla_x J_k(\bz_{k,i-1})
-\hnabla_x J_k(\bz_{k,i})\|^2
 \notag \\
&\overset{(b)}{\le} \frac{1}{b^2}
\sum_{j=1}^b
\mE\|\bq^x_{k}(\bz_{k,i};\bxi^{j}_{k,i},0) - \bq^x_{k}(\bz_{k,i-1};\bxi^{j}_{k,i},0)\|^2,
\end{align}
where $(a)$ follows from 
\begin{align}
&\mE_{\bxi^{j}_{k,i}}[\bq^x_{k}(\bz_{k,i};\bxi^{j}_{k,i},0) - \bq^x_{k}(\bz_{k,i-1};\bxi^{j}_{k,i},0)] = \hnabla_x J_k(\bz_{k,i})-
\hnabla_x J_k(\bz_{k,i-1}).
\end{align}
and the fact that the samples $\{\bxi^{j}_{k,i}\}$ are i.i.d. across $j$, $(b)$ follows from $\mE\|\bxi - \mE \bxi\|^2 \le 
\mE\|\bxi\|^2$.
Note that
\begin{align}
&\mE\|\bq^x_{k}(\bz_{k,i};\bxi^j_{k,i},0) - \bq^x_{k}(\bz_{k,i-1};\bxi^j_{k,i},0)\|^2 \notag \\
&\overset{(a)}{\le}
3\mE\|\bq^x_{k}(\bz_{k,i};\bxi^j_{k,i},0) - \nabla_x Q_k(\bz_{k,i};\bxi^{j}_{k,i})\|^2
 +3\mE\|\nabla_x Q_k(\bz_{k,i};\bxi^{j}_{k,i}) - 
\nabla_x Q_k(\bz_{k,i-1};\bxi^{j}_{k,i})\|^2
\notag \\
&\quad 
+3\mE\|\bq^x_{k}(\bz_{k,i-1};\bxi^j_{k,i},0)- 
\nabla_x Q_k(\bz_{k,i-1};\bxi^{j}_{k,i})
\|^2 \notag \\
&\overset{(b)}{\le} 
6d_1L^2_f\delta^2_x
+3L^2_f\|\bz_{k,i}-\bz_{k,i-1}\|^2,
\end{align}
where $(a)$
follows from Jensen's inequality 
and $(b)$
we invoked 
Lemma \ref{appendix:lemma:coordinate_zero_first}
and used Assumption \ref{main:assumption:Lipschitz}.
Putting results together,
we get 
\begin{align}
&\mE\|\bg^x_{k,i} - \hnabla_x J_{k}(\bz_{k,i})\|^2 \le \frac{pC_0}{B} \mathbb{I}(B < N) +
(1-p)(1-\beta) \mE\|\bg^x_{k,i-1} - \hnabla_x J_k(\bz_{k,i-1})\|^2+
\frac{12(1-p)d_1L^2_f\delta^2_x}{b}
\notag \\
&\quad 
+\frac{6(1-p)L^2_f\|\bz_{k,i} - \bz_{k,i-1}\|^2}{b} 
+\frac{2\beta^2C_0(1-p)}{b}.
\end{align}
We then use 
a symmetric argument to bound $\mE\|\bg^y_{k,i} - \hnabla_y J_{k}(\bz_{k,i})\|^2$.
% we get 
% \begin{align}
% &\mE\|\bg^y_{k,i} - \hnabla_y J_{k}(\bz_{k,i})\|^2\le \frac{pC_0}{B} \mathbb{I}(B < N) +
% (1-p)(1-\beta) \notag \\
% &\quad \times \mE\|\bg^y_{k,i-1} - \hnabla_y J_k(\bz_{k,i-1})\|^2
% +
% \frac{12(1-p)d_2L^2_f\delta^2_y}{b}
% \notag \\
% &\quad 
% +\frac{6(1-p)L^2_f\|\bz_{k,i} - \bz_{k,i-1}\|^2}{b}
% +\frac{2\beta^2C_0(1-p)}{b}.
% \end{align}
Combining the results over $k \in [K]$
and denoting $\bar{\beta} \triangleq p+\beta -p\beta$, we get
\begin{align}
&\mE\|\mS_{z,i}\|^2 
\notag \\
&\le 
(1-\bar{\beta})
\mE\|\mS_{z,i-1}\|^2
+ \frac{12L^2_f\mE\|\mZ_{i}-\mZ_{i-1}\|^2}{b} + \frac{4K\beta^2C_0}{b} 
+\frac{12K(d_1L^2_f\delta^2_x+d_2L^2_f\delta^2_y)}{b}
+\frac{2pKC_0}{B}\mathbb{I}(B<N)
\notag\\
&\overset{(a)}{\le} 
(1-\bar{\beta})
\mE\|\mS_{z,i-1}\|^2
+\frac{36KL^2_f}{b}
(\eta^2_x\mE\|\bg^x_{c,i-1}\|^2
+\eta^2_y\mE\|\bg^y_{c,i-1}\|^2)+\frac{36KL^2_fv^2_1v^2_2(\mE\|\mhE_{z,i}\|^2
+\mE\|\mhE_{z,i-1}\|^2 ) }{b}
\notag\\
&\quad+\frac{12K(d_1L^2_f\delta^2_x+d_2L^2_f\delta^2_y)}{b} +\frac{2pKC_0}{B}\mathbb{I}(B<N)
+ \frac{4K\beta^2C_0}{b},
\label{appendix:lemma_coordinate:step1}
\end{align}
where $(a)$ follows from the inequality 
\eqref{appendix:proof:incremental}.
Applying the above inequality recursively, it holds  for $i= 1, \ldots, T$
that
\begin{align}
&\mE\|\mS_{z,i}\|^2 \notag \\
&\le (1-\bar{\beta})^{i}
\mE\|\mS_{z,0}\|^2 
+\frac{36KL^2_fv^2_1v^2_2}{b}
\sum_{j=1}^{i} (1-\bar{\beta})^{i-j}\mE\|\mhE_{z,j}\|^2
+\frac{36KL^2_fv^2_1v^2_2}{b}
\sum_{j=0}^{i-1} (1-\bar{\beta})^{i-j-1}\mE\|\mhE_{z,j}\|^2  
\notag\\
&\quad 
+\frac{36KL^2_f}{b}\sum_{j=0}^{i-1}(1-\bar{\beta})^{i-j-1}
(\eta^2_x\mE\|\bg^x_{c,j}\|^2+\eta^2_y\mE\|\bg^y_{c,j}\|^2)+\frac{K}{\bar{\beta}}\underbrace{\frac{12(d_1L^2_f\delta^2_x+d_2L^2_f\delta^2_y)}{b}}_{\triangleq C'_1}
+\frac{K}{\bar{\beta}}\underbrace{\Big(\frac{2pC_0}{B}\mathbb{I}(B<N)+ \frac{4\beta^2C_0}{b}\Big)}_{\triangleq C'_2} , 
\label{appendix:lemma_coordinate:step2}
\end{align}
Averaging the above inequality over $i =1, \ldots, T$, we have 
\begin{align}
&\frac{1}{T}
\sum_{i=1}^{T}
\mE\|\mS_{z,i}\|^2  \label{appendix:lemma_coordinate:step_final}\\
% &\le 
% \frac{1}{T}\sum_{i=1}^{T}(1-\bar{\beta})^{i}
% \mE\|\mS_{z,0}\|^2+\frac{36KL^2_fv^2_1v^2_2}{bT}
% \sum_{i=1}^{T}\sum_{j=1}^{i}(1-\bar{\beta})^{i-j}\notag \\
% &\quad
% \mE\|\mhE_{z,j}\|^2 +\frac{36KL^2_fv^2_1v^2_2}{bT}\sum_{i=1}^{T}
% \sum_{j=0}^{i-1} (1-\bar{\beta})^{i-j-1}\mE\|\mhE_{z,j}\|^2\notag \\
% &\quad 
% +\frac{36KL^2_f}{bT}\sum_{i=1}^{T}\sum_{j=0}^{i-1}(1-\bar{\beta})^{i-j-1}
% (\eta^2_x\mE\|\bg^x_{c,j}\|^2+\eta^2_y\mE\|\bg^y_{c,j}\|^2)
% \notag \\
% &\quad 
% +\frac{K(C'_1+C'_2)}{\bar{\beta}}\notag \\
&\le
\frac{\mE\|\mS_{z,0}\|^2}{T\bar{\beta}}
+ 
\frac{36KL^2_fv^2_1v^2_2}{bT\bar{\beta}}
\Big(\sum_{i=1}^{T}
\mE\|\mhE_{z,i}\|^2+
\sum_{i=0}^{T-1}
\mE\|\mhE_{z,i}\|^2\Big) +\frac{36KL^2_f}{bT\bar{\beta}}
\sum_{i=0}^{T-1}
(\eta^2_x\mE\|\bg^x_{c,i}\|^2 + \eta^2_y\mE\|\bg^y_{c,i}\|^2)
+ \frac{K(C'_1+C'_2)}{\bar{\beta}}\notag \\
&\overset{(a)}{\le}  
\frac{\mE\|\mS_{z,0}\|^2}{T\bar{\beta}}
+ 
\frac{72KL^2_fv^2_1v^2_2}{bT\bar{\beta}}
\sum_{i=1}^{T}
\mE\|\mhE_{z,i}\|^2
+\frac{36KL^2_fv^2_1v^2_2}{bT\bar{\beta}}
\mE\|\mhE_{z,0}\|^2+\frac{36KL^2_f}{bT\bar{\beta}}
\sum_{i=0}^{T-1}
(\eta^2_x\mE\|\bg^x_{c,i}\|^2 + \eta^2_y\mE\|\bg^y_{c,i}\|^2)
+ \frac{K(C'_1+C'_2)}{\bar{\beta}}, \notag
\end{align}
where $(a)$ follows from \eqref{proof:useful:recursion_first}.
Denoting $\mS_0 \triangleq \mE\|\mS_{z,0}\|^2$,  
$\mhE_0 \triangleq \mE\|\mhE_{z,0}\|^2$,
and using
$\frac{1}{T\bar{\beta}} \ge \frac{1}{T}$, we can deduce that
$\frac{1}{T}
\sum_{i=0}^{T-1}
\mE\|\mS_{z,i}\|^2 
 \le \frac{1}{T}
\sum_{i=0}^{T}
\mE\|\mS_{z,i}\|^2   \le \frac{1}{T}
\sum_{i=1}^{T}
\mE\|\mS_{z,i}\|^2 + \mS_0/(T\bar{\beta})$. 
The proof is completed by plugging the results
\eqref{appendix:lemma_coordinate:step_final} into this relation.
 \end{proof}
\begin{Lemma}[\textbf{Averaged gradient error}]
\label{appendix:lemma:CW_Nxc_Nyc}
Under Assumptions \ref{main:assumption:Lipschitz}
--\ref{main:assumption:boundedvariance}, 
we consider $\bu^j_{k,i} \equiv 0$ and 
$\bar{\beta} \triangleq p +\beta - p \beta \le 1$,  it follows that
\begin{align}
&\frac{1}{T}
\sum_{i=0}^{T-1}
\mE\|\bs^z_{c,i}\|^2 \le 
\frac{2\bs_{c,0}}{T\bar{\beta}}
+ 
\frac{72L^2_fv^2_1v^2_2}{bKT\bar{\beta}}
\sum_{i=1}^{T}
\mE\|\mhE_{z,i}\|^2
+\frac{36L^2_fv^2_1v^2_2 \mhE_0}{bKT\bar{\beta}}+
\frac{36L^2_f}{bK\bar{\beta}T}
\sum_{i=0}^{T-1}
\eta^2_x\mE\|\bg^x_{c,i}\|^2
\notag \\
& 
\quad 
+\frac{36L^2_f\eta^2_y}{bK\bar{\beta}T}\sum_{i=0}^{T-1}
(1-(1-\bar{\beta})^{T-i}) \mE\|\bg^y_{c,i}\|^2
+\frac{C'_1+C'_2}{K\bar{\beta}}.
\label{appendix:lemma_coordinate:bmc:statement}
\end{align}
where $\bs_{c,0} \triangleq \mE\|\bs^z_{c,0}\|^2$ and $\mhE_0 \triangleq \mE\|\mhE_{z,0}\|^2$ and $C'_1, C'_2$ are defined in Lemma \ref{appendix:lemma:coorinate_variance}.
\end{Lemma}
\begin{proof}
We can complete the proof by following an argument similar to that used in Lemma \ref{appendix:lemma:coorinate_variance}.
\end{proof}
\begin{Lemma}[\textbf{Coupled consensus error}]
\label{appendix:lemma:cw_coupled}
Under Assumptions \ref{main:assumption:Lipschitz}, \ref{main:assumption:boundedvariance},
we consider $\bu^j_{k,i} \equiv 0$ and
\begin{align}
 \eta_x \le \eta_y, \ \eta_y \le 
\frac{(1-\rho)\underline{\lambda_b}}{54\lambda_aL_fv_1v_2}, \ b\ge1,   \ \beta+bp \le b.
\end{align}
For the coupled consensus error $\mhE_{z,i} \triangleq [\mhE_{x,i}; \mhE_{y,i}]$,
we can bound it as follows
\begin{align}
&\frac{1}{T}\sum_{i=0}^{T-1}
\mE\|\mhE_{z,i}\|^2 \le\frac{1}{T}\sum_{i=1}^{T}
\mE\|\mhE_{z,i}\|^2 + \frac{\mhE_0}{T(1-\rho)}  \notag\\
&\le 
\frac{5\mhE_0}{T(1-\rho)}
+
L'_0\sum_{i=0}^{T-1}(\eta^2_x\mE\|\bg^x_{c,i}\|^2 +\eta^2_y\mE\|\bg^y_{c,i}\|^2) 
 I'_0+\frac{20\eta^2_y\lambda^2_a(p+\beta^2)(C'_1+C'_2)}{(1-\rho)^2\underline{\lambda^2_b}\bar{\beta}} 
+ \frac{4\eta^2_y\lambda^2_aC'_3}{(1-\rho)^2\underline{\lambda^2_b}},
\end{align}
where 
\begin{align}
C'_3 &\triangleq \frac{10pC_0}{B}\mathbb{I}(B<N)
+18L^2_f(d_1\delta^2_x+d_2\delta^2_y)+\frac{6\beta^2C_0}{b}, \\
L'_0 &\triangleq \frac{936\eta^2_y\lambda^2_aL^2_f}{(1-\rho)^2\underline{\lambda^2_b}T},
\quad 
I'_0 \triangleq \frac{20\eta^2_y\lambda^2_a(p+\beta^2)}{(1-\rho)^2\underline{\lambda^2_b}K}  \Big(
\frac{2\mS_0}{T\bar{\beta}} +\frac{36KL^2_fv^2_1v^2_2\mhE_0}{bT\bar{\beta}}\Big).
\end{align}
\end{Lemma}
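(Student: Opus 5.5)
Starting from the transformed recursion of Lemma~\ref{appendix:lemma:transformed_recursion}, I would take squared norms in \eqref{main:eq:ex_finalrecursion}--\eqref{main:eq:ey_finalrecursion}. Since $\|\mT_x\|,\|\mT_y\|\le\rho<1$, Jensen's inequality with weights $\rho$ and $1-\rho$ gives
\begin{align}
\mE\|\mhE_{z,i+1}\|^2 \le \rho\,\mE\|\mhE_{z,i}\|^2 + \frac{\eta_y^2\lambda_a^2 v_2^2}{(1-\rho)\tau^2\underline{\lambda_b^2}}\,\mE\|\mG_{z,i+1}-\mG_{z,i}\|^2, \notag
\end{align}
where $\tau^2=Kv_2^2$ and $\eta_x\le\eta_y$. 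This yields a factor of the form $\frac{\eta_y^2\lambda_a^2}{(1-\rho)K\underline{\lambda_b^2}}\mE\|\mG_{z,i+1}-\mG_{z,i}\|^2$. Unrolling and summing via \eqref{proof:useful:recursion_first} brings in the $1/(1-\rho)$ factors and produces the $\mhE_0/(T(1-\rho))$ terms. The first displayed inequality in the statement is just $\sum_{i=0}^{T-1}\le\sum_{i=1}^{T}+\mhE_0$, together with $1\le 1/(1-\rho)$.

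The core step is bounding the increment $\mE\|\mG_{z,i+1}-\mG_{z,i}\|^2$ using the ZO-GRACE estimator \eqref{main:zero_GRACE}, conditioning on $\bpi_{i+1}$.

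\textbf{Case $\bpi_{i+1}=1$ (probability $p$).} Write $\mG_{i+1}-\mG_i=(\mG_{i+1}-\widehat\nabla_{i+1})+(\widehat\nabla_{i+1}-\widehat\nabla_i)-\mS_{z,i}$. Then:
\begin{itemize}
\item the first piece is bounded by Lemma~\ref{appendix:lemma:coordinate_large_bound}, giving $KC_0\mathbb{I}(B<N)/B$;
\item the middle piece is bounded by $L_f$-smoothness plus the CW bias of Lemma~\ref{appendix:lemma:coordinate_zero_first}, giving $L_f^2\|\mZ_{i+1}-\mZ_i\|^2+L_f^2K(d_1\delta_x^2+d_2\delta_y^2)$;
\item the last piece is the network error $\mS_{z,i}$.
\end{itemize}

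\textbf{Case $\bpi_{i+1}=0$.} Here $\mG_{i+1}-\mG_i=-\beta\mS_{z,i}+\beta(\widehat\nabla_{i}-\widehat\nabla_{i+1})+\big[\text{minibatch difference } \bq(\bz_{i+1})-(1-\beta)\bq(\bz_i)\big]$, rearranged in the same way as in Lemma~\ref{appendix:lemma:coorinate_variance}. This gives terms of order $\beta^2\|\mS_{z,i}\|^2$, $L_f^2\|\mZ_{i+1}-\mZ_i\|^2$, $\beta^2KC_0/b$, and $KL_f^2 d\delta^2$.

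Combining the two cases, I expect
\begin{align}
\mE\|\mG_{z,i+1}-\mG_{z,i}\|^2 \lesssim (p+\beta^2)\mE\|\mS_{z,i}\|^2 + L_f^2\mE\|\mZ_{i+1}-\mZ_i\|^2 + K C'_3. \notag
\end{align}
I would then replace $\|\mZ_{i+1}-\mZ_i\|^2$ by \eqref{appendix:proof:incremental}. This produces $\|\mhE_{z,i+1}\|^2+\|\mhE_{z,i}\|^2$ terms and $3K(\eta_x^2\|\bg^x_c\|^2+\eta_y^2\|\bg^y_c\|^2)$. The latter gives the $L'_0$ term, with constant about $936$.

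Next I substitute the time-averaged bound on $\mE\|\mS_{z,i}\|^2$ from Lemma~\ref{appendix:lemma:coorinate_variance}. Multiplied by $\frac{(p+\beta^2)\eta_y^2\lambda_a^2}{(1-\rho)^2K\underline{\lambda_b^2}}$, it produces three contributions:
\begin{itemize}
\item the $I'_0$ term, from its $\mS_0$ and $\mhE_0$ pieces;
\item the $\frac{(p+\beta^2)(C'_1+C'_2)}{\bar\beta}$ term;
\item additional $\sum\mE\|\mhE_{z,i}\|^2$ and $\sum\eta^2\|\bg_c\|^2$ terms, carrying the extra factor $(p+\beta^2)L_f^2/(b\bar\beta)$.
\end{itemize}
Using $\beta+bp\le b$ (equivalently, $(p+\beta^2)/(b\bar\beta)\lesssim 1$), these last terms merge into $L'_0$ and into the $\mhE$-sum.

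\textbf{Main obstacle.} The $\mhE$-sum now appears on both sides. I would absorb it into the left-hand side using the step-size condition $\eta_y\le(1-\rho)\underline{\lambda_b}/(54\lambda_aL_fv_1v_2)$. This makes its total coefficient $\frac{\eta_y^2\lambda_a^2L_f^2v_1^2v_2^2}{(1-\rho)^2\underline{\lambda_b^2}}\cdot\mathcal{O}(1)\le\frac12$. Keeping the constants tight enough to reach exactly $54$, $936$, and $20$ is where the care is needed. The absorption also doubles the remaining terms, which explains the factors $5\mhE_0$ and $20$ in the statement.
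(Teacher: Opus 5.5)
Your proposal follows the paper's proof essentially step for step: the $\rho$-contraction from the transformed recursion, the case split on $\bpi_{i+1}$ giving $5(p+\beta^2)\mE\|\mS_{z,i}\|^2$ plus $L_f^2\|\mZ_{i+1}-\mZ_i\|^2$ and $KC'_3$ terms, the weight-drift bound \eqref{appendix:proof:incremental}, substitution of Lemma~\ref{appendix:lemma:coorinate_variance} under $p+\beta^2\le b\bar\beta$, and absorption of the $\mhE$-sum (the paper absorbs in two $\tfrac14$ stages, which is where $936=2(108+360)$, the $20$, and the $5\mhE_0$ come from). The one slip is your identity for the case $\bpi_{i+1}=0$. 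As written, it exceeds $\mG_{i+1}-\mG_i$ by $\beta(2\widehat\nabla_i-\widehat\nabla_{i+1})$, an uncentered gradient-size term that none of your listed quantities controls. The correct split, which the paper uses, is $\bg^x_{k,i+1}-\bg^x_{k,i}=-\beta\bs^x_{k,i}+\big(\bq^x_{i+1,0}(\bz_{k,i+1};b)-\bq^x_{i+1,0}(\bz_{k,i};b)\big)+\beta\big(\bq^x_{i+1,0}(\bz_{k,i};b)-\widehat\nabla_xJ_k(\bz_{k,i})\big)$, and it yields exactly the terms you list.
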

\begin{proof}
Invoking Lemma \ref{appendix:lemma:transformed_recursion} and choosing $\eta_x \le \eta_y$, we can establish 
\begin{align}
&\|\mhE_{z,i+1}\|^2  \le  
\rho\|\mhE_{z,i}\|^2 + \frac{\eta^2_y\lambda^2_{a}\|\mG_{z,i+1} -\mG_{z,i}\|^2}{(1-\rho)\underline{\lambda^2_{b}}K}. 
\label{appendix:proof:lemma_exey_coordinate_step1}
\end{align}
% Recall the transformed recursion 
% \begin{align}
% \mhE_{x,i+1}
% = \mT_x \mhE_{x,i} - \frac{\eta_x}{\tau_x}
% \mQ^{-1}_x
% \begin{bmatrix}
% 0\\
% \widehat{\Lambda}^{-1}_{b_x}\widehat{\Lambda}_{a_x} \widehat{\mU}^\top_x(\mG_{x,i} -\mG_{x,i+1})
% \end{bmatrix}.
% \label{appendix:proof:lemma_exey_coordinate_step1}
% \end{align}
% Applying Jensen's inequality
% $\|a+b\| \le \frac{1}{t}\|a\|^2 + \frac{1}{1-t}\|b\|^2 $ and selecting $ t  = \|\mT_x\| \in(0,1)$ for the above expression, we get 
% \begin{align}
% \|\mhE_{x,i+1}\|^2 \le 
% \|\mT_x\|\|\mhE_{x,i}\|^2 + \frac{\eta^2_x\|\mQ^{-1}_x\|^2\|\widehat{\Lambda}^{-1}_{b_x}\|^2\|\widehat{\Lambda}_{a_x}\|^2\|\widehat{\mU}^\top_x\|^2\|\mG_{x,i+1} -\mG_{x,i}\|^2}{(1-\|\mT_x\|)\tau^2_x},
% \label{appendix:proof:lemma_exey_coordinate_step2}
% \end{align}
% where $\|\widehat{\mU}^\top_x\|^2 \le 1$.
% Denoting 
% \begin{align}
% \rho \triangleq \max\Big\{\|\mT_x\|, \|\mT_y\|\Big\}, \lambda^2_a \triangleq \max\Big\{\|\widehat{\Lambda}_{a_x}\|^2, \|\widehat{\Lambda}_{a_y}\|^2\Big\}, \underline{\frac{1}{\lambda^2_b}}
% \triangleq\max\Big\{\|\widehat{\Lambda}^{-1}_{b_x}\|^2,\|\widehat{\Lambda}^{-1}_{b_y}\|^2\Big\},
% \label{appendix:proof:lemma_exey_coordinate_step3}
% \end{align}
% and setting $\tau^2_x = Kv^2_2 $, where $ v^2_2 = \max\{\|\mQ^{-1}_x\|^2, \|\mQ^{-1}_y\|^2\}$, it follows that 
% \begin{align}
% \|\mhE_{x,i+1}\|^2 \le 
% \rho\|\mhE_{x,i}\|^2 + \frac{\eta^2_x\lambda^2_{a}\|\mG_{x,i+1} -\mG_{x,i}\|^2}{(1-\rho)\underline{\lambda^2_{b}}K} . \label{appendix:proof:lemma_exey_coordinate_step4}
% \end{align}
We use the probabilistic gradient estimator $\forall k \in [K]$, it follows that
\begin{align}
&\mE\|\bg^x_{k,i+1} - \bg^x_{k,i}\|^2 =
p\mE\Big\|\bq^x_{i+1,0}(\bz_{k,i+1};B) - \bg^x_{k,i}\Big\|^2 + (1-p)\mE\Big\|
(1-\beta)\Big(\bg^x_{k,i} - \bq^x_{i+1,0}(\bz_{k,i};b)\Big)
 + \bq^x_{i+1,0}(\bz_{k,i+1};b) - \bg^x_{k,i}
\Big\|^2.
\end{align}
The first term corresponds to the event $\bpi_{i+1} =1$, we have
\begin{align}
&\mE\Big\|\bq^x_{i+1,0}(\bz_{k,i+1};B) - \bg^x_{k,i}\Big\|^2  \notag \\
&=
\mE\Big\|\bq^x_{i+1,0}(\bz_{k,i+1};B) - \hnabla_x J_k(\bz_{k,i+1})
+
\hnabla_x J_k(\bz_{k,i+1}) 
 - \nabla_x J_k(\bz_{k,i+1}) +
\nabla_x J_k(\bz_{k,i+1})-\nabla_x J_k(\bz_{k,i})
\notag\\&\quad+
\nabla_x J_k(\bz_{k,i})-\hnabla_x J_k(\bz_{k,i}) 
+\hnabla_x J_k(\bz_{k,i}) 
- \bg^x_{k,i}\Big\|^2 \notag \\
&\le 
5\mE\Big\|\bq^x_{i+1,0}(\bz_{k,i+1};B) - \hnabla_x J_k(\bz_{k,i+1})\Big\|^2
+5\mE\|
\hnabla_x J_k (\bz_{k,i+1})- \nabla_x J_k(\bz_{k,i+1})\|^2+5\mE\|\nabla_x J_k(\bz_{k,i+1})-\nabla_x J_k(\bz_{k,i})\|^2\notag \\
&\quad 
+5\mE\|\nabla_x J_k(\bz_{k,i})-\hnabla_x J_k(\bz_{k,i}) \|^2
+5\mE\|\hnabla_x J_k(\bz_{k,i}) 
- \bg^x_{k,i}\|^2
\notag \\
&\overset{(a)}{\le}
\frac{5C_0}{B}\mathbb{I}(B<N) + 10d_1L^2_f\delta^2_x
+5L^2_f\mE\|\bz_{k,i+1}-\bz_{k,i}\|^2 +5\mE\|\hnabla_x J_k(\bz_{k,i}) 
- \bg^x_{k,i}\|^2,
\end{align}
where $(a)$ follows from Lemmas \ref{appendix:lemma:coordinate_zero_first} and  \ref{appendix:lemma:coordinate_large_bound}
and Assumption \ref{main:assumption:Lipschitz}.
The second term can be bounded as follows 
\begin{align}
&\mE\Big\|
(1-\beta)\Big(\bg^x_{k,i} - \bq^x_{i+1,0}(\bz_{k,i};b)\Big)+ \bq^x_{i+1,0}(\bz_{k,i+1};b) - \bg^x_{k,i}
\Big\|^2 \notag \\
&=
\mE\Big\|
-\beta(\bg^x_{k,i} - \hnabla_x J_k(\bz_{k,i}))
-
\Big(\bq^x_{i+1,0}(\bz_{k,i};b) 
 - \bq^x_{i+1,0}(\bz_{k,i+1};b)\Big)
+\beta \Big(
\bq^x_{i+1,0}(\bz_{k,i};b) - \hnabla_x J_k(\bz_{k,i})
\Big)
\Big\|^2 \notag \\
&\le 3\beta^2
\mE\|\bg^x_{k,i} - \hnabla_x J_k(\bz_{k,i})\|^2
+3 \mE\Big\|
\bq^x_{i+1,0}(\bz_{k,i+1};b) 
- \bq^x_{i+1,0}(\bz_{k,i};b) 
\Big\|^2+3\beta^2 \mE\Big\|
\bq^x_{i+1,0}(\bz_{k,i};b)   - \hnabla_x J_k(\bz_{k,i})
\Big\|^2 \notag \\
&\overset{(a)}{\le} 
3\beta^2
\mE\|\bg^x_{k,i} - \hnabla_x J_k(\bz_{k,i})\|^2 +3\mE\Big\|
\frac{1}{b}\sum_{j=1}^{b} (\bq^x_{k}(\bz_{k,i+1};\bxi^j_{k,i+1},0)- 
\nabla_x Q_k(\bz_{k,i+1};\bxi^j_{k,i+1}))\notag\\
&\quad 
 +
\frac{1}{b}\sum_{j=1}^b\Big(\nabla_x Q_k(\bz_{k,i+1};\bxi^j_{k,i+1}) -
\nabla_x Q_k(\bz_{k,i};\bxi^j_{k,i+1})\Big)
 + \frac{1}{b}\sum_{j=1}^{b}\Big(\nabla_x Q_k(\bz_{k,i};\bxi^j_{k,i+1})-\bq^x_{k}(\bz_{k,i};\bxi^j_{k,i+1},0)\Big)
\Big\|^2
\notag\\
&\quad + \frac{3\beta^2C_0}{b} \notag \\
&\overset{(b)}{\le} 
3\beta^2\mE\|\bs^x_{k,i}\|^2
+9L^2_f\mE\|\bz_{k,i+1}-\bz_{k,i}\|^2
+ 18d_1L^2_f\delta^2_x
+\frac{3\beta^2C_0}{b},
\end{align}
where $(a)$ follows from Lemma 
\ref{appendix:lemma:coordinate_large_bound} and \eqref{appendix:proof:minibbtach_variance};
$(b)$ follows from Jensen's inequality,  Lemma \ref{appendix:lemma:coordinate_zero_first}
and Assumption \ref{main:assumption:Lipschitz}.
Collecting the results  $\forall k \in [K]$,
we can derive 
\begin{align}
&\mE\|\mG_{x,i+1} - \mG_{x,i}\|^2 \notag \\
% &=
% 5p\mE\|\mS_{x,i}\|^2 + 3\beta^2(1-p)
% \mE\|\mS_{x,i}\|^2+5pL^2_f\mE\|\mZ_{i+1} -\mZ_{i}\|^2 
% \notag\\
% &\quad +9L^2_f(1-p)\mE\|\mZ_{i+1} - \mZ_{i}\|^2
% + p\Big(
% \frac{5KC_0}{B}\mathbb{I}(B<N)
% \notag\\
% &\quad +10Kd_1L^2_f\delta^2_x
% \Big) 
% +(1-p)\Big(18d_1KL^2_f\delta^2_x
% +\frac{3\beta^2KC_0}{b}
% \Big) \notag\\
&\overset{(a)}{\le} 
5(p+\beta^2)
\mE\|\mS_{x,i}\|^2
+27KL^2_fv^2_1v^2_2(\mE\|\mhE_{z,i+1}\|^2
 +\mE\|\mhE_{z,i}\|^2)+27KL^2_f(\eta^2_x\mE\|\bg^x_{c,i}\|^2 +\eta^2_y\mE\|\bg^y_{c,i}\|^2)\notag\\
&\quad 
+\Big(
\frac{5pC_0}{B}\mathbb{I}(B<N)
+18d_1L^2_f\delta^2_x+\frac{3\beta^2C_0}{b}
\Big)K,
\end{align}
where $(a)$ follows from inequality \eqref{appendix:proof:incremental} and $p\le 1$.
% Plugging the above results into 
% \eqref{appendix:proof:lemma_exey_coordinate_step4}, we get 
% \begin{align}
% &\mE\|\mhE_{x,i+1}\|^2
% \notag \\
% &\le \rho 
% \mE\|\mhE_{x,i}\|^2
% + \frac{\eta^2_x\lambda^2_a}{(1-\rho)\underline{\lambda^2_b}K}\Bigg(
% 5(p+\beta^2)
% \mE\|\mS_{x,i}\|^2
% +27KL^2_fv^2_1v^2_2(\mE\|\mhE_{x,i+1}\|^2
% \notag \\
% &
% \quad +\mE\|\mhE_{y,i+1}\|^2)
% +27KL^2_fv^2_1v^2_2(\mE\|\mhE_{x,i}\|^2+\mE\|\mhE_{y, i}\|^2)+27KL^2_f(\eta^2_x\mE\|\bg^x_{c,i}\|^2 +\eta^2_y\mE\|\bg^y_{c,i}\|^2)
% \notag\\
% &\quad 
% +\Big(
% \frac{5pKC_0}{B}\mathbb{I}(B<N)
% +18d_1KL^2_f\delta^2_x+\frac{3\beta^2KC_0}{b}
% \Big)
% \Bigg).
% \end{align}
We then use a symmetric argument to bound 
$\mE\|\mhE_{y,i+1}\|^2$. Adding up two inequalities into \eqref{appendix:proof:lemma_exey_coordinate_step1}, we obtain
\begin{align}
&\mE\|\mhE_{z,i+1}\|^2\notag \\
&\le 
\rho\mE\|\mhE_{z,i}\|^2
+\frac{5\eta^2_y\lambda^2_a(p+\beta^2)}{(1-\rho)\underline{\lambda^2_b}K}
\mE\|\mS_{z,i}\|^2 + \frac{54\eta^2_y\lambda^2_aL^2_fv^2_1v^2_2}{(1-\rho)\underline{\lambda^2_b}}
 \Big(\mE\|\mhE_{z,i+1}\|^2+\mE\|\mhE_{z,i}\|^2
\Big) + \frac{54\eta^2_y\lambda^2_aL^2_f}{(1-\rho)\underline{\lambda^2_b}}(\eta^2_x\mE\|\bg^x_{c,i}\|^2
\notag\\
&\quad +\eta^2_y\mE\|\bg^y_{c,i}\|^2)+ \frac{\eta^2_y\lambda^2_a}{(1-\rho)\underline{\lambda^2_b}}  \underbrace{\Big(
\frac{10pC_0}{B}\mathbb{I}(B<N)
+18L^2_f(d_1\delta^2_x+d_2\delta^2_y)+\frac{6\beta^2C_0}{b}
\Big)}_{\triangleq C'_3}.
\end{align}
Let 
\begin{align}
\frac{54\eta^2_y\lambda^2_aL^2_fv^2_1v^2_2}{(1-\rho)\underline{\lambda^2_b}} \le \frac{1-\rho}{8} \Longrightarrow
\eta_y \le \frac{(1-\rho)\underline{\lambda_b}}{21\lambda_aL_fv_1v_2}. 
\end{align}
Also note that $\frac{1-\rho}{8} \le \frac{1-\rho}{2}$, we have 
\begin{align}
&\mE\|\mhE_{z,i+1}\|^2 \le 
\frac{1+\rho}{2}\mE\|\mhE_{z,i}\|^2+\frac{5\eta^2_y\lambda^2_a(p+\beta^2)}{(1-\rho)\underline{\lambda^2_b}K}
\mE\|\mS_{z,i}\|^2  + \frac{1-\rho}{8}\mE\|\mhE_{z,i+1}\|^2
+ \frac{54\eta^2_y\lambda^2_aL^2_f}{(1-\rho)\underline{\lambda^2_b}}(\eta^2_x\mE\|\bg^x_{c,i}\|^2+\eta^2_y\mE\|\bg^y_{c,i}\|^2)
\notag \\
&\quad 
 + \frac{\eta^2_y\lambda^2_aC'_3}{(1-\rho)\underline{\lambda^2_b}}.
\end{align}
Applying the above inequality recursively for $i=0, \ldots, T-1$
and averaging the results, we obtain 
\begin{align}
& \frac{1}{T}\sum_{i=0}^{T-1}
\mE\|\mhE_{z,i+1}\|^2  =\frac{1}{T}\sum_{i=1}^{T}
\mE\|\mhE_{z,i}\|^2 \notag \\
% &\le \frac{1}{T}
% \sum_{i=0}^{T-1}
% \Big(\frac{1+\rho}{2}\Big)^{i+1}
% \mE\|\mhE_{z,0}\|^2
% +\frac{5\eta^2_y\lambda^2_a(p+\beta^2)}{(1-\rho)\underline{\lambda^2_b}K} \frac{1}{T}
% \sum_{i=0}^{T-1}
% \sum_{j=0}^{i} \notag \\
% &\quad 
% \Big(\frac{1+\rho}{2}\Big)^{i-j}
% \mE\|\mS_{z,j}\|^2+\frac{1-\rho}{8}
% \frac{1}{T}
% \sum_{i=0}^{T-1}
% \sum_{j=0}^{i}\Big(
% \frac{1+\rho}{2}
% \Big)^{i-j}\mE\|\mhE_{z,j+1}\|^2 \notag \\
% &\quad  
% +\frac{54\eta^2_y\lambda^2_aL^2_f}{(1-\rho)\underline{\lambda^2_b}} \frac{1}{T}
% \sum_{i=0}^{T-1}\sum_{j=0}^{i}
% \Big(
% \frac{1+\rho}{2}
% \Big)^{i-j}(\eta^2_x\mE\|\bg^x_{c,j}\|^2+\notag\\
% &\quad \eta^2_y 
% \mE\|\bg^y_{c,j}\|^2)
% + \frac{1}{T}\sum_{i=0}^{T-1}\sum_{j=0}^{i}\Big(\frac{1+\rho}{2}\Big)^{i-j}\frac{\eta^2_y\lambda^2_aC'_3}{(1-\rho)\underline{\lambda^2_b}} \notag \\
&\overset{(a)}{\le} 
\frac{2\mhE_0}{T(1-\rho)}
+ \frac{10\eta^2_y\lambda^2_a(p+\beta^2)}{(1-\rho)^2\underline{\lambda^2_b}KT}
\sum_{i=0}^{T-1}
\mE\|\mS_{z,i}\|^2+\frac{1}{4T}\sum_{i=1}^{T} 
\mE\|\mhE_{z,i}\|^2
+\frac{108\eta^2_y\lambda^2_aL^2_f}{(1-\rho)^2\underline{\lambda^2_b}T}
\sum_{i=0}^{T-1}(\eta^2_x\mE\|\bg^x_{c,i}\|^2+\eta^2_y\mE\|\bg^y_{c,i}\|^2)
\notag\\
&\quad + \frac{2\eta^2_y\lambda^2_aC'_3}{(1-\rho)^2\underline{\lambda^2_b}} \notag \\
&\overset{(b)}{\le} 
\frac{2\mhE_0}{T(1-\rho)}
+ \frac{10\eta^2_y\lambda^2_a(p+\beta^2)}{(1-\rho)^2\underline{\lambda^2_b}K} \Bigg(
\frac{2\mS_0}{T\bar{\beta}}
+ 
\frac{72KL^2_fv^2_1v^2_2}{bT\bar{\beta}}
\sum_{i=1}^{T}
\mE\|\mhE_{z,i}\|^2+\frac{36KL^2_fv^2_1v^2_2\mhE_0}{bT\bar{\beta}}+\frac{36KL^2_f}{bT\bar{\beta}}
\sum_{i=0}^{T-1}
(\eta^2_x\mE\|\bg^x_{c,i}\|^2
\notag \\
&\quad 
 + \eta^2_y\mE\|\bg^y_{c,i}\|^2)
+\frac{K(C'_1+C'_2)}{\bar{\beta}}\Bigg)+\frac{1}{4T}
\sum_{i=1}^{T}\mE\|\mhE_{z,i}\|^2 
 +\frac{108\eta^2_y\lambda^2_aL^2_f}{(1-\rho)^2\underline{\lambda^2_b}T}
\sum_{i=0}^{T-1}(\eta^2_x\mE\|\bg^x_{c,i}\|^2+\eta^2_y\mE\|\bg^y_{c,i}\|^2) 
+ \frac{2\eta^2_y\lambda^2_aC'_3}{(1-\rho)^2\underline{\lambda^2_b}},
\label{appendix:proof:lemma_exey_coordinate_step_recursion}
\end{align}
where $(a)$ follows from useful inequality \eqref{proof:useful:recursion_first},
$(b)$ follows from Lemma \ref{appendix:lemma:coorinate_variance}.
Choosing $\eta_x \le \eta_y$ and
\begin{subequations}
\begin{align}
&\frac{10\eta^2_y\lambda^2_a(p+\beta^2)}{(1-\rho)^2\underline{\lambda^2_b}K} \frac{36KL^2_f}{b\bar{\beta}} \le \frac{360\eta^2_y\lambda^2_aL^2_f}{(1-\rho)^2\underline{\lambda^2_b}} \Longrightarrow
p+\beta^2 \le b \bar{\beta}
\Longrightarrow
0 \le (b-1)p +\beta(b -\beta - bp)  \Longrightarrow
b \ge 1, \beta +bp \le b, \\ \notag\\
&\frac{10\eta^2_y\lambda^2_a(p+\beta^2)}{(1-\rho)^2\underline{\lambda^2_b}K} \frac{72KL^2_fv^2_1v^2_2}{b\bar{\beta}} \le \frac{1}{4} \Longrightarrow\eta_y \le \frac{(1-\rho)\underline{\lambda_b}}{54L_f\lambda_av_1v_2}\sqrt{\frac{b\bar{\beta}}{p+\beta^2}}  \label{proof:stepcondition1} ,
% &\frac{10\eta^2_y\lambda^2_a(p+\beta^2)}{(1-\rho)^2\underline{\lambda^2_b}K} \times \frac{36KL^2_f}{b\bar{\beta}} \le \frac{360\eta^2_y\lambda^2_aL^2_f}{(1-\rho)^2\underline{\lambda^2_b}}\Longrightarrow \notag 
% \\
% &
% b \ge 1, \beta +bp \le b, 
\end{align}
\end{subequations}
where the second condition can be simplified into 
$\eta_y  \le \frac{(1-\rho)\underline{\lambda_b}}{54L_f\lambda_av_1v_2}$ using $p+\beta^2 \le b \bar{\beta}$.
% \begin{align}
% \frac{10\eta^2_y\lambda^2_a(p+\beta^2)}{(1-\rho)^2\underline{\lambda^2_b}K} \times \frac{2pKC_0}{B\bar{\beta}} \le \frac{2\eta^2_y \lambda^2_a}{(1-\rho)^2\underline{\lambda^2_b}} \times \frac{10pC_0}{B} & \Longrightarrow
% p+\beta^2 \le \bar{\beta}  \notag 
% \\
% &\Longrightarrow 0 \le \beta(1-\beta-p) 
% \notag \\
% &\Longrightarrow p+\beta \le 1 ,\\
% \frac{10\eta^2_y\lambda^2_a(p+\beta^2)}{(1-\rho)^2\underline{\lambda^2_b}K} \times \frac{4K\beta^2C_0}{b\bar{\beta}}
% \le \frac{2\eta^2_y\lambda^2_a}{(1-\rho)^2\underline{\lambda^2_b}} \times \frac{20\beta^2C_0}{b} &\Longrightarrow
% p +\beta^2 \le \bar{\beta} \Longrightarrow p+\beta \le 1, \end{align}
% \begin{align}
% \frac{10\eta^2_y\lambda^2_a(p+\beta^2)}{(1-\rho)^2\underline{\lambda^2_b}K} \times 
% \frac{12K(d_1\delta^2_x+d_2\delta^2_y)L^2_f}{b\bar{\beta}} \le \frac{2\eta^2_y\lambda^2_a}{(1-\rho)^2\underline{\lambda^2_b}} \times 
% 60(d_1\delta^2_x+d_2\delta^2_y)L^2_f
% &\Longrightarrow b \ge 1, \beta +bp \le b.
% \end{align}
We can then conclude that 
\begin{align}
&\frac{1}{T}\sum_{i=1}^{T}
\mE\|\mhE_{z,i}\|^2  \le \frac{1}{2T}\sum_{i=1}^{T}
\mE\|\mhE_{z,i}\|^2 +
\frac{2\mhE_0}{T(1-\rho)}
+\frac{468\eta^2_y\lambda^2_aL^2_f}{(1-\rho)^2\underline{\lambda^2_b}T}\sum_{i=0}^{T-1}(\eta^2_x\mE\|\bg^x_{c,i}\|^2 +\eta^2_y\mE\|\bg^y_{c,i}\|^2) 
\notag\\
&\quad +\frac{10\eta^2_y\lambda^2_a(p+\beta^2)}{(1-\rho)^2\underline{\lambda^2_b}K}
  \Big(
\frac{2\mS_0}{T\bar{\beta}} +\frac{36KL^2_fv^2_1v^2_2\mhE_0}{bT\bar{\beta}} + \frac{K(C'_1+C'_2)}{\bar{\beta}}\Big) + \frac{2\eta^2_y\lambda^2_aC'_3}{(1-\rho)^2\underline{\lambda^2_b}}.
\end{align}
Moving $\frac{1}{2T}\sum_{i=1}^{T}
(\mE\|\mhE_{x,i}\|^2+\mE\|\mhE_{y,i}\|^2) $ to the left hand side, it follows that
\begin{align}
&\frac{1}{T}\sum_{i=1}^{T}
\mE\|\mhE_{z,i}\|^2\le 
\frac{4\mhE_0}{T(1-\rho)}
+\frac{936\eta^2_y\lambda^2_aL^2_f}{(1-\rho)^2\underline{\lambda^2_b}T}
\sum_{i=0}^{T-1}(\eta^2_x\mE\|\bg^x_{c,i}\|^2  +\eta^2_y\mE\|\bg^y_{c,i}\|^2) 
+\frac{20\eta^2_y\lambda^2_a(p+\beta^2)}{(1-\rho)^2\underline{\lambda^2_b}K}  \Big(
\frac{2\mS_0}{T\bar{\beta}} +\frac{36KL^2_fv^2_1v^2_2\mhE_0}{bT\bar{\beta}}\notag \\
&\quad 
+ \frac{K(C'_1+C'_2)}{\bar{\beta}}
\Big)+ \frac{4\eta^2_y\lambda^2_aC'_3}{(1-\rho)^2\underline{\lambda^2_b}}.
\end{align}
Adding $\frac{\mhE_0}{T(1-\rho)}$ to both sides of the above relation and using $\frac{\mhE_0}{T(1-\rho)} > \frac{\mhE_0}{T}(\rho <1)$, we can further conclude that 
\begin{align}
&\frac{1}{T}\sum_{i=0}^{T-1}
\mE\|\mhE_{z,i}\|^2 \notag\\
&\le\frac{1}{T}\sum_{i=1}^{T}
\mE\|\mhE_{z,i}\|^2 + \frac{\mhE_0}{T(1-\rho)}  \notag\\
&\le 
\frac{5\mhE_0}{T(1-\rho)}
+\frac{936\eta^2_y\lambda^2_aL^2_f}{(1-\rho)^2\underline{\lambda^2_b}T}
\sum_{i=0}^{T-1}(\eta^2_x\mE\|\bg^x_{c,i}\|^2 +\eta^2_y\mE\|\bg^y_{c,i}\|^2) 
+\frac{20\eta^2_y\lambda^2_a(p+\beta^2)}{(1-\rho)^2\underline{\lambda^2_b}K}  \Big(
\frac{2\mS_0}{T\bar{\beta}} +\frac{36KL^2_fv^2_1v^2_2\mhE_0}{bT\bar{\beta}} + \frac{K(C'_1+C'_2)}{\bar{\beta}}
\Big)\notag \\
&\quad 
+ \frac{4\eta^2_y\lambda^2_aC'_3}{(1-\rho)^2\underline{\lambda^2_b}}.
\end{align}
\end{proof}
\begin{Lemma}[\textbf{Descent relation}]
\label{appendix:lemma:costfunction}
    Under Assumptions \ref{main:assumption:costfunction}---\ref{main:assumption:Lipschitz},  
    we consider $\bu^j_{k,i} \equiv 0$ and then bound the  value function $P(\bx_{c,i+1})$
    as follows 
    \begin{align}
      & P(\bx_{c,i+1})\le   P(\bx_{c,i})
-\frac{\eta_x}{2}
\|\nabla P(\bx_{c,i})\|^2
-\frac{\eta_x}{2}
(1-L\eta_x)
\|\bg^x_{c,i}\|^2
+4\eta_x \kappa L_f \Delta^y_{c,i} + 2\eta_x L^2_fv^2_1 v^2_2 
\|\mhE_{z,i}\|^2 
+ 2\eta_x\|\bs^x_{c,i}\|^2 \notag\\
&+ 2\eta_x\delta^2_xL^2_fd_1.
    \end{align}
\end{Lemma}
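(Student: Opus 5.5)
We use the $L$-smoothness of $P$ from Lemma~\ref{Danskin} together with the averaged recursion \eqref{main:eq:x_finalrecursion}, $\bx_{c,i+1}=\bx_{c,i}-\eta_x\bg^x_{c,i}$. The descent inequality gives
\begin{align}
P(\bx_{c,i+1})\le P(\bx_{c,i})-\eta_x\langle\nabla P(\bx_{c,i}),\bg^x_{c,i}\rangle+\frac{L\eta_x^2}{2}\|\bg^x_{c,i}\|^2 . \notag
\end{align}
We then apply the polarization identity $-\langle a,b\rangle=-\tfrac12\|a\|^2-\tfrac12\|b\|^2+\tfrac12\|a-b\|^2$ with $a=\nabla P(\bx_{c,i})$ and $b=\bg^x_{c,i}$. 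This produces the terms $-\frac{\eta_x}{2}\|\nabla P(\bx_{c,i})\|^2$ and $-\frac{\eta_x}{2}(1-L\eta_x)\|\bg^x_{c,i}\|^2$, plus the residual $\frac{\eta_x}{2}\|\nabla P(\bx_{c,i})-\bg^x_{c,i}\|^2$. Everything that remains is bounding this residual.

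For the residual we write $\bg^x_{c,i}=\bs^x_{c,i}+\frac1K\sum_k\hnabla_xJ_k(\bz_{k,i})$, using the pure ZO-CW definition of $\bs^x_{k,i}$. This splits $\nabla P(\bx_{c,i})-\bg^x_{c,i}$ into four pieces:
\begin{itemize}
\item[(i)] $\nabla_xJ(\bx_{c,i},\by^o(\bx_{c,i}))-\nabla_xJ(\bz_{c,i})$;
\item[(ii)] $\frac1K\sum_k(\nabla_xJ_k(\bz_{c,i})-\nabla_xJ_k(\bz_{k,i}))$;
\item[(iii)] $\frac1K\sum_k(\nabla_xJ_k(\bz_{k,i})-\hnabla_xJ_k(\bz_{k,i}))$;
\item[(iv)] $-\bs^x_{c,i}$.
\end{itemize}
Applying $\|\sum_{j=1}^4 a_j\|^2\le4\sum_j\|a_j\|^2$ multiplies each piece by $2\eta_x$, which matches the coefficients $2\eta_x\|\bs^x_{c,i}\|^2$ and $2\eta_x\delta_x^2L_f^2d_1$ in the statement. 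We bound the pieces one at a time.

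Piece (i) is at most $L_f^2\|\by_{c,i}-\by^o(\bx_{c,i})\|^2$ by smoothness. The quadratic growth property in Lemma~\ref{Danskin} then gives at most $\frac{2L_f^2}{\nu}\Delta^y_{c,i}=2\kappa L_f\Delta^y_{c,i}$, which yields the $4\eta_x\kappa L_f\Delta^y_{c,i}$ term.

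Piece (ii) is handled by Jensen's inequality and smoothness of each $J_k$, which follows from Assumption~\ref{main:assumption:Lipschitz} via Jensen on the expectation. This gives at most $\frac{L_f^2}{K}\|\mZ_i-\mZ_{c,i}\|^2$, and \eqref{proof:useful:consensus} bounds it by $L_f^2v_1^2v_2^2\|\mhE_{z,i}\|^2$.

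Piece (iii) is at most $d_1L_f^2\delta_x^2$ by Lemma~\ref{appendix:lemma:coordinate_zero_first}, averaged with Jensen. Piece (iv) stays as $\|\bs^x_{c,i}\|^2$.

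The main obstacle is purely bookkeeping: the constants must come out exactly as stated. The ZO-bias and consensus pieces must be kept separate from the sampling error $\bs^x_{c,i}$. Also, the expected-smoothness assumption only controls expectations, so the deterministic-looking inequality is really used pointwise for $J_k$, whose gradient is Lipschitz by Jensen.
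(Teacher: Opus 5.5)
Your proposal is correct and matches the paper's proof step for step. It uses the same $L$-smooth descent inequality with polarization and the same four error pieces, which the paper obtains by first splitting around $\frac{1}{K}\sum_k \nabla_x J_k(\bz_{k,i})$ and then splitting each half in two, giving the identical $2\eta_x$ coefficients. Each piece is then bounded exactly as you do: quadratic growth yields the $4\eta_x\kappa L_f$ term, the consensus inequality yields the $2\eta_x L_f^2 v_1^2 v_2^2$ term, and the ZO-CW bias lemma yields the $2\eta_x d_1 L_f^2\delta_x^2$ term.
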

\begin{proof}
 Since \(P(\cdot)\) is \(L\)-smooth, we have
\begin{align}
&P(\bx_{c,i+1}) \notag \\
&\le 
P(\bx_{c,i})
- \eta_x \langle 
\nabla P(\bx_{c,i}), \bg^x_{c,i}
\rangle + \frac{L\eta^2_x}{2}\|\bg^x_{c,i}\|^2 \notag \\
&\overset{(a)}{\le} 
P(\bx_{c,i})
-\frac{\eta_x}{2}
\|\nabla P(\bx_{c,i})\|^2
-\frac{\eta_x}{2} (1-L\eta_x)
\|\bg^x_{c,i}\|^2  +
\eta_x
\Big\|\nabla P(\bx_{c,i})- \frac{1}{K}\sum_{k=1}^K \nabla^x_{k,i}\Big\|^2 +\eta_x \Big\|\bg^x_{c,i}- \frac{1}{K}\sum_{k=1}^{K}\nabla^x_{k,i}\Big\|^2
\notag\\
&\overset{(b)}{\le} 
P(\bx_{c,i})
-\frac{\eta_x}{2}
\|\nabla P(\bx_{c,i})\|^2
-\frac{\eta_x}{2}
(1-L\eta_x)
\|\bg^x_{c,i}\|^2
+
2\eta_x\Big\|\nabla P(\bx_{c,i}) - \frac{1}{K}
\sum_{k=1}^K\nabla_x J_k(\bz_{c,i})\Big\|^2 +2\eta_x\Big\| \frac{1}{K}\sum_{k=1}^{K}
\notag \\
&\quad 
 (\nabla_x J_k(\bz_{c,i}) - \nabla^x_{k,i})\Big\|^2 +2\eta_x\|\bs^x_{c,i}\|^2 + 2\eta_x\Big\| \frac{1}{K}\sum_{k=1}^{K}(\hnabla_x J_k(\bz_{k,i}) -\nabla^x_{k,i})\Big\|^2\notag \\
&\overset{(c)}{\le} 
P(\bx_{c,i})
-\frac{\eta_x}{2}
\|\nabla P(\bx_{c,i})\|^2
-\frac{\eta_x}{2}
(1-L\eta_x)
\|\bg^x_{c,i}\|^2
 +2\eta_xL^2_f\|\by_{c,i}-\by^o(\bx_{c,i})\|^2 + \frac{2\eta_xL^2_f}{K}\|\mZ_i-\mZ_{c,i}\|^2 
\notag \\
&\quad + 2\eta_x\|\bs^x_{c,i}\|^2+ 2\eta_x\delta^2_xL^2_fd_1 \notag \\
&\overset{(d)}{\le} P(\bx_{c,i})
-\frac{\eta_x}{2}
\|\nabla P(\bx_{c,i})\|^2
-\frac{\eta_x}{2}
(1-L\eta_x)
\|\bg^x_{c,i}\|^2
+2\eta_xL^2_f\|\by_{c,i} - \by^o(\bx_{c,i})\|^2+ 2\eta_xL^2_f v^2_1 v^2_2 
\|\mhE_{z,i}\|^2   \notag \\
&\quad + 2\eta_x\|\bs^x_{c,i}\|^2
 + 2\eta_x\delta^2_xL^2_fd_1\notag \\
&\overset{(e)}{\le} P(\bx_{c,i})
-\frac{\eta_x}{2}
\|\nabla P(\bx_{c,i})\|^2
-\frac{\eta_x}{2}
(1-L\eta_x)
\|\bg^x_{c,i}\|^2
 +4\eta_x \kappa L_f \Delta^y_{c,i}
+ 2 \eta_xL^2_f v^2_1 v^2_2 
\|\mhE_{z,i}\|^2  + 2\eta_x\|\bs^x_{c,i}\|^2 \notag \\&\quad+ 2\eta_x\delta^2_xL^2_fd_1,
\end{align}
where $(a)$ and $(b)$ follow from Jensen's inequality, 
$(c)$ follows from  Assumption \ref{main:assumption:Lipschitz} and Lemma \ref{appendix:lemma:coordinate_zero_first};$(d)$ follows from the relation \eqref{proof:useful:consensus}; $(e)$ follows from Lemma \ref{Danskin}.
\end{proof}
\begin{Lemma}[\textbf{Duality gap}]
\label{lemma:optimality_gap:coordinate}
Under Assumptions \ref{main:assumption:costfunction}---\ref{main:assumption:Lipschitz}, choosing step sizes
$\eta_x \le \min\{\frac{\eta_y}{16\kappa^2}, \frac{1}{32L}\}, \eta_y \le \min\{\frac{1}{\nu}, \frac{1}{2L_f}\}$, 
we can bound the optimality gap $\Delta^y_{c,i}$  as follows
\begin{align}
&\frac{1}{T}\sum_{i=0}^{T-1}\Delta^y_{c,i} \le 
\frac{3}{T\nu\eta_y}
\Delta^y_{c,0}
+
\frac{\eta_x}{4\nu\eta_yT} 
\sum_{i=0}^{T-1}
\|\bg^x_{c,i}\|^2-\frac{\eta_y}{4T}\sum_{i=1}^{T}
\sum_{j=0}^{i-1}
\Big(
1- \frac{\nu\eta_y}{2}\Big)^{i-j-1}
\|\bg^y_{c,j}\|^2+
\frac{4\kappa L_fv^2_1v^2_2}{T}
\sum_{i=0}^{T-1}
\|\mhE_{z,i}\|^2 \notag \\
&\quad 
+\frac{8}{T\nu}
\sum_{i=0}^{T-1}
\|\bs^y_{c,i}\|^2 +8\kappa L_fd_2\delta^2_y. 
\end{align}
\end{Lemma}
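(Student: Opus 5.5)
We track the gap $\Delta^y_{c,i}=P(\bx_{c,i})-J(\bx_{c,i},\by_{c,i})$ through one step of the averaged recursions \eqref{main:eq:x_finalrecursion}--\eqref{main:eq:y_finalrecursion}. We split the increment as
\[
\Delta^y_{c,i+1}-\Delta^y_{c,i}=\big[P(\bx_{c,i+1})-P(\bx_{c,i})\big]-\big[J(\bx_{c,i+1},\by_{c,i+1})-J(\bx_{c,i},\by_{c,i})\big].
\]
For the $P$-part we reuse the smoothness expansion from Lemma \ref{appendix:lemma:costfunction}.

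For the $J$-part we expand in two pieces, the $x$-move and then the $y$-move. The $x$-move gives
\[
J(\bx_{c,i+1},\by_{c,i})\ge J(\bz_{c,i})-\eta_x\langle\nabla_xJ(\bz_{c,i}),\bg^x_{c,i}\rangle-\tfrac{L_f\eta_x^2}{2}\|\bg^x_{c,i}\|^2 .
\]
The ascent step in $y$ is treated like a gradient step:
\[
J(\bz_{c,i+1})\ge J(\bx_{c,i+1},\by_{c,i})+\eta_y\langle\nabla_yJ(\bx_{c,i+1},\by_{c,i}),\bg^y_{c,i}\rangle-\tfrac{L_f\eta_y^2}{2}\|\bg^y_{c,i}\|^2 .
\]
We rewrite the inner product with $2\langle a,b\rangle=\|a\|^2+\|b\|^2-\|a-b\|^2$. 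Using $\eta_y\le 1/(2L_f)$, this leaves $+\frac{\eta_y}{2}\|\nabla_yJ\|^2+\frac{\eta_y}{4}\|\bg^y_{c,i}\|^2$ minus an error $\frac{\eta_y}{2}\|\nabla_yJ(\bx_{c,i+1},\by_{c,i})-\bg^y_{c,i}\|^2$.

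That error is split exactly as in step (b)--(c) of Lemma \ref{appendix:lemma:costfunction}, into four parts:
\begin{itemize}
\item a shift in $x$, bounded by $L_f^2\eta_x^2\|\bg^x_{c,i}\|^2$;
\item a consensus part $L_f^2\|\mZ_i-\mZ_{c,i}\|^2/K\le L_f^2v_1^2v_2^2\|\mhE_{z,i}\|^2$, via \eqref{proof:useful:consensus};
\item the ZO-CW bias, bounded by $d_2L_f^2\delta_y^2$ via Lemma \ref{appendix:lemma:coordinate_zero_first};
\item the averaged error $\|\bs^y_{c,i}\|^2$.
\end{itemize}

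Next we apply the PL condition at $(\bx_{c,i+1},\by_{c,i})$: $\|\nabla_yJ\|^2\ge2\nu\,[P(\bx_{c,i+1})-J(\bx_{c,i+1},\by_{c,i})]$. The bracket is $\Delta^y_{c,i}$ plus the $P$- and $J$-increments in $x$. These cross terms combine into $\eta_x\langle\nabla P(\bx_{c,i})-\nabla_xJ(\bz_{c,i}),\bg^x_{c,i}\rangle$ plus $O(L\eta_x^2)\|\bg^x_{c,i}\|^2$. We bound this by Young's inequality as
\[
\tfrac{\eta_x}{c}L_f^2\|\by_{c,i}-\by^o(\bx_{c,i})\|^2+c\,\eta_x\|\bg^x_{c,i}\|^2,
\]
and convert the first term to $\kappa L_f\Delta^y_{c,i}$ with the quadratic growth property in Lemma \ref{Danskin}. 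The condition $\eta_x\le\eta_y/(16\kappa^2)$ is what lets this $\eta_x\kappa L_f\Delta^y_{c,i}$ term be absorbed into the contraction $\nu\eta_y\Delta^y_{c,i}$.

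The resulting one-step inequality should read
\[
\Delta^y_{c,i+1}\le\Big(1-\tfrac{\nu\eta_y}{2}\Big)\Delta^y_{c,i}+\tfrac{\eta_x}{4}\cdot(\text{const})\|\bg^x_{c,i}\|^2-\tfrac{\eta_y}{4}\|\bg^y_{c,i}\|^2+\eta_y\big(2L_f^2v_1^2v_2^2\|\mhE_{z,i}\|^2+4\|\bs^y_{c,i}\|^2+4d_2L_f^2\delta_y^2\big).
\]
The coefficient of $\|\bg^x_{c,i}\|^2$ is fixed using $\eta_x\le 1/(32L)$ and $\eta_y\le 1/\nu$. We unroll it to get $\Delta^y_{c,i}\le(1-\nu\eta_y/2)^i\Delta^y_{c,0}+\sum_{j<i}(1-\nu\eta_y/2)^{i-j-1}(\cdots)$. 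We keep the negative $\|\bg^y\|^2$ terms in their double-sum form, which is why the statement retains them unsimplified. Then we average over $i$ and bound geometric sums by $2/(\nu\eta_y)$. This turns the coefficient $\eta_y\cdot4\cdot\frac{2}{\nu\eta_y}$ into $8/\nu$ for $\|\bs^y\|^2$, into $4\kappa L_f v_1^2v_2^2$ for $\|\mhE\|^2$, into $8\kappa L_fd_2\delta_y^2$ for the bias, and gives the $\Delta^y_{c,0}$ term with constant $3/(T\nu\eta_y)$ after counting the $i=0$ term.

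The main obstacle is the cross-term handling in the PL step. Getting $\eta_x\kappa L_f\Delta$ dominated by $\nu\eta_y\Delta/4$ requires tracking constants carefully so that the $\|\bg^x\|^2$ coefficient comes out as $\eta_x/(4\nu\eta_y)$ after averaging. If the direct absorption fails, I would first try evaluating PL at $(\bx_{c,i+1},\by_{c,i})$ and shifting $P$ via its $L$-smoothness instead.
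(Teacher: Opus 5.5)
Your proposal is correct and follows the same argument the paper invokes by reference to Lemma 5 of \cite{cai2025dama2}: a descent-lemma lower bound for the $y$-ascent step, PL at $(\bx_{c,i+1},\by_{c,i})$, Young's inequality plus quadratic growth for the $x$-cross term, unrolling with contraction $1-\nu\eta_y/2$, and $\Delta^y_{c,T}\ge 0$ to obtain the $3/(T\nu\eta_y)$ constant. One bookkeeping point: to land exactly on $\eta_x/(4\nu\eta_y)$, use the sharp weighting $\eta_x|\langle a,b\rangle|\le 4\eta_x\|a\|^2+\frac{\eta_x}{16}\|b\|^2$. Then $8\eta_x\kappa L_f\le\nu\eta_y/2$ is exactly the condition $\eta_x\le\eta_y/(16\kappa^2)$, and the per-step coefficient is $\frac{\eta_x}{16}+L\eta_x^2+2\eta_yL_f^2\eta_x^2\le\frac{\eta_x}{8}$; by contrast, the $\frac{1}{c}$/$c$ split as you wrote it forces $c\ge\frac14$ and only yields $5\eta_x/(8\nu\eta_y)$.
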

\begin{proof}
The proof can be completed by using an argument similar to that used in \cite[Lemma 5]{cai2025dama2}.
\end{proof}

\subsection{Proof of Theorem \ref{main:theorem1}}
\label{appendix:subsection:theorem1}
\begin{Theorem}[Restatement of Theorem \ref{main:theorem1}]
\label{appendix:restatement:theorem1}
Under Assumptions
\ref{main:assumption:costfunction}---\ref{main:assumptions:combinationmatrix}, we consider $\bu^j_{k,i} \equiv 0$ and choose hyperparameters satisfying
\begin{align}
 \eta_x &\le \min\Bigg\{\frac{1}{32L}, \frac{\eta_y}{16\kappa^2}, \frac{\sqrt{A_1}}{72\sqrt{3} \kappa L_f} \Bigg\}, p+\beta\le1,\
 b\ge1,  \notag \\
   b\bar{\beta} &\le \frac{129}{K},\ \beta+bp \le b, \ \delta_x \le \frac{\sigma}{L_f\sqrt{d_1}}, 
 \delta_y \le \frac{\sigma}{L_f\sqrt{d_2}},
 \\
 \eta_y &\le \min 
\Bigg\{ \frac{1}{\nu}, \frac{1}{2L_f}, \frac{\sqrt{A_1}}{72\sqrt{2}L_f} ,\frac{(1-\rho)\underline{\lambda_b}}{54A_3},\frac{(1-\rho)^{\frac{1}{2}}\underline{\lambda^{\frac{1}{2}}_b}(A_1)^{\frac{1}{4}}}{74L_f (\kappa A_2)^{\frac{1}{2}}},\frac{ (1-\rho)^{\frac{2}{3}}
\underline{\lambda^{\frac{2}{3}}_b}
(A_1)^{\frac{1}{3}}}{ 203L_f \kappa^{\frac{1}{3}} (A_2)^\frac{2}{3}}\Bigg\}, \bar{\beta} \le \frac{\nu\eta_y}{2},
\end{align}
where
\begin{align}
A_1 \triangleq bK\bar{\beta}, A_2 \triangleq v_1v_2\lambda_a, A_3 \triangleq L_fv_1v_2\lambda_a.
\end{align}
The performance bound is given by 
\begin{align}
&\frac{1}{T}\sum_{i=0}^{T-1}\Big(\mE\|\nabla_x J(\bx_{c,i}, \by_{c,i})\|^2 + \mE\|\nabla_y J(\bx_{c,i}, \by_{c,i})\|^2\Big)\notag \\
&\le \mathcal{O}
\Big(
\underbrace{\Pi'_0}_{\text{initial gap}} +\underbrace{\frac{\kappa^2\lambda^2_a\eta^2_y(p+\beta^2)\sigma^2}{bb_0K\bar{\beta}^2T(1-\rho)^2\underline{\lambda^2_b}}}_{\text{network noise}} + \underbrace{\Pi'_1 \sigma^2 \mathbb{I}(B<N) }_{\text{large-batch effect}}+
 \underbrace{ \frac{\kappa^2\sigma^2}{b_0\bar{\beta}KT}}_{\text{initial noise}}+  \underbrace{\Pi'_2\sigma^2}_{\text{ZO-CW noise}}
+\underbrace{\Pi'_3}_{\text{ZO-CW bias}} \Big),
\end{align}
where 
\begin{subequations}
\begin{align}
\Pi'_0 &\triangleq \frac{\mE \Delta'_{c,0}}{\eta_xT} 
+  \frac{\kappa^2\mE \Delta^y_{c,0}}{\eta_y T} + \frac{\kappa^2\eta^2_y\zeta^2_0}{bK\bar{\beta}T(1-\rho)\underline{\lambda^2_b}}, \\
\Pi'_1 &\triangleq \frac{\kappa^2\eta^2_y\lambda^2_a p}{bK\bar{\beta}B(1-\rho)^2\underline{\lambda^2_b}}    +\frac{\kappa^2p}{BK\bar{\beta}}
, \quad \Pi'_2 \triangleq \frac{\kappa^2\beta^2}{bK\bar{\beta}},\\
\Pi'_3 &\triangleq
\frac{\kappa^2\eta^2_y\lambda^2_a(d_1\delta^2_x+d_2\delta^2_y)}{bK\bar{\beta}(1-\rho)^2\underline{\lambda^2_b}}
+\frac{\kappa^2(d_1\delta^2_x+d_2\delta^2_y)}{bK\bar{\beta}} 
+ d_1\delta^2_x+d_2\kappa^2\delta^2_y.
\end{align}
\end{subequations}
\end{Theorem}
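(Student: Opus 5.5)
We will run a Lyapunov-type argument combining the descent relation for the value function with the duality-gap bound, and then close the loop using the gradient-error and consensus-error lemmas. Define the potential $\Delta'_{c,i} \triangleq P(\bx_{c,i}) - P^\star$. First, sum Lemma~\ref{appendix:lemma:costfunction} over $i=0,\dots,T-1$ and divide by $\eta_x T$. This gives an upper bound on $\frac{1}{T}\sum_i \mE\|\nabla P(\bx_{c,i})\|^2$ by
\begin{itemize}
\item $\frac{2\mE\Delta'_{c,0}}{\eta_x T}$;
\item minus $(1-L\eta_x)\frac{1}{T}\sum_i\mE\|\bg^x_{c,i}\|^2$;
\item plus $8\kappa L_f\frac1T\sum_i\mE\Delta^y_{c,i}$;
\item plus the consensus term $4L_f^2v_1^2v_2^2\frac1T\sum_i\mE\|\mhE_{z,i}\|^2$;
\item plus the averaged-error term $4\frac1T\sum_i\mE\|\bs^x_{c,i}\|^2$;
\item plus the bias $4\delta_x^2L_f^2d_1$.
\end{itemize}
Then substitute Lemma~\ref{lemma:optimality_gap:coordinate} for $\frac1T\sum_i\Delta^y_{c,i}$. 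Because it is multiplied by $\kappa L_f$, this produces $\kappa^2$-weighted copies of $\Delta^y_{c,0}/(\eta_y T)$, of the consensus error, of $\|\bs^y_{c,i}\|^2$, and of $d_2\delta_y^2$. It also produces a term $\frac{\kappa L_f\eta_x}{\nu\eta_y}\sum\|\bg^x_{c,i}\|^2$, which the condition $\eta_x\le \eta_y/(16\kappa^2)$ lets us absorb into the negative $-\frac{1}{2}\sum\|\bg^x_{c,i}\|^2$. Finally it produces a negative discounted sum of $\|\bg^y_{c,j}\|^2$. Using $\bar\beta\le \nu\eta_y/2$, we have $(1-\bar\beta)^{T-i}$ weighted sums dominated by $(1-\nu\eta_y/2)^{\cdot}$ sums, so this negative term can be matched against the $\eta_y^2\|\bg^y_{c,i}\|^2$ terms appearing later. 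That is exactly why Lemma~\ref{appendix:lemma:CW_Nxc_Nyc} carries the factor $(1-(1-\bar\beta)^{T-i})$.

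Next, insert Lemma~\ref{appendix:lemma:CW_Nxc_Nyc} for $\frac1T\sum\mE\|\bs^z_{c,i}\|^2$, with weight $O(\kappa^2)$ since the $y$-part carries $8\kappa L_f/\nu$. Also insert Lemma~\ref{appendix:lemma:cw_coupled} for $\frac1T\sum\mE\|\mhE_{z,i}\|^2$, with weight $O(\kappa L_f^2 v_1^2v_2^2)$, noting that the $\mhE$ sum inside Lemma~\ref{appendix:lemma:CW_Nxc_Nyc} feeds back into Lemma~\ref{appendix:lemma:cw_coupled}. This yields terms of three kinds:
\begin{itemize}
\item Initial terms, namely $\mS_0,\bs_{c,0},\mhE_0$. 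We bound these through \eqref{appendix:useful_Sxy} and \eqref{appendix:useful_Exy}. This produces the initial-noise term $\kappa^2\sigma^2/(b_0\bar\beta KT)$, the $\zeta_0^2$ part of $\Pi'_0$, and the network-noise term, which comes from $I'_0\cdot\mS_0$ with $\mS_0=O(K\sigma^2/b_0)$.
\item Constant terms. The constants $C_1',C_2',C_3'$ with $C_0=O(\sigma^2)$ under the stated $\delta$ bounds give $\Pi'_1$ (the $p/B$ parts), $\Pi'_2$ (the $\beta^2/b$ part), and $\Pi'_3$ (the $d\delta^2$ parts).
\item Positive multiples of $\sum(\eta_x^2\|\bg^x_{c,i}\|^2+\eta_y^2\|\bg^y_{c,i}\|^2)$.
\end{itemize}
Lastly, convert $\nabla P$ and $\Delta^y$ into the game-stationarity measure via \eqref{appendix:useful:bound_gradient}. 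This requires the $\Delta^y$ average one more time, again handled by Lemma~\ref{lemma:optimality_gap:coordinate}.

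The main obstacle is the bookkeeping that makes all positive $\|\bg^x_{c,i}\|^2,\|\bg^y_{c,i}\|^2$ coefficients dominated by the available negative ones. For the $x$-part the available negative coefficient is $\tfrac14$ after absorption. For the $y$-part it is $\kappa L_f\eta_y/4$ times the discounted sum. The coefficients to beat are:
\begin{itemize}
\item $\frac{\kappa^2 L_f^2\eta_y^2}{bK\bar\beta}$, from the averaged error. Beating it requires $\eta_y\lesssim \sqrt{A_1}/L_f$ and $\eta_x\lesssim\sqrt{A_1}/(\kappa L_f)$.
\item $\kappa L_f^2v_1^2v_2^2 L_0'$-type terms $\frac{\kappa\eta_y^2\lambda_a^2L_f^2 v_1^2v_2^2}{(1-\rho)^2\underline{\lambda_b^2}}$, from consensus.
\item Their products with $\frac{L_f^2}{bK\bar\beta}$.
\end{itemize}
Each such comparison produces one of the step-size constraints listed in the restatement. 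Examples are the $(1-\rho)^{1/2}\underline{\lambda_b}^{1/2}A_1^{1/4}/(\kappa A_2)^{1/2}$ bound and the $(1-\rho)^{2/3}$ bound. We also use $b\bar\beta\le 129/K$ to compare $\kappa^2$-versus-$\kappa$ weighted cross terms, and $\beta+bp\le b$ to invoke Lemma~\ref{appendix:lemma:cw_coupled}. I would first verify each inequality separately with generous absolute constants. After that, all gradient-norm terms drop out, and collecting the remaining terms under $\mathcal{O}(\cdot)$ gives the stated bound.
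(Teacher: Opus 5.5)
Your proposal follows the paper's proof essentially step for step. You sum the descent lemma and substitute the duality-gap lemma, then insert Lemmas~\ref{appendix:lemma:CW_Nxc_Nyc} and~\ref{appendix:lemma:cw_coupled}. Next you cancel the $\mE\|\bg^x_{c,i}\|^2$ and $\mE\|\bg^y_{c,i}\|^2$ terms through the listed step-size conditions: $\bar\beta\le\nu\eta_y/2$ gives $1-(1-\bar\beta)^{T-i}\le 1-(1-\nu\eta_y/2)^{T-i}$, and $1-(1-\nu\eta_y/2)^{T-i}\ge\nu\eta_y/2$ handles the undiscounted consensus terms. You finish with \eqref{appendix:useful:bound_gradient}.

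Two bookkeeping points. First, the paper adds $\kappa L_f\mE\Delta^y_{c,i}$ to the left-hand side at the outset, so the duality-gap lemma is used once with weight $9\kappa L_f$; fold your ``one more time'' step in there, before the cancellations rather than after them. Second, the consensus error then enters with weight $O(\kappa^2L_f^2v_1^2v_2^2/(bK\bar\beta))$, not $O(\kappa L_f^2v_1^2v_2^2)$. The condition $b\bar\beta\le 129/K$ is what lets this coefficient dominate the direct $40\kappa^2L_f^2v_1^2v_2^2$ one, and this weight is what produces the $\kappa^{-1/2}$ and $\kappa^{-1/3}$ factors in the stated $\eta_y$ bounds.
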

\begin{proof}
Invoking Lemma \ref{appendix:lemma:costfunction},
we can derive that 
\begin{align}
&\frac{1}{T}\sum_{i=0}^{T-1}\Big(\mE\|\nabla P(\bx_{c,i})\|^2 + \kappa L_f\mE\Delta^y_{c,i}\Big)\notag \\
&\le  \sum_{i=0}^{T-1}\frac{2\mE(P(\bx_{c,i}) - P(\bx_{c,i+1}))}{\eta_xT}
-\frac{1-L\eta_x}{T}
\sum_{i=0}^{T-1}\mE\|\bg^x_{c,i}\|^2
 +\frac{9\kappa L_f}{T}
\sum_{i=0}^{T-1}
\mE \Delta^y_{c,i}+\frac{4L^2_fv^2_1v^2_2}{T}
\sum_{i=0}^{T-1}\mE\|\mhE_{z,i}\|^2 \notag\\
&\quad + \frac{4}{T}
\sum_{i=0}^{T-1}\mE\|\bs^x_{c,i}\|^2 +4\delta^2_xL^2_fd_1.
\end{align}
Invoking Lemma 
\ref{lemma:optimality_gap:coordinate},
we get 
\begin{align}
&\frac{1}{T}\sum_{i=0}^{T-1}\Big(\mE\|\nabla P(\bx_{c,i})\|^2 + \kappa L_f\mE\Delta^y_{c,i}\Big)\notag \\
% &\le  \sum_{i=0}^{T-1}\frac{2\mE(P(\bx_{c,i}) - P(\bx_{c,i+1}))}{\eta_xT}
% -\frac{1-L\eta_x}{T}
% \sum_{i=0}^{T-1}\mE\|\bg^x_{c,i}\|^2
% +
% \Big(
% \frac{27\kappa^2\mE\Delta_{c,0}}{\eta_y T}
% +
% \frac{9\eta_x\kappa^2}{4\eta_yT} 
% \sum_{i=0}^{T-1}
% \mE\|\bg^x_{c,i}\|^2
% \notag\\
% &\quad 
% -\frac{9\eta_y\kappa L_f}{4T}\sum_{i=1}^{T}
% \sum_{j=0}^{i-1}\Big(
% 1- \frac{\nu\eta_y}{2}\Big)^{i-j-1}
% \mE\|\bg^y_{c,j}\|^2 +\frac{36\kappa^2L^2_fv^2_1v^2_2}{T} \sum_{i=0}^{T-1} (\mE\|\mhE_{x,i}\|^2
% +\mE\|\mhE_{y,i}\|^2)
% +\frac{36\kappa^2}{T}
% \sum_{i=0}^{T-1}\mE\|\bs^y_{c,i}\|^2\Big)
% \notag \\
% &\quad +\frac{4L^2_fv^2_1v^2_2}{T}
% \sum_{i=0}^{T-1}(\mE\|\mhE_{x,i}\|^2
% +\mE\|\mhE_{y,i}\|^2) + \frac{4}{T}
% \sum_{i=0}^{T-1}\mE\|\bs^x_{c,i}\|^2  +4\delta^2_xL^2_fd_1\\
&\overset{(a)}{\le} \frac{2\mE\Delta'_{c,0}}{\eta_xT} 
-\frac{1-L\eta_x}{T}
\sum_{i=0}^{T-1}\mE\|\bg^x_{c,i}\|^2
+
\Big(
\frac{27\kappa^2\mE\Delta^y_{c,0}}{\eta_y T}
+
\frac{9\eta_x\kappa^2}{4\eta_yT}  
\sum_{i=0}^{T-1}
\mE\|\bg^x_{c,i}\|^2-\frac{9\eta_y\kappa L_f}{4T}\sum_{i=1}^{T}
\sum_{j=0}^{i-1}\Big(
1- \frac{\nu\eta_y}{2}\Big)^{i-j-1}
\mE\|\bg^y_{c,j}\|^2 \Big)\notag\\
&\quad+\frac{40\kappa^2 L^2_fv^2_1v^2_2}{T}
\sum_{i=0}^{T-1}\mE\|\mhE_{z,i}\|^2+ \frac{72\kappa^2}{T}
\sum_{i=0}^{T-1}\mE\|\bs^z_{c,i}\|^2 
+\underbrace{4d_1\delta^2_xL^2_f+72d_2\kappa^2L^2_f\delta^2_y}_{\triangleq C'_4}
\notag \\
&\overset{(b)}{\le} 
\frac{2\mE\Delta'_{c,0}}{\eta_xT} 
-\frac{2}{3T}
\sum_{i=0}^{T-1}\mE\|\bg^x_{c,i}\|^2
+
\frac{27\kappa^2\mE\Delta^y_{c,0}}{\eta_y T}-\frac{9\eta_y\kappa L_f}{4T}
 \sum_{i=1}^{T}
\sum_{j=0}^{i-1}\Big(
1- \frac{\nu\eta_y}{2}\Big)^{i-j-1}
\mE\|\bg^y_{c,j}\|^2 
+\frac{40\kappa^2 L^2_fv^2_1v^2_2}{T}\sum_{i=0}^{T-1}\mE\|\mhE_{z,i}\|^2
\notag\\
&\quad  + \frac{72\kappa^2}{T}
\sum_{i=0}^{T-1}\mE\|\bs^z_{c,i}\|^2 +C'_4,
\end{align}
where $(a)$ follows by introducing $\mE\Delta'_{c,0} \triangleq \mE(P(\bx_{c,0}) - P^\star)$ and  using the relations $4<72\kappa^2$ for $\mE\|\bs^x_{c,i}\|^2$  and $4L^2_fv^2_1v^2_2<4\kappa^2L^2_fv^2_1v^2_2$ for $\mE\|\mhE_{z,i}\|^2$, $(b)$ 
follows by choosing 
\begin{align}
L\eta_x \le \frac{1}{6} &\Longrightarrow \eta_x \le \frac{1}{6L},\quad 
\frac{9\eta_x \kappa^2}{4\eta_y} \le \frac{1}{6} \Longrightarrow  \frac{\eta_x}{\eta_y} \le \frac{1}{14\kappa^2}.
\end{align}
Invoking Lemma \ref{appendix:lemma:CW_Nxc_Nyc},
we get 
\begin{align}
&\frac{1}{T}\sum_{i=0}^{T-1}\Big(\mE\|\nabla P(\bx_{c,i})\|^2 + \kappa L_f\mE\Delta^y_{c,i}\Big)\notag \\
&\le  
\frac{2\mE\Delta'_{c,0}}{\eta_xT} 
-\frac{2}{3T}
\sum_{i=0}^{T-1}\mE\|\bg^x_{c,i}\|^2
+
\frac{27\kappa^2\mE\Delta^y_{c,0}}{\eta_y T}-\frac{9\eta_y\kappa L_f}{4T}
\sum_{i=1}^{T}
\sum_{j=0}^{i-1}\Big(
1- \frac{\nu\eta_y}{2}\Big)^{i-j-1}
\mE\|\bg^y_{c,j}\|^2 
+\frac{40\kappa^2 L^2_fv^2_1v^2_2}{T}
\sum_{i=0}^{T-1}\mE\|\mhE_{z,i}\|^2
\notag\\
&\quad 
+ 
\Bigg(\frac{144\kappa^2\bs_{c,0}}{T\bar{\beta}}
+ 
\frac{72^2 \kappa^2L^2_fv^2_1v^2_2}{bKT\bar{\beta}}
\sum_{i=1}^{T}
\mE\|\mhE_{z,i}\|^2
+\frac{72^2 \kappa^2 L^2_fv^2_1v^2_2 \mhE_0}{bKT\bar{\beta}}+
\frac{72^2\kappa^2L^2_f}{bK\bar{\beta}T}
\sum_{i=0}^{T-1}
\eta^2_x\mE\|\bg^x_{c,i}\|^2
\notag\\
&\quad+\frac{72^2\kappa^2L^2_f\eta^2_y}{bK\bar{\beta}T}\sum_{i=0}^{T-1}
(1-(1-\bar{\beta})^{T-i})\mE\|\bg^y_{c,i}\|^2
 + \frac{72\kappa^2(C'_1+C'_2)}{K\bar{\beta}}\Bigg) +C'_4 \notag \\
&\overset{(a)}{\le} \frac{2\mE\Delta'_{c,0}}{\eta_xT} 
-\frac{1}{3T}
\sum_{i=0}^{T-1}\mE\|\bg^x_{c,i}\|^2
+
\frac{27\kappa^2\mE\Delta^y_{c,0}}{\eta_y T}-\frac{9\eta_y\kappa L_f}{4T}\sum_{i=1}^{T}
\sum_{j=0}^{i-1}\Big(
1- \frac{\nu\eta_y}{2}\Big)^{i-j-1}
\mE\|\bg^y_{c,j}\|^2 + 
\frac{ 72^2 \kappa^2L^2_fv^2_1v^2_2}{bKT\bar{\beta}}
\notag\\
&\quad 
\times
\Big(\sum_{i=1}^{T}
\mE\|\mhE_{z,i}\|^2
+
\sum_{i=0}^{T-1}\mE\|\mhE_{z,i}\|^2
\Big)
+ 
\Bigg(\frac{144\kappa^2\bs_{c,0}}{T\bar{\beta}}
+\frac{72^2 \kappa^2 L^2_fv^2_1v^2_2 \mhE_0}{bKT\bar{\beta}}
+\frac{72^2\kappa^2L^2_f\eta^2_y}{bK\bar{\beta}T}\sum_{i=0}^{T-1}
(1-(1-\bar{\beta})^{T-i})\mE\|\bg^y_{c,i}\|^2
\notag \\
& 
\quad + \frac{72\kappa^2(C'_1+C'_2)}{K\bar{\beta}}\Bigg) + C'_4\notag \\
&\overset{(b)}{\le} 
\frac{2\mE\Delta'_{c,0}}{\eta_xT} 
-\frac{1}{3T}
\sum_{i=0}^{T-1}\mE\|\bg^x_{c,i}\|^2
+
\frac{27\kappa^2\mE\Delta^y_{c,0}}{\eta_y T} -\frac{5\eta_y\kappa L_f}{4T} \sum_{i=1}^{T}
\sum_{j=0}^{i-1}\Big(
1- \frac{\nu\eta_y}{2}\Big)^{i-j-1}
\mE\|\bg^y_{c,j}\|^2\notag\\
&\quad  + 
\frac{ 72^2 \kappa^2L^2_fv^2_1v^2_2}{bKT\bar{\beta}}
\Big(\sum_{i=1}^{T}
\mE\|\mhE_{z,i}\|^2+
\sum_{i=0}^{T-1}\mE\|\mhE_{z,i}\|^2
 \Big)+ 
\Bigg(\frac{144\kappa^2\bs_{c,0}}{T\bar{\beta}}
+\frac{72^2 \kappa^2 L^2_fv^2_1v^2_2 \mhE_0}{bKT\bar{\beta}} +\frac{72\kappa^2(C'_1+C'_2)}{K\bar{\beta}}\Bigg)  + C'_4,
\end{align}
where $(a)$ follows by choosing 
\begin{align}
\frac{72^2\kappa^2L^2_f \eta^2_x}{bK\bar{\beta}} \le \frac{1}{3} &\Longrightarrow 
\eta_x \le \frac{\sqrt{bK\bar{\beta}}}{72\sqrt{3}\kappa L_f}, \\
40\kappa^2L^2_fv^2_1v^2_2  \le \frac{72^2 \kappa^2L^2_fv^2_1v^2_2}
{bK\bar{\beta}} & \Longrightarrow 
b\bar{\beta} \le \frac{129}{K},
\end{align}
and $(b)$ follows by choosing 
\begin{align}
&\frac{72^2\kappa^2L^2_f\eta^2_y}{bK\bar{\beta}}
\le 2\kappa^2 \Longrightarrow \eta_y \le \frac{\sqrt{bK\bar{\beta}}}{72\sqrt{2}L_f}, \\
&(1-(1-\bar{\beta})^{T-i}) \le \Big(
1-\Big(
1-\frac{\nu\eta_y}{2}
\Big)^{T-i}
\Big) \Longrightarrow 
\bar{\beta} \le \frac{\nu\eta_y}{2}, \bar{\beta} \le 1, \eta_y \le \frac{2}{\nu},
\end{align}
such that
\begin{align}
&\frac{\eta_y\kappa L_f}{T}
\sum_{i=1}^{T}\sum_{j=0}^{i-1}\Big(1 - \frac{\nu \eta_y}{2}\Big)^{i-j-1}
\mE\|\bg^y_{c,j}\|^2 \notag \\
&=
\frac{2\kappa^2}{T}\sum_{i=0}^{T-1} \Big(1- \Big(1-\frac{\nu\eta_y}{2}\Big)^{T-i}\Big)
\mE\|\bg^y_{c,i}\|^2 \notag \\
&\ge 
\frac{72^2\kappa^2L^2_f\eta^2_y}{bK\bar{\beta}T}
\sum_{i=0}^{T-1}(1-(1-\bar{\beta})^{T-i})\mE\|\bg^y_{c,i}\|^2.
\label{proof:appendix:eliminate_gyc}
\end{align}
Invoking Lemma \ref{appendix:lemma:cw_coupled}, we have 
\begin{align}
&\frac{1}{T}\sum_{i=0}^{T-1}\Big(\mE\Delta'_{c,0}\Big)\notag \\
&\le 
\frac{2\mE\Delta'_{c,0}}{\eta_xT} 
-\frac{1}{3T}
\sum_{i=0}^{T-1}\mE\|\bg^x_{c,i}\|^2
+
\frac{27\kappa^2\mE\Delta^y_{c,0}}{\eta_y T}-\frac{5\eta_y\kappa L_f}{4T}\sum_{i=1}^{T}
\sum_{j=0}^{i-1}\Big(
1- \frac{\nu\eta_y}{2}\Big)^{i-j-1}
\mE\|\bg^y_{c,j}\|^2 
\notag\\
&\quad 
+ 
\frac{ 2\times72^2 \kappa^2L^2_fv^2_1v^2_2}{bK\bar{\beta}}
\Bigg( \frac{5\mhE_0}{T(1-\rho)}
+L'_0
\sum_{i=0}^{T-1}(\eta^2_x\mE\|\bg^x_{c,i}\|^2  
+\eta^2_y\mE\|\bg^y_{c,i}\|^2)
+I'_0+\frac{20\eta^2_y\lambda^2_a(p+\beta^2)(C'_1+C'_2)}{(1-\rho)^2\underline{\lambda^2_b}\bar{\beta}} 
\notag \\
&\quad + \frac{4\eta^2_y\lambda^2_aC'_3}{(1-\rho)^2\underline{\lambda^2_b}}\Bigg)
+ 
\Bigg(\frac{144\kappa^2\bs_{c,0}}{T\bar{\beta}}
+\frac{72^2 \kappa^2 L^2_fv^2_1v^2_2 \mhE_0}{bKT\bar{\beta}} +\frac{72\kappa^2(C'_1+C'_2)}{K\bar{\beta}}\Bigg) + C'_4.
\end{align}
We then choose 
\begin{align}
&\frac{2\times 72^2\kappa^2L^2_fv^2_1v^2_2}{bK\bar{\beta}}
\times \underbrace{\frac{936 \eta^2_y\lambda^2_a L^2_f}{(1-\rho)^2\underline{\lambda^2_b}T}}_{L'_0} \times\eta^2_x\le \frac{1}{3T} \Longrightarrow
\eta_y \le  \frac{(1-\rho)^{\frac{1}{2}}\underline{\lambda^{\frac{1}{2}}_b}(bK\bar{\beta})^{\frac{1}{4}}}{74L_f (\kappa \lambda_a v_1v_2)^{\frac{1}{2}}} ,\\
&\frac{2\times 72^2\kappa^2L^2_fv^2_1v^2_2}{bK\bar{\beta}}
\times \underbrace{\frac{936 \eta^2_y\lambda^2_a L^2_f}{(1-\rho)^2\underline{\lambda^2_b}T}}_{L'_0} \times\eta^2_y \le \frac{\kappa^2 \nu \eta_y}{T} \Longrightarrow \eta_y \le 
\frac{ (1-\rho)^{\frac{2}{3}}
\underline{\lambda^{\frac{2}{3}}_b}
(bK\bar{\beta})^{\frac{1}{3}}}{ 203\kappa^{\frac{1}{3}} L_f (\lambda_a v_1v_2)^\frac{2}{3}},
\end{align}

such that 
\begin{align}
&\frac{\eta_y\kappa L_f}{T}
\sum_{i=1}^{T}\sum_{j=0}^{i-1}\Big(1 - \frac{\nu \eta_y}{2}\Big)^{i-j-1}
\mE\|\bg^y_{c,j}\|^2 \notag \\
&=
\frac{2\kappa^2}{T}\sum_{i=0}^{T-1} \Big(1- \Big(1-\frac{\nu\eta_y}{2}\Big)^{T-i}\Big)
\mE\|\bg^y_{c,i}\|^2 \notag \\
&\ge \frac{2\kappa^2}{T}\sum_{i=0}^{T-1} \Big(1- \Big(1-\frac{\nu\eta_y}{2}\Big)\Big)
\mE\|\bg^y_{c,i}\|^2 \notag \\
& = \frac{\kappa^2 \nu \eta_y}{T}\sum_{i=0}^{T-1}\mE\|\bg^y_{c,i}\|^2 \notag\\
&\ge \frac{2\times 72^2\kappa^2L^2_fv^2_1v^2_2}{bK\bar{\beta}}
\times \frac{936 \eta^4_y\lambda^2_a L^2_f}{(1-\rho)^2\underline{\lambda^2_b}T}\sum_{i=0}^{T-1}\mE\|\bg^y_{c,i}\|^2 .
\label{proof:appendix:eliminate_gyc_cw}
\end{align}
Finally, we can conclude that 
\begin{align}
&\frac{1}{T}\sum_{i=0}^{T-1}\Big(\mE\|\nabla P(\bx_{c,i})\|^2 + \kappa L_f\mE\Delta_{c,i}\Big)\notag \\
&\le 
\frac{2\mE\Delta'_{c,0}}{\eta_xT} 
+
\frac{27\kappa^2\mE\Delta^y_{c,0}}{\eta_y T}+ 
\frac{ 2\times 72^2 \kappa^2L^2_fv^2_1v^2_2}{bK\bar{\beta}}
\Bigg( \frac{5\mhE_0}{T(1-\rho)} 
+I'_0+\frac{20\eta^2_y\lambda^2_a(p+\beta^2)(C'_1+C'_2)}{(1-\rho)^2\underline{\lambda^2_b}\bar{\beta}} 
+ \frac{4\eta^2_y\lambda^2_aC'_3}{(1-\rho)^2\underline{\lambda^2_b}}\Bigg)+ 
\notag \\
&\quad \Bigg(
\frac{144\kappa^2\bs_{c,0}}{T\bar{\beta}}
+\frac{72^2 \kappa^2 L^2_fv^2_1v^2_2 \mhE_0}{bKT\bar{\beta}} +\frac{72\kappa^2(C'_1+C'_2)}{K\bar{\beta}}\Bigg) + C'_4 \notag\\
% \notag\\
% &\le 
% \frac{2\mE G_{p,0}}{\eta_xT} 
% +
% \frac{27\kappa^2\mE\Delta_{c,0}}{\eta_y T} 
% + 
% \frac{2\times 36^2 \kappa^2L^2_fv^2_1v^2_2}{bK\bar{\beta}}
% \Bigg( \frac{5\mhE_0}{T(1-\rho)}+\frac{20\eta^2_y\lambda^2_a(p+\beta^2)}{(1-\rho)^2\underline{\lambda^2_b}K}  \Big(
% \frac{2\mS_0}{T\bar{\beta}}+\frac{36KL^2_fv^2_1v^2_2\mhE_0}{bT\bar{\beta}}\Big)
% \notag \\
% &\quad +\frac{4\eta^2_y\lambda^2_a}{(1-\rho)^2\underline{\lambda^2_b}}\Big(
% \frac{20pKc^2_0}{B}\mathbb{I}(B<N) +78(d_1\delta^2_x
% +d_2\delta^2_y)L^2_f
% +\frac{26\beta^2c^2_0}{b}
% \Big)\Bigg)+ 
% \Bigg(\frac{72\kappa^2\bs_{c,0}}{T\bar{\beta}}+\frac{36^2 \kappa^2 L^2_fv^2_1v^2_2 \mhE_0}{bKT\bar{\beta}}\notag \\
% &\quad 
% +\frac{72\kappa^2pc^2_0}{BK\bar{\beta}}\mathbb{I}(B<N)
% + \frac{144\kappa^2\beta^2c^2_0}{bK\bar{\beta}} \notag+\frac{432\kappa^2(d_1L^2_f\delta^2_x+d_2L^2_f\delta^2_y)}{bK\bar{\beta}} +4\delta^2_xL^2_fd_1\Bigg) \notag \\
&\overset{(a)}{\le} \mathcal{O}
\Bigg(
\frac{\mE \Delta'_{c,0}}{\eta_xT} 
+  \frac{\kappa^2\mE \Delta^y_{c,0}}{\eta_y T} + \frac{\kappa^2\eta^2_y\zeta^2_0}{bK\bar{\beta}T(1-\rho)\underline{\lambda^2_b}} 
+ \frac{\kappa^2\lambda^2_a\eta^2_y(p+\beta^2)\sigma^2}{bb_0K\bar{\beta}^2T(1-\rho)^2\underline{\lambda^2_b}} 
+\frac{\kappa^2\lambda^2_a\eta^4_y(p+\beta^2)\zeta^2_0}{b^2K\bar{\beta}^2T(1-\rho)^2\underline{\lambda^4_b}} \notag\\
&\quad +\frac{\kappa^2\eta^2_y\lambda^2_a(p+\beta^2)}{bK\bar{\beta}^2(1-\rho)^2\underline{\lambda^2_b}} \Big( \frac{d_1\delta^2_x +d_2\delta^2_y}{b} +
\frac{p\sigma^2}{B} \mathbb{I}(B<N) + \frac{\beta^2\sigma^2}{b}\Big)+
\frac{\kappa^2\lambda^2_a \eta^2_y }{bK\bar{\beta}(1-\rho)^2\underline{\lambda^2_b}}\Big(
\frac{p\sigma^2}{B}\mathbb{I}(B<N)+ d_1\delta^2_x + d_2\delta^2_y+\frac{\beta^2\sigma^2}{b}\Big)
\notag\\
&\quad 
+ \frac{\kappa^2\sigma^2}{b_0\bar{\beta}KT}
+\frac{\kappa^2\zeta^2_0\eta^2_y}{bKT\bar{\beta}\underline{\lambda^2_b}}
+ \Big(\frac{\kappa^2 p \sigma^2}{BK\bar{\beta}}\mathbb{I}(B<N)
+\frac{\kappa^2\beta^2\sigma^2}{bK\bar{\beta}} + \frac{\kappa^2(d_1\delta^2_x+d_2\delta^2_y)}{bK\bar{\beta}}\Big)
 + d_1\delta^2_x+\kappa^2d_2\delta^2_y\Bigg), \notag\\
&\overset{(b)}{\le} \mathcal{O}
\Bigg(
\frac{\mE \Delta'_{c,0}}{\eta_xT} 
+  \frac{\kappa^2\mE \Delta^y_{c,0}}{\eta_y T} + \frac{\kappa^2\eta^2_y\zeta^2_0}{bK\bar{\beta}T(1-\rho)\underline{\lambda^2_b}} 
+ \frac{\kappa^2\lambda^2_a\eta^2_y(p+\beta^2)\sigma^2}{bb_0K\bar{\beta}^2T(1-\rho)^2\underline{\lambda^2_b}} 
+\frac{\kappa^2\lambda^2_a\eta^4_y(p+\beta^2)\zeta^2_0}{b^2K\bar{\beta}^2T(1-\rho)^2\underline{\lambda^4_b}} \notag\\
&\quad +
\frac{\kappa^2\lambda^2_a \eta^2_y }{bK\bar{\beta}(1-\rho)^2\underline{\lambda^2_b}}\Big(
\frac{p\sigma^2}{B}\mathbb{I}(B<N)+ d_1\delta^2_x + d_2\delta^2_y
+\frac{\beta^2\sigma^2}{b}\Big)+ \frac{\kappa^2\sigma^2}{b_0\bar{\beta}KT}
+\frac{\kappa^2\zeta^2_0\eta^2_y}{bKT\bar{\beta}\underline{\lambda^2_b}}\notag \\
&\quad 
+ \Big(\frac{\kappa^2 p \sigma^2}{BK\bar{\beta}}\mathbb{I}(B<N)
+\frac{\kappa^2\beta^2\sigma^2}{bK\bar{\beta}} + \frac{\kappa^2(d_1\delta^2_x+d_2\delta^2_y)}{bK\bar{\beta}}\Big)
 + d_1\delta^2_x+\kappa^2d_2\delta^2_y\Bigg)
\end{align}
where in $(a)$ we  suppressed the constants $v^2_1, v^2_2, L^2_f$, used the expression of $I'_0$, and
\begin{align}
C_0 &= \mathcal{O}(\sigma^2), \quad C'_1 = \mathcal{O}\Big( \frac{d_1\delta^2_x +d_2\delta^2_y}{b}\Big), \notag\\
C'_2 &= \mathcal{O}\Big(\frac{pC_0}{B} \mathbb{I}(B<N) + \frac{\beta^2C_0}{b} \Big), \notag\\
C'_3 & = \mathcal{O}\Big(\frac{pC_0}{B}\mathbb{I}(B<N)
+d_1\delta^2_x+d_2\delta^2_y+\frac{\beta^2C_0}{b}\Big), \notag\\
C'_4  &\triangleq \mathcal{O}(d_1\delta^2_x+\kappa^2d_2\delta^2_y),
\end{align}
$(b)$ is due to
\begin{align}
\frac{p+\beta^2}{\bar{\beta} }\le 1\Longrightarrow \frac{\kappa^2\eta^2_y\lambda^2_a(p+\beta^2)}{bK\bar{\beta}^2(1-\rho)^2\underline{\lambda^2_b}}  \le \frac{\kappa^2\eta^2_y\lambda^2_a}{bK\bar{\beta}(1-\rho)^2\underline{\lambda^2_b}}.
\end{align}
We will choose sufficiently small $\eta_y$, which gives 
\begin{subequations}
\begin{align}
&\frac{\kappa^2\lambda^2_a\eta^2_y(p+\beta^2)\sigma^2}{bb_0K\bar{\beta}^2T(1-\rho)^2\underline{\lambda^2_b}} 
+\frac{\kappa^2\lambda^2_a\eta^2_y(p+\beta^2)\zeta^2_0}{b^2K\bar{\beta}^2T(1-\rho)^2\underline{\lambda^2_b}} \times\underbrace{\frac{\eta^2_y}{\underline{\lambda^2_b}}}_{\text{small}}\le \mathcal{O}
\Big(\frac{\kappa^2\lambda^2_a\eta^2_y(p+\beta^2)\sigma^2}{bb_0K\bar{\beta}^2T(1-\rho)^2\underline{\lambda^2_b}} \Big),  \\
% &\Big(\frac{\kappa^2\eta^2_y\lambda^2_a(p+\beta^2)}{bK\bar{\beta}^2(1-\rho)^2\underline{\lambda^2_b}}
% \times \frac{p\sigma^2}{B}
% +\frac{\kappa^2\eta^2_y\lambda^2_a}{bK\bar{\beta}(1-\rho)^2\underline{\lambda^2_b}} \times \frac{p\sigma^2}{B} \notag\\
% &\quad +\frac{\kappa^2p\sigma^2}{BK\bar{\beta}}\Big)\mathbb{I}(B<N) \le \mathcal{O}\Big(\frac{\kappa^2\eta^2_y\lambda^2_a p\sigma^2}{bK\bar{\beta}B(1-\rho)^2\underline{\lambda^2_b}}   
% \notag\\
% &\quad +\frac{\kappa^2p\sigma^2}{BK\bar{\beta}}
% \Big)\mathbb{I}(B<N),\\
% &\frac{\kappa^2\eta^2_y\lambda^2_a(p+\beta^2)}{bK\bar{\beta}^2(1-\rho)^2\underline{\lambda^2_b}}\times \frac{d_1\delta^2_x+d_2\delta^2_y}{b} +\frac{\kappa^2\eta^2_y\lambda^2_a}{bK\bar{\beta}(1-\rho)^2\underline{\lambda^2_b}} \times d_1\delta^2_x \notag\\
% &\le \mathcal{O}\Big(
% \frac{\kappa^2\eta^2_y\lambda^2_a(d_1\delta^2_x+d_2\delta^2_y)}{bK\bar{\beta}(1-\rho)^2\underline{\lambda^2_b}}
% \Big), \\
&\frac{\kappa^2\zeta^2_0\eta^2_y}{bKT\bar{\beta}\underline{\lambda^2_b}} \le \mathcal{O}\Big(\frac{\kappa^2\zeta^2_0\eta^2_y}{bKT\bar{\beta}(1-\rho)\underline{\lambda^2_b}}\Big), \notag\\
& \frac{\kappa^2\lambda^2_a\eta^2_y}{bK\bar{\beta}(1-\rho)^2\underline{\lambda^2_b}} \times \frac{\beta^2\sigma^2}{b} =\mathcal{O}\Big(
\frac{\kappa^2\beta^2\sigma^2}{bK\bar{\beta}}\Big)
\end{align}
\end{subequations}
In fact, when the hyperparameters are specified, these omitted terms can also be absorbed into other existing terms in the bound.
The performance bound is given by 
\begin{align}
&\frac{1}{T}\sum_{i=0}^{T-1}\Big(\mE\|\nabla P(\bx_{c,i})\|^2 + \kappa L_f\mE\Delta^y_{c,i}\Big)\notag \\
&\le \mathcal{O}
\Big(
\underbrace{\Pi'_0}_{\text{initial gap}} +\underbrace{\frac{\kappa^2\lambda^2_a\eta^2_y(p+\beta^2)\sigma^2}{bb_0K\bar{\beta}^2T(1-\rho)^2\underline{\lambda^2_b}}}_{\text{network noise}} + \underbrace{\Pi'_1 \sigma^2 \mathbb{I}(B<N) }_{\text{large-batch effect}}
 \underbrace{+ \frac{\kappa^2\sigma^2}{b_0\bar{\beta}KT}}_{\text{initial noise}}+  \underbrace{\Pi'_2\sigma^2}_{\text{ZO momentum error}}
+\underbrace{\Pi'_3}_{\text{ZO-CW bias}} \Big), 
\end{align}
where 
\begin{subequations}
\begin{align}
\Pi'_0 &\triangleq \frac{\mE \Delta'_{c,0}}{\eta_xT} 
+  \frac{\kappa^2\mE \Delta^y_{c,0}}{\eta_y T} + \frac{\kappa^2\eta^2_y\zeta^2_0}{bK\bar{\beta}T(1-\rho)\underline{\lambda^2_b}}, \\
\Pi'_1 &\triangleq \frac{\kappa^2\eta^2_y\lambda^2_a p}{bK\bar{\beta}B(1-\rho)^2\underline{\lambda^2_b}}    +\frac{\kappa^2p}{BK\bar{\beta}}
,\Pi'_2 \triangleq \frac{\kappa^2\beta^2}{bK\bar{\beta}}\\
\Pi'_3 &\triangleq
\frac{\kappa^2\eta^2_y\lambda^2_a(d_1\delta^2_x+d_2\delta^2_y)}{bK\bar{\beta}(1-\rho)^2\underline{\lambda^2_b}}
+\frac{\kappa^2(d_1\delta^2_x+d_2\delta^2_y)}{bK\bar{\beta}} 
 + d_1\delta^2_x+d_2\kappa^2\delta^2_y.
\end{align}
\end{subequations}
The proof is completed by using \eqref{appendix:useful:bound_gradient}.
\end{proof}
\subsection{Corollaries of Theorem \ref{main:theorem1}}
\label{appendix:corollary:case1}
We now specialize Theorem 1 by considering two settings: the online stochastic setting, where $N$ may be infinitely large, and the finite-sum setting, where $N<+\infty$.

$\bullet$ \textbf{Online stochastic scenario}

\begin{Corollary}[\textbf{ZO-STORM-ED}]
\label{corollary:online:storm+ed}
Under Assumptions \ref{main:assumption:costfunction}-\ref{main:assumptions:combinationmatrix} and the matrix condition in Lemma \ref{appendix:lemma:transformed_recursion}, 
we consider $\bu^j_{k,i} \equiv 0$ and the  ED strategy shown in Table \ref{tab:matrix_choices}. We set the hyperparameters as follows 
\begin{align}
\eta_x &= \mathcal{O}\Bigg(
\frac{K^{\frac{2}{3}}}{\kappa^2 T^{\frac{1}{3}}}
\Bigg), \eta_y = \mathcal{O}\Bigg(
\frac{K^{\frac{2}{3}}}{T^{\frac{1}{3}}}
\Bigg), p = 0, \beta =   \mathcal{O}\Bigg(
\frac{K^{\frac{1}{3}}}{T^{\frac{2}{3}}}
\Bigg), \notag\\
b&= \mathcal{O}(1),
\delta_x = \mathcal{O}\Bigg(
\frac{K^{\frac{1}{3}}}{d^{\frac{1}{2}}_1T^{\frac{2}{3}}}
\Bigg),\delta_y = \mathcal{O}\Bigg(
\frac{K^{\frac{1}{3}}}{d^{\frac{1}{2}}_2T^{\frac{2}{3}}} 
\Bigg), 
b_0 = \mathcal{O}\Bigg(
\frac{T^{\frac{1}{3}}}{K^{\frac{2}{3}}}
\Bigg).
\end{align}
They satisfy the following condition for sufficiently large $T$
\begin{align}
\eta_x &\le \min \Bigg\{ 
\frac{1}{32L}, \mathcal{O}\Bigg(
\frac{K^{\frac{2}{3}}}{\kappa^2T^{\frac{1}{3}}}\Bigg), \mathcal{O}\Bigg(
\frac{K^{\frac{2}{3}}}{\kappa T^{\frac{1}{3}}}
\Bigg)
\Bigg\},  \beta \le \mathcal{O}\Bigg( \frac{1}{K}\Bigg) ,
\delta_x \le \frac{\sigma^2}{L_f\sqrt{d_1}},
\quad \delta_y \le \frac{\sigma^2}{L_f\sqrt{d_2}}, \quad  \beta \le \frac{\nu\eta_y}{2} \notag \\
\eta_y 
&\le \min\Bigg\{
\frac{1}{\nu}, \frac{1}{2L_f},\mathcal{O}\Bigg(
\frac{K^{\frac{2}{3}}}{T^{\frac{1}{3}}}\Bigg),
\frac{(1-\sqrt{\lambda})\sqrt{1-\lambda}}{54A_3},
\frac{(1-\sqrt{\lambda})^{\frac{1}{2}}(1-\lambda)^{\frac{1}{4}}K^{\frac{1}{3}}}{74L_f (\kappa A_2)^{\frac{1}{2}} T^{\frac{1}{6}}},
 \frac{ (1-\sqrt{\lambda})^{\frac{2}{3}}
(1-\lambda)^{\frac{1}{3}}
K^\frac{4}{9}}{ 203L_f\kappa^{\frac{1}{3}}  (A_2)^\frac{2}{3}T^{\frac{2}{9}}} 
\Bigg\}. \notag 
\end{align}
The performance bound is given by 
\begin{align}
&\frac{1}{T}\sum_{i=0}^{T-1}\Big(\mE\|\nabla_x J(\bx_{c,i}, \by_{c,i})\|^2 + \mE\|\nabla_y J(\bx_{c,i}, \by_{c,i})\|^2\Big) \notag \\
&\le \mathcal{O}
\Bigg(
\frac{\kappa^2\Delta'_0}{(TK)^{\frac{2}{3}}}
+\frac{\kappa^2\zeta^2_0}{T(1-\lambda)^2}+ \frac{\kappa^2K\lambda^2\sigma^2}{T^2(1-\lambda)^3}+ \frac{\kappa^2\sigma^2}{(TK)^{\frac{2}{3}}}  
+ \frac{\kappa^2\lambda^2K^{\frac{2}{3}}}{(1-\lambda)^3T^{\frac{4}{3}}} 
\Bigg), 
\end{align}
where $\Delta'_0 \triangleq \mE(\Delta'_{c,0} + \Delta^y_{c,0})$.
The dominant communication complexity (CC)  and function query complexity (FC) are given by 
\begin{align}
&CC = \mathcal{O}\Bigg(\frac{\kappa^3\varepsilon^{-3}}{K} + \frac{\kappa^2\varepsilon^{-2}}{(1-\lambda)^2} \Bigg), \quad  FC = 
CC \times 4d  + b_0 \times 2d \approx \mathcal{O}\Bigg(\frac{d\kappa^3\varepsilon^{-3}}{K} + \frac{d\kappa^2\varepsilon^{-2}}{(1-\lambda)^2} + \frac{d \kappa \varepsilon^{-1}}{K}  \Bigg).
\end{align}
Furthermore, the transient time in achieving linear speedup in the number of agent $K$ is given by 
\begin{align}
\max \Bigg\{ 
\mathcal{O} \Bigg(\frac{K^2}{(1-\lambda)^6} \Bigg), \mathcal{O} \Bigg( 
\frac{K^{2}\lambda^{3}}{(1-\lambda)^{\frac{9}{2}}}
\Bigg),
\mathcal{O} \Bigg( 
\frac{K^{\frac{5}{4}}\lambda^{\frac{3}{2}}}{(1-\lambda)^{\frac{9}{4}}}
\Bigg)
\Bigg\}.
\end{align}
\end{Corollary}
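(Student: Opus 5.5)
The plan is to specialize the restated Theorem~\ref{appendix:restatement:theorem1} and the ED constants of Lemma~\ref{appendix:lemma:transformed_recursion}, namely $\rho=\sqrt{\lambda}$, $\lambda_a=\lambda$ and $\underline{\lambda_b}=\sqrt{1-\lambda}$. Note that $1-\rho=1-\sqrt{\lambda}\ge (1-\lambda)/2$, so $(1-\rho)^2\underline{\lambda^2_b}\gtrsim(1-\lambda)^3$ and $(1-\rho)\underline{\lambda^2_b}\gtrsim(1-\lambda)^2$.

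With $p=0$ we have $\bar\beta=\beta$, the indicator term $\Pi'_1\sigma^2\mathbb{I}(B<N)$ vanishes because its prefactor is $p$, and $p+\beta^2=\beta^2$.

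\textbf{Step 1: admissibility.} I would check each step-size condition for large $T$. The quantities $\eta_x\propto K^{2/3}/(\kappa^2T^{1/3})$ and $\eta_y\propto K^{2/3}/T^{1/3}$ give $\eta_x\le\eta_y/(16\kappa^2)$ by construction. The requirement $\eta_x\le\sqrt{bK\bar\beta}/(72\sqrt3\kappa L_f)$ becomes $K^{2/3}/(\kappa^2T^{1/3})\lesssim K^{2/3}/(\kappa T^{1/3})$ since $\sqrt{K\beta}\sim K^{2/3}T^{-1/3}$. The remaining conditions follow similarly:
\begin{itemize}
\item $\bar\beta=\beta\le\nu\eta_y/2$ holds because $K^{1/3}T^{-2/3}\ll K^{2/3}T^{-1/3}$.
\item $b\bar\beta\le 129/K$ holds once $T\gtrsim K^2$.
\item $\beta+bp\le b$ is trivial.
\item The network conditions on $\eta_y$ hold for $T$ beyond a threshold polynomial in $K$ and $1/(1-\lambda)$.
\end{itemize}

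\textbf{Step 2: substitute term by term.} For the initial gap, $\Delta'_0/(\eta_xT)\sim\kappa^2/(TK)^{2/3}$ and $\kappa^2\Delta^y_0/(\eta_yT)$ is smaller. The $\zeta_0^2$ term is $\kappa^2\eta_y^2\zeta_0^2/(K\beta T(1-\rho)\underline{\lambda_b^2})\sim \kappa^2\zeta_0^2/(T(1-\lambda)^2)$, since $\eta_y^2/(K\beta)=O(1)$.

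The network-noise term with $p=0$ reads $\kappa^2\lambda^2\eta_y^2\sigma^2/(b_0K T(1-\lambda)^3)$. Substituting $b_0\sim T^{1/3}K^{-2/3}$ and $\eta_y^2\sim K^{4/3}T^{-2/3}$ gives $\kappa^2\lambda^2K^2\sigma^2/(T^{5/3}(1-\lambda)^3)$. Matching the stated $K\lambda^2\sigma^2/(T^2(1-\lambda)^3)$ requires tracking constants carefully, and I would redo this with the exact orders.

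The initial noise is $\kappa^2\sigma^2/(b_0\beta KT)=\kappa^2\sigma^2/(TK)^{2/3}$, and the momentum error $\Pi'_2=\kappa^2\beta/K$ gives the same order. For the bias $\Pi'_3$, choosing $d_1\delta_x^2\sim K^{2/3}T^{-4/3}$ makes $d\delta^2/(K\beta)\sim K^{-2/3}T^{-2/3}$. Its network part gives $\kappa^2\lambda^2\eta_y^2 d\delta^2/(K\beta(1-\lambda)^3)\sim \kappa^2\lambda^2K^{2/3}/((1-\lambda)^3T^{4/3})$, matching the last term. The plain term $d_2\kappa^2\delta_y^2$ is lower order.

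\textbf{Step 3: complexities.} The gradient bound controls the averaged stationarity via \eqref{appendix:useful:bound_gradient}. Setting the bound to $\varepsilon^2$, the $(TK)^{-2/3}$ term gives $T\sim\kappa^3\varepsilon^{-3}/K$ (with $\kappa^2$ absorbed against the $\varepsilon$ power as stated), and the $\zeta_0^2$ term gives $\kappa^2\varepsilon^{-2}/(1-\lambda)^2$; the remaining terms are higher order. The function-query complexity follows by multiplying by $4d$ queries per round, using two CW estimators per minibatch element, and adding $2d\,b_0$ for initialization, where $b_0\sim(T/K^2)^{1/3}\sim\kappa\varepsilon^{-1}/K$.

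The transient time comes from requiring each network term to be at most $\kappa^2/(TK)^{2/3}$:
\begin{itemize}
\item $1/(T(1-\lambda)^2)\le(TK)^{-2/3}$ gives $T\ge K^2/(1-\lambda)^6$.
\item $\lambda^2K^{2/3}/((1-\lambda)^3T^{4/3})$ gives $K^{2}\lambda^3/(1-\lambda)^{9/2}$.
\item The $T^{-2}$ term gives the $K^{5/4}$ threshold.
\end{itemize}

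The main obstacle is the network-noise exponent bookkeeping in Step~2 together with verifying the last two $\eta_y$ constraints. Those constraints scale like $K^{1/3}T^{-1/6}$ and $K^{4/9}T^{-2/9}$ and decay slower than $\eta_y\sim K^{2/3}T^{-1/3}$, so they hold only for $T$ large. That is consistent with the transient-time requirement, but I expect it to need care.
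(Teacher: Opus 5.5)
Your route is the same as the paper's: plug the ED constants ($\rho=\sqrt{\lambda}$, $\lambda_a=\lambda$, $\underline{\lambda_b}=\sqrt{1-\lambda}$) and the stated hyperparameters into Theorem~\ref{appendix:restatement:theorem1}. Your term-by-term bookkeeping is right except at the one place you flagged, and there the mismatch is an arithmetic slip, not a constant-tracking issue. With $p=0$ the factor $(p+\beta^2)/\bar\beta^2$ equals $1$. Also $b_0KT\sim T^{1/3}K^{-2/3}\cdot KT=K^{1/3}T^{4/3}$. Hence $\eta_y^2/(b\,b_0KT)\sim K^{4/3}T^{-2/3}/(K^{1/3}T^{4/3})=K/T^2$, which gives exactly $\kappa^2K\lambda^2\sigma^2/(T^2(1-\lambda)^3)$. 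This is also the form you use in Step 3 to get the $K^{5/4}\lambda^{3/2}/(1-\lambda)^{9/4}$ transient threshold, so the fix makes Steps 2 and 3 consistent.

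There are two minor points.
\begin{itemize}
\item The term $\kappa^2\Delta^y_{c,0}/(\eta_yT)$ is of the same order $\kappa^2/(TK)^{2/3}$ as $\Delta'_{c,0}/(\eta_xT)$, not smaller. That is why the two are merged into $\Delta'_0$. Similarly, $(TK)^{2/3}\ge\kappa^2\varepsilon^{-2}$ gives $T\ge\kappa^3\varepsilon^{-3}/K$ directly, with no absorption needed.
\item The last $\eta_y$ constraint, the one scaling like $K^{4/9}T^{-2/9}$, holds only once $T\gtrsim\kappa^3\lambda^6K^2/(1-\lambda)^9$. This is an admissibility threshold separate from the stated transient time, and for sparse networks it is larger. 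The corollary only asserts admissibility for sufficiently large $T$, so this does not affect the statement. However, the threshold is not implied by the transient time, contrary to your remark that the two are consistent.
\end{itemize}
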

    We note that ZO-STORM-EXTRA achieves performance comparable to ZO-STORM-ED.
\begin{Corollary}[\textbf{ZO-STORM-GT}]
\label{corollary:online:storm+gt}
Under Assumptions \ref{main:assumption:costfunction}-\ref{main:assumptions:combinationmatrix} and the matrix condition in Lemma \ref{appendix:lemma:transformed_recursion},
we consider $\bu^j_{k,i} \equiv 0$ 
and the ATC-GT strategy shown in in Table \ref{tab:matrix_choices}, and then choose the hyperparameters similar to Corollary \ref{corollary:online:storm+ed}.
The performance bound is given by 
\begin{align}
&\frac{1}{T}\sum_{i=0}^{T-1}\Big(\mE\|\nabla_x J(\bx_{c,i}, \by_{c,i})\|^2 + \mE\|\nabla_y J(\bx_{c,i}, \by_{c,i})\|^2\Big) \notag \\
&\le \mathcal{O}
\Bigg(
\frac{\kappa^2\Delta'_0}{(TK)^{\frac{2}{3}}}
+\frac{\kappa^2\zeta^2_0}{T(1-\lambda)^3}
+ \frac{\kappa^2K\lambda^4\sigma^2}{T^2(1-\lambda)^4} + \frac{\kappa^2\sigma^2}{(TK)^{\frac{2}{3}}} 
 +\frac{\kappa^2\lambda^4K^{\frac{2}{3}}}{(1-\lambda)^4T^{\frac{4}{3}}}
\Bigg).
\end{align}
The CC and FC are given by 
\begin{align}
&CC = \mathcal{O}\Bigg(\frac{\kappa^3\varepsilon^{-3}}{K} + \frac{\kappa^2\varepsilon^{-2}}{(1-\lambda)^3} \Bigg), \quad  FC = 
CC \times 4d 
+ b_0 \times 2d \approx \mathcal{O}\Bigg(\frac{d\kappa^3\varepsilon^{-3}}{K} + \frac{d\kappa^2\varepsilon^{-2}}{(1-\lambda)^3} + \frac{d\kappa \varepsilon^{-1}}{K}\Bigg).
\end{align}
Furthermore, the transient time in achieving linear speedup in the number of agents $K$ is given by 
\begin{align}
\max \Bigg\{ 
\mathcal{O} \Bigg(\frac{K^2}{(1-\lambda)^9} \Bigg), \mathcal{O} \Bigg( 
\frac{K^{2}\lambda^{6}}{(1-\lambda)^{6}} \Bigg),
\mathcal{O} \Bigg( 
\frac{K^{\frac{5}{4}}\lambda^{3}}{(1-\lambda)^{3}} \Bigg)
\Bigg\}.
\end{align}
\end{Corollary}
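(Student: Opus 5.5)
The plan is to obtain the corollary by specializing the restated Theorem~\ref{appendix:restatement:theorem1}, exactly as for Corollary~\ref{corollary:online:storm+ed}, but with the ATC-GT network constants. From Lemma~\ref{appendix:lemma:transformed_recursion}, GT gives $\rho\le\frac{1+\lambda}{2}$, so $1-\rho\ge\frac{1-\lambda}{2}$, together with $\lambda_a=\lambda^2$ and $\underline{\lambda_b}=1-\lambda$. The structural constants $v_1,v_2$ are suppressed as in the theorem. I would take the same hyperparameters as in Corollary~\ref{corollary:online:storm+ed}:
\begin{align}
&p=0,\quad \bar\beta=\beta=\frac{K^{1/3}}{T^{2/3}},\quad \eta_y=\frac{K^{2/3}}{T^{1/3}},\quad \eta_x=\frac{\eta_y}{\kappa^2},\notag\\
&b=\mathcal{O}(1),\quad b_0=\frac{T^{1/3}}{K^{2/3}},\quad d_1\delta_x^2,\ d_2\delta_y^2=\mathcal{O}\Big(\frac{K^{2/3}}{T^{4/3}}\Big).\notag
\end{align}
With $p=0$ the large-batch term $\Pi'_1$ vanishes identically. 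In the online setting $\mathbb{I}(B<N)=1$, but this term is multiplied by $p$, so it disappears regardless.

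\textbf{Step 1: verify the step-size conditions of Theorem~\ref{appendix:restatement:theorem1}.}
\begin{itemize}
\item The condition $b\bar\beta\le 129/K$ reduces to $K^{4/3}\lesssim T^{2/3}$.
\item The condition $\bar\beta\le\nu\eta_y/2$ reads $K^{1/3}T^{-2/3}\lesssim \nu K^{2/3}T^{-1/3}$, which holds for large $T$.
\item The condition $\eta_y\le\frac{(1-\rho)\underline{\lambda_b}}{54A_3}$ becomes $T\gtrsim K^2/(1-\lambda)^6$.
\item The two remaining network constraints are checked the same way, after substituting $A_1=bK\bar\beta\asymp K^{4/3}T^{-2/3}$.
\end{itemize}
Each condition holds once $T$ is large enough. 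The resulting lower bounds on $T$ are dominated by the transient-time thresholds derived in Step~3.

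\textbf{Step 2: evaluate each term of the bound.}
\begin{itemize}
\item Initial gap $\Pi'_0$. The term $\mE\Delta'_{c,0}/(\eta_xT)$ gives $\kappa^2\Delta'_0/(TK)^{2/3}$, and the $\Delta^y_{c,0}$ term is of the same or smaller order. The $\zeta_0$-term is
\begin{align}
\frac{\kappa^2\eta_y^2\zeta_0^2}{K\beta T(1-\rho)\underline{\lambda^2_b}}=\frac{\kappa^2\zeta_0^2}{T(1-\lambda)^3}.\notag
\end{align}
\item Network noise. Since $p+\beta^2=\beta^2$, this term equals $\frac{\kappa^2\lambda^4\eta_y^2\sigma^2}{b_0KT(1-\lambda)^4}$, and inserting $b_0$ gives $\frac{\kappa^2K\lambda^4\sigma^2}{T^2(1-\lambda)^4}$.
\item Initial noise and the momentum term. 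Both $\frac{\kappa^2\sigma^2}{b_0\beta KT}$ and $\Pi'_2\sigma^2=\frac{\kappa^2\beta\sigma^2}{K}$ equal $\frac{\kappa^2\sigma^2}{(TK)^{2/3}}$.
\item ZO bias $\Pi'_3$. Its first piece gives $\frac{\kappa^2\lambda^4K^{2/3}}{(1-\lambda)^4T^{4/3}}$. The remaining pieces $\frac{\kappa^2(d_1\delta_x^2+d_2\delta_y^2)}{K\beta}$ and $d_1\delta_x^2+\kappa^2d_2\delta_y^2$ are both $\mathcal{O}(\kappa^2/(TK)^{2/3})$.
\end{itemize}
Collecting these terms yields exactly the stated rate, after converting through \eqref{appendix:useful:bound_gradient}.

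\textbf{Step 3: complexities and transient time.}
\begin{itemize}
\item Communication complexity. Requiring the two leading terms to be at most $\varepsilon^2$ gives $T\gtrsim \kappa^3\varepsilon^{-3}/K$ and $T\gtrsim\kappa^2\varepsilon^{-2}/(1-\lambda)^3$. The other terms are lower order in $\varepsilon$.
\item Function query complexity. Each round with $b=\mathcal{O}(1)$ uses $\mathcal{O}(d)$ coordinate-wise queries, and the warm start costs $2d\,b_0$. With $T^{1/3}\sim\kappa\varepsilon^{-1}$, the warm start contributes $d\kappa\varepsilon^{-1}/K$, which gives the stated FC.
\item Transient time. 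Each network term must be dominated by $(TK)^{-2/3}$:
\begin{itemize}
\item $\frac{1}{T(1-\lambda)^3}\le (TK)^{-2/3}$ gives $T\ge K^2/(1-\lambda)^9$;
\item $\frac{K\lambda^4}{T^2(1-\lambda)^4}\le (TK)^{-2/3}$ gives $T\ge K^{5/4}\lambda^3/(1-\lambda)^3$;
\item $\frac{\lambda^4K^{2/3}}{(1-\lambda)^4T^{4/3}}\le (TK)^{-2/3}$ gives $T\ge K^2\lambda^6/(1-\lambda)^6$.
\end{itemize}
These three thresholds also dominate the step-size thresholds from Step~1.
\end{itemize}

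The main obstacle is not conceptual but lies in the bookkeeping of Step~1. One must confirm that every coupled step-size constraint of Theorem~\ref{appendix:restatement:theorem1}, those involving $(1-\rho)^{1/2}$ and $(1-\rho)^{2/3}$ powers under the GT constants, is implied by the transient-time thresholds. I would check each one by direct substitution of the chosen schedules.
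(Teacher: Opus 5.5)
Your route matches what the paper intends. It gives no separate argument for this corollary; it reuses the ZO-STORM-ED schedule in the restated Theorem~\ref{appendix:restatement:theorem1} with the ATC-GT constants $1-\rho\ge\frac{1-\lambda}{2}$, $\lambda_a=\lambda^2$, $\underline{\lambda_b}=1-\lambda$. Your Step~2 evaluations are correct, and so are the communication complexity and the three rate-based thresholds $K^2/(1-\lambda)^9$, $K^{5/4}\lambda^3/(1-\lambda)^3$ and $K^2\lambda^6/(1-\lambda)^6$.

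\textbf{The dominance claim is false.} You assert, twice, that these thresholds dominate every step-size requirement of the theorem, and that direct substitution would confirm it. Substitution does not confirm it. Take the cubic constraint
\[
\eta_y\le \frac{(1-\rho)^{2/3}\underline{\lambda_b}^{2/3}A_1^{1/3}}{203L_f\kappa^{1/3}A_2^{2/3}}.
\]
With $A_1\asymp K^{4/3}T^{-2/3}$ and $\eta_y^3\asymp K^2/T$, you have $\eta_y^3/A_1\asymp\eta_y$, so under your schedule the constraint becomes
\[
\eta_y\lesssim\frac{(1-\lambda)^4}{\kappa\lambda^4},\qquad\text{i.e.}\qquad T\gtrsim\frac{\kappa^3K^2\lambda^{12}}{(1-\lambda)^{12}}.
\]
On sparse networks this exceeds $K^2/(1-\lambda)^9$ by roughly $(1-\lambda)^{-3}$. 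The $(1-\rho)^{1/2}$-type constraint also gives $T\gtrsim\kappa^3K^2\lambda^6/(1-\lambda)^6$, which carries an extra $\kappa^3$.

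The paper never makes this claim. For ED too, the same cubic constraint gives $T\gtrsim\kappa^3K^2\lambda^6/(1-\lambda)^9$, against the stated $K^2/(1-\lambda)^6$. There, as here, the transient time comes only from comparing the rate terms with $(TK)^{-2/3}$. Validity of the step sizes is handled separately by ``for sufficiently large $T$''. Do the same: drop the dominance claim and list the step-size conditions as an additional, $\varepsilon$-independent lower bound on $T$. This leaves the $\varepsilon$-dominant CC and FC unchanged. It does mean the displayed maximum is not, on its own, a sufficient horizon for the bound to hold.

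\textbf{A minor slip in the FC step.} At $T\asymp\kappa^3\varepsilon^{-3}/K$ you have $T^{1/3}\asymp\kappa\varepsilon^{-1}K^{-1/3}$, not $\kappa\varepsilon^{-1}$. That is what gives $b_0=T^{1/3}K^{-2/3}\asymp\kappa\varepsilon^{-1}/K$, and hence the warm-start cost $d\kappa\varepsilon^{-1}/K$ you state.
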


$\bullet$ \textbf{Finite-sum scenario}

Below, we focus on full-batch setting, 
i.e., 
$B=N$.
\begin{Corollary}[\textbf{ZO-PAGE-ED}]
\label{corollary:offline:page+ed}
Under Assumptions \ref{main:assumption:costfunction}-\ref{main:assumptions:combinationmatrix} and the matrix condition in Lemma \ref{appendix:lemma:transformed_recursion},
we consider $\bu^j_{k,i} \equiv 0$ and  the ED strategy shown in Table \ref{tab:matrix_choices}.
We set the hyperparameters as follows
\begin{align}
 B &= N, b =b_0 = \sqrt{\frac{N}{K}}, p = \frac{1}{\sqrt{NK}}, \beta = 0,  \notag\\ \eta_x &= \mathcal{O}\Big(\frac{(1-\lambda)^{1.5}}{\kappa^2}\Big),
 \eta_y = \mathcal{O}((1-\lambda)^{1.5}),   \notag\\ \delta_x &= \mathcal{O}
 \Big(
 \frac{1}{d^{\frac{1}{2}}_1 (1-\lambda)^{\frac{3}{4}}T^{\frac{1}{2}}}
 \Big),
 \delta_y = \mathcal{O}
 \Big(
 \frac{1}{d^{\frac{1}{2}}_2 (1-\lambda)^{\frac{3}{4}}T^{\frac{1}{2}}}
 \Big).
\end{align}
The performance bound is then given by 
\begin{align}
&\frac{1}{T}\sum_{i=0}^{T-1}\Big(\mE\|\nabla_x J(\bx_{c,i}, \by_{c,i})\|^2 + \mE\|\nabla_y J(\bx_{c,i}, \by_{c,i})\|^2\Big) \notag \\
&\le \mathcal{O}
\Bigg(
\frac{\kappa^2(\Delta'_0+1)}{(1-\lambda)^{1.5}T}
+\frac{\kappa^2(1-\lambda)\zeta^2_0}{T}
+\frac{\kappa^2\lambda^2\sqrt{K}\sigma^2}{\sqrt{N} T}
+\frac{\kappa^2\sigma^2}{T}
\Bigg).
\end{align}
The dominant CC and FC are given by 
\begin{align}
CC &\approx \mathcal{O}\Bigg(\frac{\kappa^2\varepsilon^{-2}}{(1-\lambda)^{1.5}}
+ \kappa^2\varepsilon^{-2} \Bigg), 
\\
FC &\approx 
\Big(CC \times p \times B + b_0\Big) \times 2d + CC \times (1-p) \times b  \times 4d \approx \mathcal{O}\Bigg(
\frac{d\kappa^2\sqrt{N}\varepsilon^{-2}}{\sqrt{K}(1-\lambda)^{1.5}} + d\sqrt{\frac{N}{K}}
\Bigg).
\end{align}
\end{Corollary}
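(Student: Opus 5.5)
The plan is to specialize the restated Theorem~\ref{appendix:restatement:theorem1} to the ED constants of Lemma~\ref{appendix:lemma:transformed_recursion}, namely $\rho=\sqrt{\lambda}$, $\lambda_a=\lambda$ and $\underline{\lambda_b}=\sqrt{1-\lambda}$. We use $1-\sqrt{\lambda}\ge (1-\lambda)/2$, so that $(1-\rho)^2\underline{\lambda^2_b}=\Theta((1-\lambda)^3)$ and $(1-\rho)\underline{\lambda^2_b}=\Theta((1-\lambda)^2)$. We then plug in $B=N$, $b=b_0=\sqrt{N/K}$, $p=1/\sqrt{NK}$ and $\beta=0$.

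\textbf{Step 1: hyperparameter conditions.} With $\beta=0$ we have $\bar\beta=p$, so $A_1=bK\bar\beta=\sqrt{N/K}\cdot K/\sqrt{NK}=1$. Hence every $\sqrt{A_1}$, $A_1^{1/4}$, $A_1^{1/3}$ factor equals one, and $b\bar\beta=1/K\le 129/K$ holds. The other conditions also hold: $p+\beta\le 1$; $\beta+bp=1/K\le b$ whenever $N\ge K$; $b\ge1$; and $\bar\beta=p\le \nu\eta_y/2$ whenever $\sqrt{NK}\gtrsim \kappa/(L_f(1-\lambda)^{1.5})$, which I would impose as a mild size condition. The $\eta_y$ constraints then reduce to
\[
\eta_y\lesssim \min\{(1-\lambda)^{1.5},\,(1-\lambda)^{3/4}\kappa^{-1/2},\,(1-\lambda)\kappa^{-1/3}\}/L_f .
\]
So $\eta_y=\mathcal{O}((1-\lambda)^{1.5})$, with the constant absorbing $\kappa,L_f$, is admissible. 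Likewise $\eta_x=\mathcal{O}(\eta_y/\kappa^2)$ satisfies the $\eta_x$ constraints, including $\eta_x\le \sqrt{A_1}/(72\sqrt3\kappa L_f)$.

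\textbf{Step 2: evaluate each term.} Since $B=N$, the indicator kills $\Pi'_1$, and $\beta=0$ kills $\Pi'_2$. For the initial gap, $\mE\Delta'_{c,0}/(\eta_xT)+\kappa^2\mE\Delta^y_{c,0}/(\eta_yT)=\mathcal{O}(\kappa^2\Delta'_0/((1-\lambda)^{1.5}T))$. The $\zeta_0^2$ term equals $\kappa^2\eta_y^2\zeta_0^2/((1-\lambda)^2T)=\mathcal{O}(\kappa^2(1-\lambda)\zeta_0^2/T)$. The network-noise term, using $p/\bar\beta^2=1/p$, gives
\[
\frac{\kappa^2\lambda^2\eta_y^2\sigma^2}{b_0\,A_1\,p\,T(1-\lambda)^3}=\frac{\kappa^2\lambda^2\sigma^2\sqrt{K}}{\sqrt{N}\,T},
\]
after using $b_0p=1/K$, so $1/(b_0p)=K\cdot\sqrt{K/N}/\sqrt{K}$. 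I would track this carefully so the result lands on $\sqrt K/\sqrt N$ as claimed. The initial noise is $\kappa^2\sigma^2/(b_0\bar\beta KT)=\kappa^2\sigma^2/T$. The ZO bias $\Pi'_3$ scales like $(d_1\delta_x^2+d_2\delta_y^2)\kappa^2\,\mathcal{O}(1)$, and with the prescribed $\delta$'s this becomes $\mathcal{O}(\kappa^2/((1-\lambda)^{1.5}T))$, which accounts for the ``$+1$'' inside $\Delta'_0+1$. Finally, converting the bound on $\|\nabla P\|^2+\kappa L_f\Delta^y$ to the gradient norms uses \eqref{appendix:useful:bound_gradient}, exactly as in the theorem.

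\textbf{Step 3: complexities.} Setting the bound to $\varepsilon^2$ gives $T=\mathcal{O}(\kappa^2\varepsilon^{-2}(1-\lambda)^{-1.5}+\kappa^2\varepsilon^{-2})$. The $\zeta_0$ term and the $\sqrt{K/N}$ term are treated as lower order, assuming $N\gtrsim K$. For the function-query count, each round costs in expectation $p\cdot B\cdot 2d+(1-p)\cdot b\cdot 4d=\mathcal{O}(d\sqrt{N/K})$, since $pN=\sqrt{N/K}=b$. Adding the initial cost $2db_0$ gives the stated FC.

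The main obstacle is Step 1: checking that the coupled $\eta_y$ constraints, especially $\bar\beta\le\nu\eta_y/2$ with $\bar\beta=p$, are compatible with $\eta_y\propto(1-\lambda)^{1.5}$ and with the fixed $p=1/\sqrt{NK}$. I would resolve this by stating explicit size requirements on $NK$ and $N\ge K$, and hiding $\kappa$ and $L_f$ factors in the $\mathcal{O}$ constants.
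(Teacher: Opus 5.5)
Your route is the one the paper intends. You specialize Theorem~\ref{appendix:restatement:theorem1} with the ED constants $\rho=\sqrt{\lambda}$, $\lambda_a=\lambda$, $\underline{\lambda_b}=\sqrt{1-\lambda}$, note that $A_1=bK\bar\beta=1$, and read off each term. Your hyperparameter checks are right, as are the initial-gap, $\zeta_0^2$, initial-noise and $\Pi'_3$ terms and the CC/FC counts. You are also more explicit than the paper about two implicit requirements: $N\ge K$ (for $b\ge1$) and a lower bound on $NK$ (for $\bar\beta=p\le\nu\eta_y/2$).

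The one actual error is in the network-noise term. With $\beta=0$ and $\bar\beta=p$,
\[
\frac{p+\beta^2}{bb_0K\bar\beta^2}=\frac{1}{b_0\,(bK\bar\beta)}=\frac{1}{b_0A_1}=\frac{1}{b_0}=\sqrt{K/N}.
\]
You wrote $1/(b_0A_1p)$ instead: you replaced $bK$ by $A_1$ and lost the factor $\bar\beta=p$. Taken literally, your expression equals $\kappa^2\lambda^2\sigma^2K/T$, because $b_0p=1/K$ gives $1/(b_0p)=K$. The identity $1/(b_0p)=K\sqrt{K/N}/\sqrt{K}$ that you wrote is false. With the correct factor the term is $\kappa^2\lambda^2\eta_y^2\sigma^2/(b_0T(1-\lambda)^3)=\mathcal{O}(\kappa^2\lambda^2\sigma^2\sqrt{K}/(\sqrt{N}T))$, as claimed.

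There is also a caveat you share with the paper. Letting the constant in $\eta_y$ ``absorb $\kappa$'' conflicts with the explicit $\kappa^2$ in the final bound. Your own list contains the caps $\eta_y\lesssim(1-\lambda)^{3/4}\kappa^{-1/2}/L_f$ and $\eta_y\lesssim(1-\lambda)\kappa^{-1/3}/L_f$. These are implied by $\eta_y\lesssim(1-\lambda)^{1.5}/L_f$ when $1-\lambda\lesssim\kappa^{-2/3}$, but not in general. At $\lambda=1/2$, for example, they force the constant in $\eta_y$ down to $\mathcal{O}(\kappa^{-1/2}/L_f)$. Since $\eta_x\le\eta_y/(16\kappa^2)$, the terms $\mE\Delta'_{c,0}/(\eta_xT)$ and $\kappa^2\mE\Delta^y_{c,0}/(\eta_yT)$ then scale as $\kappa^{2.5}/((1-\lambda)^{1.5}T)$ rather than $\kappa^2/((1-\lambda)^{1.5}T)$. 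You should either restrict the claim to that regime or carry the extra $\sqrt{\kappa}$.
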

Wee note that ZO-PAGE-EXTRA achieves performance comparable to ZO-PAGE-ED.

\begin{Corollary}[\textbf{ZO-PAGE-GT}]
\label{corollary:offline:page+atc-gt}
Under Assumptions \ref{main:assumption:costfunction}-\ref{main:assumptions:combinationmatrix} and the matrix condition in Lemma \ref{appendix:lemma:transformed_recursion},
we consider $\bu^j_{k,i} \equiv 0$  and the ATC-GT strategy shown in Table \ref{tab:matrix_choices}.
We set the hyperparameters as follows
\begin{align}
 B &= N, b =b_0 = \sqrt{\frac{N}{K}}, p = \frac{1}{\sqrt{NK}}, \beta = 0, \notag\\
 \eta_x &= \mathcal{O}\Big(\frac{(1-\lambda)^{2}}{\kappa^2}\Big), \eta_y = \mathcal{O}\Big((1-\lambda)^{2}\Big), \notag\\
 \delta_x &= \mathcal{O}
 \Bigg(
 \frac{1}{d^{\frac{1}{2}}_1 (1-\lambda)T^{\frac{1}{2}}}
 \Bigg),  \delta_y = \mathcal{O}
 \Bigg(
 \frac{1}{d^{\frac{1}{2}}_2 (1-\lambda)T^{\frac{1}{2}}}
 \Bigg).
\end{align}
The performance bound is then given by 
\begin{align}
&\frac{1}{T}\sum_{i=0}^{T-1}\Big(\mE\|\nabla_x J(\bx_{c,i}, \by_{c,i})\|^2 + \mE\|\nabla_y J(\bx_{c,i}, \by_{c,i})\|^2\Big) \notag \\
&\le \mathcal{O}
\Bigg(
\frac{\kappa^2(\Delta'_0+1)}{(1-\lambda)^{2}T}
+\frac{\kappa^2(1-\lambda)\zeta^2_0}{T}
+\frac{\kappa^2\lambda^4\sqrt{K}\sigma^2}{\sqrt{N} T}
+\frac{\kappa^2\sigma^2}{T} 
\Bigg).
\end{align}
The CC and FC are given by 
\begin{align}
CC &\approx \mathcal{O}\Bigg(\frac{\kappa^2\varepsilon^{-2}}{(1-\lambda)^{2}}
+ \kappa^2\varepsilon^{-2} \Bigg), 
\\
FC &\approx 
\Big(CC \times p \times B + b_0\Big)\times 2d + CC \times (1-p) \times b \times 4d \approx \mathcal{O}\Bigg(
\frac{d\kappa^2\sqrt{N}\varepsilon^{-2}}{\sqrt{K}(1-\lambda)^{2}} + d\sqrt{\frac{N}{K}}
\Bigg).
\end{align}
\end{Corollary}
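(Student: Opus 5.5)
The plan is to specialize the unified bound of Theorem \ref{main:theorem1} (restated as Theorem \ref{appendix:restatement:theorem1}) to the ATC-GT design matrices in Table \ref{tab:matrix_choices}. We follow the same template as Corollary \ref{corollary:offline:page+ed}, changing only the network constants. By Lemma \ref{appendix:lemma:transformed_recursion}, GT gives $\rho\le\frac{1+\lambda}{2}$, so $1-\rho\ge\frac{1-\lambda}{2}$, together with $\lambda_a=\lambda^2$ and $\underline{\lambda_b}=1-\lambda$. Hence
\[
(1-\rho)^2\underline{\lambda^2_b}=\Theta((1-\lambda)^4),\qquad (1-\rho)\underline{\lambda^2_b}=\Theta((1-\lambda)^3).
\]
In the ED case these quantities were $(1-\lambda)^3$ and $(1-\lambda)^{2}$. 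The extra factor of $(1-\lambda)$ is why the step sizes are scaled by $(1-\lambda)^2$ here instead of $(1-\lambda)^{1.5}$.

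\textbf{Step 1: feasibility of the hyperparameters.} Take $B=N$ (so $\mathbb{I}(B<N)=0$), $b=b_0=\sqrt{N/K}$, $p=1/\sqrt{NK}$ and $\beta=0$. Then $\bar\beta=p$ and $A_1=bK\bar\beta=1$. The conditions $p+\beta\le1$, $\beta+bp\le b$ and $b\bar\beta=1/K\le 129/K$ hold trivially. With $A_1=1$, each $\eta_y$ constraint in Theorem \ref{appendix:restatement:theorem1} reduces to a power of $(1-\lambda)$:
\begin{itemize}
\item $(1-\rho)\underline{\lambda_b}/A_3\sim(1-\lambda)^2$;
\item $(1-\rho)^{1/2}\underline{\lambda_b}^{1/2}\sim(1-\lambda)$;
\item $(1-\rho)^{2/3}\underline{\lambda_b}^{2/3}\sim(1-\lambda)^{4/3}$.
\end{itemize}
The binding constraint is therefore $\eta_y=\mathcal{O}((1-\lambda)^2)$, with $\eta_x=\eta_y/(16\kappa^2)$.

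\textbf{Main obstacle.} The condition $\bar\beta\le\nu\eta_y/2$ reads $1/\sqrt{NK}\lesssim\nu(1-\lambda)^2$. This requires $N$ large enough relative to the network, and I would state it as an implicit regime assumption, as the ED corollary does.

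\textbf{Step 2: bound each term.} Substitute into $\Pi'_0,\dots,\Pi'_3$.
\begin{itemize}
\item The initial-gap part of $\Pi'_0$ gives $\kappa^2\Delta'_0/((1-\lambda)^2T)$.
\item The $\zeta_0^2$ part of $\Pi'_0$ gives $\kappa^2(1-\lambda)^4\zeta_0^2/((1-\lambda)^3T)=\kappa^2(1-\lambda)\zeta_0^2/T$.
\item For the network-noise term, use $(p+\beta^2)/\bar\beta^2=1/p=\sqrt{NK}$ and $bb_0K=N$. This gives $\kappa^2\lambda^4(1-\lambda)^4\sqrt{NK}\sigma^2/(N(1-\lambda)^4T)=\kappa^2\lambda^4\sqrt{K}\sigma^2/(\sqrt N T)$.
\item The initial-noise term is $\kappa^2\sigma^2/(b_0\bar\beta KT)=\kappa^2\sigma^2/T$.
\item $\Pi'_1$ is killed by $B=N$, and $\Pi'_2=0$ since $\beta=0$.
\end{itemize}

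\textbf{Step 3: the bias term.} In $\Pi'_3$ the factor $1/(bK\bar\beta)=1$. The choice $\delta_w^2=\mathcal{O}(1/(d_w(1-\lambda)^2T))$ makes $\kappa^2(d_1\delta_x^2+d_2\delta_y^2)=\mathcal{O}(\kappa^2/((1-\lambda)^2T))$. This is the source of the "$+1$" in $\Delta'_0+1$. The $\eta_y^2$-weighted piece of $\Pi'_3$ is smaller. Collecting the terms gives the stated rate.

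\textbf{Step 4: complexities.} Setting the rate to at most $\varepsilon^2$ gives $T=CC=\mathcal{O}(\kappa^2\varepsilon^{-2}(1-\lambda)^{-2}+\kappa^2\varepsilon^{-2})$. The $\sqrt{K/N}$ term is dominated under $N\ge K$. For the function-query count, note that each CW query costs $2d$ or $4d$ evaluations, and count
\[
\text{FC}\approx\big(CC\cdot pB+b_0\big)\cdot 2d+CC\cdot b\cdot 4d,
\]
where $pB=\sqrt{N/K}=b$. This yields $\mathcal{O}\big(d\kappa^2\sqrt N\varepsilon^{-2}/(\sqrt K(1-\lambda)^2)+d\sqrt{N/K}\big)$.
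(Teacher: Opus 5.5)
Your proposal is correct and follows the route the paper intends. The corollary has no separate proof; it is obtained exactly as you do it, by substituting the ATC-GT constants $1-\rho\ge\frac{1-\lambda}{2}$, $\lambda_a=\lambda^2$, $\underline{\lambda_b}=1-\lambda$ and the PAGE choices (which give $A_1=bK\bar\beta=1$) into the restated Theorem~\ref{appendix:restatement:theorem1}, and your term-by-term evaluations and CC/FC counts check out. Your explicit flagging of the side conditions $1/\sqrt{NK}\le\nu\eta_y/2$ and $N\ge K$ (needed for $b\ge 1$), which the paper leaves implicit, is a worthwhile addition.
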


\begin{Corollary}[\textbf{ZO-L2S-ED}]
\label{corollary:offline:L2S+ed}
Under Assumptions \ref{main:assumption:costfunction}-\ref{main:assumptions:combinationmatrix} and the matrix condition in Lemma \ref{appendix:lemma:transformed_recursion},
we consider $\bu^j_{k,i} \equiv 0$ and   the ED strategy shown in in Table \ref{tab:matrix_choices}.
We set the hyperparameters as follows
\begin{align}
 B &= N, b = \mathcal{O}(1), b_0 = \sqrt{\frac{N}{K}}, p = \frac{1}{N}, \beta = 0, \notag\\
 \eta_x &= \mathcal{O}\Bigg(\frac{\sqrt{K}}{\kappa^2\sqrt{N}}\Bigg), \eta_y = \mathcal{O}\Bigg(
 \frac{\sqrt{K}}{\sqrt{N}}
 \Bigg), \notag\\
 \delta_x &= \mathcal{O}
 \Bigg(
 \frac{K^{\frac{1}{4}}}{d^{\frac{1}{2}}_1 T^{\frac{1}{2}}N^{\frac{1}{4}}}
 \Bigg),  \delta_y = \mathcal{O}
 \Bigg(
 \frac{K^{\frac{1}{4}}}{d^{\frac{1}{2}}_2T^{\frac{1}{2}} N^{\frac{1}{4}}}
 \Bigg).
\end{align}
The performance bound is given by 
\begin{align}
&\frac{1}{T}\sum_{i=0}^{T-1}\Big(\mE\|\nabla_x J(\bx_{c,i}, \by_{c,i})\|^2 + \mE\|\nabla_y J(\bx_{c,i}, \by_{c,i})\|^2\Big) \notag \\
&\le \mathcal{O}\Bigg(
\frac{\kappa^2\sqrt{N}(\Delta'_0+1)}{\sqrt{K}T}
+\frac{\kappa^2\zeta^2_0}{T(1-\lambda)^2}
+\frac{\kappa^2\lambda^2\sqrt{K}(\sigma^2+1)}{\sqrt{N}T(1-\lambda)^3}  + \frac{\kappa^2\sqrt{N}\sigma^2}{\sqrt{K}T}
\Bigg).
\end{align}
The CC and FC are given by 
\begin{align}
CC &\approx 
\mathcal{O}\Bigg(
\frac{\kappa^2\sqrt{N}\varepsilon^{-2}}{\sqrt{K}} 
+\frac{\kappa^2\epsilon^{-2}}{(1-\lambda)^2} + \frac{\kappa^2\sqrt{K}\varepsilon^{-2}}{\sqrt{N}(1-\lambda)^3}
\Bigg), \\
FC &\approx 
\Big(CC \times p \times B + b_0\Big)\times 2d + CC \times (1-p) \times b  \times 4d \notag\\
&\approx \mathcal{O}\Bigg(
\frac{d\kappa^2\sqrt{N}\varepsilon^{-2}}{\sqrt{K}} + \frac{d\kappa^2\epsilon^{-2}}{(1-\lambda)^2} + \frac{d\kappa^2\sqrt{K}\varepsilon^{-2}}{\sqrt{N}(1-\lambda)^3} + d\sqrt{\frac{N}{K}}
\Bigg).
\end{align}
\end{Corollary}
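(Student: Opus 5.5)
The plan is to specialize the restated Theorem~\ref{appendix:restatement:theorem1} (the full version of Theorem~\ref{main:theorem1}) to the ED parameters and the L2S hyperparameters, and then read off the complexities. From Lemma~\ref{appendix:lemma:transformed_recursion}, ED gives $\rho=\sqrt{\lambda}$, $\lambda_a=\lambda$ and $\underline{\lambda_b}=\sqrt{1-\lambda}$. Since $1-\sqrt{\lambda}\ge (1-\lambda)/2$, we may replace $(1-\rho)^2\underline{\lambda^2_b}$ by $\Omega((1-\lambda)^3)$ and $(1-\rho)\underline{\lambda^2_b}$ by $\Omega((1-\lambda)^2)$. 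With $B=N$ the indicator $\mathbb{I}(B<N)$ vanishes, so $\Pi'_1$ drops out. With $\beta=0$ we get $\Pi'_2=0$, and $\bar\beta=p=1/N$.

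The first step is to verify the step-size and parameter conditions of the restated theorem. The conditions $p+\beta\le 1$, $b\ge1$ and $\beta+bp\le b$ are immediate. The condition $b\bar\beta=\mathcal{O}(1/N)\le 129/K$ holds since $K\le N$ in the regime of interest. The constraint $\bar\beta\le \nu\eta_y/2$ reads $1/N\lesssim \nu\sqrt{K/N}$, which holds whenever $N\gtrsim K/\nu^2$. For the remaining $\eta_y$ constraints, I use $A_1=bK\bar\beta=\Theta(K/N)$, so $\sqrt{A_1}=\Theta(\sqrt{K/N})$, which matches the choice $\eta_y=\mathcal{O}(\sqrt{K/N})$. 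The network-dependent caps $(1-\rho)\underline{\lambda_b}/A_3$ and the $A_1^{1/4}$, $A_1^{1/3}$ terms are handled with a hidden constant: they are satisfied for $N$ large relative to $K/(1-\lambda)^{c}$. Otherwise I would take $\eta_y$ as the minimum and track the extra term, which is where the $(1-\lambda)^{-2}$ contribution comes from. The choice $\eta_x=\eta_y/(16\kappa^2)$ satisfies $\eta_x\le \sqrt{A_1}/(72\sqrt3\kappa L_f)$.

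Next I substitute into each term. The initial gap $\Pi'_0$ gives $\kappa^2\Delta'_0\sqrt{N}/(\sqrt K T)$ from $1/(\eta_x T)$ and $\kappa^2/(\eta_y T)$. Its $\zeta_0^2$ part equals $\kappa^2\eta_y^2\zeta_0^2/(bK\bar\beta T(1-\lambda)^2)=\kappa^2\zeta_0^2/(T(1-\lambda)^2)$, using $\eta_y^2/(bK\bar\beta)=\Theta(1)$. The network noise term, with $p/\bar\beta^2=N$ and $b_0=\sqrt{N/K}$, becomes $\kappa^2\lambda^2\sqrt K\sigma^2/(\sqrt N T(1-\lambda)^3)$. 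The initial-noise term $\kappa^2\sigma^2/(b_0\bar\beta KT)$ becomes $\kappa^2\sqrt N\sigma^2/(\sqrt K T)$. For the bias $\Pi'_3$, the choice of $\delta_x^2 d_1,\delta_y^2 d_2=\mathcal{O}(\sqrt K/(\sqrt N T))$ makes $d_1\delta_x^2+\kappa^2d_2\delta_y^2$ small. The term $\kappa^2(d\delta^2)/(bK\bar\beta)=\kappa^2 N d\delta^2/K$ is the delicate one, since it is of order $\kappa^2\sqrt N/(\sqrt K T)$, so the $\delta$ scaling must be checked against the $1/(bK\bar\beta)$ factor. The network-amplified part contributes $\kappa^2\lambda^2\sqrt K/(\sqrt N T(1-\lambda)^3)$, which produces the "$+1$" in $(\sigma^2+1)$. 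Finally, \eqref{appendix:useful:bound_gradient} converts the bound on $\|\nabla P\|^2+\kappa L_f\Delta^y$ into the stated game-stationarity bound.

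For the complexities, setting the bound to $\varepsilon^2$ gives CC directly. FC then follows from the per-round query counts. Full-batch rounds occur with probability $p$ and cost $B\cdot 2d$, so in expectation they contribute $CC\cdot pB\cdot 2d=CC\cdot 2d$. Minibatch rounds cost $b\cdot 4d$ each, and the initialization costs $b_0\cdot 2d$. I expect the main obstacle to be the bookkeeping of the conditional step-size caps: making sure $\eta_y=\Theta(\sqrt{K/N})$ is admissible, or else identifying which $(1-\lambda)$-dependent cap binds. That choice determines exactly the $(1-\lambda)^{-2}$ and $(1-\lambda)^{-3}$ terms, and checking the $\delta$-bias term against $1/(bK\bar\beta)=N/K$ is a secondary point of care.
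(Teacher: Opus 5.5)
Your proposal is correct and follows the route the paper intends (it gives no separate proof of this corollary): specialize the restated Theorem~\ref{appendix:restatement:theorem1} with the ED constants $\rho=\sqrt{\lambda}$, $\lambda_a=\lambda$, $\underline{\lambda_b}=\sqrt{1-\lambda}$, drop $\Pi'_1$ and $\Pi'_2$ via $B=N$ and $\beta=0$, and substitute term by term. Your evaluations are right, including the $\kappa^2\sqrt{N}/(\sqrt{K}T)$ bias term that gives the ``$+1$'', and so is your FC accounting.

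One clarification: the $(1-\lambda)^{-2}$ and $(1-\lambda)^{-3}$ terms come from plugging $\eta_y=\Theta(\sqrt{K/N})$ into the $\zeta_0^2$, network-noise and bias terms, not from a binding network-dependent cap on $\eta_y$. If such a cap were active, $\kappa^2/(\eta_y T)$ would put $(1-\lambda)$ into the leading term and the stated bound would not follow. So the corollary, here as in the paper, tacitly requires $N$ large enough, roughly $N\gtrsim K\kappa^2\lambda^4/(1-\lambda)^6$, for $\eta_y=\Theta(\sqrt{K/N})$ to be admissible.
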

\begin{Corollary}[\textbf{ZO-L2S-GT}]
\label{corollary:offline:L2S+atc-gt}
Under Assumptions \ref{main:assumption:costfunction}-\ref{main:assumptions:combinationmatrix} and the matrix condition in Lemma \ref{appendix:lemma:transformed_recursion},
we consider $\bu^j_{k,i} \equiv 0$ and the ATC-GT shown  in Table \ref{tab:matrix_choices}.
We set the hyperparameters as follows
\begin{align}
 B &= N, b = \mathcal{O}(1), b_0 = \sqrt{\frac{N}{K}}, p = \frac{1}{N}, \beta = 0,  \notag\\
 \eta_x &= \mathcal{O}\Bigg(\frac{\sqrt{K}}{\kappa^2\sqrt{N}}\Bigg),\eta_y = \mathcal{O}\Bigg(
 \frac{\sqrt{K}}{\sqrt{N}}
 \Bigg), \notag\\
 \delta_x &= \mathcal{O}
 \Big(
 \frac{K^{\frac{1}{4}}}{d^{\frac{1}{2}}_1 T^{\frac{1}{2}}N^{\frac{1}{4}}}
 \Big),  \delta_y = \mathcal{O}
 \Big(
 \frac{K^{\frac{1}{4}}}{d^{\frac{1}{2}}_2T^{\frac{1}{2}} N^{\frac{1}{4}}}
 \Big).
\end{align}
The performance bound is given by 
\begin{align}
&\frac{1}{T}\sum_{i=0}^{T-1}\Big(\mE\|\nabla_x J(\bx_{c,i}, \by_{c,i})\|^2 + \mE\|\nabla_y J(\bx_{c,i}, \by_{c,i})\|^2\Big) \notag \\
&\le \mathcal{O}\Bigg(
\frac{\kappa^2\sqrt{N}(\Delta'_0+1)}{\sqrt{K}T}
+\frac{\kappa^2\zeta^2_0}{T(1-\lambda)^3}
+\frac{\kappa^2\lambda^4\sqrt{K}(\sigma^2+1)}{\sqrt{N}T(1-\lambda)^4} 
+ \frac{\kappa^2\sqrt{N}\sigma^2}{\sqrt{K}T}
\Bigg).
\end{align}
The CC and FC are given by 
\begin{align}
&CC \approx 
\mathcal{O}\Bigg(
\frac{\kappa^2\sqrt{N}\varepsilon^{-2}}{\sqrt{K}} 
+\frac{\kappa^2\epsilon^{-2}}{(1-\lambda)^3}  + \frac{\kappa^2\sqrt{K}\varepsilon^{-2}}{\sqrt{N}(1-\lambda)^4}
\Bigg),\\
&FC \approx 
\Big(CC \times p \times B +b_0 \Big)\times 2d + CC \times (1-p) \times b  \times 4d \notag \\
&\approx \mathcal{O}\Bigg(
\frac{d\kappa^2\sqrt{N}\varepsilon^{-2}}{\sqrt{K}} +\frac{d\kappa^2\varepsilon^{-2}}{(1-\lambda)^3} + \frac{d\kappa^2\sqrt{K}\varepsilon^{-2}}{\sqrt{N}(1-\lambda)^4}+ d\sqrt{\frac{N}{K}}
\Bigg).
\end{align}
\end{Corollary}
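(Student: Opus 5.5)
The plan is to specialize the restated Theorem~\ref{appendix:restatement:theorem1} to the ATC-GT choice in Table~\ref{tab:matrix_choices}, mirroring Corollary~\ref{corollary:offline:L2S+ed}. First, Lemma~\ref{appendix:lemma:transformed_recursion} gives the GT network constants $\rho\le\frac{1+\lambda}{2}$, so $1-\rho\ge\frac{1-\lambda}{2}$; $\lambda_a=\lambda^2$; and $\underline{\lambda_b}=1-\lambda$. Hence $(1-\rho)^2\underline{\lambda^2_b}\gtrsim(1-\lambda)^4$ and $(1-\rho)\underline{\lambda^2_b}\gtrsim(1-\lambda)^3$. Next I set $B=N$, which kills the large-batch term $\Pi'_1\sigma^2\mathbb{I}(B<N)$, and $\beta=0$, so that $\Pi'_2=0$ and $\bar\beta=p=1/N$. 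The parameters also satisfy $\beta+bp\le b$, $p+\beta\le1$, and $b\bar\beta=b/N\le 129/K$ for $N$ large relative to $K$ and $b=\mathcal{O}(1)$. For the step-size condition, $\bar\beta\le\nu\eta_y/2$ reads $1/N\lesssim\sqrt{K/N}$, which holds. The remaining conditions on $\eta_y$ are of the form $\eta_y\lesssim (1-\rho)\underline{\lambda_b}/A_3$ and $\eta_y\lesssim A_1^{1/4}(\cdots)$ with $A_1=bK/N$. These hold for $\eta_y=\mathcal{O}(\sqrt{K/N})$ once constants and the regime ($N$ large relative to $K/(1-\lambda)^{c}$) are absorbed into the $\mathcal{O}$; I would state this as the standing regime.

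Then I substitute the parameters into each term of the theorem.
\begin{itemize}
\item \emph{Initial gap.} $\frac{\Delta'_{c,0}}{\eta_xT}+\frac{\kappa^2\Delta^y_{c,0}}{\eta_yT}=\mathcal{O}\big(\kappa^2\sqrt N\Delta'_0/(\sqrt K T)\big)$.
\item \emph{$\zeta_0$ part of $\Pi'_0$.} Using $bK\bar\beta=bK/N$ and $\eta_y^2=K/N$, the factor $\frac{\kappa^2\eta_y^2}{bK\bar\beta}$ becomes $\mathcal{O}(\kappa^2)$. With $(1-\rho)\underline{\lambda_b^2}\sim(1-\lambda)^3$, this gives $\kappa^2\zeta_0^2/(T(1-\lambda)^3)$.
\item \emph{Network noise.} With $p=\bar\beta=1/N$ and $b_0=\sqrt{N/K}$, we get $\frac{\kappa^2\lambda^4\eta_y^2 p}{b_0K\bar\beta^2T(1-\lambda)^4}=\frac{\kappa^2\lambda^4 (K/N)N}{\sqrt{N/K}\,KT(1-\lambda)^4}=\frac{\kappa^2\lambda^4\sqrt K}{\sqrt N T(1-\lambda)^4}$, multiplied by $\sigma^2$.
\item \emph{Initial noise.} $\frac{\kappa^2\sigma^2}{b_0\bar\beta KT}=\frac{\kappa^2\sqrt N\sigma^2}{\sqrt KT}$.
\item \emph{ZO bias $\Pi'_3$.} Its factors are $\frac{\kappa^2\eta_y^2\lambda^4}{bK\bar\beta(1-\lambda)^4}=\mathcal{O}\big(\kappa^2\lambda^4/(1-\lambda)^4\big)$ and $\frac{\kappa^2}{bK\bar\beta}=\kappa^2N/K$, each times $d_1\delta_x^2+d_2\delta_y^2=\mathcal{O}(\sqrt K/(\sqrt N T))$. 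This yields contributions $\frac{\kappa^2\lambda^4\sqrt K}{\sqrt N T(1-\lambda)^4}$ (the ``$+1$'' attached to $\sigma^2$) and $\kappa^2\sqrt N/(\sqrt K T)$ (the ``$+1$'' attached to $\Delta'_0$). The leftover $d_1\delta_x^2+\kappa^2d_2\delta_y^2$ is dominated by these.
\end{itemize}

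Finally, for complexities I set the bound $\le\varepsilon^2$ and solve for $T$ term by term to get CC. FC counts $2d$ queries per sample. Full-batch refreshes cost $CC\cdot p\cdot B\cdot 2d=2d\,CC$, minibatch steps cost $CC\cdot b\cdot4d$, and initialization costs $b_0\cdot 2d=2d\sqrt{N/K}$. Since $p B=1$ and $b=\mathcal{O}(1)$, FC is $\mathcal{O}(d\cdot CC+d\sqrt{N/K})$, which is the claimed expression.

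The main obstacle I expect is the step-size feasibility check. This means confirming that $\eta_y=\Theta(\sqrt{K/N})$ does not violate the $(1-\lambda)$-dependent caps from Lemma~\ref{appendix:lemma:cw_coupled} and the theorem. Those caps involve $A_1=bK\bar\beta=bK/N$, which is small, so caps like $A_1^{1/4}$ scale as $(K/N)^{1/4}\ge\sqrt{K/N}$ and are fine. The cap $(1-\rho)\underline{\lambda_b}/A_3\sim(1-\lambda)^2$ is independent of $N$, so it forces $N\gtrsim K/(1-\lambda)^4$. I would either impose this regime or note that otherwise $\eta_y$ is capped at $(1-\lambda)^2$, in which case the $\Delta'_0$ term picks up an additional $(1-\lambda)^{-2}$-type term, absorbed as in the statement.
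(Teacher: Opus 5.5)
Your proposal is correct and follows the paper's route: plug the ATC-GT constants ($1-\rho\ge\frac{1-\lambda}{2}$, $\lambda_a=\lambda^2$, $\underline{\lambda_b}=1-\lambda$) and the L2S choices ($B=N$, $\beta=0$, $\bar\beta=p=1/N$) term by term into the restated Theorem~\ref{appendix:restatement:theorem1}, then read off CC and FC. One refinement to your feasibility remark: the $A_1^{1/4}$ and $A_1^{1/3}$ caps on $\eta_y$ carry $(1-\lambda)$ and $\kappa$ prefactors, and the $A_1^{1/3}$ cap is the binding one, so $\eta_y\asymp\sqrt{K/N}$ needs roughly $N\gtrsim K\kappa^2\lambda^8/(1-\lambda)^8$ rather than only $N\gtrsim K/(1-\lambda)^4$; the paper leaves this regime implicit as well.
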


\section{Proof of hybrid ZO estimator}
We prove the convergence guarantees for the hybrid ZO estimator presented in Theorem \ref{main:theorem2}.
We start by establishing some key lemmas.
\subsection{Key Lemmas}
\begin{Lemma}[\textbf{Network gradient error}]
\label{appendix:lemma:coorinate_Nxc_Nyc}
Under Assumptions \ref{main:assumption:Lipschitz}, \ref{main:assumption:boundedvariance},  and \ref{main:asssumption:RD-ZO},
we consider \textbf{ZOMA} with 
$\bu^j_{k,i} \not= 0$ during $\bpi_i=0$, and choose
$\bar{\beta} \triangleq p +\beta - p \beta \le 1, \beta \le \frac{\sqrt{d}}{2\sigma_1}$, it holds that
\begin{align}
&\frac{1}{T}
\sum_{i=0}^{T-1}\mE\|\mS_{z,i}\|^2 \le 
\frac{2\mS_0}{T\bar{\beta}} + \frac{38dKL^2_fv^2_1v^2_2}{b\bar{\beta}T}
\sum_{i=1}^T
 \mE\|\mhE_{z,i}\|^2\notag \\
&
 +\frac{18dKL^2_fv^2_1v^2_2\mhE_0}{b\bar{\beta}T}+ \frac{20dKL^2_f}{b\bar{\beta}T}
 \sum_{i=0}^{T-1}
(\eta^2_x\mE\|\bg^x_{c,i}\|^2 + \eta^2_y\mE\|\bg^y_{c,i}\|^2)+ \frac{8K\beta^2\sigma^2_1}{b\bar{\beta}T}
\sum_{i=0}^{T-1}
 \mE\|\nabla^z_{c,i}\|^2  
+\frac{KC_2}{\bar{\beta}} +\frac{KC_1}{\bar{\beta}}. 
\label{appendix:proof:NxNy_statement}
\end{align}
where 
 $\mS_0 \triangleq \mE\|\mS_{z,0}\|^2$, $\mhE_0 \triangleq \mE\|\mhE_{z,0}\|^2 $ , $d \triangleq d_1+d_2$ and 
 \begin{align}
C_1 &\triangleq
3pL^2_f(d_1\delta^2_x+d^2_1\mu^2_x +d_2\delta^2_y +d^2_2\mu^2_y)
 + \frac{3L^2_f(d^2_1\mu^2_x + d^2_2\mu^2_y)}{b}.
 \notag\\
 C_2 &\triangleq \frac{6pC_0}{B}\mathbb{I}(B<N)+\frac{4\beta^2\sigma^2_0}{b}.
\end{align}
\end{Lemma}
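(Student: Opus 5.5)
I would mirror the proof of Lemma~\ref{appendix:lemma:coorinate_variance}, but with the error now measured against the smoothed gradient, $\bs^x_{k,i}=\bg^x_{k,i}-\nabla_x J^{\mu_x}_k(\bz_{k,i})$. The plan is to derive a one-step contraction $\mE\|\mS_{z,i}\|^2\le(1-\bar\beta)\mE\|\mS_{z,i-1}\|^2+\text{(forcing terms)}$ and then unroll it with \eqref{proof:useful:recursion_first}.

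\textbf{Case split on $\bpi_i$.} For agent $k$ and the $x$-block, split on the event $\bpi_i$ as in \eqref{appendix:proof:CW_ZO:grad_error1}.

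On $\bpi_i=1$ (probability $p$), write
\[
\bq^x_{i,0}(\bz_{k,i};B)-\nabla_x J^{\mu_x}_k=\big(\bq^x_{i,0}-\hnabla_x J_k\big)+\big(\hnabla_x J_k-\nabla_x J_k\big)+\big(\nabla_x J_k-\nabla_x J^{\mu_x}_k\big).
\]
Bound the three pieces by Lemma~\ref{appendix:lemma:coordinate_large_bound}, Lemma~\ref{appendix:lemma:coordinate_zero_first} and Lemma~\ref{appendix:lemma:randomsmoothing}(2), respectively. This produces the $3pC_0/B\,\mathbb{I}(B<N)$ term and the $3pL_f^2(d_1\delta_x^2+d_1^2\mu_x^2)$ part of $C_1$.

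On $\bpi_i=0$, decompose the momentum update as
\[
(1-\beta)\bs^x_{k,i-1}+(1-\beta)\big[\bq^x_i(\bz_{k,i};b)-\bq^x_i(\bz_{k,i-1};b)-(\nabla_x J^{\mu_x}_k(\bz_{k,i})-\nabla_x J^{\mu_x}_k(\bz_{k,i-1}))\big]+\beta\big[\bq^x_i(\bz_{k,i};b)-\nabla_x J^{\mu_x}_k(\bz_{k,i})\big].
\]
By Lemma~\ref{appendix:lemma:randomsmoothing}(3) the last two brackets have zero conditional mean, so the cross term with $\bs^x_{k,i-1}$ vanishes. This leaves $(1-\beta)\mE\|\bs^x_{k,i-1}\|^2$ plus twice the squared difference term plus $2\beta^2$ times the variance term.

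The variance term is handled by Lemma~\ref{appendix:lemma:randomsmoothing}(4). That gives $\frac{\sigma_1^2}{b}\mE\|\nabla_x J(\bz_{k,i})\|^2+\frac{\sigma_0^2}{b}$. Then split $\|\nabla_x J(\bz_{k,i})\|^2\le 2\|\nabla^z_{c,i}\|^2+2L_f^2\|\bz_{k,i}-\bz_{c,i}\|^2$. The first piece yields the $\frac{8K\beta^2\sigma_1^2}{b}\|\nabla^z_{c,i}\|^2$ term after summing over blocks and agents. The consensus piece is converted via \eqref{proof:useful:consensus} and absorbed into the $\mhE$ term using $\beta\le\sqrt d/(2\sigma_1)$, since then $\beta^2\sigma_1^2\le d/4$.

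\textbf{Main obstacle: the difference term.} Using independence over $j$ and $\mE\|\bxi-\mE\bxi\|^2\le\mE\|\bxi\|^2$, reduce it to $\frac1b\mE\|\bq^x_k(\bz_{k,i};\bxi,\bu)-\bq^x_k(\bz_{k,i-1};\bxi,\bu)\|^2$ with a shared $(\bxi,\bu)$. Since we only have mean-squared smoothness (Assumption~\ref{main:assumption:Lipschitz}) and no per-component Lipschitz gradient, I would write the ZO-RU finite difference via the mean value theorem:
\[
\frac{Q(z+\mu\bu)-Q(z)}{\mu}=\bu^\top\nabla_x Q\big(x+t\mu\bu,y;\bxi\big).
\]
The difference then becomes $d_1\bu\bu^\top$ applied to gradient differences at shifted points, each within $\mu_x$ of $\bz_{k,i}$ or $\bz_{k,i-1}$. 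Insert and subtract $\nabla_x Q$ at the unshifted points, and use $\mE[d_1^2\bu\bu^\top\bu\bu^\top]=d_1 I$ on the sphere. This should give a bound of order $3d_1L_f^2\|\bz_{k,i}-\bz_{k,i-1}\|^2+3L_f^2 d_1^2\mu_x^2$, which is the $\frac{3L_f^2d_1^2\mu_x^2}{b}$ part of $C_1$ and the $d$-factor on the drift. This is the step I expect to be delicate, because the intermediate point depends on $t$ and hence on $\bxi$ and $\bu$. I would handle it by bounding the shifted-point discrepancy pointwise with $\|t\mu\bu\|\le\mu$ inside the expectation over $\bxi$. This is where Lemma~\ref{appendix:lemma:random:exey}-type reasoning enters.

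\textbf{Assembly.} Sum over $k$ and both blocks, which produces $d=d_1+d_2$. Bound the drift $\|\mZ_i-\mZ_{i-1}\|^2$ by \eqref{appendix:proof:incremental}, giving the $\mhE_{z,i},\mhE_{z,i-1}$ terms and $3K(\eta_x^2\|\bg^x_{c,i-1}\|^2+\eta_y^2\|\bg^y_{c,i-1}\|^2)$. Combine the two events into the contraction factor $(1-p)(1-\beta)=1-\bar\beta$. Unroll exactly as in \eqref{appendix:lemma_coordinate:step2}--\eqref{appendix:lemma_coordinate:step_final}. Finally, average over $i$, add back the $i=0$ term $\mS_0/(T\bar\beta)$, and collect constants into the stated form.
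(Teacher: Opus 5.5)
Your outline follows the paper's proof almost step for step. It shares the split on $\bpi_i$, the three-term bound on $\bpi_i=1$, the vanishing cross term, and Lemma~\ref{appendix:lemma:randomsmoothing}(4) for the variance. It also inserts $\nabla_xQ_k$ at the unshifted points with isotropy of $\bu$ for the unshifted difference, uses \eqref{appendix:proof:incremental} for the drift, and unrolls with $1-\bar\beta=(1-p)(1-\beta)$.

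The gap is exactly at the step you flag as delicate, and your proposed fix does not work. In the Lagrange form of the mean value theorem, the intermediate parameter $t=t(\bxi,\bu,\bz)$ is random, so the evaluation point $x+t\mu_x\bu$ is correlated with $\bxi$. Assumption~\ref{main:assumption:Lipschitz} is only a mean-square bound for \emph{fixed} pairs of points. It gives no per-sample Lipschitz control of $\nabla_xQ_k(\cdot;\bxi)$, and the paper deliberately avoids assuming one. So when $t$ depends on $\bxi$, you have neither a pointwise bound nor $\mE_{\bxi}\|\nabla_xQ_k(x+t\mu_x\bu,y;\bxi)-\nabla_xQ_k(z;\bxi)\|^2\le L_f^2\|t\mu_x\bu\|^2$. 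Appealing to Lemma~\ref{appendix:lemma:random:exey} does not help either. Its proof invokes the present lemma and reuses exactly this bound, \eqref{proof:appendix:NxNy_step4}, so the argument has to be made here.

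The missing idea is the integral form of the remainder,
\[
R\triangleq Q_k(x+\mu_x\bu,y;\bxi)-Q_k(z;\bxi)-\langle\nabla_xQ_k(z;\bxi),\mu_x\bu\rangle=\int_0^1\big\langle\nabla_xQ_k(x+t\mu_x\bu,y;\bxi)-\nabla_xQ_k(z;\bxi),\mu_x\bu\big\rangle\,dt,
\]
in which $t$ is a deterministic integration variable. Write $R^2$ as a double integral in $(t,s)$ and use Tonelli to move $\mE$ inside. Then Cauchy--Schwarz in expectation, plus Assumption~\ref{main:assumption:Lipschitz} at each fixed $t$ (conditionally on $\bu$, which is independent of $\bxi$), gives $\mE R^2\le L_f^2\mu_x^4(\int_0^1t\,dt)^2=L_f^2\mu_x^4/4$.

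The factor $1/4$ is also what produces the stated constant. Each of the two shifted terms contributes $3d_1^2\mE R^2/\mu_x^2$, totalling $\tfrac32L_f^2d_1^2\mu_x^2$, which becomes exactly $3L_f^2d_1^2\mu_x^2/b$ in $C_1$ after the factor $2/b$. Your bound via $\|t\mu\bu\|\le\mu$, even if it were justified, would give $12L_f^2d_1^2\mu_x^2/b$.

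A smaller bookkeeping point concerns the index range. Splitting $\|\nabla_xJ(\bz_{k,i})\|^2$ around $\bz_{c,i}$ leaves $\mE\|\nabla^z_{c,i}\|^2$ at index $i$, so after unrolling you get $\sum_{i=1}^{T}$ rather than the stated $\sum_{i=0}^{T-1}$. The paper shifts to $\nabla^z_{c,i-1}$ using $\|\bz_{c,i}-\bz_{c,i-1}\|^2=\eta_x^2\|\bg^x_{c,i-1}\|^2+\eta_y^2\|\bg^y_{c,i-1}\|^2$. That shift is the source of the factor $8$. It is also the source of the extra $2$ in the coefficient $20$, which is absorbed via $\beta\le\sqrt d/(2\sigma_1)$.
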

\begin{proof}
Note that  $\forall k \in [K]$,
we have 
\begin{align}
 &\mE\|\bg^x_{k,i} - \nabla_x J^{\mu_x}_{k}(\bz_{k,i})\|^2\overset{(a)}{=}
 p\mE\Big\|
 \bq^x_{i,0}(\bz_{k,i};B)-\nabla_xJ^{\mu_x}_{k}(\bz_{k,i})  
 \Big\|^2 +(1-p)
 \mE\Big\|
 (1-\beta)\Big(\bg^x_{k,i-1} - \bq^x_{i}(\bz_{k,i-1};b)\Big)  \notag \\
 &\quad +  \bq^x_{i}(\bz_{k,i};b)
  - \nabla_x J^{\mu_x}_{k}(\bz_{k,i}) 
 \Big\|^2,
 \label{proof:appendix:NxNy_step1}
\end{align}
where $(a)$ follows from the fact that each agent employs a probabilistic gradient estimator. 
The first error term can be bounded as
\begin{align}
&\mE\Big\|
 \bq^x_{i,0}(\bz_{k,i};B)- \nabla_x J^{\mu_x}_{k}(\bz_{k,i})
 \Big\|^2 \notag\\
&=\mE\Big\|
 \bq^x_{i,0}(\bz_{k,i};B)- 
 \hnabla_x J_k(\bz_{k,i})
 +\hnabla_x J_k(\bz_{k,i})-\nabla^x_{k,i}+\nabla^x_{k,i}
 -
 \nabla_x J^{\mu_x}_{k}(\bz_{k,i})
 \Big\|^2 \notag \\
 &\le 
 3\mE\Big\|\bq^x_{i,0}(\bz_{k,i};B)- 
 \hnabla_x J_k(\bz_{k,i})\Big\|^2  + 3\mE\|\hnabla_x J_k(\bz_{k,i})
 -\nabla^x_{k,i}\|^2+ 3\mE\|
 \nabla_x J^{\mu_x}_k(\bz_{k,i})-\nabla^x_{k,i} \|^2 \notag \\
 &\overset{(a)}{\le} 
 \frac{3C_0}{B}\mathbb{I}(B<N)
 +3L^2_f(d_1\delta^2_x+d^2_1\mu^2_x),
\label{proof:appendix:NxNy_step2}
\end{align}
where $(a)$ follows from Lemmas \ref{appendix:lemma:randomsmoothing}, \ref{appendix:lemma:coordinate_zero_first}, and \ref{appendix:lemma:coordinate_large_bound}.
For the second term, we can derive 
\begin{align}
 &\mE\Big\|
 (1-\beta)\Big(\bg^x_{k,i-1} - \bq^x_{i}(\bz_{k,i-1};b) \Big)  +  \bq^x_{i}(\bz_{k,i};b)
 - \nabla_x J^{\mu_x}_{k}(\bz_{k,i}) 
 \Big\|^2 \notag \\
 &=\mE\Big\|
 (1-\beta)\Big(\bg^x_{k,i-1} - \nabla_x J^{\mu_x}_{k}(\bz_{k,i-1})\Big) - (1-\beta)\Big(
 \bq^x_{i}(\bz_{k,i-1};b) - \bq^x_{i}(\bz_{k,i};b)
 +\nabla_x J^{\mu_x}_{k}(\bz_{k,i})
 \notag\\
 &\quad - \nabla_x J^{\mu_x}_{k}(\bz_{k,i-1})\Big) 
 + \beta \Big( \bq^x_{i}(\bz_{k,i};b) 
 - \nabla_x J^{\mu_x}_{k}(\bz_{k,i}) \Big) 
 \Big\|^2 \notag \\
 &\overset{(a)}{\le} 
 (1-\beta)^2\mE\|\bg^x_{k,i-1} - \nabla_x J^{\mu_x}_{k}(\bz_{k,i-1})\|^2 +2\mE\Big\|\bq^x_{i}(\bz_{k,i-1};b)
  -
 \bq^x_{i}(\bz_{k,i};b)+\nabla_x J^{\mu_x}_{k}(\bz_{k,i})- \nabla_x J^{\mu_x}_{k}(\bz_{k,i-1})
 \Big\|^2  
 \notag\\
 &\quad
 + 2\beta^2\mE\Big\|
 \bq^x_{i}(\bz_{k,i};b)- \nabla_x J^{\mu_x}_{k}(\bz_{k,i})
 \Big\|^2 \notag \\
 &\overset{(b)}{\le} 
 (1-\beta)\mE\|\bg^x_{k,i-1} - \nabla_x J^{\mu_x}_{k}(\bz_{k,i-1})\|^2
 +\frac{2}{b^2}\sum_{j=1}^b\mE\Big\| \bq^x_{i}(\bz_{k,i-1};\bxi^j_{k,i}, \bu^j_{k,i}) - \bq^x_{i}(\bz_{k,i};\bxi^j_{k,i},\bu^j_{k,i})
 \Big\|^2
 \notag\\
 &\quad 
 +\frac{2\beta^2}{b} (\sigma^2_1\mE\|\nabla_x J(\bz_{k,i})\|^2
 +\sigma^2_0).\label{proof:appendix:NxNy_step3}
\end{align}
where $(a)$ follows from the Jensen's inequality and the fact that 
\begin{align}
\mE_{\bu^j_{k,i}} \mE_{\bxi^j_{k,i}}[\bq^x_{i}(\bz_{k,i};\bxi^j_{k,i},\bu^j_{k,i})] &= \nabla_x J^{\mu_x}_{k}(\bz_{k,i}), \notag \\
\mE_{\bu^j_{k,i}} \mE_{\bxi^j_{k,i}}[\bq^x_{i}(\bz_{k,i-1};\bxi^j_{k,i},\bu^j_{k,i})] &= \nabla_x J^{\mu_x}_{k}(\bz_{k,i-1}),
\end{align}
and $(b)$ follows from the fact that
$\{\bxi^j_{k, i}\}, \{\bu^j_{k,i}\}$
are i.i.d across $j$ as well as the inequality $\mE\|\bxi - \mE\bxi\|^2 \le \mE\|\bxi\|^2$ and Assumption \ref{main:asssumption:RD-ZO}.
Note that
\begin{align}
&\mE\Big\|\bq^x_{i}(\bz_{k,i};\bxi^j_{k,i}, \bu^j_{k,i}) - \bq^x_{i}(\bz_{k,i-1};\bxi^j_{k,i},\bu^j_{k,i})
 \Big\|^2 \notag \\
 &=\mE\Big\|
\frac{d_1(Q_k(\bx_{k,i}+\mu_x\bu^{j}_{k,i}, \by_{k,i};\bxi^j_{k, i}) - Q_k(\bz_{k,i};\bxi^j_{k, i}))}{\mu_x} \bu^j_{k,i} -
 \frac{d_1(Q_k(\bx_{k,i-1}+\mu_x\bu^{j}_{k,i},\by_{k,i-1};\bxi^j_{k, i}) - Q_k(\bz_{k,i-1};\bxi^j_{k, i}))}{\mu_x} \bu^j_{k,i} 
 \Big\|^2 \notag \\
&\overset{(a)}{=} d^2_1\mE\Big\| \Big(
\frac{Q_k(\bx_{k,i}+\mu_x\bu^{j}_{k,i},\by_{k,i};\bxi^j_{k, i}) - Q_k(\bz_{k,i};\bxi^j_{k, i}) -\langle \nabla_x Q_k(\bz_{k,i};\bxi^j_{k, i}),\mu_x \bu^j_{k,i} \rangle}{\mu_x} \bu^j_{k,i} 
 \Big)
\notag\\
&\quad - \Big(\frac{Q_k(\bx_{k,i-1}+\mu_x\bu^{j}_{k,i},\by_{k,i-1};\bxi^j_{k, i}) -Q_k(\bz_{k,i-1};\bxi^j_{k, i}) - \langle \nabla_x Q_k(\bz_{k,i-1};\bxi^j_{k, i}),\mu_x \bu^j_{k,i} \rangle}{\mu_x} \bu^j_{k,i} \Big)
 \notag\\
 &\quad+
\Big\langle \nabla_x Q_k(\bz_{k,i};\bxi^j_{k, i}) - \nabla_x Q_k(\bz_{k,i-1};\bxi^j_{k, i}), \bu^j_{k,i} \Big\rangle\bu^j_{k,i}
 \Big\|^2 \notag \\
 &\overset{(b)}{\le} 
\frac{3L^2_fd^2_1\mu^2_x}{2} +3d^2_1\mE 
(\nabla_x Q_k(\bz_{k,i};\bxi^j_{k, i}) - \nabla_x Q_k(\bz_{k,i-1};\bxi^j_{k, i}))^\top \bu^j_{k,i}(\bu^j_{k,i})^\top (\nabla_x Q_k(\bz_{k,i};\bxi^j_{k, i}) - \nabla_x Q_k(\bz_{k,i-1};\bxi^j_{k, i}))  \notag \\
&\overset{(c)}{\le} 
\frac{3L^2_fd^2_1\mu^2_x}{2} +3d_1\mE
\|\nabla_x Q_k(\bz_{k,i};\bxi^j_{k, i}) - \nabla_x Q_k(\bz_{k,i-1};\bxi^j_{k, i})\|^2  \notag \\
&\le 
\frac{3L^2_fd^2_1\mu^2_x}{2} +3d_1L^2_f\mE
\|\bz_{k,i} - \bz_{k,i-1}\|^2,
\label{proof:appendix:NxNy_step4}
\end{align}
where $(a)$ follows by inserting some terms, $(c)$ is due to $\mE\bu^j_{k,i} (\bu^j_{k,i})^\top = \frac{1}{d_1} \mathrm{I}_{d_1}$ (see, e.g.,  \cite{gao2018information}),
and  $(b)$ follows from $(\bu^j_{k,i})^\top \bu^j_{k,i} = 1$ and
\begin{align}
&\mE|Q_k(x+\mu_x \bu,y; \bxi)  - 
Q_k(z;\bxi) - \langle \nabla_x Q_k(z;\bxi), \mu_x \bu\rangle|^2 \le \frac{L^2_f\mu^4_x}{4}. \label{proof:inegrable}
\end{align}
The proof of relation \eqref{proof:inegrable} is provided below. For convenience, let 
\begin{align}
R \triangleq Q_k(x+\mu_x \bu,y; \bxi)  - 
Q_k(z;\bxi) - \langle \nabla_x Q_k(z;\bxi), \mu_x \bu \rangle,
\end{align}
where $\bu, \bxi$ are independent of $z$.
Using the mean value theorem, we have 
\begin{align}
&Q_{k}(x+\mu_x\bu, y;\bxi) - Q_k(z;\bxi) = \int_{0}^1 \langle 
\nabla_x Q_k(x+t\mu_x\bu,y;\bxi), \mu_x \bu
\rangle dt.
\end{align}
It follows that
\begin{align}
R = \int_{0}^1 \langle 
\nabla_x Q_k(x+t\mu_x\bu,y;\bxi) - \nabla_x Q_{k}(z;\bxi), \mu_x \bu
\rangle dt.
\end{align}
Taking the absolute value on both sides, have 
\begin{align}
|R| &= |\int_{0}^1 \langle 
\nabla_x Q_k(x+t\mu_x\bu,y;\bxi) - \nabla_x Q_{k}(z;\bxi), \mu_x \bu
\rangle dt| \notag \\
&\le \int_{0}^1 |\langle 
\nabla_x Q_k(x+t\mu_x\bu,y;\bxi) - \nabla_x Q_{k}(z;\bxi), \mu_x \bu
\rangle| dt \\
&\overset{(a)}{\le} 
\int_{0}^1 \|\nabla_x Q_k(x+t\mu_x\bu,y;\bxi) - \nabla_x Q_{k}(z;\bxi)\|\|\mu_x \bu\| dt \notag \\
&=\mu_x \int_{0}^1 \|\nabla_x Q_k(x+t\mu_x\bu, y;\bxi) - \nabla_x Q_{k}(z;\bxi)\| dt,
\end{align}
where  $(a)$ follows from Cauchy-Schwarz inequality.
Taking the square, we have 
\begin{align}
|R|^2 &= \mu^2_x \int_{0}^{1} \int_{0}^1 \|\nabla_x Q_k(x+t\mu_x\bu,y;\bxi) - \nabla_x Q_{k}(z;\bxi)\|\|\nabla_x Q_k(x+s\mu_x\bu,y;\bxi) - \nabla_x Q_{k}(z;\bxi)\|dtds,
\end{align}
where the integrand is nonnegative and measurable.
By Tonelli's theorem, we can switch the order of the integral and expectation, which gives
\begin{align}
&\mE|R|^2 \\
&\overset{(a)}{\le} 
\mu^2_x \int_{0}^{1} \int_{0}^1 (\mE\|\nabla_x Q_k(x+t\mu_x\bu,y;\bxi) - \nabla_x Q_{k}(z;\bxi)\|^2)^{\frac{1}{2}}
(\mE\|\nabla_x Q_k(x+s\mu_x\bu,y;\bxi) - \nabla_x Q_{k}(z;\bxi)\|^2)^{\frac{1}{2}}dtds \notag \\
&\overset{(b)}{\le} \mu^2_x
\int_{0}^{1} \int_{0}^1
L_f\|t\mu_x \bu\|L_f\|s\mu_x \bu\| dtds \notag\\
&
= L^2_f\mu^4_x \int_{0}^{1} \int_{0}^1 st dsdt = L^2_f\mu^4_x (\int_{0}^1 t dt)^2 = \frac{L^2_f\mu^4_x}{4},
\end{align}
where $(a)$ follows from $\mE\bx \by \le \sqrt{\mE\bx^2} \sqrt{\mE\by^2}$, $(b)$ follows from the expected $L_f$-smooth assumption.
Putting results together and choosing $p \le 1$, we get 
\begin{align}
&\mE\|\bg^x_{k,i} - \nabla_x J^{\mu_x}_{k}(\bz_{k,i})\|^2 \le 
 \frac{3pC_0}{B}\mathbb{I}(B<N)
+3pL^2_f(d_1\delta^2_x+d^2_1\mu^2_x)
 +(1-p)(1-\beta)\mE\|\bg^x_{k,i-1} - \nabla_x J^{\mu_x}_{k}(\bz_{k,i-1})\|^2
  \notag \\
 &\quad  + \frac{3L^2_fd^2_1\mu^2_x}{b} + \frac{6d_1L^2_f}{b}\mE
\|\bz_{k,i} - \bz_{k,i-1}\|^2
 + \frac{2\beta^2\sigma^2_1}{b}\mE\|\nabla_x J(\bz_{k,i})\|^2+ \frac{2\beta^2\sigma^2_0}{b}.
\end{align}
We then bound 
$\mE\|\bg^y_{k,i} - \nabla_y J^{\mu_y}_{k}(\bz_{k,i})\|^2$ using a symmetric argument.
Putting results together  $\forall k \in [K]$
and denoting $\bar{\beta} \triangleq p+\beta -p\beta \le 1$, we get
\begin{align}
&\mE\|\mS_{z,i}\|^2\le
(1-\bar{\beta})\mE\|\mS_{z,i-1}\|^2
 + KC_2
+KC_1+ \frac{6dL^2_f}{b}\mE
\|\mZ_i-\mZ_{i-1}\|^2 + \frac{2\beta^2\sigma^2_1}{b}\sum_{k=1}^{K}(\mE\|\nabla_x J(\bz_{k,i})\|^2+\mE\|\nabla_y J(\bz_{k,i})\|^2),
\end{align}
where $ d \triangleq d_1+d_2$ and
\begin{align}
C_1 &\triangleq
3pL^2_f(d_1\delta^2_x+d^2_1\mu^2_x +d_2\delta^2_y +d^2_2\mu^2_y)
 + \frac{3L^2_f(d^2_1\mu^2_x + d^2_2\mu^2_y)}{b}, \\
 C_2 &\triangleq \frac{6pC_0}{B}\mathbb{I}(B<N)+\frac{4\beta^2\sigma^2_0}{b}.
\end{align}
We then have
 \begin{align}
&\mE\|\mS_{z,i}\|^2\notag \\
&\le
(1-\bar{\beta})\mE\|\mS_{z,i-1}\|^2
 + \frac{6dL^2_f}{b}\mE
\|\mZ_i-\mZ_{i-1}\|^2
 +\frac{2\beta^2\sigma^2_1}{b}\sum_{k=1}^{K}\Big(\mE\|\nabla_x J(\bx_{k,i},\by_{k,i})- \nabla^x_{c,i} +\nabla^x_{c,i} \|^2 \notag\\
 &\quad 
  +\mE\|\nabla_y J(\bx_{k,i},\by_{k,i})-\nabla^y_{c,i}+ \nabla^y_{c,i}  \|^2 \Big) 
+KC_2 + KC_1 \notag \\ 
&\overset{(a)}{\le}
(1-\bar{\beta})\mE\|\mS_{z,i-1}\|^2 
 + \frac{6dL^2_f}{b}\Big(3Kv^2_1v^2_2
 (\mE\|\mhE_{z,i}\|^2
   + \mE\|\mhE_{z,i-1}\|^2)
+3K(\eta^2_x\mE\|\bg^x_{c,i-1}\|^2 + \eta^2_y\mE\|\bg^y_{c,i-1}\|^2)\Big) 
\notag \\
 &\quad +\frac{8\beta^2L^2_f\sigma^2_1}{b} \mE\|\mZ_{i} - \mZ_{c,i}\|^2 +\frac{4K\beta^2\sigma^2_1}{b}
 (\mE\|\nabla^x_{c,i}\|^2 
+ \mE\|\nabla^y_{c,i}\|^2)
+KC_2 + KC_1\notag 
\\
&\overset{(b)}{\le}
(1-\bar{\beta})\mE\|\mS_{z,i-1}\|^2 
 + \frac{18dKL^2_fv^2_1v^2_2}{b}
 (
 \mE\|\mhE_{z,i}\|^2
 +
 \mE\|\mhE_{z,i-1}\|^2)
 +\frac{18dKL^2_f}{b}
 (\eta^2_x\mE\|\bg^x_{c,i-1}\|^2 + \eta^2_y\mE\|\bg^y_{c,i-1}\|^2)
\notag\\
&\quad +\frac{8K\beta^2L^2_fv^2_1v^2_2\sigma^2_1}{b}
\mE\|\mhE_{z,i}\|^2 +\frac{8K\beta^2\sigma^2_1}{b}
 (\mE\|\nabla^x_{c,i-1}\|^2
 + \mE\|\nabla^y_{c,i-1}\|^2) + \frac{8K\beta^2L^2_f\sigma^2_1}{b}
(\eta^2_x\mE\|\bg^x_{c,i-1}\|^2 
\notag \\
&\quad +\eta^2_y\mE\|\bg^x_{c,i-1}\|^2)
+KC_2+KC_1,
\label{proof:appendix:NxNy_step5}
\end{align}
where $(a)$ follows from inequality \eqref{appendix:proof:incremental}, in $(b)$
we insert the true gradient $\nabla^x_{c,i-1}$ and apply Jensen's inequality. We further choose
\begin{align}
\frac{8K\beta^2L^2_f\sigma^2_1}{b} \le \frac{2dKL^2_f}{b} &\Longrightarrow \beta \le  \frac{\sqrt{d}}{2\sigma_1}, \\
\frac{8K\beta^2L^2_fv^2_1v^2_2\sigma^2_1}{b} \le \frac{2dKL^2_fv^2_1v^2_2}{b} &\Longrightarrow \beta \le \frac{\sqrt{d}}{2\sigma_1}.
\end{align}
Applying the above relation \eqref{proof:appendix:NxNy_step5} recursively for $i = 1, \ldots, T$ and averaging the results together, we obtain
\begin{align}
&\frac{1}{T}
\sum_{i=1}^{T}\mE\|\mS_{z,i}\|^2 \overset{(a)}{\le} 
\frac{\mS_0}{T\bar{\beta}} 
+ \frac{38dKL^2_fv^2_1v^2_2}{b\bar{\beta}T}
\sum_{i=1}^T 
 \mE\|\mhE_{z,i}\|^2
 +\frac{18dKL^2_fv^2_1v^2_2\mhE_0}{b\bar{\beta}T}+ \frac{20dKL^2_f}{b\bar{\beta}T}
 \sum_{i=0}^{T-1}
(\eta^2_x\mE\|\bg^x_{c,i}\|^2 + \eta^2_y\mE\|\bg^y_{c,i}\|^2)\notag \\
 &\quad 
 + \frac{8K\beta^2\sigma^2_1}{b\bar{\beta}T}
\sum_{i=0}^{T-1}
 (\mE\|\nabla^x_{c,i}\|^2 + \mE\|\nabla^y_{c,i}\|^2) +\frac{KC_2}{\bar{\beta}} +\frac{KC_1}{\bar{\beta}}, 
\label{appendix:proof:NxNy_last}
\end{align}
where $(a)$ follows from the useful inequality \eqref{proof:useful:recursion_first}, $\mS_0 \triangleq \mE\|\mS_{z,0}\|^2$ and $\mhE_0 \triangleq \mE\|\mhE_{z,0}\|^2$.
We can further deduce that
$\frac{1}{T}
\sum_{i=0}^{T-1}
\mE\|\mS_{z,i}\|^2
 \le \frac{1}{T}
\sum_{i=0}^{T}
\mE\|\mS_{z,i}\|^2  \le \frac{1}{T}
\sum_{i=1}^{T}
\mE\|\mS_{z,i}\|^2 + \mS_0/(T\bar{\beta})$. 
Plugging the results \eqref{appendix:proof:NxNy_last} into this relation, we can complete the proof.
\end{proof}
\begin{Lemma}[\textbf{Averaged gradient estimation error}]
\label{appendix:lemma:coorinate_variance_nxny}
Under Assumptions \ref{main:assumption:Lipschitz}, \ref{main:assumption:boundedvariance},  and \ref{main:asssumption:RD-ZO},
we consider \textbf{ZOMA} with 
$\bu^j_{k,i} \not= 0$ during $\bpi_i=0$, and choose
$\bar{\beta} \triangleq p +\beta - p \beta \le 1$
and $\beta \le \frac{\sqrt{d}}{2\sigma_1}$, it follows that:
\begin{align}
&\frac{1}{T}
\sum_{i=0}^{T-1}\mE\|\bs^z_{c,i}\|^2 \le 
\frac{2\bs_{c,0}}{T\bar{\beta}} 
+ \frac{38dL^2_fv^2_1v^2_2}{b\bar{\beta}KT}
\sum_{i=1}^T 
\mE\|\mhE_{z,i}\|^2+\frac{18dL^2_fv^2_1v^2_2\mhE_0}{b\bar{\beta}KT}+ \frac{20dL^2_f}{b\bar{\beta}KT}
 \sum_{i=0}^{T-1}
\eta^2_x\mE\|\bg^x_{c,i}\|^2\notag \\
&\quad+ 
\frac{20dL^2_f \eta^2_y}{b\bar{\beta}KT}
 \sum_{i=0}^{T-1} (1-(1-\bar{\beta})^{T-i})
\mE\|\bg^y_{c,i}\|^2 
 + \frac{8\beta^2\sigma^2_1}{b\bar{\beta}KT}
\sum_{i=0}^{T-1}
 (\mE\|\nabla^x_{c,i}\|^2 
+ \mE\|\nabla^y_{c,i}\|^2)  +\frac{C_2}{K\bar{\beta}} +\frac{C_1}{K\bar{\beta}}, 
\label{appendix:proof:Nxc_Nyc_last}
\end{align}
where  $\bs_{c,0}\triangleq \mE\|\bs^z_{c,0}\|^2$, $\mhE_0 \triangleq \mE\|\mhE_{z,0}\|^2 $ and $C_1, C_2$ are defined in Lemma \ref{appendix:lemma:coorinate_Nxc_Nyc}.
\end{Lemma}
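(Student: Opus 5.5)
We follow the proof of Lemma \ref{appendix:lemma:coorinate_Nxc_Nyc}, but work with the network-averaged error $\bs^z_{c,i}=\frac{1}{K}\sum_k \bs^z_{k,i}$ instead of the stacked vector $\mS_{z,i}$.

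\textbf{Step 1 (one-step recursion for the averaged error).} The Bernoulli variable $\bpi_i$ is shared by all agents, so averaging \eqref{main:zero_GRACE} over $k$ keeps the same two-event structure. With probability $p$, the error $\bs^x_{c,i}$ is the average of $K$ independent large-batch errors. Each of these is handled as in \eqref{proof:appendix:NxNy_step2}. The stochastic parts are zero-mean and independent across agents (Assumption \ref{main:assumption:boundedvariance}), so they contribute $\frac{3C_0}{KB}\mathbb{I}(B<N)$. The deterministic CW/RU bias terms are bounded by Jensen's inequality and Lemmas \ref{appendix:lemma:randomsmoothing}--\ref{appendix:lemma:coordinate_zero_first}; we accept these at order $C_1/K$.

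With probability $1-p$, we write
\[
\bs^x_{c,i}=(1-\beta)\bs^x_{c,i-1}+\frac{1}{K}\sum_k \mathbf{e}_{k,i},
\]
where $\mathbf{e}_{k,i}$ is the zero-mean term (conditionally on the past) coming from the mini-batch differences and the $\beta$-weighted fresh estimator. The cross term vanishes by Lemma \ref{appendix:lemma:randomsmoothing}(3). Because the $\mathbf{e}_{k,i}$ are independent across agents, $\mE\|\frac{1}{K}\sum_k\mathbf{e}_{k,i}\|^2=\frac{1}{K^2}\sum_k\mE\|\mathbf{e}_{k,i}\|^2$. This is the source of the extra $1/K^2$ factor compared with the network lemma, i.e. $1/K$ after multiplying by $K$.

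We bound each $\mE\|\mathbf{e}_{k,i}\|^2$ exactly as in \eqref{proof:appendix:NxNy_step3}--\eqref{proof:appendix:NxNy_step4}. This uses the integral remainder bound \eqref{proof:inegrable}, $\mE\bu\bu^\top=\frac{1}{d_1}\mathrm{I}$, and Assumption \ref{main:asssumption:RD-ZO}. The result is
\[
\frac{3L_f^2d_1^2\mu_x^2}{b}+\frac{6d_1L_f^2}{b}\mE\|\bz_{k,i}-\bz_{k,i-1}\|^2+\frac{2\beta^2}{b}\bigl(\sigma_1^2\mE\|\nabla_xJ(\bz_{k,i})\|^2+\sigma_0^2\bigr).
\]
Summing over $k$ and the $x,y$ blocks gives
\[
\mE\|\bs^z_{c,i}\|^2\le(1-\bar\beta)\mE\|\bs^z_{c,i-1}\|^2+\frac{6dL_f^2}{bK^2}\mE\|\mZ_i-\mZ_{i-1}\|^2+\frac{2\beta^2\sigma_1^2}{bK^2}\sum_k\mE\|\nabla J(\bz_{k,i})\|^2+\frac{C_1+C_2}{K}.
\]

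\textbf{Step 2 (express in tracked quantities).} We apply \eqref{appendix:proof:incremental} to the drift term. We handle the gradient-dependent variance by inserting $\nabla^z_{c,i}$ and applying \eqref{proof:useful:consensus}. The condition $\beta\le\sqrt d/(2\sigma_1)$ then lets us absorb the $\beta^2\sigma_1^2$ consensus and drift pieces into the $d$-terms. This produces the constants $18+2\to20$ and $38$, exactly as in \eqref{proof:appendix:NxNy_step5}, with an overall factor $1/K$.

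\textbf{Step 3 (unroll and average).} We unroll with \eqref{proof:useful:recursion_first} and average over $i$, which gives $\frac{2\bs_{c,0}}{T\bar\beta}$ and the remaining terms. The main obstacle is the refined weight $(1-(1-\bar\beta)^{T-i})$ on $\mE\|\bg^y_{c,i}\|^2$, since it is needed later to cancel against the same weight in Lemma \ref{lemma:optimality_gap:coordinate}.

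To get this weight, we do not bound the geometric sum crudely by $1/\bar\beta$. The term $\eta_y^2\mE\|\bg^y_{c,j}\|^2$ enters $\mE\|\bs^z_{c,i}\|^2$ with weight $(1-\bar\beta)^{i-j-1}$, so summing over $i=j+1,\dots,T$ gives exactly $\sum_{m=0}^{T-j-1}(1-\bar\beta)^m=\frac{1-(1-\bar\beta)^{T-j}}{\bar\beta}$. We apply the same computation to the $x$-term but relax it to $1/\bar\beta$. Finally we add $\bs_{c,0}/(T\bar\beta)$ to pass from $\sum_{i=1}^T$ to $\sum_{i=0}^{T-1}$, as in Lemma \ref{appendix:lemma:coorinate_variance}.
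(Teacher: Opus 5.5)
\textbf{Gap: the large-batch bias does not get the factor $1/K$.} The problem is in Step~1, event $\bpi_i=1$, where you ``accept'' the deterministic bias terms at order $C_1/K$. The $p$-weighted part of $C_1$ is $3pL_f^2(d_1\delta_x^2+d_1^2\mu_x^2+d_2\delta_y^2+d_2^2\mu_y^2)$. It comes from the mismatch $\frac{1}{K}\sum_{k}\big[\hnabla_x J_k(\bz_{k,i})-\nabla_x J^{\mu_x}_k(\bz_{k,i})\big]$. The large-batch ZO-CW estimator is unbiased only for $\hnabla_x J_k$, whereas $\bs^x_{k,i}$ is measured against the smoothed gradient $\nabla_x J^{\mu_x}_k$.

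This mismatch vector is fixed given the past. So neither the zero-mean cross-term argument nor independence across agents applies to it. Jensen gives only $\|\frac1K\sum_k b_k\|^2\le\frac1K\sum_k\|b_k\|^2$, which is the size of a single agent's bias, with no $1/K$. In Lemma~\ref{appendix:lemma:coorinate_Nxc_Nyc} this is harmless, because the $K$ biases are summed and give the correct $KC_1$. The paper's one-line proof (``similar to'' that lemma) does not address this point either, and the statement inherits the extra $1/K$.

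\textbf{The missing factor cannot be recovered.} Consider the following instance, which satisfies all hypotheses of the lemma with $\sigma=0$ and $\beta=0$:
\begin{itemize}
\item identical agents with identical data and initialization, $d_1=d_2=1$;
\item $Q_k(x,y;\bxi)=\frac{L_f}{2}x|x|-\frac{L_f}{2}y^2$, with $\bx_{k,0}=\by_{k,0}=0$;
\item $p=1$, $\beta=0$, $B=N$;
\item $\mu_x,\mu_y,\delta_y,\eta_x,\eta_y\to0$.
\end{itemize}
Then $\mhE_{z,i}=0$ and $\by_{c,i}\equiv0$. For all $i$, $\bs^x_{c,i}\approx\hnabla_xJ(0)-\nabla_xJ^{\mu_x}(0)=L_f(\delta_x-\mu_x)/2$. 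The left-hand side therefore tends to $L_f^2\delta_x^2/4$. The right-hand side tends to $L_f^2\delta_x^2\big(\frac{1}{2T}+\frac{3}{K}\big)$, which is smaller as soon as, e.g., $K\ge24$ and $T\ge5$.

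\textbf{What your argument does prove.} The rest of the proposal is sound:
\begin{itemize}
\item The $\bpi_i=0$ increments are conditionally zero-mean and independent across agents. This legitimately gives the $1/K^2$ (hence $1/K$) on the drift, $\sigma_0$, $\sigma_1$ and $\frac{3L_f^2(d_1^2\mu_x^2+d_2^2\mu_y^2)}{b}$ terms.
\item The large-batch variance correctly gives $\frac{6pC_0}{KB}\mathbb{I}(B<N)$.
\item Step~2 reproduces the constants $20$ and $38$.
\item Your exact evaluation $\sum_{i=j+1}^{T}(1-\bar\beta)^{i-j-1}=\frac{1-(1-\bar\beta)^{T-j}}{\bar\beta}$ is what yields the refined weight on $\mE\|\bg^y_{c,j}\|^2$.
\end{itemize}
Together these prove the stated bound with $\frac{C_1}{K\bar\beta}$ replaced by
\[
\frac{3pL_f^2(d_1\delta_x^2+d_1^2\mu_x^2+d_2\delta_y^2+d_2^2\mu_y^2)}{\bar\beta}+\frac{3L_f^2(d_1^2\mu_x^2+d_2^2\mu_y^2)}{bK\bar\beta}.
\]
Since $p\le\bar\beta$, the first term is at most $3L_f^2(d_1\delta_x^2+d_1^2\mu_x^2+d_2\delta_y^2+d_2^2\mu_y^2)$. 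After the factor $200\kappa^2$ in the proof of Theorem~\ref{main:theorem2}, this is of the same order as $C_5$ when $\delta=\mu$. The theorem and its corollaries therefore survive up to constants, but the lemma itself should be stated and proved in this weaker form.
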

\begin{proof}
The proof is similar to Lemma \ref{appendix:lemma:coorinate_Nxc_Nyc}.
\end{proof}
\begin{Lemma}[\textbf{Coupled consensus error}]
\label{appendix:lemma:random:exey}
Under Assumptions \ref{main:assumption:Lipschitz}, \ref{main:assumption:boundedvariance},  and \ref{main:asssumption:RD-ZO},
we consider \textbf{ZOMA} with $\bu^j_{k,i} \not= 0$ during $\bpi_i=0$ and choose
\begin{align}
\eta_x &\le \eta_y, \eta_y \le \min \Bigg\{\frac{(1-\rho)\underline{\lambda_b}}{15d^{\frac{1}{2}}L_fv_1v_2\lambda_a}, 
\frac{b^{\frac{1}{2}}(1-\rho)\underline{\lambda_b}}{7L_fv_1v_2\sigma_1 \lambda_a},  \frac{(1-\rho)\underline{\lambda_b}}{40(d)^{\frac{1}{2}}L_fv_1v_2 \lambda_a}\Bigg\},
\beta \le 1, p+\beta \le 1, \beta +bp \le b, b \ge 1.\notag
\end{align}
We can bound the coupled consensus error as follows:
\begin{align}
&\frac{1}{T}  \sum_{i=0}^{T-1}
\mE\|\mhE_{z,i}\|^2  \quad  \Big(\text{ or } \frac{1}{T}  \sum_{i=1}^{T}
\mE\|\mhE_{z,i}\|^2  \Big) \\
&\le
\frac{5\mhE_0}{T(1-\rho)}
+ I_0 
+L_0\sum_{i=0}^{T-1}(\eta^2_x\mE\|\bg^x_{c,i}\|^2
+\eta^2_y\mE\|\bg^y_{c,i}\|^2)+
 L_1\sum_{i=0}^{T-1}(\mE\|\nabla^x_{c,i}\|^2+\mE\|\nabla^y_{c,i}\|^2) 
+C_4 + \frac{4\eta^2_y\lambda^2_a( 5C_1+C_3)}{(1-\rho)^2\underline{\lambda^2_b}}
, \notag
\end{align}
where $\mS_0 \triangleq \mE\|\mS_{z,0}\|^2$, $\mhE_0 \triangleq \mE\|\mhE_{z,0}\|^2$,  and 
\begin{align}
I_0 &\triangleq  \frac{20\eta^2_y\lambda^2_a(p+\beta^2)}{(1-\rho)^2\underline{\lambda^2_b}K}
\Bigg(
\frac{2\mS_{0}}{T\bar{\beta}}  +\frac{18dKL^2_fv^2_1v^2_2\mhE_{0}}{b\bar{\beta}T}  \Bigg),\notag\\
L_0 &\triangleq \frac{508dL^2_f\lambda^2_a\eta^2_y}{(1-\rho)^2\underline{\lambda^2_b}T}, 
L_1 \triangleq \frac{184\lambda^2_a\sigma^2_1\eta^2_y\beta^2}{(1-\rho)^2\underline{\lambda^2_b}bT},\notag\\
 C_3 &\triangleq 
5p(d_1L^2_f\delta^2_x + d_2L^2_f\delta^2_y 
+d^2_1L^2_f\mu^2_x +d^2_2L^2_f\mu^2_y)
+ \frac{9L^2_f(d^2_1\mu^2_x+d^2_2\mu^2_y)}{2}   , \notag\\
C_4 &\triangleq 
\frac{4\eta^2_y\lambda^2_a}{(1-\rho)^2\underline{\lambda^2_b}}\Big(\frac{40pC_0}{B}\mathbb{I}(B<N) + \frac{26\beta^2\sigma^2_0}{b} \Big).
\end{align}
\end{Lemma}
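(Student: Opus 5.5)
The argument follows the proof of Lemma \ref{appendix:lemma:cw_coupled}, with the ZO-CW reference gradient $\hnabla_w J_k$ replaced by the smoothed gradient $\nabla_w J^{\mu_w}_k$. The gradient-difference bound will rely on the mean-value estimate \eqref{proof:inegrable} and the isotropy $\mE\bu\bu^\top=\frac{1}{d_1}\mathrm{I}$ from the proof of Lemma \ref{appendix:lemma:coorinate_Nxc_Nyc}.

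\textbf{Step 1: one-step contraction.} Start from the transformed recursion in Lemma \ref{appendix:lemma:transformed_recursion}. Using $\|a+b\|^2\le \frac{1}{t}\|a\|^2+\frac{1}{1-t}\|b\|^2$ with $t=\rho$, and setting $\tau_x^2=\tau_y^2=Kv_2^2$ and $\eta_x\le\eta_y$, exactly as in \eqref{appendix:proof:lemma_exey_coordinate_step1}, we get
$\mE\|\mhE_{z,i+1}\|^2\le\rho\,\mE\|\mhE_{z,i}\|^2+\frac{\eta_y^2\lambda_a^2}{(1-\rho)\underline{\lambda_b^2}K}\mE\|\mG_{z,i+1}-\mG_{z,i}\|^2$.

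\textbf{Step 2: bound the estimator increment by the Bernoulli event.}
\begin{itemize}
\item If $\bpi_{i+1}=1$, insert $\hnabla_x J_k$, $\nabla_x J_k$ and $\nabla_x J^{\mu_x}_k$ at both iterates and split into five or six terms. Lemmas \ref{appendix:lemma:coordinate_large_bound}, \ref{appendix:lemma:coordinate_zero_first} and \ref{appendix:lemma:randomsmoothing}(2) then give $\frac{5C_0}{B}\mathbb{I}(B<N)$, bias terms in $d_1\delta_x^2$ and $d_1^2\mu_x^2$, $L_f^2\|\bz_{k,i+1}-\bz_{k,i}\|^2$, and $5\|\bs^x_{k,i}\|^2$.
\item If $\bpi_{i+1}=0$, rewrite the increment as $-\beta\bs^x_{k,i}-(\bq^x(\bz_{k,i};b)-\bq^x(\bz_{k,i+1};b))+\beta(\bq^x(\bz_{k,i};b)-\nabla_xJ^{\mu_x}_k(\bz_{k,i}))$.
  \begin{itemize}
  \item The last term is bounded by Lemma \ref{appendix:lemma:randomsmoothing}(4), giving $\frac{\beta^2}{b}(\sigma_1^2\|\nabla_x J(\bz_{k,i})\|^2+\sigma_0^2)$.
  \item The middle term is not centered, so we cannot divide by $b$. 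Instead, apply Jensen over the minibatch to the per-sample bound \eqref{proof:appendix:NxNy_step4}, giving $\frac{3}{2}L_f^2d_1^2\mu_x^2+3d_1L_f^2\|\bz_{k,i+1}-\bz_{k,i}\|^2$. This is where the factor $d$ in $L_0$ and the constant $\frac{9}{2}L_f^2(d_1^2\mu_x^2+d_2^2\mu_y^2)$ in $C_3$ come from.
  \end{itemize}
\end{itemize}
Next, sum over $k$ and over $x,y$. Bound the drift $\|\mZ_{i+1}-\mZ_i\|^2$ by \eqref{appendix:proof:incremental}. Handle $\sum_k\|\nabla J(\bz_{k,i})\|^2$ as in \eqref{proof:appendix:NxNy_step5}: insert $\nabla^z_{c,i}$ and convert $\|\mZ_i-\mZ_{c,i}\|^2$ into $\mhE$ through \eqref{proof:useful:consensus}. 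Then use $p\le1$, $(1-p)\beta^2\le\beta^2$ and $p+\beta^2$ to collect terms.

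\textbf{Step 3: absorb and sum.} The resulting recursion has a $(p+\beta^2)\mE\|\mS_{z,i}\|^2$ term and $\mhE_{z,i+1},\mhE_{z,i}$ terms with coefficients of order $\frac{\eta_y^2\lambda_a^2(d+\beta^2\sigma_1^2/b)L_f^2v_1^2v_2^2}{(1-\rho)\underline{\lambda_b^2}}$. The first and second step-size conditions make these at most $\frac{1-\rho}{8}$, which yields a contraction factor $\frac{1+\rho}{2}$. Unroll with \eqref{proof:useful:recursion_first} and average.

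Substitute Lemma \ref{appendix:lemma:coorinate_Nxc_Nyc} for $\frac1T\sum\mE\|\mS_{z,i}\|^2$. Its cross terms are controlled by $p+\beta^2\le b\bar\beta$, which follows from $b\ge1$ and $\beta+bp\le b$; this makes the $\frac{dKL_f^2}{b\bar\beta}$ factor collapse to order $d$. With the third step-size condition, the fed-back $\mhE$ term has coefficient at most $\frac14$. Its $\frac{KC_1}{\bar\beta}$ piece contributes the $5C_1$ in the statement.

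Move $\frac12\frac1T\sum\mE\|\mhE_{z,i}\|^2$ to the left and double all remaining terms. This gives $I_0$, $L_0$, $L_1$ (from the $\beta^2\sigma_1^2$ gradient terms of both the direct part and Lemma \ref{appendix:lemma:coorinate_Nxc_Nyc}, again using $p+\beta^2\le b\bar\beta$), and $C_4$ (from the $\frac{pC_0}{B}$ and $\frac{\beta^2\sigma_0^2}{b}$ pieces). Finally, add $\frac{\mhE_0}{T(1-\rho)}$ to pass from $\sum_{i=1}^T$ to $\sum_{i=0}^{T-1}$, giving the $5\mhE_0$ term.

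\textbf{Main obstacle.} The hardest part is the $\bpi=0$ increment. Because the RU estimator's variance depends on the gradient, the increment produces $\|\nabla J(\bz_{k,i})\|^2$, and this must be routed through the consensus error and the averaged gradient without breaking the contraction. This is the reason for the extra condition $\eta_y\lesssim\sqrt b(1-\rho)\underline{\lambda_b}/(\sigma_1\lambda_a)$ and for the $L_1$ term. Tracking the numerical constants (508, 184) is routine but tedious.
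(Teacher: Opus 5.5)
Your proposal is correct and follows the paper's proof essentially step for step. It uses the same one-step contraction from Lemma~\ref{appendix:lemma:transformed_recursion} and the same Bernoulli split of $\mE\|\mG_{z,i+1}-\mG_{z,i}\|^2$, with the per-sample bound \eqref{proof:appendix:NxNy_step4} applied without a $1/b$ factor. It then feeds back Lemma~\ref{appendix:lemma:coorinate_Nxc_Nyc} and finishes with the same absorption and doubling. Two small bookkeeping fixes are needed. First, the constant $7$ only makes the $\sigma_1$-dependent $\mhE_{z,i}$ coefficient at most $\frac{1-\rho}{4}$, not $\frac{1-\rho}{8}$, though this still yields the factor $\frac{1+\rho}{2}$. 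Second, the fed-back $\beta^2\sigma_1^2$, $C_1$, $\frac{pC_0}{B}$ and $\frac{\beta^2\sigma_0^2}{b}$ terms must be controlled with $p+\beta^2\le\bar{\beta}$ (which follows from $p+\beta\le 1$), not with $p+\beta^2\le b\bar{\beta}$; otherwise the $1/b$ in $L_1$ is lost.
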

\begin{proof}
Invoking Lemma \ref{appendix:lemma:transformed_recursion} and using $\eta_x \le \eta_y, \|\hat{\mU}^\top_x\|\le1, \|\hat{\mU}^\top_y\|\le1$, we can establish that 
\begin{align}
&\|\mhE_{z,i+1}\|^2  \le  
\rho\|\mhE_{z,i}\|^2 + \frac{\eta^2_y\lambda^2_{a}\|\mG_{z,i+1} -\mG_{z,i}\|^2}{(1-\rho)\underline{\lambda^2_{b}}K}. \label{appendix:proof:lemma_exey_random_step1}
\end{align}
$\forall k \in [K]$, it follows that
\begin{align}
&\mE\|\bg^x_{k,i+1} - \bg^x_{k,i}\|^2 =
p\mE\Big\|\bq^x_{i+1,0}(\bz_{k,i+1};B) - \bg^x_{k,i}\Big\|^2+ (1-p)\mE\Big\|
(1-\beta)\Big(\bg^x_{k,i} - \bq^x_{i+1}(\bz_{k,i};b)\Big)
+ \bq^x_{i+1}(\bz_{k,i+1}; b) - \bg^x_{k,i}
\Big\|^2.
\end{align}
The first term corresponds to the event $\bpi_{i+1} = 1$, hence
\begin{align}
&\mE\Big\|\bq^x_{i+1,0}(\bz_{k,i+1};B) - \bg^x_{k,i}\Big\|^2  \notag \\
&=
\mE\Big\|\bq^x_{i+1,0}(\bz_{k,i+1};B) - \hnabla_x J_k(\bz_{k,i+1})
+
\hnabla_x J_k(\bz_{k,i+1})- \nabla^x_{k,i+1}
+\nabla^x_{k,i+1}-\nabla^x_{k,i}+
\nabla^x_{k,i}-\nabla_x J^{\mu_x}_{k}(\bz_{k,i}) 
\notag \\
& \quad+\nabla_x J^{\mu_x}_{k}(\bz_{k,i}) 
- \bg^x_{k,i}\Big\|^2 \notag \\
&\overset{(a)}{\le}
\frac{5C_0}{B}\mathbb{I}(B<N) + 5(d_1L^2_f\delta^2_x+d^2_1L^2_f\mu^2_x)
+5L^2_f\mE\|\bz_{k,i+1}  -\bz_{k,i}\|^2
+5\mE\|\nabla_x J^{\mu_x}_{k}(\bz_{k,i}) 
- \bg^x_{k,i}\|^2.
\end{align}
where $(a)$ follows from 
Lemmas \ref{appendix:lemma:randomsmoothing}, \ref{appendix:lemma:coordinate_zero_first}, \ref{appendix:lemma:coordinate_large_bound} and Assumption \ref{main:assumption:Lipschitz}.
The second term can be bounded as follows 
\begin{align}
&\mE\Big\|
(1-\beta)\Big(\bg^x_{k,i} - \bq^x_{i+1}(\bz_{k,i};b)\Big)+ \bq^x_{i+1}(\bz_{k,i+1};b) - \bg^x_{k,i}
\Big\|^2 \notag \\
&=
\mE\Big\|
-\beta(\bg^x_{k,i} - \nabla_x J^{\mu_x}_{k}(\bz_{k,i}))
-
\Big(\bq^x_{i+1}(\bz_{k,i};b)
 - \bq^x_{i+1}(\bz_{k,i+1};b)\Big)
+\beta \Big(
\bq^x_{i+1}(\bz_{k,i};b) - \nabla_x J^{\mu_x}_{k}(\bz_{k,i})
\Big)
\Big\|^2 \notag \\
&\le 3\beta^2
\mE\|\bg^x_{k,i} - \nabla_x J^{\mu_x}_{k}(\bz_{k,i})\|^2
+3 \mE\Big\|
\bq^x_{i+1}(\bz_{k,i+1};b) 
 - \bq^x_{i+1}(\bz_{k,i};b)
\Big\|^2+3\beta^2 \mE\Big\|
\bq^x_{i+1}(\bz_{k,i};b) - \nabla_x J^{\mu_x}_{k}(\bz_{k,i})
\Big\|^2 \notag \\
&\overset{(a)}{\le} 
3\beta^2
\mE\|\bg^x_{k,i} - \nabla_x J^{\mu_x}_{k}(\bz_{k,i})\|^2
+
\frac{9L^2_fd^2_1\mu^2_x}{2} +9d_1L^2_f
\mE
\|\bz_{k,i+1} - \bz_{k,i}\|^2+
\frac{3\beta^2}{b}\Big( \sigma^2_1
\mE\|\nabla_x J(\bz_{k,i})\|^2 +\sigma^2_0
\Big) \notag \\
&\overset{(b)}{\le} 
3\beta^2
\mE\|\bg^x_{k,i} - \nabla_x J^{\mu_x}_{k}(\bz_{k,i})\|^2
+
\frac{9L^2_fd^2_1\mu^2_x}{2} 
 +9d_1L^2_f\mE
\|\bz_{k,i+1}- \bz_{k,i}\|^2 +
\frac{3\beta^2}{b}\Big( 2L^2_f\sigma^2_1\mE\|\bz_{k,i}-\bz_{c,i}\|^2   \notag\\
&\quad+2\sigma^2_1
\mE\|\nabla^x_{c,i}\|^2+\sigma^2_0
\Big) 
\end{align}
where $(a)$ follows from
\eqref{proof:appendix:NxNy_step4} and Assumption \ref{main:asssumption:RD-ZO},
and $(b)$ follows from
\begin{align}
&\|\nabla_x J(\bz_{k,i})\|^2 \le 
2\|\nabla^x_{c,i}\|^2
+2 \|\nabla_x J(\bz_{k,i})- \nabla^x_{c,i}\|^2 \notag \\
&\le 2\|\nabla^x_{c,i}\|^2
+2L^2_f\|\bz_{k,i} -\bz_{c,i}\|^2. 
\end{align}
% Combining the results $\forall k \in [K]$, we have 
% \begin{align}
% &\mE\|\mG_{x,i+1} - \mG_{x,i}\|^2\le 
% \frac{5pKC_0}{B}\mathbb{I}(B<N) + 5pK(d_1L^2_f\delta^2_x+d^2_1L^2_f\mu^2_x)\notag \\
% &
% +5pL^2_f\mE\|\mZ_{i+1} - \mZ_{i}\|^2 +5p\mE\|\mS_{x,i}\|^2 +
% 3(1-p)\beta^2\mE\|\mS_{x,i}\|^2\notag \\
% & 
% +\frac{9(1-p)KL^2_fd^2_1\mu^2_x}{2}
% +9(1-p)d_1L^2_f
% \mE\|\mZ_{i+1} -\mZ_{i}\|^2 
% \notag\\
% & +
% \frac{6(1-p)\beta^2L^2_f\sigma^2_1}{b}
% \mE\|\mZ_i -\mZ_{c,i}\|^2  + \frac{6K\beta^2\sigma^2_1(1-p)}{b} \mE\|\nabla^x_{c,i}\|^2
% \notag\\
% &\quad+\frac{3K\beta^2\sigma^2_0(1-p)}{b}.
% \end{align}
We can bound 
$\mE\|\bg^y_{k,i+1} -\bg^y_{k,i}\|^2$ using a symmetric argument. Putting results $\forall k \in [K]$ together,
we get 
\begin{align}
&\mE\|\mG_{z,i+1} - \mG_{z,i}\|^2 \notag \\
&
\overset{(a)}{\le} \frac{10pKC_0}{B}\mathbb{I}(B<N)
+5pK(d_1L^2_f\delta^2_x + d_2L^2_f\delta^2_y+d^2_1L^2_f\mu^2_x + d^2_2L^2_f\mu^2_y) 
+9(d_1+d_2)L^2_f\mE\|\mZ_{i+1} - \mZ_{i}\|^2 \notag\\
&\quad +5(p+\beta^2)\mE\|\mS_{z,i}\|^2
+ \frac{9KL^2_f(d^2_1\mu^2_x+d^2_2\mu^2_y)}{2}  +\frac{6K\beta^2\sigma^2_0}{b} +\frac{12\beta^2L^2_f\sigma^2_1}{b}\mE\|\mZ_i - \mZ_{c,i}\|^2
+ \frac{6K\beta^2\sigma^2_1}{b}
\mE\|\nabla^z_{c,i}\|^2 \notag \\
&
\overset{(b)}{\le} \frac{10pKC_0}{B}\mathbb{I}(B<N) 
+KC_3
+27dKL^2_fv^2_1v^2_2\Big(\mE\|\mhE_{z,i+1}\|^2
+\mE\|\mhE_{z,i}\|^2\Big)
+27dKL^2_f(\eta^2_x\mE\|\bg^x_{c,i}\|^2 + \eta^2_y\mE\|\bg^y_{c,i}\|^2)
\notag \\&\quad +5(p+\beta^2)\mE\|\mS_{z,i}\|^2
  +\frac{6K\beta^2\sigma^2_0}{b}+\frac{12K\beta^2L^2_fv^2_1v^2_2\sigma^2_1}{b} \mE\|\mhE_{z,i}\|^2+ \frac{6K\beta^2\sigma^2_1}{b}
(\mE\|\nabla^x_{c,i}\|^2 +\mE\|\nabla^y_{c,i}\|^2).
\end{align}
where $(a)$ follows from $(1-p)+p =1, d_1 \ge 1$, and $(b)$ follows from \eqref{proof:useful:consensus}, \eqref{appendix:proof:incremental}, and
\begin{align}
C_3 &\triangleq 
5p(d_1L^2_f\delta^2_x + d_2L^2_f\delta^2_y 
+d^2_1L^2_f\mu^2_x +d^2_2L^2_f\mu^2_y)
+ \frac{9L^2_f(d^2_1\mu^2_x+d^2_2\mu^2_y)}{2}.
\end{align}
As a result, we have
\begin{align}
&\mE\|\mhE_{z,i+1}\|^2 \le  
\rho\mE\|\mhE_{z,i}\|^2 + \frac{\eta^2_y\lambda^2_{a}}{(1-\rho)\underline{\lambda^2_b}K}\Bigg(27dKL^2_fv^2_1v^2_2
(\mE\|\mhE_{z,i+1}\|^2 +\mE\|\mhE_{z,i}\|^2)+27dKL^2_f(\eta^2_x\mE\|\bg^x_{c,i}\|^2
 + \eta^2_y\mE\|\bg^y_{c,i}\|^2)\notag\\
&\quad +5(p+\beta^2)\mE\|\mS_{z,i}\|^2
+\frac{12K\beta^2L^2_fv^2_1v^2_2\sigma^2_1}{b}\mE\|\mhE_{z,i}\|^2+ \frac{6K\beta^2\sigma^2_1}{b}
(\mE\|\nabla^x_{c,i}\|^2  +\mE\|\nabla^y_{c,i}\|^2)+\frac{10pKC_0}{B}\mathbb{I}(B<N) 
\notag \\&\quad 
+ \frac{6K\beta^2\sigma^2_0}{b} + KC_3\Bigg) .
\end{align}
Setting  
\begin{align}
&\frac{27dKL^2_fv^2_1v^2_2 \eta^2_y\lambda^2_a}{(1-\rho)\underline{\lambda^2_b}K}
\le \frac{1-\rho}{8} \Longrightarrow \eta_y \le \frac{(1-\rho)\underline{\lambda_b}}{15d^{\frac{1}{2}}L_fv_1v_2\lambda_a} \notag \\  &\frac{\eta^2_y \lambda^2_a}{(1-\rho)\underline{\lambda^2_b}K} \times 
\frac{12KL^2_fv^2_1v^2_2\sigma^2_1\beta^2}{b} \le \frac{1-\rho}{4} \Longrightarrow \beta \le 1, 
\eta_y \le \frac{b^{\frac{1}{2}}(1-\rho)\underline{\lambda_b}}{7L_fv_1v_2\sigma_1 \lambda_a} ,
\end{align}
and using $\rho + \frac{1-\rho}{8} + \frac{1-\rho}{4}  \le  \frac{1+\rho}{2}$,
it follows that 
\begin{align}
&\mE\|\mhE_{z,i+1}\|^2 \le 
\frac{1+\rho}{2}\mE\|\mhE_{z,i}\|^2 +  \frac{5\eta^2_y\lambda^2_{a}(p+\beta^2)}{(1-\rho)\underline{\lambda^2_b}K}\mE\|\mS_{z,i}\|^2+ 
 \frac{1-\rho}{8}\mE\|\mhE_{z,i+1}\|^2
+\frac{27dL^2_f\lambda^2_a\eta^2_y}{(1-\rho)\underline{\lambda^2_b}}(\eta^2_x\mE\|\bg^x_{c,i}\|^2 + \eta^2_y\mE\|\bg^y_{c,i}\|^2)\notag\\
&\quad
+ \frac{6\lambda^2_a\eta^2_y\beta^2\sigma^2_1}{(1-\rho)\underline{\lambda^2_b}b}
\mE\|\nabla^z_{c,i}\|^2 + \frac{\eta^2_y\lambda^2_a}{(1-\rho)\underline{\lambda^2_b}}\Big(\frac{10pC_0}{B}\mathbb{I}(B<N) 
 + \frac{6\beta^2\sigma^2_0}{b} + C_3\Big).
\end{align}
Applying the above relation recursively for 
$i=0, \ldots, T-1$
and averaging the results,
we can establish that 
\begin{align}
&\frac{1}{T}  \sum_{i=0}^{T-1}
\mE\|\mhE_{z,i+1}\|^2 =\frac{1}{T}  \sum_{i=1}^{T}
\mE\|\mhE_{z,i}\|^2 \notag \\
&\overset{(a)}{\le} 
\frac{2\mhE_0}{T(1-\rho)}
+ \frac{10\eta^2_y\lambda^2_a(p+\beta^2)}{(1-\rho)^2\underline{\lambda^2_b}KT}
\sum_{i=0}^{T-1}
\mE\|\mS_{z,i}\|^2+\frac{1}{4T}
\sum_{i=1}^{T}
\mE\|\mhE_{z,i}\|^2
+\frac{54dL^2_f\lambda^2_a\eta^2_y}{(1-\rho)^2\underline{\lambda^2_b}T}\sum_{i=0}^{T-1}(\eta^2_x\mE\|\bg^x_{c,i}\|^2+\eta^2_y\mE\|\bg^y_{c,i}\|^2)\notag \\
&\quad 
+\frac{12\lambda^2_a\eta^2_y\beta^2\sigma^2_1}{(1-\rho)^2\underline{\lambda^2_b}bT}
\sum_{i=0}^{T-1}
\mE\|\nabla^z_{c,i}\|^2 + \frac{2\eta^2_y\lambda^2_a}{(1-\rho)^2\underline{\lambda^2_b}}\Big(\frac{10pC_0}{B}\mathbb{I}(B<N)
 + \frac{6\beta^2\sigma^2_0}{b} + C_3\Big) \notag \\
&\overset{(b)}{\le} 
\frac{2\mhE_0}{T(1-\rho)}
+ \frac{10\eta^2_y\lambda^2_a(p+\beta^2)}{(1-\rho)^2\underline{\lambda^2_b}K}
\Bigg(
\frac{2\mS_{0}}{T\bar{\beta}} 
+ \frac{38dKL^2_fv^2_1v^2_2}{b\bar{\beta}T}
\sum_{i=1}^T
\mE\|\mhE_{z,i}\|^2+\frac{18dKL^2_fv^2_1v^2_2\mhE_0}{b\bar{\beta}T}+ \frac{20dKL^2_f}{b\bar{\beta}T}
 \notag \\
 &\quad 
\times
 \sum_{i=0}^{T-1}
(\eta^2_x\mE\|\bg^x_{c,i}\|^2 + \eta^2_y\mE\|\bg^y_{c,i}\|^2) + \frac{8K\beta^2\sigma^2_1}{b\bar{\beta}T}
\sum_{i=0}^{T-1}
 \mE\|\nabla^z_{c,i}\|^2 
+\underbrace{\frac{6pKC_0}{\bar{\beta}B}\mathbb{I}(B<N)+\frac{4K\beta^2\sigma^2_0}{b\bar{\beta}}}_{KC_2/\bar{\beta}} +\frac{KC_1}{\bar{\beta}}\Bigg)
\notag \\
&\quad +\frac{1}{4T}
\sum_{i=1}^{T}\mE\|\mhE_{z,i}\|^2
+\frac{54dL^2_f\lambda^2_a\eta^2_y}{(1-\rho)^2\underline{\lambda^2_b}T}\sum_{i=0}^{T-1}(\eta^2_x\mE\|\bg^x_{c,i}\|^2
+\eta^2_y\mE\|\bg^y_{c,i}\|^2)
+\frac{12\lambda^2_a\eta^2_y\beta^2\sigma^2_1}{(1-\rho)^2\underline{\lambda^2_b}bT}
\sum_{i=0}^{T-1}\mE\|\nabla^z_{c,i}\|^2
 \notag \\
&\quad
+ \frac{2\eta^2_y\lambda^2_a}{(1-\rho)^2\underline{\lambda^2_b}}\Big(\frac{10pC_0}{B}\mathbb{I}(B<N) + \frac{6\beta^2\sigma^2_0}{b} + C_3\Big),
\end{align}
where $(a)$ and $(b)$ follow from the useful inequality \eqref{proof:useful:recursion_first} and Lemma \ref{appendix:lemma:coorinate_Nxc_Nyc}.
We next choose 
\begin{align}
&\frac{10\eta^2_y\lambda^2_a(p+\beta^2)}{(1-\rho)^2\underline{\lambda^2_b}K}
    \times \frac{38dKL^2_fv^2_1v^2_2}{b\bar{\beta}} \le \frac{1}{4} \Longrightarrow 
    \eta_y \le \frac{(1-\rho)\underline{\lambda_b}}{40(d)^{\frac{1}{2}}L_fv_1v_2 \lambda_a}\sqrt{\frac{b\bar{\beta}}{p+\beta^2}} \\
&\frac{10\eta^2_y\lambda^2_a(p+\beta^2)}{(1-\rho)^2\underline{\lambda^2_b}K}
\times \frac{20dKL^2_f}{b\bar{\beta}}
\le \frac{200dL^2_f\lambda^2_a\eta^2_y}{(1-\rho)^2\underline{\lambda^2_b}} \Longrightarrow  p+\beta^2 \le b\bar{\beta} \Longrightarrow b \ge1, \beta +bp\le b \\
&\frac{10\eta^2_y\lambda^2_a(p+\beta^2)}{(1-\rho)^2\underline{\lambda^2_b}K}
\times \frac{8K\beta^2\sigma^2_1}{b\bar{\beta}} \le \frac{80\lambda^2_a\sigma^2_1\eta^2_y\beta^2}{(1-\rho)^2\underline{\lambda^2_b}b} \Longrightarrow 
p+\beta^2 \le \bar{\beta} \Longrightarrow p+\beta \le 1 \\
&\frac{10\eta^2_y\lambda^2_a(p+\beta^2)}{(1-\rho)^2\underline{\lambda^2_b}K}
\times 
\frac{6pKC_0}{\bar{\beta} B} \le \frac{2\eta^2_y\lambda^2_a}{(1-\rho)^2\underline{\lambda^2_b}} \times \frac{30pC_0}{B} \Longrightarrow
p+\beta^2 \le \bar{\beta} \Longrightarrow p+\beta \le 1 \\
&\frac{10\eta^2_y\lambda^2_a(p+\beta^2)}{(1-\rho)^2\underline{\lambda^2_b}K}
\times \frac{4K\beta^2\sigma^2_0}{b\bar{\beta}} \le 
\frac{2\eta^2_y\lambda^2_a}{(1-\rho)^2\underline{\lambda^2_b}} \times 
\frac{20\beta^2\sigma^2_0}{b} \Longrightarrow  p+\beta \le 1 \\
&\frac{10\eta^2_y\lambda^2_a(p+\beta^2)}{(1-\rho)^2\underline{\lambda^2_b}K}
\times \frac{K}{\bar{\beta}}
\le \frac{2\eta^2_y\lambda^2_a \times 5}{(1-\rho)^2\underline{\lambda^2_b}} \Longrightarrow
p+\beta \le 1.
\end{align}
Using $b\bar{\beta} \ge p +\beta^2$,
the above first condition can be simplified into 
\begin{align}   \eta_y \le \frac{(1-\rho)\underline{\lambda_b}}{40(d)^{\frac{1}{2}}L_fv_1v_2 \lambda_a}.
\end{align}
Using the above results, we get the following results 
% \begin{align}
% &\frac{1}{T}  \sum_{i=1}^{T}
% (\mE\|\mhE_{x,i}\|^2 + \mE\|\mhE_{y,i}\|^2) \notag \\
% &\le 
% \frac{4\mhE_0}{T(1-\rho)}
% + \frac{20\eta^2_y\lambda^2_a(p+\beta^2)}{(1-\rho)^2\underline{\lambda^2_b}K}
% \Bigg(
% \frac{2\mS_{0}}{T\bar{\beta}}  +\frac{18dKL^2_fv^2_1v^2_2\mhE_{0}}{b\bar{\beta}T}  \Bigg)
% \notag \\
% &\quad 
% +\frac{508dL^2_f\lambda^2_a\eta^2_y}{(1-\rho)^2\underline{\lambda^2_b}T}\sum_{i=0}^{T-1}(\eta^2_x\mE\|\bg^x_{c,i}\|^2
% +\eta^2_y\mE\|\bg^y_{c,i}\|^2)
% +\frac{184\lambda^2_a\sigma^2_1\eta^2_y\beta^2}{(1-\rho)^2\underline{\lambda^2_b}bT}\sum_{i=0}^{T-1}(\mE\|\nabla_x J(\bx_{c,i}, \by_{c,i})\|^2+\mE\|\nabla_y J(\bx_{c,i}, \by_{c,i})\|^2) 
% \notag\\
% &\quad 
% + \frac{4\eta^2_y\lambda^2_a}{(1-\rho)^2\underline{\lambda^2_b}}\Big(\frac{40pC_0}{B}\mathbb{I}(B<N) + \frac{26\beta^2\sigma^2_0}{b} + 5C_1+C_3\Big).
% \end{align}
% Finally, we can derive that 
\begin{align}
&\frac{1}{T}  \sum_{i=0}^{T-1}
\mE\|\mhE_{z,i}\|^2 \le \frac{1}{T}  \sum_{i=1}^{T}
\mE\|\mhE_{z,i}\|^2  + \frac{\mE\|\mhE_{z,0}\|^2}{T(1-\rho)} \notag\\
&\le
\frac{5\mhE_0}{T(1-\rho)}
+ \frac{20\eta^2_y\lambda^2_a(p+\beta^2)}{(1-\rho)^2\underline{\lambda^2_b}K}
\Bigg(
\frac{2\mS_{0}}{T\bar{\beta}}  +\frac{18dKL^2_fv^2_1v^2_2\mhE_{0}}{b\bar{\beta}T}  \Bigg)
+\frac{508dL^2_f\lambda^2_a\eta^2_y}{(1-\rho)^2\underline{\lambda^2_b}T}\sum_{i=0}^{T-1}(\eta^2_x\mE\|\bg^x_{c,i}\|^2
+\eta^2_y\mE\|\bg^y_{c,i}\|^2)
\notag\\
&\quad +\frac{184\lambda^2_a\sigma^2_1\eta^2_y\beta^2}{(1-\rho)^2\underline{\lambda^2_b}bT}\sum_{i=0}^{T-1}\mE\|\nabla^z_{c,i}\|^2
+ \frac{4\eta^2_y\lambda^2_a}{(1-\rho)^2\underline{\lambda^2_b}}\Big(\frac{40pC_0}{B}\mathbb{I}(B<N) + \frac{26\beta^2\sigma^2_0}{b} + 5C_1+C_3\Big). \notag
\end{align}
\end{proof}
\begin{Lemma}[\textbf{Descent relation}]
\label{apendix:lemma:RD:cost}
Under Assumption \ref{main:assumption:Lipschitz}, let $P_{\mu_x}(x) \triangleq \mE_{\bu \sim {\rm Unil}(\mathbb{B}^{d_1})} [P(x+\mu_x \bu)]$, where $P(x) = \max_y J(x, y)$, it holds that 
\begin{align}
& P_{\mu_x}(\bx_{c,i+1})\le  P_{\mu_x}(\bx_{c,i})
- \frac{\eta_x}{4}
\|\nabla P(\bx_{c,i})\|^2-\frac{\eta_x(1-L\eta_x)}{2}\|\bg^x_{c,i}\|^2
 +6\eta_x\kappa L_f
\Delta^y_{c,i}+9\eta_x L^2_fv^2_1v^2_1\|\mhE_{z,i}\|
 +3\eta_x\|\bs^x_{c,i}\|^2 \notag\\
 &\quad 
 +2\eta_x\mu^2_xL^2d^2_1.
\label{appendix:lemma:random_cost:statement}
\end{align}
\end{Lemma}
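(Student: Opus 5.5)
The plan is to mirror the proof of Lemma~\ref{appendix:lemma:costfunction}, with $P$ replaced by its smoothed version $P_{\mu_x}$. First, $P_{\mu_x}$ is a ball-average of the $L$-smooth function $P$ (Lemma~\ref{Danskin}), so it is itself $L$-smooth; this is the same argument as in part (1) of Lemma~\ref{appendix:lemma:randomsmoothing}. Part (2) of that lemma, applied to $P$, gives $\|\nabla P_{\mu_x}(x)-\nabla P(x)\|\le \mu_x L d_1/2$. Using the averaged recursion $\bx_{c,i+1}=\bx_{c,i}-\eta_x\bg^x_{c,i}$ from Lemma~\ref{appendix:lemma:transformed_recursion}, the descent lemma yields
\begin{align}
P_{\mu_x}(\bx_{c,i+1})\le P_{\mu_x}(\bx_{c,i})-\eta_x\langle\nabla P_{\mu_x}(\bx_{c,i}),\bg^x_{c,i}\rangle+\frac{L\eta_x^2}{2}\|\bg^x_{c,i}\|^2. \notag
\end{align}
I then apply the polarization identity $-\langle a,b\rangle=\tfrac12(\|a-b\|^2-\|a\|^2-\|b\|^2)$ with $a=\nabla P_{\mu_x}(\bx_{c,i})$ and $b=\bg^x_{c,i}$. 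This produces the term $-\frac{\eta_x}{2}(1-L\eta_x)\|\bg^x_{c,i}\|^2$ directly. Next, I lower-bound $\|\nabla P_{\mu_x}\|^2\ge \tfrac12\|\nabla P\|^2-\|\nabla P_{\mu_x}-\nabla P\|^2$, which explains why the statement carries $\eta_x/4$ rather than $\eta_x/2$ and contributes a bias term of order $\eta_x\mu_x^2L^2d_1^2$.

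The remaining error $\|\nabla P_{\mu_x}(\bx_{c,i})-\bg^x_{c,i}\|^2$ is where the hybrid structure enters. Here $\bs^x_{c,i}=\bg^x_{c,i}-\frac1K\sum_k\nabla_xJ^{\mu_x}_k(\bz_{k,i})$, so I split the error into three pieces:
\begin{align}
\nabla P_{\mu_x}(\bx_{c,i})-\bg^x_{c,i}&=\big(\nabla P_{\mu_x}(\bx_{c,i})-\nabla P(\bx_{c,i})\big)+\Big(\nabla P(\bx_{c,i})-\tfrac1K\textstyle\sum_k\nabla_xJ^{\mu_x}_k(\bz_{k,i})\Big)-\bs^x_{c,i}. \notag
\end{align}
Jensen's inequality with constant $3$ explains the coefficient $3\eta_x\|\bs^x_{c,i}\|^2$.

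The middle term is then expanded further:
\begin{align}
\nabla P(\bx_{c,i})-\nabla_xJ(\bz_{c,i}),\qquad \tfrac1K\textstyle\sum_k\big(\nabla_xJ_k(\bz_{c,i})-\nabla_xJ_k(\bz_{k,i})\big),\qquad \tfrac1K\textstyle\sum_k\big(\nabla_xJ_k(\bz_{k,i})-\nabla_xJ^{\mu_x}_k(\bz_{k,i})\big). \notag
\end{align}
Each of these is handled by an earlier result:
\begin{itemize}
\item The first piece is at most $L_f^2\|\by_{c,i}-\by^o(\bx_{c,i})\|^2\le 2\kappa L_f\Delta^y_{c,i}$, using Lemma~\ref{Danskin} (quadratic growth).
\item The second piece is at most $\frac{L_f^2}{K}\|\mZ_i-\mZ_{c,i}\|^2\le L_f^2v_1^2v_2^2\|\mhE_{z,i}\|^2$, using the consensus bound \eqref{proof:useful:consensus}. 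This matches the statement's $\|\mhE_{z,i}\|$ term, which should be read as a square.
\item The third piece is at most $\mu_x^2L_f^2d_1^2/4$, using part (2) of Lemma~\ref{appendix:lemma:randomsmoothing}.
\end{itemize}
Collecting constants, which I expect to land within the claimed $6\kappa L_f$, $9L_f^2v_1^2v_2^2$, $3$ and $2\mu_x^2L^2d_1^2$ (using $L_f\le L$), gives the claim.

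The main obstacle is the smoothness of $P_{\mu_x}$ together with the gradient-bias bound for $P$. $P$ is only known to be $L$-smooth through the Danskin lemma, so I need that Lemma~\ref{appendix:lemma:randomsmoothing}(1)--(2), i.e. \cite[Lemma 4.1]{gao2018information}, applies to any $L$-smooth function and not only to $J_k$. I would state this briefly, with interchange of gradient and expectation justified by the Lipschitz continuity of $\nabla P$. The rest is constant bookkeeping, where I would allocate the Jensen splits so that the coefficient on $\|\bs^x_{c,i}\|^2$ stays at $3$.
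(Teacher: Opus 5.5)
Your plan is essentially the paper's proof: $L$-smoothness of $P_{\mu_x}$, the descent lemma with polarization, the bound $-\|\nabla P_{\mu_x}\|^2\le \|\nabla P_{\mu_x}-\nabla P\|^2-\tfrac12\|\nabla P\|^2$, and a split of $\nabla P_{\mu_x}(\bx_{c,i})-\bg^x_{c,i}$ into smoothing-bias, Danskin/quadratic-growth, consensus and $\bs^x_{c,i}$ pieces. The paper routes through $\nabla_x J^{\mu_x}(\bz_{c,i})$ and the smoothed local gradients rather than through $\nabla_x J_k(\bz_{c,i})$, which makes no difference.

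One piece of bookkeeping needs fixing. A nested $3\times 3$ Jensen split puts $\tfrac{9}{2}\eta_x\cdot 2\kappa L_f=9\eta_x\kappa L_f$ on $\Delta^y_{c,i}$, which exceeds the claimed $6\eta_x\kappa L_f$. Instead, use a single five-term split of $\tfrac{\eta_x}{2}\|\cdot\|^2$, as the paper does. This gives $5\eta_x\kappa L_f\Delta^y_{c,i}$, $\tfrac52\eta_x L_f^2v_1^2v_2^2\|\mhE_{z,i}\|^2$, $\tfrac52\eta_x\|\bs^x_{c,i}\|^2$ and total bias $\tfrac{11}{8}\eta_x\mu_x^2L^2d_1^2$, all within the statement.
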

\begin{proof}
According to Lemma \ref{appendix:lemma:randomsmoothing}, given a $L$-smooth function $P(z)$, then $P_{\mu_x}(x) \triangleq \mE_{\bu \sim {\rm Unil}(\mathbb{B}^{d_1})}P(x+\mu_x \bu)$ is $L^\prime (\le L)$-smooth,  it follows that
\begin{align}
&P_{\mu_x}(\bx_{c,i+1})
\le P_{\mu_x}(\bx_{c,i})
+\langle  \nabla P_{\mu_x}(\bx_{c,i}), \bx_{c,i+1}- \bx_{c,i}\rangle \notag + \frac{L\eta^2_x}{2}\|\bg^x_{c,i}\|^2\\
&\le P_{\mu_x}(\bx_{c,i})
- \eta_x\langle  \nabla P_{\mu_x}(\bx_{c,i}), \bg^x_{c,i}\rangle + \frac{L\eta^2_x}{2}\|\bg^x_{c,i}\|^2 \notag \\
&\le 
P_{\mu_x}(\bx_{c,i})
-\frac{\eta_x}{2}
\|\nabla P_{\mu_x}(\bx_{c,i})\|^2
- \frac{\eta_x}{2}\|\bg^x_{c,i}\|^2
+\frac{\eta_x}{2}
\|\nabla P_{\mu_x}(\bx_{c,i}) - \bg^x_{c,i}\|^2
+ \frac{L\eta^2_x}{2}\|\bg^x_{c,i}\|^2.
\label{appendix:lemma:random_cost:step1}
\end{align}
Note that
\begin{align}
&\|\nabla P(\bx_{c,i})\|=\|\nabla P(\bx_{c,i})
-\nabla P_{\mu_x}(\bx_{c,i}) +
\nabla P_{\mu_x}(\bx_{c,i})\|^2 \notag\\
&\le 2\|\nabla P_{\mu_x}(\bx_{c,i})
-\nabla P(\bx_{c,i}) \|^2 +2\|\nabla P_{\mu_x}(\bx_{c,i})\|^2,
\end{align}
we have 
\begin{align}
-\|\nabla P_{\mu_x}(\bx_{c,i})\|^2
&\le \|\nabla P_{\mu_x}(\bx_{c,i})
-\nabla P(\bx_{c,i}) \|^2 -\frac{\|\nabla P(\bx_{c,i})\|^2}{2}\le \frac{\mu^2_xL^2d^2_1}{4}-\frac{\|\nabla P(\bx_{c,i})\|^2}{2}.
\end{align}
where the second inequality is due to Lemma \ref{appendix:lemma:randomsmoothing}.
Plugging the above results into 
\eqref{appendix:lemma:random_cost:step1}
and denoting 
\begin{align}
J^{\mu_x}(\bz_{c,i}) &\triangleq
\frac{1}{K}\sum_{k=1}^{K}J^{\mu_x}_{k}(\bz_{c,i}),
\end{align}
we can derive that 
\begin{align}
& P_{\mu_x}(\bx_{c,i+1}) \notag\\
&\le 
P_{\mu_x}(\bx_{c,i})
-\frac{\eta_x}{4}
\|\nabla P(\bx_{c,i})\|^2 + \frac{\eta_x\mu^2_xL^2d^2_1}{8}
- \frac{\eta_x}{2}\|\bg^x_{c,i}\|^2 
+\frac{\eta_x}{2}
\|\nabla P_{\mu_x}(\bx_{c,i}) - \bg^x_{c,i}\|^2
+ \frac{L\eta^2_x}{2}\|\bg^x_{c,i}\|^2  \notag \\
&\overset{(a)}{\le} P_{\mu_x}(\bx_{c,i})
- \frac{\eta_x}{4}
\|\nabla P(\bx_{c,i})\|^2
+ \frac{\eta_x\mu^2_xL^2d^2_1}{8}
-\frac{\eta_x}{2}
\|\bg^x_{c,i}\|^2
+3\eta_x\Big(\|
\nabla P_{\mu_x}(\bx_{c,i}) - 
\nabla P(\bx_{c,i})
\|^2+\|
\nabla P(\bx_{c,i}) - 
\nabla^x_{c,i}\|^2 \notag \\
&\quad 
+\|
\nabla^x_{c,i}
- \nabla_x J^{\mu_x}(\bz_{c,i})
\|^2+\Big\|\nabla_x J^{\mu_x}(\bz_{c,i})- \frac{1}{K}\sum_{k=1}^{K} \nabla_x 
J^{\mu_x}_k(\bz_{k,i})\Big\|^2 +
\Big\| \frac{1}{K}\sum_{k=1}^{K} \nabla_x J^{\mu_x}_k(\bz_{k,i})
- \bg^x_{c,i}
\Big\|^2\Big)  + \frac{L\eta^2_x}{2}\|\bg^x_{c,i}\|^2  \notag \\
&\overset{(b)}{\le} 
P_{\mu_x}(\bx_{c,i})
- \frac{\eta_x}{4}
\|\nabla P(\bx_{c,i})\|^2
-\frac{\eta_x(1-L\eta_x)}{2}
\|\bg^x_{c,i}\|^2 
+3\eta_xL^2_f\|
\by^o(\bx_{c,i}) - \by_{c,i}\|^2\notag +\frac{3\eta_xL^2_f}{K}\|\mZ_{i} - \mZ_{c,i}\|^2  \\
&\quad +3\eta_x\|
\bs^x_{c,i}
\|^2
  + \frac{3}{2}\eta_x \mu^2_xL^2d^2_1 +\frac{\eta_x\mu^2_xL^2d^2_1}{8} \notag \\
&\overset{(c)}{\le} 
 P_{\mu_x}(\bx_{c,i})
- \frac{\eta_x}{4}
\|\nabla P(\bx_{c,i})\|^2
-\frac{\eta_x(1-L\eta_x)}{2}
\|\bg^x_{c,i}\|^2 
+6\eta_x\kappa L_f
\Delta^y_{c,i} + 9\eta_xL^2_fv^2_1v^2_2\|\mhE_{z,i}\|^2
 +3\eta_x\|\bs^x_{c,i}\|^2
 +2\eta_x\mu^2_xL^2d^2_1,
\label{appendix:lemma:random_cost:step2}
\end{align}
where $(a)$ follows from Jensen's inequality, $(b)$ follows from 
Lemma \ref{appendix:lemma:randomsmoothing} and $L_f \le L$, 
and $(c)$ follows from 
\eqref{proof:useful:consensus} and
\begin{align}
\|\by^o(\bx_{c,i}) - \by_{c,i}\|^2 \le \frac{2 (P(\bx_{c,i}) -J(\bx_{c,i},\by_{c,i})) }{\nu} = \frac{2 \Delta^y_{c,i}}{\nu}.\notag
\end{align}
\end{proof}
\begin{Lemma}[\textbf{Duality gap}]
\label{lemma:optimality_gap}
Under Assumptions \ref{main:assumption:costfunction}---\ref{main:assumption:Lipschitz}, choosing step sizes
$\eta_x \le \min\{\frac{\eta_y}{16\kappa^2}, \frac{1}{32L}\}, \eta_y \le \min\{\frac{1}{\nu}, \frac{1}{2L_f}\}$, 
we can bound the optimality gap $\Delta^y_{c,i}$  as follows
\begin{align}
&\frac{1}{T}\sum_{i=0}^{T-1}\Delta^y_{c,i} \le 
\frac{3}{T\nu\eta_y}
\Delta^y_{c,0}
+
\frac{\eta_x}{4\nu\eta_yT} 
\sum_{i=0}^{T-1}
\|\bg^x_{c,i}\|^2-\frac{\eta_y}{4T}\sum_{i=1}^{T}
\sum_{j=0}^{i-1}
\Big(
1- \frac{\nu\eta_y}{2}\Big)^{i-j-1}
\|\bg^y_{c,j}\|^2+
\frac{4\kappa L_fv^2_1v^2_2}{T}
\sum_{i=0}^{T-1}
\|\mhE_{z,i}\|^2 \notag \\
&\quad 
+\frac{8}{T\nu}
\sum_{i=0}^{T-1}
\|\bs^y_{c,i}\|^2 +2\kappa L_fd^2_2\mu^2_y. 
\end{align}
\end{Lemma}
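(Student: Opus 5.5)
The plan is to prove this duality-gap bound by tracking $\Delta^y_{c,i} = P(\bx_{c,i}) - J(\bx_{c,i},\by_{c,i})$ along the averaged recursions \eqref{main:eq:x_finalrecursion}--\eqref{main:eq:y_finalrecursion}, as in \cite[Lemma 5]{cai2025dama2} and Lemma \ref{lemma:optimality_gap:coordinate}. The only change from the pure ZO-CW case is that the bias term now comes from the smoothed gradients $\nabla_y J^{\mu_y}_k$ rather than $\hnabla_y J_k$.

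\textbf{Step 1: ascent in $y$.} Use $L_f$-smoothness of $J(x,\cdot)$ with $\by_{c,i+1} = \by_{c,i} + \eta_y \bg^y_{c,i}$, and the polarization identity $\langle a,b\rangle = \tfrac12(\|a\|^2+\|b\|^2-\|a-b\|^2)$, to get
\[
-J(\bx_{c,i},\by_{c,i+1}) \le -J(\bz_{c,i}) - \tfrac{\eta_y}{2}\|\nabla^y_{c,i}\|^2 - \tfrac{\eta_y}{2}(1-L_f\eta_y)\|\bg^y_{c,i}\|^2 + \tfrac{\eta_y}{2}\|\bg^y_{c,i}-\nabla^y_{c,i}\|^2 .
\]
Then split the error term as
\[
\bg^y_{c,i}-\nabla^y_{c,i} = \bs^y_{c,i} + \frac{1}{K}\sum_k\big(\nabla_y J^{\mu_y}_k(\bz_{k,i}) - \nabla_y J_k(\bz_{k,i})\big) + \frac{1}{K}\sum_k\big(\nabla_y J_k(\bz_{k,i}) - \nabla_y J_k(\bz_{c,i})\big).
\]
The pieces are bounded as follows:
\begin{itemize}
\item the second piece by Lemma \ref{appendix:lemma:randomsmoothing}(2), giving $\mu_y^2L_f^2d_2^2/4$;
\item the third piece by $L_f^2\|\mZ_i-\mZ_{c,i}\|^2/K \le L_f^2 v_1^2v_2^2\|\mhE_{z,i}\|^2$ via \eqref{proof:useful:consensus}.
\end{itemize}
Apply the PL condition (Assumption \ref{main:assumption:costfunction}) to $\|\nabla^y_{c,i}\|^2 \ge 2\nu\Delta^y_{c,i}$, which yields a contraction factor $(1-\nu\eta_y)$.

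\textbf{Step 2: drift in $x$.} Account for the change from $\bx_{c,i}$ to $\bx_{c,i+1}$ through
\[
P(\bx_{c,i+1}) - P(\bx_{c,i}) - \big[J(\bx_{c,i+1},\by_{c,i+1}) - J(\bx_{c,i},\by_{c,i+1})\big].
\]
Use $L$-smoothness of $P$ (Lemma \ref{Danskin}) and smoothness of $J$ in $x$. The resulting cross term is $\eta_x\langle \nabla_xJ(\bx_{c,i},\by_{c,i+1}) - \nabla P(\bx_{c,i}), \bg^x_{c,i}\rangle$. Young's inequality bounds it by $\tfrac{\nu\eta_y}{4}\cdot\tfrac{L_f^2}{\nu^2}\|\by_{c,i+1}-\by^o(\bx_{c,i})\|^2$ plus $O(\eta_x^2/(\nu\eta_y))\|\bg^x_{c,i}\|^2$. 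The first part is in turn controlled by $\Delta$ through quadratic growth and by $\eta_y^2\|\bg^y_{c,i}\|^2$. The step-size conditions $\eta_x\le \eta_y/(16\kappa^2)$ and $\eta_y\le \min\{1/\nu, 1/(2L_f)\}$ make these terms absorbable, leaving the one-step recursion
\[
\Delta^y_{c,i+1} \le \Big(1-\frac{\nu\eta_y}{2}\Big)\Delta^y_{c,i} - \frac{\eta_y}{4}\|\bg^y_{c,i}\|^2 + \frac{\eta_x}{4\nu\eta_y}\cdot c\,\eta_x\nu\|\bg^x_{c,i}\|^2 + \eta_y\big(2\|\bs^y_{c,i}\|^2 + L_f^2v_1^2v_2^2\|\mhE_{z,i}\|^2 + \tfrac{1}{2}L_f^2d_2^2\mu_y^2\big).
\]

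\textbf{Step 3: unroll and average.} Unroll the recursion as in \eqref{proof:useful:recursion_first} and sum over $i$. The geometric sums give the factor $2/(\nu\eta_y)$, which turns the $\eta_y$-weighted noise terms into $\frac{8}{\nu}\|\bs^y_{c,i}\|^2$, $4\kappa L_f v_1^2v_2^2\|\mhE_{z,i}\|^2$, and $2\kappa L_f d_2^2\mu_y^2$ (using $L_f^2/\nu = \kappa L_f$). The initial term becomes $\frac{3}{T\nu\eta_y}\Delta^y_{c,0}$. The negative $\|\bg^y_{c,j}\|^2$ terms keep their discounted double-sum form.

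\textbf{Main obstacle.} The delicate part is Step 2: choosing the Young's-inequality constants so that the $y$-drift terms are absorbed by $\nu\eta_y/2$ and by part of the negative $-\tfrac{\eta_y}{2}\|\bg^y\|^2$ term. That is exactly where $\eta_x\le\eta_y/(16\kappa^2)$ is used.
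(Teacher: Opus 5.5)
Your skeleton matches the standard argument the paper defers to (\cite[Lemma 5]{cai2025dama2}): ascent in $y$ at fixed $\bx_{c,i}$ with PL, then the $x$-drift via $L$-smoothness of $P$ and of $J(\cdot,y)$, then unrolling with factor $2/(\nu\eta_y)$. Steps 1 and 3 are fine. Step 2, the one you flag as delicate, does not close as written, for two reasons.

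\emph{The Young weights are mis-scaled.} You put weight $\frac{\nu\eta_y}{4}\cdot\frac{L_f^2}{\nu^2}=\frac{\eta_yL_f^2}{4\nu}$ on $\|\by_{c,i+1}-\by^o\|^2$. Quadratic growth turns this into a multiple of $\eta_y\kappa^2\,\Delta^y_{c,i}$. No stated step-size condition (all are invariant under $J\mapsto cJ$) lets you dominate that by the available $\frac{\nu\eta_y}{2}\Delta^y_{c,i}$. The weight must be of order $\nu^2\eta_y$:
\[
\eta_xL_f\|\by_{c,i+1}-\by^o\|\,\|\bg^x_{c,i}\|\le \frac{\nu^2\eta_y}{4}\|\by_{c,i+1}-\by^o\|^2+\frac{\eta_x^2\kappa^2}{\eta_y}\|\bg^x_{c,i}\|^2 .
\]
The $\kappa^2$ in the conjugate weight is what makes $\eta_x\le\eta_y/(16\kappa^2)$ the right condition; your $O(\eta_x^2/(\nu\eta_y))$ has no $\kappa^2$.

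\emph{The plan for the $\|\bg^y\|^2$ term cannot keep the stated constant.} Bounding $\|\by_{c,i+1}-\by^o\|\le\|\by_{c,i}-\by^o\|+\eta_y\|\bg^y_{c,i}\|$ creates a positive $\Theta(\nu^2\eta_y^3)\|\bg^y_{c,i}\|^2$ term. Step 1 only gives $-\frac{\eta_y}{2}(1-L_f\eta_y)\|\bg^y_{c,i}\|^2$, which equals $-\frac{\eta_y}{4}\|\bg^y_{c,i}\|^2$ at $\eta_y=1/(2L_f)$. So absorbing anything into it, as you propose, loses the $-\frac{\eta_y}{4}$ claimed in the statement.

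The fix is to apply quadratic growth at the intermediate point $(\bx_{c,i},\by_{c,i+1})$. Set $G_i\triangleq P(\bx_{c,i})-J(\bx_{c,i},\by_{c,i+1})$ and let $\by^o$ be the projection of $\by_{c,i+1}$ onto $\arg\max_yJ(\bx_{c,i},y)$. Then $\nabla P(\bx_{c,i})=\nabla_xJ(\bx_{c,i},\by^o)$ and $\frac{\nu^2\eta_y}{4}\|\by_{c,i+1}-\by^o\|^2\le\frac{\nu\eta_y}{2}G_i$, so
\[
\Delta^y_{c,i+1}\le\Big(1+\frac{\nu\eta_y}{2}\Big)G_i+\Big(\frac{\eta_x^2\kappa^2}{\eta_y}+\frac{(L+L_f)\eta_x^2}{2}\Big)\|\bg^x_{c,i}\|^2 .
\]
Multiply your Step-1 bound on $G_i$ by $1+\frac{\nu\eta_y}{2}\le\frac32$. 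This gives:
\begin{itemize}
\item contraction $(1+\frac{\nu\eta_y}{2})(1-\nu\eta_y)\le1-\frac{\nu\eta_y}{2}$;
\item the $-\frac{\eta_y}{4}\|\bg^y_{c,i}\|^2$ term preserved, since the factor only makes it more negative;
\item a $\|\bg^x_{c,i}\|^2$ coefficient at most $\frac{\eta_x}{16}+\frac{\eta_x}{32}\le\frac{\eta_x}{8}$, using $\eta_x\le\eta_y/(16\kappa^2)$, $\eta_x\le1/(32L)$ and $L_f\le L$; after the factor $2/(\nu\eta_y)$ this is $\frac{\eta_x}{4\nu\eta_y}$.
\end{itemize}
For the error, split $\|a+b+c\|^2\le\frac{16}{3}\|a\|^2+\frac{16}{3}\|b\|^2+\frac{8}{3}\|c\|^2$, with $a=\bs^y_{c,i}$, $b$ the smoothing bias and $c$ the consensus piece. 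Step 3 then gives exactly $\frac{8}{\nu}$, $2\kappa L_fd_2^2\mu_y^2$, $4\kappa L_fv_1^2v_2^2$ and $\frac{3}{T\nu\eta_y}\Delta^y_{c,0}$ as stated.
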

\begin{proof}
The proof follows from an argument similar to that used in  \cite[Lemma 5]{cai2025dama2}.
\end{proof}
\subsection{Proof of Theorem \ref{main:theorem2}}
\label{appendix:subsection:theorem2}
\begin{Theorem}[Restatement of Theorem \ref{main:theorem2}]
\label{appendix:restatement:theorem2}
Under Assumptions
\ref{main:assumption:costfunction}---\ref{main:asssumption:RD-ZO}, we consider \textbf{ZOMA} with  $\bu^j_{k,i} \not= 0$  during $\bpi_i=0$ and choose sufficiently small hyperparameters $\eta_x,\eta_y, \beta, p, \mu_x, \mu_y, \delta_x, \delta_y$ satisfying
\begin{align}
\eta_x &\le \min \Big\{
\frac{\sqrt{A_1}}{130\sqrt{d}\kappa L_f}, \frac{1}{32L},\frac{\eta_y}{16\kappa^2} 
\Big\}, \quad \beta \le \min \Big\{ 
\frac{\sqrt{A_1}}{200\kappa \sigma_1},
\frac{\sqrt{A_1}}{90\sqrt{d}L_f}, \eta_y, \frac{\sqrt{d}}{2\sigma_1}
\Big\}, p+\beta
 \le 1, \beta + bp \le b, b\bar{\beta} \le \frac{55d}{K}, \notag \\
\eta_y &\le \min \Bigg\{ \frac{\sqrt{A_1}}{90\sqrt{d}L_f}, \frac{(1-\rho)\underline{\lambda_b}}{15\sqrt{d}A_3}, 
\frac{b^{\frac{1}{2}}(1-\rho)\underline{\lambda_b}}{7\sigma_1 A_3},  \frac{(1-\rho)\underline{\lambda_b}}{40\sqrt{d}A_3}, 
\frac{(1-\rho)^{\frac{1}{2}}\underline{\lambda^{\frac{1}{2}}_b}(A_1)^{\frac{1}{4}}}{57
 (d\kappa A_2)^\frac{1}{2} L_f} , \frac{1}{\nu}, \frac{1}{2L_f},\frac{(1-\rho)^{\frac{1}{2}}\underline{\lambda^{\frac{1}{2}}_b}(bA_1)^{\frac{1}{4}}}{70 d^{\frac{1}{4}}(\kappa A_3 \sigma_1)^{\frac{1}{2}}}, 
  \frac{(1-\rho)^{\frac{2}{3}}
\underline{\lambda^{\frac{2}{3}}_b}(A_1)^{\frac{1}{3}}}{138L_f \kappa^{\frac{1}{3}}  (d A_2)^{\frac{2}{3}}}\Bigg\}, \notag \\
 &\quad \delta_x =\mu_x \le \frac{\sigma}{L_f\sqrt{d_1}}, \delta_y = \mu_y \le \frac{\sigma}{L_f\sqrt{d_2}}, \bar{\beta} \le \frac{\nu \eta_y}{2},
\end{align}
where
\begin{align}
A_1 \triangleq bK\bar{\beta}, A_2 \triangleq v_1v_2\lambda_a, A_3 \triangleq L_fv_1v_2\lambda_a
\end{align}
We obtain the following bound
\begin{align}
& \frac{1}{T}
\sum_{i=0}^{T-1}
(\mE\|\nabla_x J(\bx_{c,i},\by_{c,i})\|^2 + \mE\|\nabla_y J(\bx_{c,i},\by_{c,i})\|^2)  \notag \\
&\le\mathcal{O}
\Bigg(
\underbrace{\Pi_0}_{\text{initial gap}} + \underbrace{\frac{d\kappa^2\lambda^2_a\eta^2_y(p+\beta^2)\sigma^2}{bb_0K\bar{\beta}^2T(1-\rho)^2\underline{\lambda^2_b}}}_{\text{network noise}} + \underbrace{\Pi_1 \sigma^2 \mathbb{I}(B<N)}_{\text{large-batch effect}}
+ \underbrace{\frac{\kappa^2\sigma^2}{b_0K\bar{\beta}T}}_{\text{initial noise}}
+ \underbrace{\Pi_2 \sigma^2_0}_{\text{ZO momentum error}} 
+ \underbrace{\Pi_3}_{\text{ZO bias}}
\Bigg),
\end{align}
where
\begin{align}
\Pi_0 &\triangleq 
\frac{\mE\Delta_{c,0}}{T\eta_x} 
+ 
\frac{\kappa^2\mE\Delta^y_{c,0}}{T\eta_y}
+\frac{d\kappa^2\eta^2_y\zeta^2_0}{b\bar{\beta}KT(1-\rho)\underline{\lambda^2_b}}, \notag \\
\Pi_1 &\triangleq \frac{d\kappa^2\lambda^2_a\eta^2_yp}{b\bar{\beta}K(1-\rho)^2\underline{\lambda^2_bB}}+ \frac{\kappa^2p}{BK\bar{\beta}},\notag\\
\Pi_2 &\triangleq 
\frac{d\kappa^2\lambda^2_a\eta^2_y\beta^2}{b^2\bar{\beta}K(1-\rho)^2\underline{\lambda^2_b}} + \frac{\kappa^2\beta^2}{Kb\bar{\beta}},\notag\\
\Pi_3 &\triangleq\frac{d\kappa^2\lambda^2_a\eta^2_y}{b\bar{\beta}K(1-\rho)^2\underline{\lambda^2_b}} (d^2_1\mu^2_x+d^2_2\mu^2_y)
+\frac{\kappa^2(d^2_1\mu^2_x+d^2_2\mu^2_y)}{bK\bar{\beta}}\notag\\
&\quad+\kappa^2d^2_2\mu^2_y+\mu^2_xd^2_1.
\end{align}
\end{Theorem}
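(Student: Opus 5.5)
The proof follows the template of Theorem~\ref{appendix:restatement:theorem1}, with the hybrid-estimator lemmas substituted for the coordinate-wise ones. The key difference is that the gradient-error bounds now feed back the true gradient norms $\mE\|\nabla^z_{c,i}\|^2$. These terms must be absorbed into the left-hand side, which is where the extra conditions on $\beta$ come from.

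\textbf{Step 1: combine the descent and duality-gap bounds.} Average the descent relation of Lemma~\ref{apendix:lemma:RD:cost} over $i=0,\dots,T-1$ and telescope $P_{\mu_x}$. Use $P_{\mu_x}(\bx_{c,0})-\inf P_{\mu_x}\le \mE\Delta_{c,0}+\mathcal{O}(\mu_x^2 L d_1^2)$, which follows from Lemma~\ref{appendix:lemma:randomsmoothing}. Add $\kappa L_f$ times the averaged gap $\Delta^y_{c,i}$ from Lemma~\ref{lemma:optimality_gap}. Under $\eta_x\le \eta_y/(16\kappa^2)$ and $\eta_x\le 1/(32L)$, the $\|\bg^x_{c,i}\|^2$ coefficient stays negative, and a negative weighted sum $-\frac{\eta_y\kappa L_f}{4T}\sum_i\sum_j(1-\nu\eta_y/2)^{i-j-1}\mE\|\bg^y_{c,j}\|^2$ survives. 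This produces a bound on $\frac1T\sum(\mE\|\nabla P\|^2+\kappa L_f\mE\Delta^y_{c,i})$ in terms of three quantities: $\kappa^2$ times the average of $\mE\|\bs^z_{c,i}\|^2$, $\kappa^2L_f^2v_1^2v_2^2$ times the average of $\mE\|\mhE_{z,i}\|^2$, and the bias $\kappa^2d_2^2\mu_y^2+d_1^2\mu_x^2$.

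\textbf{Step 2: substitute the error lemmas.} Insert Lemma~\ref{appendix:lemma:coorinate_variance_nxny} for $\bs^z_{c,i}$ and Lemma~\ref{appendix:lemma:random:exey} for $\mhE_{z,i}$.
\begin{itemize}
\item The $\eta_x^2\|\bg^x_{c,i}\|^2$ terms are absorbed by the negative $\bg^x$ term, using $\eta_x\lesssim \sqrt{A_1}/(\sqrt d\kappa L_f)$.
\item The $\eta_y^2\|\bg^y_{c,i}\|^2$ terms carry the factor $(1-(1-\bar\beta)^{T-i})$. They are dominated by the negative $\bg^y$ sum exactly as in \eqref{proof:appendix:eliminate_gyc}, using $\bar\beta\le\nu\eta_y/2$ and $\eta_y\lesssim\sqrt{A_1}/(\sqrt dL_f)$.
\item The $L_0$ contribution coming from $\mhE$ is handled as in \eqref{proof:appendix:eliminate_gyc_cw}. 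This is where the conditions with exponents $1/4$ and $1/3$ on $\eta_y$ arise, now carrying an extra $d$.
\item The repeated $\mhE_{z,i}$ terms inside the $\bs^z_{c,i}$ bound are merged using $b\bar\beta\le 55d/K$.
\end{itemize}

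\textbf{Step 3 (main obstacle): absorb the gradient-norm feedback.} Two sources feed $\frac1T\sum\mE\|\nabla^z_{c,i}\|^2$ back into the right-hand side: the term $\frac{8\beta^2\sigma_1^2}{b\bar\beta K}$ from Lemma~\ref{appendix:lemma:coorinate_variance_nxny}, multiplied by $72\kappa^2$, and the term $L_1$ from Lemma~\ref{appendix:lemma:random:exey}, multiplied by $\kappa^2L_f^2v_1^2v_2^2/A_1$. By \eqref{proof:useful:gradientnorm}, $\mE\|\nabla^z_{c,i}\|^2\le 6(\kappa L_f\mE\Delta^y_{c,i}+\mE\|\nabla P(\bx_{c,i})\|^2)$, so both terms can be moved to the left-hand side provided their total coefficient is at most $1/12$.
\begin{itemize}
\item For the first source, this requires $\beta\lesssim\sqrt{A_1}/(\kappa\sigma_1)$.
\item For the second, it requires $\eta_y\lesssim (1-\rho)^{1/2}\underline{\lambda_b}^{1/2}(bA_1)^{1/4}/(d^{1/4}(\kappa A_3\sigma_1)^{1/2})$. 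Here I would also use $\beta\le\eta_y$ to control the factor $\beta^2$ appearing in $L_1$.
\end{itemize}
Balancing these coefficients without losing the $K$ speedup is the delicate bookkeeping; I would check them term by term against the stated hyperparameter conditions.

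\textbf{Step 4: collect the remaining terms.} Use \eqref{appendix:useful_Sxy} for $\mS_0$ and $\bs_{c,0}$, \eqref{appendix:useful_Exy} for $\mhE_0$, and $C_0=\mathcal{O}(\sigma^2)$ via $\delta=\mu\le\sigma/(L_f\sqrt{d_i})$. The leftover terms then group as follows.
\begin{itemize}
\item $\Pi_0$: the initial gaps together with $\zeta_0^2$.
\item Network noise: the term from $I_0$.
\item $\Pi_1$: the terms from $pC_0/B$.
\item $\Pi_2$: the terms from $\beta^2\sigma_0^2/b$.
\item $\Pi_3$: the terms from $C_1$, $C_3$, and the smoothing bias.
\end{itemize}
Finally, \eqref{appendix:useful:bound_gradient} converts the left-hand side into the game-stationarity measure stated in the theorem.
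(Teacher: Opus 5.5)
Your proposal is correct and follows essentially the same route as the paper's proof. Both arguments telescope $P_{\mu_x}$ in the descent lemma, add the duality-gap bound, and substitute Lemmas~\ref{appendix:lemma:coorinate_variance_nxny} and~\ref{appendix:lemma:random:exey}; they absorb the $\bg^x$ and $\bg^y$ terms through the stated step-size conditions, and move the $\beta^2\sigma_1^2$-driven gradient-norm feedback to the left-hand side via \eqref{proof:useful:gradientnorm}, using $\beta\lesssim\sqrt{A_1}/(\kappa\sigma_1)$ and $\beta\le\eta_y$. The only cosmetic difference is that the paper defines $\mE\Delta_{c,0}\triangleq\mE P_{\mu_x}(\bx_{c,0})-P^\star$ and shows $\inf_x P_{\mu_x}\ge P^\star$, so it needs no $\mathcal{O}(\mu_x^2)$ correction to the initial gap.
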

\begin{proof}
From Lemma \ref{apendix:lemma:RD:cost}, we can derive 
\begin{align}
&\frac{1}{T}
\sum_{i=0}^{T-1}
(\mE\|\nabla P(\bx_{c,i})\|^2 + \kappa L_f\mE\Delta^y_{c,i}) \notag\\ 
&\le \sum_{i=0}^{T-1}
\frac{4\mE(P_{\mu_x}(\bx_{c,i}) - P_{\mu_x}(\bx_{c,i+1}))}{T\eta_x}
-\frac{2(1-L\eta_x)}{T}\sum_{i=0}^{T-1}
\mE\|\bg^x_{c,i}\|^2
+ \frac{25\kappa L_f}{T}
\sum_{i=0}^{T-1}\mE\Delta^y_{c,i}
+\frac{36L^2_fv^2_1v^2_2}{T}\sum_{i=0}^{T-1}\mE\|\mhE_{z,i}\|^2
\notag\\
&\quad + \frac{12}{T}
\sum_{i=0}^{T-1}\mE\|\bs^x_{c,i}\|^2
+8\mu^2_xL^2d^2_1 \notag \\
&\overset{(a)}{\le} \frac{4(\mE P_{\mu_x}(\bx_{c,0}) -P^\star)}{T\eta_x} - \frac{1}{T}\sum_{i=0}^{T-1}
\mE\|\bg^x_{c,i}\|^2
+ \frac{25\kappa L_f}{T}
 \sum_{i=0}^{T-1}\mE\Delta^y_{c,i}+\frac{36L^2_fv^2_1v^2_2}{T}\sum_{i=0}^{T-1}\mE\|\mhE_{z,i}\|^2
+ \frac{12}{T}
\sum_{i=0}^{T-1}\mE\|\bs^x_{c,i}\|^2
+8\mu^2_xL^2d^2_1,
\end{align}
where $(a)$
follows by choosing $\eta_x \le \frac{1}{2L}$, telescoping the first term and 
establishing the relation 
\begin{align}
\inf_x P_{\mu_x}(x) &= \inf_x \frac{1}{{\rm Vol}(\mathbb{B}^{d_1})}
\int_{\mathbb{B}^{d_1}}
P(x +\mu_x \bu) d\bu \ge 
\frac{1}{{\rm Vol}(\mathbb{B}^{d_1})}
\int_{\mathbb{B}^{d_1}}
 \inf_x P(x +\mu_x \bu) d\bu = P^\star.
\end{align}
Denoting $\mE\Delta_{c,0} \triangleq \mE P_{\mu_x}(\bx_{c,0}) -P^\star$ and invoking Lemma \ref{lemma:optimality_gap}, 
we have 
\begin{align}
&\frac{1}{T}
\sum_{i=0}^{T-1}
(\mE\|\nabla P(\bx_{c,i})\|^2 + \kappa L_f\mE\Delta^y_{c,i}) \notag\\ 
&\le \frac{4\mE\Delta_{c,0}}{T\eta_x} - \frac{1}{T}\sum_{i=0}^{T-1}
\mE\|\bg^x_{c,i}\|^2
+ 
\Big(
\frac{75\kappa^2}{T\eta_y}
\mE\Delta^y_{c,0}
+
\frac{25\kappa^2\eta_x}{4\eta_yT} 
\sum_{i=0}^{T-1} \mE\|\bg^x_{c,i}\|^2
-\frac{25\kappa L_f\eta_y}{4T}\sum_{i=1}^{T}
\sum_{j=0}^{i-1}\Big(
1-  \frac{\nu\eta_y}{2}\Big)^{i-j-1}\mE\|\bg^y_{c,j}\|^2\notag \\
&\quad 
+
\frac{100\kappa^2 L^2_fv^2_1v^2_2}{T}
\sum_{i=0}^{T-1}
\mE\|\mhE_{z,i}\|^2
+\frac{200\kappa^2}{T}
\sum_{i=0}^{T-1}
\mE\|\bs^y_{c,i}\|^2
+50\kappa^2L^2_fd^2_2\mu^2_y
\Big)
+\frac{36L^2_fv^2_1v^2_2}{T}\sum_{i=0}^{T-1}\mE\|\mhE_{z,i}\|^2\notag\\
&\quad + \frac{12}{T}
\sum_{i=0}^{T-1}\mE\|\bs^x_{c,i}\|^2
+8\mu^2_xL^2d^2_1 \notag 
\\
&\overset{(a)}{\le} \frac{4\mE\Delta_{c,0}}{T\eta_x} - \frac{1}{2T}\sum_{i=0}^{T-1}
\mE\|\bg^x_{c,i}\|^2
+ 
\Big(
\frac{75\kappa^2}{T\eta_y}
\mE\Delta^y_{c,0}
-\frac{25\kappa L_f\eta_y}{4T}\sum_{i=1}^{T}
\sum_{j=0}^{i-1}\Big(
1-  \frac{\nu\eta_y}{2}\Big)^{i-j-1} \mE\|\bg^y_{c,j}\|^2 +
\frac{136\kappa^2 L^2_fv^2_1v^2_2}{T}
\sum_{i=0}^{T-1}
\mE\|\mhE_{z,i}\|^2
\Big)\notag \\
&\quad 
+\frac{200\kappa^2}{T}
\sum_{i=0}^{T-1}
\mE\|\bs^z_{c,i}\|^2
+ C_5\notag 
\\
&\overset{(b)}{\le} 
\frac{4\mE\Delta_{c,0}}{T\eta_x} - \frac{1}{2T}\sum_{i=0}^{T-1}
\mE\|\bg^x_{c,i}\|^2
+ 
\Big(
\frac{75\kappa^2}{T\eta_y}
\mE\Delta^y_{c,0}-\frac{25\kappa L_f\eta_y}{4T} \sum_{i=1}^{T}
\sum_{j=0}^{i-1}\Big(
1-  \frac{\nu\eta_y}{2}\Big)^{i-j-1} \mE\|\bg^y_{c,j}\|^2+
\frac{136\kappa^2 L^2_fv^2_1v^2_2}{T}
\sum_{i=0}^{T-1}
\mE\|\mhE_{z,i}\|^2
\Big)
\notag\\
&\quad + 200\kappa^2
\Bigg(
\frac{2\bs_{c,0}}{T\bar{\beta}} + \frac{38dL^2_fv^2_1v^2_2}{b\bar{\beta}KT}\sum_{i=1}^T
\mE\|\mhE_{z,i}\|^2
 +\frac{18dL^2_fv^2_1v^2_2\mhE_0}{b\bar{\beta}KT}+ \frac{20dL^2_f}{b\bar{\beta}KT}
 \sum_{i=0}^{T-1}
\eta^2_x\mE\|\bg^x_{c,i}\|^2
\notag \\
 &\quad 
 + 
\frac{20dL^2_f \eta^2_y}{b\bar{\beta}KT}
 \sum_{i=0}^{T-1} (1-(1-\bar{\beta})^{T-i})
\mE\|\bg^y_{c,i}\|^2 
 + \frac{8\beta^2\sigma^2_1}{b\bar{\beta}KT}
\sum_{i=0}^{T-1}
 (\mE\|\nabla^x_{c,i}\|^2 + \mE\|\nabla^y_{c,i}\|^2)  
+\frac{C_1+C_2}{K\bar{\beta}}
\Bigg)
+C_5 \notag \\
&\overset{(c)}{\le} 
\frac{4\mE\Delta_{c,0}}{T\eta_x} - \frac{1}{4T}\sum_{i=0}^{T-1}
\mE\|\bg^x_{c,i}\|^2
+ 
\Big(
\frac{75\kappa^2}{T\eta_y}
\mE\Delta^y_{c,0}
-\frac{25\kappa L_f\eta_y}{4T}\sum_{i=1}^{T}
\sum_{j=0}^{i-1}\Big(
1-  \frac{\nu\eta_y}{2}\Big)^{i-j-1} \mE\|\bg^y_{c,j}\|^2\Big)\notag \\
&\quad 
+ \frac{200\times 38 d \kappa^2 L^2_fv^2_1v^2_2}{b\bar{\beta}KT} \Big(
\sum_{i=1}^T 
\mE\|\mhE_{z,i}\|^2 + \sum_{i=0}^{T-1}
\mE\|\mhE_{z,i}\|^2\Big)
+ 200\kappa^2
\Bigg(
\frac{2\bs_{c,0}}{T\bar{\beta}} +\frac{18dL^2_fv^2_1v^2_2\mhE_0}{b\bar{\beta}KT} + 
\frac{20dL^2_f \eta^2_y}{b\bar{\beta}KT}
\notag \\
 &\quad \times
 \sum_{i=0}^{T-1} (1-(1-\bar{\beta})^{T-i})
\mE\|\bg^y_{c,i}\|^2+ \frac{8\beta^2\sigma^2_1}{b\bar{\beta}KT}
\sum_{i=0}^{T-1}
 (\mE\|\nabla^x_{c,i}\|^2 + \mE\|\nabla^y_{c,i}\|^2) 
+\frac{C_1+C_2}{K\bar{\beta}}
\Bigg)
+C_5,
\end{align}
where $(a)$ follows by choosing $\frac{25\kappa^2\eta_x}{4\eta_y} \le \frac{1}{2} \Longrightarrow \frac{\eta_x}{\eta_y} \le \frac{2}{25\kappa^2}$
and noting $36 < 36\kappa^2, 12 < 200\kappa^2$ and denoting $C_5 \triangleq 50\kappa^2L^2_fd^2_2\mu^2_y+8\mu^2_xL^2d^2_1$, $(b)$
follows from Lemma \ref{appendix:lemma:coorinate_variance_nxny},
$(c)$ follows by choosing 
\begin{align}
& \frac{136\kappa^2L^2_fv^2_1v^2_2}{T} \le    200\kappa^2 \times \frac{38dL^2_fv^2_1v^2_2}{b\bar{\beta}KT} \Longrightarrow 
 b\bar{\beta} \le \frac{55d}{K} ,\\  &\frac{200\times20 d \kappa^2L^2_f \eta^2_x}{b\bar{\beta}KT}\le \frac{1}{4T}\Longrightarrow \eta_x \le 
 \frac{\sqrt{bK\bar{\beta}}}{130\sqrt{d}\kappa L_f}.
\end{align}
Moreover, we have 
\begin{align}
&\frac{\eta_y\kappa L_f}{T}
\sum_{i=1}^{T}\sum_{j=0}^{i-1}\Big(1 - \frac{\nu \eta_y}{2}\Big)^{i-j-1}
\mE\|\bg^y_{c,j}\|^2 
\notag\\
&=
\frac{2\kappa^2}{T}\sum_{i=0}^{T-1} \Big(1- \Big(1-\frac{\nu\eta_y}{2}\Big)^{T-i}\Big)
\mE\|\bg^y_{c,i}\|^2.
\label{proof:appendix:eliminate_gyc2}
\end{align}
We further choose
\begin{align}
&200 \kappa^2 \times \frac{20dL^2_f\eta^2_y}{b\bar{\beta}K} \le 2 \kappa^2 \Longrightarrow \eta_y \le  \frac{\sqrt{bK\bar{\beta}}}{90\sqrt{d}L_f}.\\
&(1-(1-\bar{\beta})^{T-i}) \le \Big(1- \Big(1-\frac{\nu\eta_y}{2}\Big)^{T-i}\Big)  \Longrightarrow
\bar{\beta} \le \frac{\nu \eta_y}{2}, \bar{\beta} \le 1, \eta_y \le \frac{2}{\nu}.
\end{align}
We can cancel part of the term on $\mE\|\bg^y_{c,i}\|$, leading to
\begin{align}
&\frac{1}{T}
\sum_{i=0}^{T-1}
(\mE\|\nabla P(\bx_{c,i})\|^2 + \kappa L_f\mE\Delta^y_{c,i}) \notag\\ 
&\le  
\frac{4\mE\Delta_{c,0}}{T\eta_x} - \frac{1}{4T}\sum_{i=0}^{T-1}
\mE\|\bg^x_{c,i}\|^2
+ 
\Big(
\frac{75\kappa^2}{T\eta_y}
\mE\Delta^y_{c,0}
-\frac{21\kappa L_f\eta_y}{4T}\sum_{i=1}^{T}
\sum_{j=0}^{i-1}
\Big(
1-  \frac{\nu\eta_y}{2}\Big)^{i-j-1} \mE\|\bg^y_{c,j}\|^2\Big)+ \frac{200\times 38 d \kappa^2 L^2_fv^2_1v^2_2}{b\bar{\beta}KT} \notag \\
&\quad 
\times\Big(
\sum_{i=1}^T
\mE\|\mhE_{z,i}\|^2 + \sum_{i=0}^{T-1}
\mE\|\mhE_{z,i}\|^2\Big)+ 200\kappa^2
\Bigg(
\frac{2\bs_{c,0}}{T\bar{\beta}} 
 +\frac{18dL^2_fv^2_1v^2_2\mhE_0}{b\bar{\beta}KT}
+ \frac{8\beta^2\sigma^2_1}{b\bar{\beta}KT}
\sum_{i=0}^{T-1}
 (\mE\|\nabla^x_{c,i}\|^2  + \mE\|\nabla^y_{c,i}\|^2) + \frac{C_1+C_2}{K\bar{\beta}}
\Bigg)
\notag 
\\
&\quad 
+C_5.
\end{align}
Invoking Lemma 
\ref{appendix:lemma:random:exey} and using inequality
\eqref{proof:useful:gradientnorm}, we have 
\begin{align}
& \Big(1 
-  \frac{9600\kappa^2\beta^2\sigma^2_1}{b\bar{\beta}K} \Big)\frac{1}{T}
\sum_{i=0}^{T-1}
(\mE\|\nabla P(\bx_{c,i})\|^2 + \kappa L_f\mE\Delta^y_{c,i})  \notag \\
&\le 
\frac{4\mE\Delta_{c,0}}{T\eta_x} - \frac{1}{4T}\sum_{i=0}^{T-1}
\mE\|\bg^x_{c,i}\|^2
+ 
\Big(
\frac{75\kappa^2}{T\eta_y}
\mE\Delta^y_{c,0}
-\frac{21\kappa L_f\eta_y}{4T}\sum_{i=1}^{T}
\sum_{j=0}^{i-1}\Big(
1-  \frac{\nu\eta_y}{2}\Big)^{i-j-1} \mE\|\bg^y_{c,j}\|^2\Big)\notag \\
&\quad 
+ \frac{400\times 38 d \kappa^2 L^2_fv^2_1v^2_2}{b\bar{\beta}K} \Bigg(
\frac{5\mhE_0}{T(1-\rho)}
+ I_0+L_0\sum_{i=0}^{T-1}
(\eta^2_x\mE\|\bg^x_{c,i}\|^2
+\eta^2_y\mE\|\bg^y_{c,i}\|^2)
+L_1\sum_{i=0}^{T-1}(6\kappa L_f\mE\Delta^y_{c,i} 
\notag\\
&\quad +  6\mE\|\nabla P(\bx_{c,i})\|^2) 
 +C_4 + \frac{4\eta^2_y\lambda^2_a( 5C_1+C_3)}{(1-\rho)^2\underline{\lambda^2_b}}\Bigg)
+ 200\kappa^2
\Bigg(
\frac{2\bs_{c,0}}{T\bar{\beta}} +\frac{18dL^2_fv^2_1v^2_2\mhE_0}{b\bar{\beta}KT}
+\frac{C_1+C_2}{K\bar{\beta}} \Bigg)+C_5.
\end{align}
Recall the definition of $L_0, L_1$ in Lemma \ref{appendix:lemma:random:exey}, we choose
\begin{align}
&\frac{400\times 38 d\kappa^2 L^2_f v^2_1v^2_2}{b\bar{\beta} K}
\times \frac{508dL^2_f\lambda^2_a\eta^2_y\eta^2_x}{(1-\rho)^2\underline{\lambda^2_b}} \le \frac{1}{4} \Longrightarrow
\eta_y \le \frac{(1-\rho)^{\frac{1}{2}}\underline{\lambda^{\frac{1}{2}}_b}(b\bar{\beta}K)^{\frac{1}{4}}}{75
 (d\kappa v_1v_2 \lambda_a)^\frac{1}{2} L_f
}, \\
& \frac{9600\kappa^2\beta^2\sigma^2_1}{bK\bar{\beta}} \le \frac{1}{4}  \Longrightarrow 
\beta \le \frac{\sqrt{b\bar{\beta}K}}{200\kappa \sigma_1}, \\
&\frac{400\times 38 d\kappa^2 L^2_f v^2_1v^2_2}{b\bar{\beta} K}
\times\frac{184\lambda^2_a\sigma^2_1\eta^2_y\beta^2}{(1-\rho)^2\underline{\lambda^2_b}b} \times 6\le \frac{1}{4}
\Longrightarrow 
\eta_y \le \frac{(1-\rho)^{\frac{1}{2}}\underline{\lambda^{\frac{1}{2}}_b}(b^2\bar{\beta}K)^{\frac{1}{4}}}{91d^{\frac{1}{4}}(\kappa L_f v_1 v_2 \lambda_a \sigma_1)^{\frac{1}{2}}}, \beta \le \eta_y .
\end{align}
We can get 
\begin{align}
& \frac{1}{2T}
\sum_{i=0}^{T-1}
(\mE\|\nabla P(\bx_{c,i})\|^2 + \kappa L_f\mE\Delta^y_{c,i})  \notag \\
&\le 
\frac{4\mE\Delta_{c,0}}{T\eta_x} 
+ 
\Big(
\frac{75\kappa^2}{T\eta_y}
\mE\Delta^y_{c,0}
-\frac{21\kappa L_f\eta_y}{4T}\sum_{i=1}^{T}
\sum_{j=0}^{i-1} \Big(
1-  \frac{\nu\eta_y}{2}\Big)^{i-j-1} \mE\|\bg^y_{c,j}\|^2\Big)
+ \frac{400\times 38 d \kappa^2 L^2_fv^2_1v^2_2}{b\bar{\beta}K} \notag\\
&\quad \times \Bigg(
\frac{5\mhE_0}{T(1-\rho)}
+ I_0+ L_0\eta^2_y\sum_{i=0}^{T-1}\mE\|\bg^y_{c,i}\|^2 +C_4
 + \frac{4\eta^2_y\lambda^2_a( 5C_1+C_3)}{(1-\rho)^2\underline{\lambda^2_b}}\Bigg)
+ 200\kappa^2
\Bigg(
\frac{2\bs_{c,0}}{T\bar{\beta}} +\frac{18dL^2_fv^2_1v^2_2\mhE_0}{b\bar{\beta}KT}+\frac{C_1+C_2}{K\bar{\beta}}\Big) \notag\\
&\quad 
+C_5.
\end{align}
Note that
\begin{align}
&\frac{\eta_y\kappa L_f}{T}
\sum_{i=1}^{T}\sum_{j=0}^{i-1}\Big(1 - \frac{\nu \eta_y}{2}\Big)^{i-j-1} \mE\|\bg^y_{c,i}\|^2 \notag\\
&= \frac{2\kappa^2 }{T}\sum_{i=0}^{T-1}\Big(
1-(1-\frac{\nu\eta_y}{2})^{T-i}
\Big)\mE\|\bg^y_{c,i}\|^2 
\notag\\
&\ge 
 \frac{2\kappa^2 }{T}\sum_{i=0}^{T-1}\Big(
1-(1-\frac{\nu\eta_y}{2})
\Big)\mE\|\bg^y_{c,i}\|^2 \quad (\eta_y \le \frac{1}{\nu}) \notag\\
&\ge \frac{\kappa^2 \nu \eta_y}{T}\sum_{i=0}^{T-1}\mE\|\bg^y_{c,i}\|^2 .
\end{align}
We can choose
\begin{align}
&\frac{400\times 38 d \kappa^2 L^2_fv^2_1v^2_2}{b\bar{\beta}K}  \times 
\frac{508dL^2_f\lambda^2_a\eta^4_y}{(1-\rho)^2\underline{\lambda^2_b}T}
\le \frac{\kappa^2\nu \eta_y }{T}
\Longrightarrow 
\eta_y  \le \frac{(1-\rho)^{\frac{2}{3}}
\underline{\lambda^{\frac{2}{3}}_b}(bK\bar{\beta})^{\frac{1}{3}}}{198 L_f  \kappa^{\frac{1}{3}} (d v_1v_2 \lambda_a)^{\frac{2}{3}}}.
\end{align}
We can conclude
\begin{align}
& \frac{1}{T}
\sum_{i=0}^{T-1}
(\mE\|\nabla P(\bx_{c,i})\|^2 + \kappa L_f\mE\Delta^y_{c,i})  \notag \\
&\le 
\frac{8 \mE\Delta_{c,0}}{T\eta_x} 
+ 
\frac{150\kappa^2\mE\Delta^y_{c,0}}{T\eta_y}
 + \frac{400\times 76 d \kappa^2 L^2_fv^2_1v^2_2}{b\bar{\beta}K} \Bigg(
\frac{5\mhE_0}{T(1-\rho)}
+ I_0 +C_4  +\frac{4\eta^2_y\lambda^2_a( 5C_1+C_3)}{(1-\rho)^2\underline{\lambda^2_b}}\Bigg) + 400\kappa^2
\Bigg(
\frac{2\bs_{c,0}}{T\bar{\beta}} \notag\\
&\quad 
+\frac{18dL^2_fv^2_1v^2_2\mhE_0}{b\bar{\beta}KT}
+\frac{C_1+C_2}{K\bar{\beta}}
\Bigg)
+2C_5
\notag \\
&\overset{(a)}{\le}\mathcal{O}
\Bigg(
\frac{\mE\Delta_{c,0}}{T\eta_x} 
+ 
\frac{\kappa^2\mE\Delta^y_{c,0}}{T\eta_y}
+\frac{d\kappa^2\eta^2_y\zeta^2_0}{b\bar{\beta}KT(1-\rho)\underline{\lambda^2_b}}
+ \frac{d\kappa^2\lambda^2_a\eta^2_y(p+\beta^2)\sigma^2}{bb_0K\bar{\beta}^2T(1-\rho)^2\underline{\lambda^2_b}}
+\frac{d^2\kappa^2\lambda^2_a\eta^4_y(p+\beta^2)\zeta^2_0}{b^2K\bar{\beta}^2T(1-\rho)^2\underline{\lambda^4_b}} \notag \\
&\quad 
+\frac{d\kappa^2\lambda^2_a\eta^2_y}{b\bar{\beta}K(1-\rho)^2\underline{\lambda^2_b}} \Big(
\frac{pC_0}{B}\mathbb{I}(B<N)+ \frac{\beta^2\sigma^2_0}{b} +C_1 +C_3
\Big) 
+\frac{\kappa^2\sigma^2}{b_0K\bar{\beta}T}
+\frac{d\kappa^2\eta^2_y\zeta^2_0}{b\bar{\beta}KT\underline{\lambda^2_b}}
+\frac{\kappa^2(C_1+C_2)}{K\bar{\beta}}
+C_5\Bigg) \notag\\
&\overset{(b)}{\le}\mathcal{O}
\Bigg(
\frac{\mE\Delta_{c,0}}{T\eta_x} 
+ 
\frac{\kappa^2\mE\Delta^y_{c,0}}{T\eta_y}
+\frac{d\kappa^2\eta^2_y\zeta^2_0}{b\bar{\beta}KT(1-\rho)\underline{\lambda^2_b}}
+ \frac{d\kappa^2\lambda^2_a\eta^2_y(p+\beta^2)\sigma^2}{bb_0K\bar{\beta}^2T(1-\rho)^2\underline{\lambda^2_b}}
+\frac{d^2\kappa^2\lambda^2_a\eta^4_y(p+\beta^2)\zeta^2_0}{b^2K\bar{\beta}^2T(1-\rho)^2\underline{\lambda^4_b}} \notag \\
&\quad 
+\frac{d\kappa^2\lambda^2_a\eta^2_y}{b\bar{\beta}K(1-\rho)^2\underline{\lambda^2_b}} \Big(
\frac{p\sigma^2}{B}\mathbb{I}(B<N)+ \frac{\beta^2\sigma^2_0}{b} +d^2_1\mu^2_x+d^2_2\mu^2_y
\Big) 
 +\frac{\kappa^2\sigma^2}{b_0K\bar{\beta}T}
+\frac{d\kappa^2\eta^2_y\zeta^2_0}{b\bar{\beta}KT\underline{\lambda^2_b}}
+\frac{\kappa^2(d^2_1\mu^2_x+d^2_2\mu^2_y)}{bK\bar{\beta}}\notag\\
&\quad 
+\frac{\kappa^2p\sigma^2}{BK\bar{\beta}}\mathbb{I}(B<N)+\frac{\kappa^2\beta^2\sigma^2_0}{Kb\bar{\beta}}
+\kappa^2d^2_2\mu^2_y+\kappa^2d^2_1\mu^2_x\Bigg),
\end{align}
where in $(a)$ we suppress the constants $v^2_1,v^2_2,L^2_f$ and use the relations \eqref{appendix:useful_Sxy}, \eqref{appendix:useful_Exy} and definition of $I_0, C_4$, $(b)$ is because we 
choose sufficiently small $p$ and $\delta_x, \mu_x, \delta_y, \mu_y$ with $\mu_x =\delta_x, \mu_y =\delta_y$ and $b \ge 1$ such that
\begin{align}
C_1 &= \mathcal{O}\Big(
\frac{d^2_1\mu^2_x+d^2_2\mu^2_y}{b}
\Big), \quad 
C_2 = \mathcal{O}\Big( \frac{pC_0}{B}\mathbb{I}(B<N) + \frac{\beta^2\sigma^2_0}{b}\Big), \notag\\
C_3 &= \mathcal{O}(d^2_1\mu^2_x+d^2_2\mu^2_y), \quad C_0 = \mathcal{O}(\sigma^2), \quad
C_5 = \mathcal{O}(\kappa^2d^2_2\mu^2_y+ \kappa^2d^2_1\mu^2_x).
\end{align}
We next derive a compact form of the performance bound by absorbing several minor terms. Specifically, choosing $\eta_y$ sufficiently small, we have
\begin{align}
&\mathcal{O}\Big(
\frac{d\kappa^2\lambda^2_a\eta^2_y(p+\beta^2)\sigma^2}{bb_0K\bar{\beta}^2T(1-\rho)^2\underline{\lambda^2_b}}
+\frac{d^2\kappa^2\lambda^2_a\eta^2_y(p+\beta^2)\zeta^2_0}{b^2K\bar{\beta}^2T(1-\rho)^2\underline{\lambda^2_b}} \times \underbrace{\frac{\eta^2_y}{\underline{\lambda^2_b}}}_{\text{small}}\Big) = \mathcal{O}\Big(
\frac{d\kappa^2\lambda^2_a\eta^2_y(p+\beta^2)\sigma^2}{bb_0K\bar{\beta}^2T(1-\rho)^2\underline{\lambda^2_b}} 
\Big).
\end{align}
Invoking Lemma \ref{appendix:lemma:transformed_recursion}, we have $(1-\rho)\underline{\lambda^2_b} < 1$ and
\begin{align}
\mathcal{O} \Big(\frac{d\kappa^2\eta^2_y\zeta^2_0}{b\bar{\beta}KT(1-\rho)\underline{\lambda^2_b}}+\frac{d\kappa^2\eta^2_y\zeta^2_0}{b\bar{\beta}KT\underline{\lambda^2_b}}
\Big) &= \mathcal{O} \Big(\frac{d\kappa^2\eta^2_y\zeta^2_0}{b\bar{\beta}KT(1-\rho)\underline{\lambda^2_b}}
\Big).
\end{align}
Note that we choose $\eta_y, \beta$ sufficiently small in stochastic setting and choose $\beta = 0$ in the finite-sum setting, we can write 
\begin{align}
\underbrace{\frac{\eta^2_y}{b}}_{\text{small}}\times\frac{d\lambda^2_a}{(1-\rho)^2\underline{\lambda^2_b}} \times \frac{\kappa^2\beta^2}{b\bar{\beta}K}+ \frac{\kappa^2\beta^2}{Kb\bar{\beta}}= \mathcal{O}\Big(\frac{\kappa^2\beta^2}{Kb\bar{\beta}}\Big)
\end{align}
Furthermore, 
we introduce 
\begin{align}
\Pi_0 &\triangleq 
\frac{\mE\Delta_{c,0}}{T\eta_x} 
+ 
\frac{\kappa^2\mE\Delta^y_{c,0}}{T\eta_y}
+\frac{d\kappa^2\eta^2_y\zeta^2_0}{b\bar{\beta}KT(1-\rho)\underline{\lambda^2_b}}, \notag \\
\Pi_1 &\triangleq \frac{d\kappa^2\lambda^2_a\eta^2_yp}{b\bar{\beta}K(1-\rho)^2\underline{\lambda^2_b}B}+ \frac{\kappa^2p}{BK\bar{\beta}},\quad
\Pi_2 \triangleq 
 \frac{\kappa^2\beta^2}{Kb\bar{\beta}},\notag\\
\Pi_3 &\triangleq\frac{d\kappa^2\lambda^2_a\eta^2_y}{b\bar{\beta}K(1-\rho)^2\underline{\lambda^2_b}} (d^2_1\mu^2_x+d^2_2\mu^2_y)
+\frac{\kappa^2(d^2_1\mu^2_x+d^2_2\mu^2_y)}{bK\bar{\beta}}+\kappa^2d^2_2\mu^2_y+d^2_1\mu^2_x.
\end{align}
 We can conclude that
\begin{align}
& \frac{1}{T}
\sum_{i=0}^{T-1}
(\mE\|\nabla P(\bx_{c,i})\|^2 + \kappa L_f\mE\Delta^y_{c,i})  \notag \\
&\le\mathcal{O}
\Bigg(
\underbrace{\Pi_0}_{\text{initial gap}} + \underbrace{\frac{d\kappa^2\lambda^2_a\eta^2_y(p+\beta^2)\sigma^2}{bb_0K\bar{\beta}^2T(1-\rho)^2\underline{\lambda^2_b}}}_{\text{network noise}} + \underbrace{\Pi_1 \sigma^2 \mathbb{I}(B<N)}_{\text{large-batch effect}}
+ \underbrace{\frac{\kappa^2\sigma^2}{b_0K\bar{\beta}T}}_{\text{initial noise}}
+ \underbrace{\Pi_2 \sigma^2_0}_{\text{ZO momentum error}} 
+ \underbrace{\Pi_3}_{\text{ZO-RU bias}}
\Bigg).
\end{align}
We further denote 
\begin{align}
A_1 \triangleq bK\bar{\beta}, A_2 \triangleq v_1v_2\lambda_a, A_3 \triangleq L_fv_1v_2\lambda_a.
\end{align}
We summarize the choice of hyperparameters as follows
\begin{align}
\eta_x &\le \min \Big\{
\frac{\sqrt{A_1}}{130\sqrt{d}\kappa L_f}, \frac{1}{32L},\frac{\eta_y}{16\kappa^2} 
\Big\}, \quad \beta \le \min \Big\{ 
\frac{\sqrt{A_1}}{200\kappa \sigma_1},
\frac{\sqrt{A_1}}{90\sqrt{d}L_f}, \eta_y, \frac{\sqrt{d}}{2\sigma_1}
\Big\}, p+\beta
 \le 1, \beta + bp \le b, b\bar{\beta} \le \frac{55d}{K}, \notag \\
\eta_y &\le \min \Bigg\{ \frac{\sqrt{A_1}}{90\sqrt{d}L_f}, \frac{(1-\rho)\underline{\lambda_b}}{15\sqrt{d}A_3}, 
\frac{b^{\frac{1}{2}}(1-\rho)\underline{\lambda_b}}{7\sigma_1 A_3},  \frac{(1-\rho)\underline{\lambda_b}}{40\sqrt{d}A_3}, 
\frac{(1-\rho)^{\frac{1}{2}}\underline{\lambda^{\frac{1}{2}}_b}(A_1)^{\frac{1}{4}}}{75
 (d\kappa A_2)^\frac{1}{2} L_f} , \frac{1}{\nu}, \frac{1}{2L_f},\frac{(1-\rho)^{\frac{1}{2}}\underline{\lambda^{\frac{1}{2}}_b}(bA_1)^{\frac{1}{4}}}{91 d^{\frac{1}{4}}(\kappa A_3 \sigma_1)^{\frac{1}{2}}},
  \frac{(1-\rho)^{\frac{2}{3}}
\underline{\lambda^{\frac{2}{3}}_b}(A_1)^{\frac{1}{3}}}{198L_f \kappa^{\frac{1}{3}}  (d A_2)^{\frac{2}{3}}}\Bigg\}, \notag\\
&\quad\delta_x =\mu_x \le \frac{\sigma}{L_f\sqrt{d_1}},  \delta_y = \mu_y \le \frac{\sigma}{L_f\sqrt{d_2}}, \bar{\beta} \le \frac{\nu \eta_y}{2}
\end{align}
The proof is completed by using the relation  \eqref{appendix:useful:bound_gradient}.
\end{proof}

\subsection{Corollaries of Theorem \ref{main:theorem2}}
\label{appendix:corollary:case2}
We now specialize Theorem \ref{main:theorem2} by considering two settings: the online stochastic setting, where $N$ may be infinitely large, and the finite-sum setting, where $N<+\infty$.

$\bullet$ \textbf{Online stochastic scenario}

\begin{Corollary}[\textbf{ZO-STORM-ED}]
\label{corollary:online:storm+ed:rd}
Under Assumptions \ref{main:assumption:costfunction}-\ref{main:asssumption:RD-ZO} and the matrix condition in Lemma \ref{appendix:lemma:transformed_recursion}, 
we consider $\bu^j_{k,i} \not= 0$ during $\bpi_i=0$.
We consider the ED strategy shown in Table \ref{tab:matrix_choices}, and set hyperparameters as
\begin{align}
\eta_x &= \mathcal{O}\Bigg(
\frac{K^{\frac{2}{3}}}{\kappa^2 d^{\frac{1}{2}} T^{\frac{1}{3}}}
\Bigg), \eta_y = \mathcal{O}\Bigg(
\frac{K^{\frac{2}{3}}}{d^{\frac{1}{2}}T^{\frac{1}{3}}}
\Bigg), p = 0,  \notag\\
\quad \beta &=   \mathcal{O}\Bigg(
\frac{K^{\frac{1}{3}}}{T^{\frac{2}{3}}}
\Bigg), b= \mathcal{O}(1),
\delta_x = \mu_x=\mathcal{O}\Bigg(
\frac{K^{\frac{1}{3}}}{d^{\frac{3}{4}}_1T^{\frac{2}{3}}}
\Bigg),
\delta_y =\mu_y = \mathcal{O}\Bigg(
\frac{K^{\frac{1}{3}}}{d^{\frac{3}{4}}_2T^{\frac{2}{3}}} 
\Bigg) , b_0 = \mathcal{O}\Bigg(
\frac{T^{\frac{1}{3}}}{K^{\frac{2}{3}}}
\Bigg).
\end{align}
The above hyperparameter choices satisfy the hyperparameter conditions for {\em sufficiently} large $T$, i.e., 
\begin{align}
\eta_x & \le \min \Big\{ 
\frac{1}{32L}, \mathcal{O}\Bigg(
\frac{K^{\frac{2}{3}}}{\kappa^2d^{\frac{1}{2}}T^{\frac{1}{3}}}\Bigg), \mathcal{O}\Bigg(
\frac{K^{\frac{2}{3}}}{\kappa d^{\frac{1}{2}} T^{\frac{1}{3}}}
\Bigg)
\Bigg\},  \notag\\
\delta_x &= \mu_x\le \frac{\sigma^2}{L_f\sqrt{d_1}}, \beta \le \min\Bigg\{ \mathcal{O} \Bigg(
\frac{K^{\frac{2}{3}}}{\kappa T^{\frac{1}{3}}}
\Bigg), \mathcal{O}\Bigg(
\frac{K^{\frac{2}{3}}}{d^{\frac{1}{2}}T^{\frac{1}{3}}}\Bigg),\frac{\sqrt{d}}{2\sigma_1} \Bigg\} ,  \delta_y = \mu_y \le \frac{\sigma^2}{L_f\sqrt{d_2}}, \notag \\
 \eta_y &
\le \min\Big\{
\mathcal{O}\Bigg(
\frac{K^{\frac{2}{3}}}{\sqrt{d}T^{\frac{1}{3}}} \Bigg),\mathcal{O}
\Bigg(
\frac{(1-\lambda)\sqrt{1-\lambda}}{d^{\frac{1}{2}}}\Bigg), \mathcal{O}(1)
\Bigg\}, \bar{\beta} = \beta \le \eta_y, \bar{\beta} \le \mathcal{O}\Big(\frac{d}{K}\Big). \notag 
\end{align}
The performance bound is given by 
\begin{align}
&\frac{1}{T}\sum_{i=0}^{T-1}\Big(\mE\|\nabla_x J(\bx_{c,i}, \by_{c,i})\|^2 + \mE\|\nabla_y J(\bx_{c,i}, \by_{c,i})\|^2\Big) \notag \\
&\le \mathcal{O}
\Bigg(
\frac{\sqrt{d}\kappa^2\Delta_{0}}{(TK)^{\frac{2}{3}}}
+\frac{\kappa^2\zeta^2_0}{T(1-\lambda)^2}
+ \frac{\kappa^2K\lambda^2\sigma^2}{T^2(1-\lambda)^3} +\frac{\kappa^2(\sigma^2+\sigma^2_0)}{(TK)^{\frac{2}{3}}}
 + \frac{\sqrt{d}\kappa^2\lambda^2K^{\frac{2}{3}}}{T^{\frac{4}{3}}(1-\lambda)^3}
+ \frac{\sqrt{d}\kappa^2}{(TK)^{\frac{2}{3}}} + \frac{\sqrt{d}\kappa^2K^{\frac{2}{3}}}{T^{\frac{4}{3}}}
\Bigg),
\end{align}
where $\Delta_0  \triangleq \mE(\Delta_{c,0}+\Delta^y_{c,0})$.
The dominant communication complexity (CC) and function query complexity (FC) are given by 
\begin{align}
CC &= \mathcal{O}\Bigg(\frac{ d^{\frac{3}{4}}\kappa^3\varepsilon^{-3}}{K} + \frac{\kappa^2\varepsilon^{-2}}{(1-\lambda)^2} \Bigg),  FC = 
CC \times 8b +b_0 \times 2d\approx \mathcal{O}\Bigg(\frac{d^{\frac{3}{4}}\kappa^3\varepsilon^{-3}}{K}+ \frac{\kappa^2\varepsilon^{-2}}{(1-\lambda)^2} + \frac{d^{\frac{5}{4}}\kappa\varepsilon^{-1}}{K} \Bigg).
\end{align}
Furthermore, the transient time in achieving a linear speedup in the number of agent $K$ is given by 
\begin{align}
\max \Bigg\{ 
\mathcal{O} \Bigg(\frac{K^2}{d^{\frac{3}{2}}(1-\lambda)^6} \Bigg), 
\mathcal{O} \Bigg( 
\frac{K^{\frac{5}{4}}}{d^{\frac{3}{8}}(1-\lambda)^{\frac{9}{4}}}
\Bigg), \mathcal{O} \Bigg(\frac{K^2}{(1-\lambda)^{4.5}}\Bigg)
\Bigg\},
\end{align}
where we used $\lambda < 1$.
\end{Corollary}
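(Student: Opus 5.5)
The plan is to specialize the restated Theorem~\ref{appendix:restatement:theorem2} to the ED choice in Table~\ref{tab:matrix_choices}. By Lemma~\ref{appendix:lemma:transformed_recursion} this choice gives $\rho=\sqrt{\lambda}$, $\lambda_a=\lambda$ and $\underline{\lambda_b}=\sqrt{1-\lambda}$. Hence $(1-\rho)^2\underline{\lambda^2_b}=\Theta((1-\lambda)^3)$ and $(1-\rho)\underline{\lambda^2_b}=\Theta((1-\lambda)^2)$, using $1-\sqrt{\lambda}\ge (1-\lambda)/2$. I then substitute $p=0$ (so $\bar\beta=\beta$ and $\mathbb{I}(B<N)$ is irrelevant since $\Pi_1=0$), $b=\mathcal{O}(1)$, and the stated $\eta_x,\eta_y,\beta,\mu,\delta,b_0$.

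The first step is checking the hyperparameter conditions for large $T$.
\begin{itemize}
\item $\beta=K^{1/3}T^{-2/3}$ must satisfy $\bar\beta\le\nu\eta_y/2$ and $\beta\le\eta_y$. This holds because $\eta_y=K^{2/3}d^{-1/2}T^{-1/3}$ dominates $\beta$ once $T\gtrsim d^{3/2}/K$.
\item $\eta_x\le \sqrt{A_1}/(130\sqrt d\kappa L_f)$ with $A_1=bK\beta=\Theta(K^{4/3}T^{-2/3})$. Then $\sqrt{A_1}=K^{2/3}T^{-1/3}$, which matches $\eta_x\propto K^{2/3}/(\kappa^2\sqrt d T^{1/3})$. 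The constraint $\eta_y\le\sqrt{A_1}/(90\sqrt d L_f)$ fixes $\eta_y$ similarly, and $\eta_x\le\eta_y/(16\kappa^2)$ holds by construction.
\item The network-dependent constraints on $\eta_y$, such as $(1-\lambda)^{3/2}/\sqrt d$ and the $A_1^{1/4}$ and $A_1^{1/3}$ terms, all hold once $T$ is large, since $\eta_y\to0$.
\item $b\bar\beta\le 55d/K$ holds because $\beta\to0$.
\end{itemize}

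The second step is plugging into the bound term by term.
\begin{itemize}
\item $\Pi_0$: the gap terms give $\Delta_0/(T\eta_x)+\kappa^2\Delta_0/(T\eta_y)=\mathcal{O}(\sqrt d\kappa^2\Delta_0/(TK)^{2/3})$. The $\zeta_0$ term has coefficient $d\eta_y^2/(bK\beta)=\Theta(1)$, which yields $\kappa^2\zeta_0^2/(T(1-\lambda)^2)$.
\item Network noise: with $b_0=T^{1/3}K^{-2/3}$ and $\beta^2/\beta^2=1$, it gives $d\eta_y^2\kappa^2\lambda^2\sigma^2/(b_0KT(1-\lambda)^3)=\kappa^2\lambda^2 K\sigma^2/(T^2(1-\lambda)^3)$.
\item Initial noise: $\kappa^2\sigma^2/(b_0K\beta T)=\kappa^2\sigma^2/(TK)^{2/3}$.
\item $\Pi_2\sigma_0^2$: $\kappa^2\beta\sigma_0^2/K=\kappa^2\sigma_0^2/(TK)^{2/3}$.
\item $\Pi_3$: with $d_1^2\mu_x^2=\sqrt{d_1}K^{2/3}T^{-4/3}$, the last two pieces give $\sqrt d\kappa^2K^{2/3}/T^{4/3}$. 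The middle piece gives $\kappa^2\sqrt d K^{2/3}T^{-4/3}/(K\beta)=\sqrt d\kappa^2/(TK)^{2/3}$. The network piece gives $\sqrt d\kappa^2\lambda^2K^{2/3}/(T^{4/3}(1-\lambda)^3)$.
\end{itemize}
Then I convert the $\nabla P$, $\Delta^y$ bound to the gradient norms using \eqref{appendix:useful:bound_gradient}.

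The third step derives the complexities. Setting the dominant term $\sqrt d\kappa^2/(TK)^{2/3}\le\varepsilon^2$ gives $T\gtrsim d^{3/4}\kappa^3\varepsilon^{-3}/K$. Setting the $\zeta_0$ term $\le\varepsilon^2$ gives $\kappa^2\varepsilon^{-2}/(1-\lambda)^2$. The function-query count is $8b$ queries per round for RU plus $2d\,b_0$ for the CW initialization, with $b_0$ evaluated at this $T$. The transient time comes from requiring each network-dependent term to be below the leading $\kappa^2/(TK)^{2/3}$ term and solving for $T$.

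The main obstacle is the $\kappa$ and $d$ bookkeeping. The stated $\mathcal{O}(\cdot)$ hides the $\kappa$-dependence in the step sizes, so I must verify that $\eta_y\le 1/\nu$ and the other $\kappa$-laden conditions, such as $\beta\le\sqrt{A_1}/(200\kappa\sigma_1)$, are only transient in $T$ and do not change the $\varepsilon$-order. The same care is needed for the $\varepsilon$ exponents in FC, in particular the $b_0$ contribution $d^{5/4}\kappa\varepsilon^{-1}/K$.
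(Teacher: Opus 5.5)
Your proposal is correct and follows the paper's route. The paper gives no separate argument for this corollary beyond substituting the ED constants $\rho=\sqrt{\lambda}$, $\lambda_a=\lambda$, $\underline{\lambda_b}=\sqrt{1-\lambda}$ and the stated hyperparameters into Theorem~\ref{appendix:restatement:theorem2}, and your term-by-term evaluations (e.g.\ $d\eta_y^2/(bK\beta)=\Theta(1)$, $d\eta_y^2/(b_0K)=K/T$, $d_1^2\mu_x^2=\sqrt{d_1}K^{2/3}T^{-4/3}$) reproduce the stated bound, CC and FC.

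Two small tightenings. First, the $A_1^{1/4}$ and $A_1^{1/3}$ step-size constraints do not hold for large $T$ merely because $\eta_y\to0$, since their right-hand sides vanish too. They hold because $\eta_y\propto T^{-1/3}$ decays faster than $T^{-1/6}$ and $T^{-2/9}$. For the $A_1^{1/3}$ constraint this only takes effect once $T\gtrsim K^2\kappa^3d^{3/2}\lambda^6/(1-\lambda)^9$, a threshold that the stated transient time, like the paper's, does not account for. Second, the stated transient times, with their $d^{-3/2}$ and $d^{-3/8}$ factors, come from comparing against $\sqrt{d}\kappa^2/(TK)^{2/3}$, not against $\kappa^2/(TK)^{2/3}$.
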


\begin{Corollary}[\textbf{ZO-STORM-GT}]
\label{corollary:online:atc+gt:rd}
Under Assumptions \ref{main:assumption:costfunction}-\ref{main:asssumption:RD-ZO}  and the matrix condition in \ref{appendix:lemma:transformed_recursion}, 
we consider $\bu^j_{k,i} \not= 0$ during $\bpi_i=0$.
We consider the GT strategy shown in Table \ref{tab:matrix_choices}, and then choose the same hyperparameters as Corollary \ref{corollary:online:storm+ed:rd}.
The performance bound is given by 
\begin{align}
&\frac{1}{T}\sum_{i=0}^{T-1}\Big(\mE\|\nabla_x J(\bx_{c,i}, \by_{c,i})\|^2 + \mE\|\nabla_y J(\bx_{c,i}, \by_{c,i})\|^2\Big) \notag \\
&\le \mathcal{O}
\Bigg(
\frac{ \sqrt{d}\kappa^2\Delta_0}{(TK)^{\frac{2}{3}}}
+\frac{\kappa^2\zeta^2_0}{T(1-\lambda)^3}
+ \frac{\kappa^2K\lambda^4\sigma^2}{T^2(1-\lambda)^4} + \frac{\kappa^2(\sigma^2+\sigma^2_0)}{(TK)^{\frac{2}{3}}} 
+ \frac{\sqrt{d}\kappa^2\lambda^4K^{\frac{2}{3}}}{(1-\lambda)^4T^{\frac{4}{3}}}
+ \frac{\sqrt{d}\kappa^2}{(TK)^{\frac{2}{3}}} + \frac{\sqrt{d}\kappa^2K^{\frac{2}{3}}}{T^{\frac{4}{3}}}
\Bigg)
\end{align}
The CC and FC are given by 
\begin{align}
CC &= \mathcal{O}\Bigg(\frac{ d^{\frac{3}{4}}\kappa^3\varepsilon^{-3}}{K} + \frac{\kappa^2\varepsilon^{-2}}{(1-\lambda)^3} \Bigg), \quad  FC = 
CC \times 8b +b_0\times 2d \approx \mathcal{O}\Bigg(\frac{d^{\frac{3}{4}}\kappa^3\varepsilon^{-3}}{K} + \frac{\kappa^2\varepsilon^{-2}}{(1-\lambda)^3} + \frac{d^{\frac{5}{4}}\kappa\varepsilon^{-1}}{K} \Bigg).
\end{align}
Furthermore, the transient time in achieving linear speedup in the number of agent $K$ is given by 
\begin{align}
\max \Bigg\{ 
\mathcal{O} \Bigg(\frac{K^2}{d^{\frac{3}{2}}(1-\lambda)^9} \Bigg), 
\mathcal{O} \Bigg( 
\frac{K^{\frac{5}{4}}}{d^{\frac{3}{8}}(1-\lambda)^{3}}\Bigg), \mathcal{O} \Bigg( \frac{K^2}{(1-\lambda)^{6}}\Bigg)
\Bigg\},
\end{align}
where we used $\lambda < 1$.
\end{Corollary}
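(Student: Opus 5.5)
The statement to prove is Corollary \ref{corollary:online:atc+gt:rd}. I would specialize the unified bound of Theorem \ref{main:theorem2} (restated in Appendix \ref{appendix:subsection:theorem2}) to the ATC-GT design matrices of Table \ref{tab:matrix_choices}, reusing the hyperparameter schedule of Corollary \ref{corollary:online:storm+ed:rd}. The only structural change from the ED case comes from Lemma \ref{appendix:lemma:transformed_recursion}: for GT one has $\rho \le \frac{1+\lambda}{2}$ (so $1-\rho \ge \frac{1-\lambda}{2}$), $\lambda_a = \lambda^2$, and $\underline{\lambda_b} = 1-\lambda$. For ED the corresponding values are $1-\rho = 1-\sqrt{\lambda} \asymp 1-\lambda$, $\lambda_a=\lambda$, and $\underline{\lambda_b}=\sqrt{1-\lambda}$.

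\textbf{Step 1: verify the step-size and parameter conditions.} I would first check that the conditions of Theorem \ref{main:theorem2} still hold for large $T$. The conditions independent of the network are unchanged from the ED corollary: $p=0$ gives $\bar\beta=\beta=\mathcal{O}(K^{1/3}T^{-2/3})$, so $A_1 = bK\bar\beta = \mathcal{O}(K^{4/3}T^{-2/3})$, and $\eta_y \asymp K^{2/3}d^{-1/2}T^{-1/3}$. The network-dependent ceilings now take the form $(1-\lambda)^2/(\sqrt d\,\lambda^2)$, together with the $A_1^{1/4}$ and $A_1^{1/3}$ terms scaled by powers of $(1-\lambda)$. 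Each of these decays more slowly in $T$ than $\eta_y$ does, so it is satisfied beyond some transient $T$. I would also check $\bar\beta\le \nu\eta_y/2$ (since $K^{1/3}T^{-2/3}\ll K^{2/3}T^{-1/3}$) and $\beta\le\eta_y$.

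\textbf{Step 2: substitute into each term of the unified bound.} I would then evaluate each term of the bound in turn.
\begin{itemize}
\item The initial-gap terms $\frac{\Delta}{T\eta_x}+\frac{\kappa^2\Delta}{T\eta_y}$ give $\sqrt d\kappa^2\Delta_0/(TK)^{2/3}$.
\item The $\zeta_0^2$ term equals $d\kappa^2\eta_y^2\zeta_0^2/(b\bar\beta KT(1-\rho)\underline{\lambda_b^2})$. Here $\eta_y^2/\bar\beta = \mathcal{O}(K^{1/3}/d)$, and $(1-\rho)\underline{\lambda_b^2}\asymp(1-\lambda)^3$, so the term becomes $\kappa^2\zeta_0^2/(T(1-\lambda)^3)$ after discarding $K^{-2/3}\le1$.
\item The network-noise term carries $\lambda_a^2\eta_y^2\beta^2/(bb_0K\bar\beta^2T(1-\rho)^2\underline{\lambda_b^2})$ with $b_0\asymp T^{1/3}K^{-2/3}$. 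This yields $\kappa^2K\lambda^4\sigma^2/(T^2(1-\lambda)^4)$, where $\lambda^4$ comes from $\lambda_a^2$ and $(1-\lambda)^4$ from $(1-\rho)^2\underline{\lambda_b^2}$.
\item The initial-noise and momentum terms $\kappa^2\sigma^2/(b_0K\bar\beta T)$ and $\Pi_2\sigma_0^2=\kappa^2\beta\sigma_0^2/(Kb)$ both give $(TK)^{-2/3}$. The large-batch term vanishes because $p=0$.
\end{itemize}

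\textbf{Step 3: the bias term $\Pi_3$.} With $\mu_x,\mu_y\asymp K^{1/3}d^{-3/4}T^{-2/3}$ one gets $d_1^2\mu_x^2 \asymp \sqrt d K^{2/3}T^{-4/3}$. Substituting:
\begin{itemize}
\item The term $\kappa^2d_2^2\mu_y^2 + d_1^2\mu_x^2$ gives $\sqrt d\kappa^2K^{2/3}T^{-4/3}$.
\item The term $\kappa^2(\cdot)/(bK\bar\beta)$ gives $\sqrt d\kappa^2/(TK)^{2/3}$.
\item The network part, with factor $d\lambda_a^2\eta_y^2/(\bar\beta(1-\rho)^2\underline{\lambda_b^2})$, gives $\sqrt d\kappa^2\lambda^4K^{2/3}/((1-\lambda)^4T^{4/3})$.
\end{itemize}
Collecting all terms reproduces the displayed bound.

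\textbf{Step 4: complexities and transient time.} For the communication complexity I would set each term to $\varepsilon^2$ and solve for $T$. The dominant terms are $\sqrt d\kappa^2/(TK)^{2/3}\le\varepsilon^2$, giving $T\gtrsim d^{3/4}\kappa^3\varepsilon^{-3}/K$, and $\kappa^2/(T(1-\lambda)^3)\le\varepsilon^2$, giving $T\gtrsim\kappa^2\varepsilon^{-2}/(1-\lambda)^3$. For the function-query complexity, each communication round costs $\mathcal{O}(b)=\mathcal{O}(1)$ queries, and the initialization costs $b_0\cdot2d$ queries with $b_0\asymp T^{1/3}K^{-2/3}$; evaluating $b_0$ at the chosen $T$ gives $d^{5/4}\kappa\varepsilon^{-1}/K$. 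For the transient time I would compare each network term with the leading $(TK)^{-2/3}$ term:
\begin{itemize}
\item $\frac{1}{T(1-\lambda)^3}\le \frac{\sqrt d}{(TK)^{2/3}}$ gives $T\gtrsim K^2/(d^{3/2}(1-\lambda)^9)$.
\item The $T^{-4/3}$ network term gives $K^{5/4}d^{-3/8}(1-\lambda)^{-3}$, using $\lambda<1$.
\item The $T^{-2}$ term gives $K^2/(1-\lambda)^6$.
\end{itemize}

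The main obstacle I expect is Step 1. The $A_1^{1/3}$ and $A_1^{1/4}$ ceilings on $\eta_y$ involve $d^{2/3}$ and $\kappa^{1/3}$ factors, and these must be shown to hold only up to a $T$-dependent threshold rather than uniformly. I would handle this by writing each ceiling as $c\,T^{-\alpha}$ with $\alpha<1/3$ and absorbing the resulting threshold into the "sufficiently large $T$" clause.
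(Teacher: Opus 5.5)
Your proposal is correct and follows the paper's own route: the paper gives no separate proof and obtains this corollary by inserting the ATC-GT constants of Lemma~\ref{appendix:lemma:transformed_recursion} ($1-\rho\ge\frac{1-\lambda}{2}$, $\lambda_a=\lambda^2$, $\underline{\lambda_b}=1-\lambda$) into Theorem~\ref{main:theorem2} with the hyperparameters of Corollary~\ref{corollary:online:storm+ed:rd}, then reading off CC, FC and the transient times. Two bookkeeping slips leave every conclusion unchanged: (i) in Step~2 one actually has $\eta_y^2/\bar\beta\asymp K/d$, which is the value you implicitly use in Step~3, so the $\zeta_0^2$ term is exactly $\kappa^2\zeta_0^2/(T(1-\rho)\underline{\lambda_b^2})$ with no $K^{-2/3}$ to discard; (ii) in Step~4 the attributions are swapped, since the $T^{-4/3}$ network term gives $T\gtrsim K^2\lambda^6/(1-\lambda)^6$ and the $T^{-2}$ term gives $T\gtrsim K^{5/4}\lambda^3/(d^{3/8}(1-\lambda)^3)$, which yields the same maximum.
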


$\bullet$ \textbf{Finite-sum scenario}

We next consider the finite-sum setting, focusing on the two regimes
$N\ge \tilde{\mathcal{O}}(\varepsilon^{-2})$
and $N\le \tilde{\mathcal{O}}(\varepsilon^{-2})$.
Compared with the naive choice $B =N$,
we show that the complexity bound can be further improved when $N \ge \tilde{\mathcal{O}}(\epsilon^{-2})$.

\begin{Corollary}[\textbf{ZO-PAGE-ED}]
\label{corollary:offline:page+ed:rd}
Under Assumptions \ref{main:assumption:costfunction}-\ref{main:asssumption:RD-ZO} and the matrix condition in Lemma \ref{appendix:lemma:transformed_recursion},
we consider $\bu^j_{k,i} \not= 0$ during $\bpi_i=0$.
We consider the ED strategy shown in Table \ref{tab:matrix_choices}.
We set the hyperparameters as follows
\begin{align}
\eta_x &= \mathcal{O}\Bigg(\frac{(1-\lambda)^{1.5}}{\sqrt{d}\kappa^2}\Bigg),\eta_y =  \mathcal{O}\Bigg(\frac{(1-\lambda)^{1.5}}{\sqrt{d}}\Bigg), \notag\\
b &=b_0 = \sqrt{\frac{Nd^{1-c}}{K}}, p = \frac{1}{\sqrt{NKd^{1-c}}}, \beta = 0,  B=\frac{N}{d^{c}} \notag\\
\delta_x& = \mu_x= \mathcal{O}\Big(\frac{1}{d^{\frac{3}{4}}_1 (1-\lambda)^{\frac{3}{4}}T^{\frac{1}{2}}}
\Big),  \delta_y = \mu_y = \mathcal{O}
 \Big(
\frac{1}{d^{\frac{3}{4}}_2 (1-\lambda)^{\frac{3}{4}}T^{\frac{1}{2}}}
 \Big),
\end{align}
where $c=1$ if $N \ge \mathcal{O}(d\kappa^2\varepsilon^{-2}/K)$; otherwise $c=0$ (full-batch).
We obtain the performance bound given by 
\begin{align}
&\frac{1}{T}\sum_{i=0}^{T-1}\Big(\mE\|\nabla_x J(\bx_{c,i}, \by_{c,i})\|^2 + \mE\|\nabla_y J(\bx_{c,i}, \by_{c,i})\|^2\Big) \notag \\
&\le \mathcal{O}
\Bigg(
\frac{\sqrt{d}\kappa^2\Delta_0}{T(1-\lambda)^{1.5}}
+\frac{\kappa^2(1-\lambda)\zeta^2_0}{T}
+\frac{\kappa^2\lambda^2\sqrt{K}\sigma^2}{\sqrt{Nd^{1-c}} T} + \varepsilon^2\mathbb{I}(c=1) 
+ \frac{\kappa^2\sigma^2}{T} +\frac{\sqrt{d}\kappa^2}{T(1-\lambda)^{1.5}} 
\Bigg).
\end{align}
The dominant CC and FC are given by 
\begin{align}
&CC \approx \mathcal{O}\Bigg(\frac{\sqrt{d}\kappa^2\varepsilon^{-2}}{(1-\lambda)^{1.5}}
+ \kappa^2\varepsilon^{-2} \Bigg), FC \approx 
(CC \times p  B + b_0)2d + CC \times (1-p) \times 8b \approx \mathcal{O}\Bigg(
\frac{d^{1-\frac{c}{2}}\kappa^2\sqrt{N}\varepsilon^{-2}}{\sqrt{K}(1-\lambda)^{1.5}} + d^{\frac{3-c}{2}}\sqrt{\frac{N}{K}}
\Bigg).
\end{align}
\end{Corollary}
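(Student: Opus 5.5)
The plan is to specialize the unified bound of Theorem~\ref{appendix:restatement:theorem2} (restated Theorem~\ref{main:theorem2}) with the ED parameters from Lemma~\ref{appendix:lemma:transformed_recursion}, namely $\rho=\sqrt{\lambda}$, $\lambda_a=\lambda$ and $\underline{\lambda_b}=\sqrt{1-\lambda}$. With these, $(1-\rho)^2\underline{\lambda^2_b}=\Theta((1-\lambda)^3)$ and $(1-\rho)\underline{\lambda^2_b}=\Theta((1-\lambda)^2)$, using $1-\sqrt{\lambda}\ge (1-\lambda)/2$. The PAGE choices give $\beta=0$, so $\bar\beta=p$, and $\Pi_2\sigma_0^2$ and every $\beta^2$ term vanish.

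\textbf{Step 1: check the step-size conditions.} I would verify the conditions of the theorem for sufficiently large $T$ and $N$.
\begin{itemize}
\item The binding network constraints are $\eta_y\lesssim (1-\rho)\underline{\lambda_b}/(\sqrt d A_3)=\Theta((1-\lambda)^{1.5}/\sqrt d)$ and $\eta_x\le\eta_y/(16\kappa^2)$. These match the proposed $\eta_y$ and $\eta_x$.
\item Since $A_1=bK\bar\beta=bKp=1$, the remaining conditions involving $A_1$ (for example $\sqrt{A_1}/(90\sqrt d L_f)$, and the $(1-\rho)^{1/2}\underline{\lambda^{1/2}_b}A_1^{1/4}/\sqrt{d\kappa A_2}$ term) become $\mathcal O((1-\lambda)^{3/4}/\sqrt{d\kappa})$. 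This is weaker than $(1-\lambda)^{1.5}/\sqrt d$ up to constants.
\item Conditions on $\sigma_1$ disappear with $\beta=0$. The constraints $\beta+bp\le b$ and $p\le 1$ are trivial.
\item The problematic one is $\bar\beta\le\nu\eta_y/2$, i.e. $p\lesssim (1-\lambda)^{1.5}/(\sqrt d\kappa)$. It holds once $N K d^{1-c}$ is large, and I would state it as a large-$N$ proviso.
\item The constraint $b\bar\beta\le 55d/K$ reads $1/K\le 55d/K$, which is fine.
\end{itemize}

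\textbf{Step 2: evaluate the error terms of the bound.}
\begin{itemize}
\item The initial gap gives $\Pi_0=\mathcal O(\sqrt d\kappa^2\Delta_0/(T(1-\lambda)^{1.5}))$. The $\zeta_0$ part is $d\kappa^2\eta_y^2\zeta_0^2/((1-\lambda)^2T)=\kappa^2(1-\lambda)\zeta_0^2/T$.
\item The network noise is $d\kappa^2\lambda^2\eta_y^2 p\sigma^2/(b b_0 K p^2 T(1-\lambda)^3)=\kappa^2\lambda^2\sigma^2/(b_0 p K T)$, up to constants. With $b_0 pK=\sqrt{K/(Nd^{1-c})}\,K\cdot$ and $b_0=b$, this simplifies to $\kappa^2\lambda^2\sqrt K\sigma^2/(\sqrt{Nd^{1-c}}T)$, as claimed.
\item The initial noise is $\kappa^2\sigma^2/(b_0KpT)=\kappa^2\sigma^2/T$.
\item For the ZO bias, $\Pi_3$ with $\mu_x^2d_1^2=\Theta(\sqrt{d_1}/((1-\lambda)^{1.5}T))$ yields the $\sqrt d\kappa^2/(T(1-\lambda)^{1.5})$ term. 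The $1/(bK\bar\beta)=1$ prefactors and the $\eta_y^2/(1-\lambda)^3$ prefactors are $\mathcal O(1/d)$, so they do not worsen this.
\end{itemize}

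\textbf{Step 3: the large-batch term, which is the main obstacle.} $\Pi_1\sigma^2\mathbb I(B<N)$ vanishes when $c=0$ because $B=N$. When $c=1$ we have $B=N/d$, and $\Pi_1\approx\kappa^2 p/(BKp)=\kappa^2 d/(NK)$. The condition $N\ge d\kappa^2\varepsilon^{-2}/K$ then makes this at most $\varepsilon^2$, which gives the $\varepsilon^2\mathbb I(c=1)$ term. Care is needed that Lemma~\ref{appendix:lemma:coordinate_large_bound} still gives $C_0/B$ without replacement.

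\textbf{Step 4: complexities.} Setting the bound to $\varepsilon^2$ gives $T=CC$. For FC, each round costs $2dpB$ expected queries on $\bpi_i=1$ and about $8b$ (ZO-RU) otherwise, plus $2db_0$ initially. With these choices $pB\cdot d=d^{1-c/2}\sqrt{N/K}\cdot$ and $b=\sqrt{Nd^{1-c}/K}\le d^{1-c/2}\sqrt{N/K}$. Multiplying by $CC$, whose dominant part is $\sqrt d\kappa^2\varepsilon^{-2}/(1-\lambda)^{1.5}$, and absorbing $\sqrt d$ consistently gives the stated FC. The initial term $2db_0=d^{(3-c)/2}\sqrt{N/K}$ accounts for the second summand.
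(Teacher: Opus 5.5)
Your route is to specialize the restated Theorem~2 with $\beta=0$, $\bar\beta=p$, $A_1=bKp=1$ and the ED constants. That is the intended argument; the paper states this corollary without a separate proof. Your evaluation of the individual error terms in Steps~2--3 is correct. The gap is in Step~1: it is not true that every remaining $A_1$-dependent step-size condition is implied by $\eta_y=\Theta((1-\lambda)^{1.5}/\sqrt d)$.

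\textbf{The cube-root condition.} You skipped $\eta_y\le (1-\rho)^{2/3}\underline{\lambda_b}^{2/3}A_1^{1/3}/(138L_f\kappa^{1/3}(dA_2)^{2/3})$. It arises from cancelling the $L_0\eta_y^2\sum_i\mE\|\bg^y_{c,i}\|^2$ contribution of the consensus lemma against the negative $\bg^y$-term. The proof only lower-bounds that negative term's magnitude by $\kappa^2\nu\eta_y T^{-1}\sum_i\mE\|\bg^y_{c,i}\|^2$. For ED with $A_1=1$ and $A_2=v_1v_2\lambda$, the condition reads $\eta_y\lesssim(1-\lambda)/(\kappa^{1/3}d^{2/3}\lambda^{2/3})$. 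With your $\eta_y$ it is equivalent to $\kappa\sqrt d\,\lambda^2(1-\lambda)^{1.5}\lesssim 1$. This fails, for example, for any fixed $\lambda\in(0,1)$ once $d$ or $\kappa$ is large.

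\textbf{The quarter-power condition.} It evaluates to $\Theta((1-\lambda)^{3/4}/\sqrt{d\kappa\lambda})$, which is \emph{not} weaker than $(1-\lambda)^{1.5}/\sqrt d$ unless $\kappa\lambda(1-\lambda)^{1.5}\lesssim 1$. This one is only an artifact of the theorem's proof using $\eta_x\le\eta_y$. Redoing that step with $\eta_x\le\eta_y/(16\kappa^2)$ removes it, but the cube-root condition cannot be dispatched this way.

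This affects the conclusion. To stay admissible you would shrink $\eta_y$ to $(1-\lambda)/(\kappa^{1/3}d^{2/3}\lambda^{2/3})$. Then $\eta_x\le\eta_y/(16\kappa^2)$ forces an initial-gap term of order $\kappa^{7/3}d^{2/3}\lambda^{2/3}\Delta_0/((1-\lambda)T)$. Its ratio to the claimed $\sqrt d\kappa^2\Delta_0/((1-\lambda)^{1.5}T)$ is $(\kappa\sqrt d\,\lambda^2(1-\lambda)^{1.5})^{1/3}$, so it dominates exactly in the violating regime, and both the stated rate and $CC$ change. The plug-in argument therefore gives the corollary only in one of two ways:
\begin{itemize}
\item under an extra regime assumption such as $\kappa\sqrt d\,\lambda^2(1-\lambda)^{1.5}\lesssim1$, stated next to your large-$N$ proviso for $p\le\nu\eta_y/2$; or
\item after a sharper treatment of that cancellation than the one in Theorem~2's proof.
\end{itemize}

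Two smaller slips:
\begin{itemize}
\item In the network-noise term you dropped a factor $b$. It is $\kappa^2\lambda^2\sigma^2/(b\,b_0pKT)=\kappa^2\lambda^2\sigma^2/(b_0T)$, since $bpK=1$.
\item In Step~4, $pB\,d=d^{(1-c)/2}\sqrt{N/K}$, not $d^{1-c/2}\sqrt{N/K}$. The per-round cost is therefore $\Theta(d^{(1-c)/2}\sqrt{N/K})$, and multiplying by $CC$ gives exactly $d^{1-c/2}$ with no $\sqrt d$ left to absorb.
\end{itemize}
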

We note that ZO-PAGE-EXTRA achieves performance comparable to ZO-PAGE-ED.

\begin{Corollary}[\textbf{ZO-PAGE-GT}]
\label{corollary:offline:page+atc-gt:rd}
Under Assumptions \ref{main:assumption:costfunction}-\ref{main:asssumption:RD-ZO}  and the matrix condition in Lemma \ref{appendix:lemma:transformed_recursion},
we consider $\bu^j_{k,i} \not= 0$ during $\bpi_i=0$.
We consider the ATC-GT strategy shown on Table \ref{tab:matrix_choices}.
We set the hyperparameters as follows
\begin{align}
 \eta_x &= \mathcal{O}\Bigg(\frac{(1-\lambda)^{2}}{\sqrt{d}\kappa^2}\Bigg), \eta_y = \mathcal{O}\Bigg(\frac{(1-\lambda)^{2}}{\sqrt{d}}\Bigg),B = \frac{N}{d^c}, b =b_0 = \sqrt{\frac{Nd^{1-c}}{K}},\notag \\
  p &= \frac{1}{\sqrt{NKd^{1-c}}}, \beta = 0,  \delta_x =
\mu_x = \mathcal{O}
 \Bigg(
 \frac{1}{d^{\frac{3}{4}}_1 (1-\lambda)T^{\frac{1}{2}}}
 \Bigg),  \delta_y=\mu_y = \mathcal{O}
 \Bigg(
 \frac{1}{d^\frac{3}{4}_2 (1-\lambda)T^{\frac{1}{2}}}
 \Bigg).
\end{align}
where $c=1$ if $N \ge \mathcal{O}(d\kappa^2\varepsilon^{-2}/K)$; otherwise $c=0$ (full-batch).
We obtain the performance bound given by 
\begin{align}
&\frac{1}{T}\sum_{i=0}^{T-1}\Big(\mE\|\nabla_x J(\bx_{c,i}, \by_{c,i})\|^2 + \mE\|\nabla_y J(\bx_{c,i}, \by_{c,i})\|^2\Big) \notag \\
&\le \mathcal{O}
\Bigg(
\frac{\sqrt{d}\kappa^2\Delta_0}{(1-\lambda)^{2}T}
+\frac{\kappa^2(1-\lambda)\zeta^2_0}{T}
+\frac{\kappa^2\lambda^4\sqrt{K}\sigma^2}{\sqrt{Nd^{1-c}} T}
+ \varepsilon^2\mathbb{I}(c=1)+\frac{\kappa^2\sigma^2}{T} +\frac{\sqrt{d}\kappa^2}{(1-\lambda)^{2}T} 
\Bigg)
\end{align}
The dominant CC and FC are given by 
\begin{align}
&CC \approx \mathcal{O}\Bigg(\frac{\sqrt{d}\kappa^2\varepsilon^{-2}}{(1-\lambda)^{2}}
+ \kappa^2\varepsilon^{-2} \Bigg), FC \approx 
\Big(CC \times p B +b_0\Big) 2d + CC \times (1-p) \times 8b \approx \mathcal{O}\Bigg(
\frac{d^{1-\frac{c}{2}}\kappa^2\sqrt{N}\varepsilon^{-2}}{\sqrt{K}(1-\lambda)^{2}} + d^{\frac{3-c}{2}}\sqrt{\frac{N}{K}}
\Bigg).
\end{align}
\end{Corollary}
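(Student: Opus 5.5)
The statement is Corollary~\ref{corollary:offline:page+atc-gt:rd} (ZO-PAGE-GT, hybrid estimator). I would obtain it by specializing the unified bound of Theorem~\ref{main:theorem2} (restated in Appendix~\ref{appendix:subsection:theorem2}). The steps are: first read off the GT constants from Lemma~\ref{appendix:lemma:transformed_recursion}, then verify the step-size and batch conditions, then bound each of the terms $\Pi_0,\dots,\Pi_3$, the network-noise term and the initial-noise term, and finally convert the resulting rate into communication and query complexities.

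\emph{Constants and admissibility.} For ATC-GT, Lemma~\ref{appendix:lemma:transformed_recursion} gives $1-\rho\ge (1-\lambda)/2$, $\lambda_a=\lambda^2$ and $\underline{\lambda_b}=1-\lambda$. Hence $(1-\rho)\underline{\lambda_b}=\Theta((1-\lambda)^2)$ and $(1-\rho)^2\underline{\lambda^2_b}=\Theta((1-\lambda)^4)$.

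With $\beta=0$ we have $\bar\beta=p$. With $b=\sqrt{Nd^{1-c}/K}$ and $p=1/\sqrt{NKd^{1-c}}$ we get $b\bar\beta=1/K$, so $A_1=bK\bar\beta=1$. The conditions $b\bar\beta\le 55d/K$, $\beta+bp\le b$ and $p+\beta\le1$ then hold trivially. The smoothing conditions $\delta,\mu\le\sigma/(L_f\sqrt{d_i})$ hold for large $T$.

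With $A_1=1$, the binding constraints on $\eta_y$ are $(1-\rho)\underline{\lambda_b}/(\sqrt d A_3)\sim(1-\lambda)^2/\sqrt d$ and the $\tfrac12$- and $\tfrac23$-power conditions, which are weaker when $\lambda^2\le1$. So $\eta_y=\Theta((1-\lambda)^2/\sqrt d)$ and $\eta_x=\eta_y/(16\kappa^2)$ are admissible.

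One condition needs care: $\bar\beta\le\nu\eta_y/2$, which reads $p\lesssim(1-\lambda)^2/\sqrt d$. It holds for $N$ large enough; otherwise I would absorb it into the $\mathcal{O}$ constant, as the ED corollary implicitly does.

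\emph{Term-by-term bounds.}
\begin{itemize}
\item \emph{Initial gap.} $\Pi_0$ contributes $\Delta_0/(T\eta_x)+\kappa^2\Delta_0/(T\eta_y)=\mathcal{O}(\sqrt d\kappa^2\Delta_0/((1-\lambda)^2T))$. Its $\zeta_0$ term is $d\kappa^2\eta_y^2\zeta_0^2/(T(1-\rho)\underline{\lambda_b^2})=\mathcal{O}(\kappa^2(1-\lambda)\zeta_0^2/T)$, since $d\eta_y^2/((1-\lambda)^3)\sim(1-\lambda)$.
\item \emph{Network noise.} Using $p/\bar\beta^2=1/p$, $b_0=b$ and $b\bar\beta K=1$, the term becomes $d\kappa^2\lambda^4\eta_y^2\sigma^2/(b\,T(1-\lambda)^4)=\kappa^2\lambda^4\sigma^2\sqrt K/(\sqrt{Nd^{1-c}}\,T)$.
\item \emph{Initial noise.} $\kappa^2\sigma^2/(b_0K\bar\beta T)=\kappa^2\sigma^2/T$.
\item \emph{Momentum error.} $\Pi_2=0$ because $\beta=0$.
\item \emph{ZO bias.} In $\Pi_3$ the dominant pieces are $\kappa^2(d_1^2\mu_x^2+d_2^2\mu_y^2)$ and the $1/(bK\bar\beta)=1$ piece. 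With $\mu\sim d^{-3/4}(1-\lambda)^{-1}T^{-1/2}$ these give $\mathcal{O}(\sqrt d\kappa^2/((1-\lambda)^2T))$. The network piece is no larger, since $d\eta_y^2\lambda^4/(1-\lambda)^4=\mathcal{O}(1)$.
\item \emph{Large-batch effect.} If $c=0$ then $B=N$ and the indicator kills the term. If $c=1$ then $B=N/d$. The $\kappa^2 p\sigma^2/(BK\bar\beta)$ piece of $\Pi_1$ equals $\kappa^2\sigma^2 d/(NK)$, and the network piece is of the same order. This is at most $\varepsilon^2$ exactly when $N\gtrsim d\kappa^2\varepsilon^{-2}/K$, which is the stated regime condition and yields the $\varepsilon^2\mathbb{I}(c=1)$ term.
\end{itemize}

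\emph{Complexities.} Setting the rate to $\varepsilon^2$ and noting that the $N$-dependent noise term is dominated, I get $T=\mathcal{O}(\sqrt d\kappa^2\varepsilon^{-2}/(1-\lambda)^2+\kappa^2\varepsilon^{-2})$. Counting queries, a $\pi_i=1$ round costs $2dB$ (coordinate-wise) and a $\pi_i=0$ round costs about $8b$ (random directions), with a $2db_0$ warm start. Then $pB\cdot 2d=2d^{1-c}\sqrt{N/(Kd^{1-c})}$ equals $b$ up to a factor $2d$, so the dominant cost per round is $\sim d\,b$. Multiplying by $T$ gives $d^{1-c/2}\kappa^2\sqrt N\varepsilon^{-2}/(\sqrt K(1-\lambda)^2)$ (after using $\sqrt d\cdot d^{(1-c)/2}\cdot d^{c}/\ldots$ bookkeeping), plus $b_0\cdot 2d=2d^{(3-c)/2}\sqrt{N/K}$.

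\emph{Main obstacle.} I expect the delicate parts to be the $c=1$ large-batch bookkeeping, namely matching the $\sqrt d$ factors in the query count, and checking $\bar\beta\le\nu\eta_y/2$ against the chosen $p$.
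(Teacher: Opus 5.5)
Your route is the one the paper intends: the corollary is stated as a direct specialization of Theorem~\ref{main:theorem2} with the ATC-GT constants of Lemma~\ref{appendix:lemma:transformed_recursion}. Your evaluation of the bound, using $\lambda_a=\lambda^2$, $\underline{\lambda_b}=1-\lambda$, $1-\rho\ge(1-\lambda)/2$ and $bK\bar\beta=1$, reproduces every term of the stated rate and the stated $CC$.

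The query count, however, goes wrong. Your expression $2d\,pB=2d^{1-c}\sqrt{N/(Kd^{1-c})}$ equals exactly $2\sqrt{Nd^{1-c}/K}=2b$, not ``$b$ up to a factor $2d$''. Hence the expected cost per round, $2d\,pB+8(1-p)b$, is $\Theta(b)$, not $\Theta(d\,b)$. Multiplying $T$ by $d\,b$ would overshoot the claimed $FC$ by a factor $d$, and the unexplained ``bookkeeping'' does not repair this. The correct count is
$T\cdot b=\frac{\sqrt d\,\kappa^2\varepsilon^{-2}}{(1-\lambda)^2}\,d^{\frac{1-c}{2}}\sqrt{N/K}=\frac{d^{1-\frac{c}{2}}\kappa^2\sqrt N\varepsilon^{-2}}{\sqrt K(1-\lambda)^2}$,
plus the warm start $2d\,b_0=2d^{\frac{3-c}{2}}\sqrt{N/K}$. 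The $\sqrt d$ comes from $T$, not from the per-round cost.

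Your admissibility check also overreaches.

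\emph{The condition $\bar\beta\le\nu\eta_y/2$.} With $\bar\beta=p$ it says $\sqrt{NKd^{1-c}}\gtrsim\sqrt d/(\nu(1-\lambda)^2)$. This is a lower bound on $N$, because $p$ is pinned by $(N,K,d)$ and $\eta_y$ is capped by the theorem's other conditions. It is exactly what the proof of Theorem~\ref{main:theorem2} uses to cancel the $\mE\|\bg^y_{c,i}\|^2$ terms. So it cannot be absorbed into an $\mathcal{O}$-constant and must be added as a hypothesis, as must $b\ge 1$, i.e.\ $Nd^{1-c}\ge K$.

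\emph{The $\tfrac{2}{3}$-power step-size condition.} This is not implied by $\lambda^2\le 1$. With the GT constants and $A_1=1$ it becomes, up to constants, $\eta_y\lesssim(1-\lambda)^{4/3}/(\kappa^{1/3}d^{2/3}\lambda^{4/3})$. The choice $\eta_y=\Theta((1-\lambda)^2/\sqrt d)$ meets this only if $\kappa^{1/3}d^{1/6}\lambda^{4/3}(1-\lambda)^{2/3}=\mathcal{O}(1)$, which fails for large $d$ when $\lambda$ is bounded away from $0$ and $1$. The $\tfrac{1}{2}$-power condition as stated similarly needs $\sqrt{\kappa}\,\lambda(1-\lambda)=\mathcal{O}(1)$. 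That one can be recovered by using $\eta_x\le\eta_y/(16\kappa^2)$ inside its derivation.

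The paper leaves these restrictions implicit as well. A rigorous version of your argument must either impose them as hypotheses or take $\eta_y$ as the full minimum in Theorem~\ref{main:theorem2}, which changes the leading $\sqrt d\,\kappa^2\Delta_0/((1-\lambda)^2T)$ term.
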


\begin{Corollary}[\textbf{ZO-L2S-ED}]
\label{corollary:offline:L2S+ed:rd}
Under Assumptions \ref{main:assumption:costfunction}-\ref{main:asssumption:RD-ZO}  and the matrix condition in Lemma \ref{appendix:lemma:transformed_recursion},
we consider $\bu^j_{k,i} \not = 0$ during $\bpi_i=0$.
We consider the ED strategy shown on  Table \ref{tab:matrix_choices}.
We set the hyperparameters as follows
\begin{align}
\eta_x &= \mathcal{O}\Bigg(\frac{K^{\frac{c+1}{2}}}{\kappa^2\sqrt{N}d^{1-\frac{c}{2}}}\Bigg), \eta_y = \mathcal{O}\Bigg(
 \frac{K^{\frac{c+1}{2}}}{\sqrt{N}d^{1-\frac{c}{2}}}
 \Bigg), \notag\\
  b &= \mathcal{O}(1), b_0 = \frac{\sqrt{N}}{K^{\frac{c+1}{2}}}, B = \frac{N}{(dK)^{c}}, p = \frac{K^{c}}{Nd^{1-c}}, \beta = 0, \\
 \delta_x &= \mu_x= \mathcal{O}
 \Bigg(
 \frac{1}{d_1 T^{\frac{1}{2}}N^{\frac{1}{4}}}
 \Bigg),  \delta_y=\mu_y = \mathcal{O}
 \Bigg(
 \frac{1}{d_2T^{\frac{1}{2}} N^{\frac{1}{4}}}
 \Bigg).
\end{align}
where $c=1$ if $N \ge \mathcal{O}(\max\{d\kappa^2\varepsilon^{-2}, \frac{\sqrt{d}K\kappa\varepsilon^{-1}}{(1-\lambda)^{1.5}}\})$; otherwise $c=0$ (full-batch).
We obtain the performance bound given by 
\begin{align}
&\frac{1}{T}\sum_{i=0}^{T-1}\Big(\mE\|\nabla_x J(\bx_{c,i}, \by_{c,i})\|^2 + \mE\|\nabla_y J(\bx_{c,i}, \by_{c,i})\|^2\Big) \notag \\
&\le \mathcal{O}\Bigg(
\frac{d^{1-\frac{c}{2}}\kappa^2\sqrt{N}\Delta_0}{K^{\frac{c+1}{2}}T}
+\frac{\kappa^2\zeta^2_0}{T(1-\lambda)^2}+\frac{\kappa^2\lambda^2(K^{\frac{c+1}{2}}\sigma^2+1)}{\sqrt{N}T(1-\lambda)^3}+\varepsilon^2\mathbb{I}(c=1)+ \frac{d^{1-c}\kappa^2\sqrt{N}\sigma^2}{K^{\frac{c+1}{2}}T} + \frac{d^{1-c}\kappa^2\sqrt{N}}{K^{c+1}T}
  + \frac{\kappa^2}{\sqrt{N}T}
\Bigg).
\end{align}
The dominant CC and FC are given by 
\begin{align}
CC &\approx 
\mathcal{O}\Bigg(
\frac{d^{1-\frac{c}{2}}\kappa^2\sqrt{N}\varepsilon^{-2}}{K^{\frac{c+1}{2}}} 
+\frac{\kappa^2\epsilon^{-2}}{(1-\lambda)^2}+ \frac{\kappa^2K^{\frac{c+1}{2}} \varepsilon^{-2}}{\sqrt{N}(1-\lambda)^3}
\Bigg), \notag\\
FC &\approx 
\Big(CC \times p \times B +b_0\Big)\times 2d + CC \times (1-p) \times 8b   \notag\\
\approx \mathcal{O}&\Bigg(
\frac{d^{1-\frac{c}{2}}\kappa^2\sqrt{N}\varepsilon^{-2}}{K^{\frac{c+1}{2}}} +\frac{\kappa^2\varepsilon^{-2}}{(1-\lambda)^2}+ \frac{\kappa^2K^{\frac{c+1}{2}} \varepsilon^{-2}}{\sqrt{N}(1-\lambda)^3} + \frac{d\sqrt{N}}{K^{\frac{c+1}{2}}} 
\Bigg).
\end{align}
\end{Corollary}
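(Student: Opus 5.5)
The approach is to specialize the unified bound of Theorem~\ref{main:theorem2} (in its restated form, Theorem~\ref{appendix:restatement:theorem2}) to the ED design matrices. This requires three things: substitute the prescribed hyperparameters, check the step-size and batch conditions, and then read off the rate and the complexities. By Lemma~\ref{appendix:lemma:transformed_recursion}, for ED we have $\rho=\sqrt{\lambda}$, $\lambda_a=\lambda$ and $\underline{\lambda_b}=\sqrt{1-\lambda}$. Hence $(1-\rho)^2\underline{\lambda^2_b}\asymp(1-\lambda)^3$ and $(1-\rho)\underline{\lambda^2_b}\asymp(1-\lambda)^2$, since $1-\sqrt\lambda\ge(1-\lambda)/2$. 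With $\beta=0$ we get $\bar\beta=p=K^c/(Nd^{1-c})$. We also have $bK\bar\beta=K^{c+1}/(Nd^{1-c})$, so that $\sqrt{A_1}=K^{\frac{c+1}{2}}/(\sqrt N d^{\frac{1-c}{2}})$. The choice $\eta_y\asymp K^{\frac{c+1}{2}}/(\sqrt N d^{1-\frac c2})=\sqrt{A_1}/\sqrt d$ is therefore exactly the binding constraint $\eta_y\le\sqrt{A_1}/(90\sqrt d L_f)$. Likewise $\eta_x=\eta_y/\kappa^2$ meets both $\eta_x\le\eta_y/(16\kappa^2)$ and $\eta_x\le\sqrt{A_1}/(130\sqrt d\kappa L_f)$ up to constants.

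The first step is to verify the remaining conditions of Theorem~\ref{appendix:restatement:theorem2}. The conditions $p+\beta\le1$, $\beta+bp\le b$ and $b\bar\beta\le 55d/K$ reduce to $pK\lesssim d$. This follows from $K^{c+1}/(Nd^{1-c})\lesssim d$, which holds when $N$ is large relative to $K$; in the regime $c=1$ it follows directly from the threshold on $N$. The network-dependent upper bounds on $\eta_y$ (the $(1-\rho)\underline{\lambda_b}$, $(1-\rho)^{1/2}\underline{\lambda_b}^{1/2}A_1^{1/4}$ and $(1-\rho)^{2/3}\underline{\lambda_b}^{2/3}A_1^{1/3}$ terms) are satisfied whenever $N$ is large enough that $\sqrt{A_1}/\sqrt d$ is below them. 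For $c=1$ this is exactly the second threshold $N\gtrsim\sqrt d K\kappa\varepsilon^{-1}/(1-\lambda)^{1.5}$. The condition $\bar\beta\le\nu\eta_y/2$, i.e.\ $p\lesssim\eta_y/\kappa$, also holds for $N$ large. \textbf{This bookkeeping is the step I expect to be most delicate}: the two regimes must be matched to the stated $N$-thresholds, and one must check that $\bar\beta\le\nu\eta_y/2$ is compatible with $p=K^c/(Nd^{1-c})$. My first attempt would be to absorb any failure into the large-$N$ assumption and treat the small-$N$ case via $c=0$ (full batch, $\mathbb{I}(B<N)=0$).

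The second step is to evaluate each term of the bound.
\begin{itemize}
\item Initial gap $\Pi_0$: the first part is $\Delta_0/(T\eta_x)\asymp d^{1-\frac c2}\kappa^2\sqrt N\Delta_0/(K^{\frac{c+1}{2}}T)$. The $\zeta_0^2$ term becomes $d\kappa^2\eta_y^2\zeta_0^2/(A_1 T(1-\lambda)^2)\asymp\kappa^2\zeta_0^2/(T(1-\lambda)^2)$, using $\eta_y^2=A_1/d$.
\item Network noise: using $p/\bar\beta=1$ and $\eta_y^2=A_1/d$, it collapses to $\kappa^2\lambda^2\sigma^2/(b_0 T(1-\lambda)^3)$. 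With $b_0=\sqrt N/K^{\frac{c+1}{2}}$ this gives the $\lambda^2K^{\frac{c+1}{2}}\sigma^2/(\sqrt N T(1-\lambda)^3)$ term.
\item Initial noise: $\kappa^2\sigma^2/(b_0K\bar\beta T)$ evaluates to $d^{1-c}\kappa^2\sqrt N\sigma^2/(K^{\frac{c+1}{2}}T)$ after substitution.
\item Large-batch term $\Pi_1\sigma^2\mathbb{I}(B<N)$: it vanishes for $c=0$. For $c=1$ its leading piece is $\kappa^2p/(BK\bar\beta)=\kappa^2dK/(NK)=d\kappa^2/N$. This is $\le\varepsilon^2$ under $N\gtrsim d\kappa^2\varepsilon^{-2}$, which gives the $\varepsilon^2\mathbb{I}(c=1)$ term.
\item Momentum term $\Pi_2$: it vanishes since $\beta=0$.
\item ZO bias $\Pi_3$: with $d_j^2\mu^2\asymp 1/(T\sqrt N)$, the pieces $\kappa^2 d^2\mu^2/(bK\bar\beta)$ and $\kappa^2 d^2\mu^2$ yield the residual terms $d^{1-c}\kappa^2\sqrt N/(K^{c+1}T)$ and $\kappa^2/(\sqrt N T)$. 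The network-weighted part is bounded by the $\lambda^2/(\sqrt NT(1-\lambda)^3)$ term.
\end{itemize}

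Finally, setting the bound to $\varepsilon^2$ and solving for $T$ gives the communication complexity $CC$, with the three dominant terms as stated. For the function-query complexity we count $2d$ queries per coordinate-wise large batch, used with probability $p$ at batch size $B$ and once at initialization with $b_0$, plus $8b=\mathcal{O}(1)$ queries per randomized round. The product $pB=d^{-1}$ times $2d$ is $\mathcal{O}(1)$ per round, so $FC\approx CC+2db_0$. This yields the stated expression, with the last term $d\sqrt N/K^{\frac{c+1}{2}}$.
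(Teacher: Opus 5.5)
Your overall route is the paper's route. You plug the ED constants ($\rho=\sqrt\lambda$, $\lambda_a=\lambda$, $\underline{\lambda_b}=\sqrt{1-\lambda}$) and the prescribed hyperparameters into Theorem~\ref{main:theorem2}, and use $\eta_y^2=A_1/d$ and $p=\bar\beta$. Your evaluations of $\Pi_0$, the network and initial noise, $\Pi_3$, and the count $pB\cdot 2d=2$ in $FC$ are correct.

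\textbf{Dropped piece of $\Pi_1$.} Your handling of the $c=1$ regime is wrong. You keep only the piece $\kappa^2 p\sigma^2/(BK\bar\beta)=d\kappa^2\sigma^2/N$ of $\Pi_1\sigma^2$ and discard its network-weighted piece. With $\eta_y^2=A_1/d$, $\bar\beta=p=K/N$ and $B=N/(dK)$, that piece is
\[
\frac{d\kappa^2\lambda_a^2\eta_y^2 p\,\sigma^2}{b\bar\beta K(1-\rho)^2\underline{\lambda^2_b}B}\asymp\frac{d\kappa^2\lambda^2K^2\sigma^2}{N^2(1-\lambda)^3}.
\]
It is $\lesssim\varepsilon^2$ exactly when $N\gtrsim\sqrt d\,\kappa\lambda K\sigma\varepsilon^{-1}/(1-\lambda)^{1.5}$. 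That is the origin of the second threshold in the statement. The GT analogue, with $(1-\lambda)^4$ in place of $(1-\lambda)^3$, reproduces the $(1-\lambda)^{2}$ threshold of Corollary~\ref{corollary:offline:L2S+atc-gt:rd}. As written, your justification of the $\varepsilon^2\mathbb{I}(c=1)$ term never uses this threshold.

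\textbf{Step-size caps.} Your claim that the network-dependent caps on $\eta_y$ reduce ``exactly'' to the second threshold is false. Substitute $\eta_y=\sqrt{A_1/d}$ into the caps built from $(1-\rho)^{1/2}\underline{\lambda_b}^{1/2}A_1^{1/4}$ and $(1-\rho)^{2/3}\underline{\lambda_b}^{2/3}A_1^{1/3}$. This gives $A_1\lesssim(1-\lambda)^3/(\kappa^2\lambda^2)$ and $A_1\lesssim(1-\lambda)^6/(d\kappa^2\lambda^4)$, so you need, for example, $N\gtrsim d^{c}\kappa^2\lambda^4K^{c+1}/(1-\lambda)^6$. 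These requirements are $\varepsilon$-free and are not implied by the stated thresholds. They also persist for $c=0$: the full batch removes $\Pi_1$ but not the step-size caps. So ``treat the small-$N$ case via $c=0$'' does not work. These caps must be carried as a separate ``$N$ sufficiently large'' assumption, which the statement also leaves implicit.

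\textbf{Possible repair.} If you enforce the cap $\eta_y\lesssim(1-\rho)\underline{\lambda_b}/(\sqrt d A_3)$, i.e.\ $N\gtrsim\lambda^2K^2/(1-\lambda)^3$ when $c=1$, the dropped piece is dominated by $d\kappa^2\sigma^2/N$ and your shortcut becomes valid. You would need to state this explicitly, and it makes the second threshold redundant rather than explaining it.
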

\begin{Corollary}[\textbf{ZO-L2S-GT}]
\label{corollary:offline:L2S+atc-gt:rd}
Under Assumptions \ref{main:assumption:costfunction}-\ref{main:asssumption:RD-ZO} and the matrix condition in Lemma \ref{appendix:lemma:transformed_recursion},
we consider $\bu^j_{k,i} \not= 0$ during $\bpi_i=0$.
We consider ATC-GT strategy shown in Table \ref{tab:matrix_choices}.
We set the hyperparameters the same as Corollary \ref{corollary:offline:L2S+ed:rd} and choose  $c=1$ if $N \ge \mathcal{O}(\max\{d\kappa^2\varepsilon^{-2}, \frac{\sqrt{d}\kappa K\varepsilon^{-1}}{(1-\lambda)^{2}}\})$; otherwise $c=0$ (full-batch).
We obtain the performance bound given by 
\begin{align}
&\frac{1}{T}\sum_{i=0}^{T-1}\Big(\mE\|\nabla_x J(\bx_{c,i}, \by_{c,i})\|^2 + \mE\|\nabla_y J(\bx_{c,i}, \by_{c,i})\|^2\Big) \notag \\
&\le \mathcal{O}\Bigg(
\frac{d^{1-\frac{c}{2}}\kappa^2\sqrt{N}\Delta_0}{K^{\frac{c+1}{2}}T}
+\frac{\kappa^2\zeta^2_0}{T(1-\lambda)^3}+\frac{\kappa^2\lambda^4(K^{\frac{c+1}{2}}\sigma^2+1)}{\sqrt{N}T(1-\lambda)^4} 
+\varepsilon^2\mathbb{I}(c=1)+ \frac{d^{1-c}\kappa^2\sqrt{N}\sigma^2}{K^{\frac{c+1}{2}}T} + \frac{d^{1-c}\kappa^2\sqrt{N}}{K^{c+1}T} + \frac{\kappa^2}{\sqrt{N}T}
\Bigg).
\end{align}
The dominant CC and FC are given by 
\begin{align}
CC &\approx 
\mathcal{O}\Bigg(
\frac{d^{1-\frac{c}{2}}\kappa^2\sqrt{N}\varepsilon^{-2}}{K^{\frac{c+1}{2}}} 
+\frac{\kappa^2\epsilon^{-2}}{(1-\lambda)^3}+ \frac{\kappa^2K^{\frac{c+1}{2}} \varepsilon^{-2}}{\sqrt{N}(1-\lambda)^4}
\Bigg), \notag\\
FC &\approx 
\Big(CC \times p \times B +b_0\Big)\times 2d + CC \times (1-p) \times 8b   \notag\\
\approx \mathcal{O} &\Bigg(
\frac{d^{1-\frac{c}{2}}\kappa^2\sqrt{N}\varepsilon^{-2}}{K^{\frac{c+1}{2}}} +\frac{\kappa^2\varepsilon^{-2}}{(1-\lambda)^3}+ \frac{\kappa^2K^{\frac{c+1}{2}} \varepsilon^{-2}}{\sqrt{N}(1-\lambda)^4} + \frac{d\sqrt{N}}{K^{\frac{c+1}{2}}} 
\Bigg).
\end{align}
\end{Corollary}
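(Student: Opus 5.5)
The plan is to specialize the restated Theorem~\ref{appendix:restatement:theorem2} to the ATC-GT choice of design matrices and the hyperparameters of Corollary~\ref{corollary:offline:L2S+ed:rd}. From Lemma~\ref{appendix:lemma:transformed_recursion}, GT gives $1-\rho \ge (1-\lambda)/2$, $\lambda_a=\lambda^2$ and $\underline{\lambda_b}=1-\lambda$. Hence $(1-\rho)^2\underline{\lambda^2_b}=\Theta((1-\lambda)^4)$ and $(1-\rho)\underline{\lambda^2_b}=\Theta((1-\lambda)^3)$, whereas ED gives $(1-\lambda)^3$ and $(1-\lambda)^2$. Thus the GT bound should follow from the ED computation by raising every network-dependent power of $(1-\lambda)^{-1}$ by one and replacing $\lambda^2$ with $\lambda^4$.

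\textbf{Step 1: verify the step-size conditions.} With $\beta=0$ we have $\bar{\beta}=p=K^c/(Nd^{1-c})$, $b=\mathcal{O}(1)$ and $A_1=bK\bar{\beta}\asymp K^{c+1}/(Nd^{1-c})$. The binding conditions are then $\eta_x\lesssim \sqrt{A_1}/(\sqrt{d}\kappa L_f)$ and $\eta_y\lesssim\sqrt{A_1/d}$. Both match $\eta_y\asymp K^{\frac{c+1}{2}}/(\sqrt{N}d^{1-\frac{c}{2}})$, since $\sqrt{A_1/d}=K^{\frac{c+1}{2}}/(\sqrt N d^{1-\frac c2})$. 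Next I check $\bar\beta\le\nu\eta_y/2$, i.e.\ $K^{c}/(Nd^{1-c})\lesssim K^{\frac{c+1}{2}}/(\sqrt N d^{1-\frac c2})$, which holds for $N$ large. The network conditions $\eta_y\lesssim(1-\lambda)^2/(\sqrt d\lambda^2)$ and the $(1-\rho)^{1/2}\underline{\lambda_b}^{1/2}$ and $(1-\rho)^{2/3}\underline{\lambda_b}^{2/3}$ conditions must also hold. This is where the threshold on $N$ (now involving $(1-\lambda)^{2}$ instead of $(1-\lambda)^{1.5}$) enters. I will treat the regime in which they fail as contributing the $(1-\lambda)$-dependent terms of $CC$.

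\textbf{Step 2: evaluate each term of the bound.} The initial gap $\Pi_0$ gives $d^{1-\frac c2}\kappa^2\sqrt N\Delta_0/(K^{\frac{c+1}{2}}T)$. The $\zeta_0^2$ term gives $d\kappa^2\eta_y^2\zeta_0^2/(A_1T(1-\lambda)^3)=\kappa^2\zeta_0^2/(T(1-\lambda)^3)$. The network-noise term with $p/\bar\beta=1$ and $b_0=\sqrt N/K^{\frac{c+1}{2}}$ gives $\kappa^2\lambda^4K^{\frac{c+1}{2}}\sigma^2/(\sqrt N T(1-\lambda)^4)$. The $\Pi_1$ term with $B=N/(dK)^c$ vanishes when $c=0$ ($B=N$). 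When $c=1$ it is $\kappa^2 p/(BK\bar\beta)=\kappa^2d/N\le\varepsilon^2$ by the threshold on $N$. The initial-noise term gives $d^{1-c}\kappa^2\sqrt N\sigma^2/(K^{\frac{c+1}{2}}T)$ and related terms. $\Pi_2=0$ since $\beta=0$. The bias $\Pi_3$ with $\mu\asymp 1/(d_iT^{1/2}N^{1/4})$ produces the residual $\lambda^4/(\sqrt NT(1-\lambda)^4)$ and $\kappa^2/(\sqrt NT)$ pieces.

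\textbf{Step 3: convert to complexities.} Setting the bound to $\varepsilon^2$ and solving for $T$ gives $CC$. $FC$ follows by counting $2d$ queries per large-batch event, occurring with probability $p$ on $B$ samples, $8b$ queries otherwise, and $2d\,b_0$ at initialization. Here $pB=K^c/(Nd^{1-c})\cdot N/(dK)^c=d^{-1}$, so the large-batch cost per round is $\mathcal{O}(1)$.

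The main obstacle I expect is Step 1, namely reconciling the fixed $\eta_y$ scaling with the GT network conditions. For GT, $\underline{\lambda_b}(1-\rho)\asymp(1-\lambda)^2$, so the $\eta_y\lesssim(1-\rho)^{2/3}\underline{\lambda_b}^{2/3}A_1^{1/3}/(\kappa^{1/3}(dA_2)^{2/3})$ condition tightens by a $(1-\lambda)^{1/3}$ factor relative to ED. I would handle it by taking $\eta_y$ as the minimum of the prescribed value and the network bounds, and absorbing the resulting $1/(\eta_yT)$ contribution into the $(1-\lambda)^{-3}$ and $(1-\lambda)^{-4}$ terms of $CC$.
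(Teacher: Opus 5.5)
Your route is the intended one: substitute the prescribed hyperparameters into Theorem~\ref{appendix:restatement:theorem2}, use the GT constants $1-\rho\ge(1-\lambda)/2$, $\lambda_a=\lambda^2$, $\underline{\lambda_b}=1-\lambda$, and exploit $d\eta_y^2\asymp bK\bar{\beta}$. Your evaluations of the initial-gap, $\zeta_0^2$, network-noise and initial-noise terms are correct, as is $FC$ via $pB=1/d$. However, two steps do not go through as written.

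\textbf{The missing half of $\Pi_1$.} In Step 2 you keep only the half $\kappa^2p/(BK\bar\beta)=\kappa^2 d/N$ of $\Pi_1$ and drop its network half. For $c=1$ we have $\eta_y^2\asymp K^2/(Nd)$, $p=\bar\beta=K/N$ and $B=N/(dK)$, so that half is
\[
\frac{d\kappa^2\lambda_a^2\eta_y^2 p\,\sigma^2}{b\bar\beta K(1-\rho)^2\underline{\lambda^2_b}B}\asymp\frac{d\kappa^2\lambda^4K^2\sigma^2}{N^2(1-\lambda)^4}.
\]
Requiring this to be $\mathcal{O}(\varepsilon^2)$ gives exactly the second branch $N\gtrsim\sqrt{d}\kappa K\varepsilon^{-1}/(1-\lambda)^{2}$ of the threshold. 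With ED's $(1-\lambda)^3$ the same computation gives the $(1-\lambda)^{1.5}$ of Corollary~\ref{corollary:offline:L2S+ed:rd}. So the threshold does not come from the step-size conditions in your Step 1. Those conditions contain no $\varepsilon$ and must also hold when $c=0$.

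\textbf{The step-size fix.} The fix in your last paragraph does not work. The $(1-\lambda)^{-3}$ and $(1-\lambda)^{-4}$ terms of $CC$ come from the $\zeta_0^2$ and network-noise terms evaluated at the prescribed $\eta_y$, as your own Step 2 shows. They are not produced by any reduction of the step size.

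If you actually cap $\eta_y$, the bound changes. Take the $(1-\rho)^{1/2}\underline{\lambda_b}^{1/2}$ condition, whose GT value is $\asymp(1-\lambda)A_1^{1/4}/(\sqrt{d\kappa}\,\lambda)$. Capping there makes the initial-gap term of order $\kappa^{5/2}\sqrt{d}\,\lambda\,A_1^{-1/4}/((1-\lambda)T)$. This carries $\kappa^{5/2}$, grows like $N^{1/4}$, and is not dominated by the terms of the stated bound in general. A smaller $\eta_y$ can also violate $\bar\beta=p\le\nu\eta_y/2$ unless $p$, and hence $FC$, is retuned.

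The stated bound is only available at the prescribed step sizes. The remaining conditions of Theorem~\ref{appendix:restatement:theorem2} must therefore be imposed as a standing requirement that $N$ be large enough relative to $K$, $d$, $\kappa$ and $1/(1-\lambda)$; the corollary leaves this implicit. These conditions are the network bounds on $\eta_y$, $\eta_y\le\min\{1/\nu,1/(2L_f)\}$, $\bar\beta\le\nu\eta_y/2$ and $b\bar\beta\le 55d/K$. They cannot be traded for extra complexity terms.

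A minor point: the term $d^{1-c}\kappa^2\sqrt{N}/(K^{c+1}T)$ comes from the piece $\kappa^2(d_1^2\mu_x^2+d_2^2\mu_y^2)/(bK\bar\beta)$ of $\Pi_3$, not from the initial-noise term.
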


\end{document}